\newcommand{\nbb}{\mathbb{N}}
\newcommand{\rbb}{\mathbb{R}}
\newcommand{\cbb}{\mathbb{C}}
\newcommand{\W}{\mathcal{W}}
\newcommand{\Pcal}{\mathcal{P}}
\newcommand{\la}{\langle}
\newcommand{\ra}{\rangle}
\newcommand{\taut}{\tilde{\tau}}
\newcommand{\nt}{\notag}
\newcommand{\vbar}{\overline{v}}
\newcommand{\zbar}{\overline{z}}
\newcommand{\uhat}{\widehat{u}}
\newcommand{\ubar}{\overline{u}}
\newcommand{\etabar}{\overline{\eta}}
\newcommand{\ug}{u^\gamma}
\newcommand{\vg}{v^\gamma}
\newcommand{\Xg}{X^\gamma}
\newcommand{\Yg}{Y^\gamma}
\newcommand{\Xhat}{\widehat{X}}
\newcommand{\Yhat}{\widehat{Y}}
\newcommand{\What}{\widehat{\mathcal{W}}}
\renewcommand{\i}{\textup{i}}
\newcommand{\mi}{\wedge}
\newcommand{\Tr}{\text{Tr}}
\renewcommand{\d}{\textup{d}}
\newcommand{\domain}{\mathcal{O}}
\newcommand{\f}{\varphi}
\newcommand{\grad}{\nabla}
\newcommand{\Fcal}{\mathcal{F}}
\newcommand{\E}{\mathbb{E}}
\renewcommand{\P}{\mathbb{P}}
\newcommand{\Law}{\text{Law}}
\renewcommand{\Re}{\text{Re}}
\renewcommand{\H}{\mathcal{H}}
\newcommand{\close}{\!\!\!}
\newcommand{\TV}{\textup{TV}}
\newcommand{\KL}{\textup{KL}}
\theoremstyle{plain}
\newtheorem{theorem}{Theorem}[section]
\newtheorem{lemma}[theorem]{Lemma}
\newtheorem{assumption}[theorem]{Assumption}
\newtheorem{proposition}[theorem]{Proposition}
\newtheorem{definition}[theorem]{Definition}
\theoremstyle{definition}
\newtheorem{remark}[theorem]{Remark}
\numberwithin{equation}{section}
\title{The inviscid limit for long time statistics of the one-dimensional stochastic Ginzburg-Landau equation}
\author{ Hung D.~Nguyen$^1$}
\address{$^1$ Department of Mathematics, University of Tennessee, Knoxville, Tennessee, USA}
\begin{document}

\begin{abstract}
We consider the long time statistics of a one-dimensional stochastic Ginzburg-Landau equation with cubic nonlinearity while being subjected to random perturbations via an additive Gaussian noise. Under the assumption that sufficiently many directions of the phase space are stochastically forced, we find that the dynamics is attractive toward the unique invariant probability measure with a polynomial rate that is independent of the vanishing viscosity. This relies on a coupling technique exploiting a Foias-Prodi argument specifically tailored to the system. Then, in the inviscid regime, we show that the sequence of invariant measures converges toward the invariant measure of the stochastic Schr\"odinger equation in a suitable Wasserstein distance. Together with the uniform polynomial mixing, we obtain the validity of the inviscid limit for the solutions on the infinite time horizon with a log log rate.

\end{abstract}

\maketitle

\section{Introduction} \label{sec:intro}

For each $\gamma\in(0,1)$, we consider the following stochastic system in the unknown variable $\ug(t)=\ug(x,t):[0,1]\times[0,\infty)\to\cbb$
\begin{align} \label{eqn:Ginzburg_Landau:original}
\partial_t u^\gamma(t) & = (\gamma +\i) \triangle u^\gamma(t) + \i |u^\gamma(t)|^2u^\gamma (t) -\alpha u^\gamma(t) +Q\partial_t W(t),\\
u^\gamma(x,t) & = 0,\quad x\in \{0,1\},\,t>0, \nt \\
u^\gamma(x,0) & = u_0(x),\quad x\in [0,1]. \nt 
\end{align}
System \eqref{eqn:Ginzburg_Landau:original} is the complex Ginzburg-Landau (CGL) equation, arising in the modeling of traveling waves in dissipative dynamics. On the right-hand side of \eqref{eqn:Ginzburg_Landau:original}, $\gamma\in(0,1)$ represents the viscosity, $\alpha>0$ denotes the damping constant, $Q\d W(t)$ is a Gaussian process which is white in time and whose spatial correlation is characterized by the symmetric bounded operator $Q$ that satisfies certain conditions. Historically, equation \eqref{eqn:Ginzburg_Landau:original} was developed in the seminal work of \cite{ginzburg1950theory} describing superconductivity. Since then, it has found many applications in various areas of physics, e.g.,  chemical reaction \cite{huber1993universal,kuramoto1975formation}, hydrodynamic instability theory \cite{blennerhassett1980generation,moon1982three,
moon1983transitions,newell1969finite,stuart1978frs}, and waves on deep water \cite{dias1999nonlinear}.

In the absence of viscosity, that is when setting $\gamma=0$, equation \eqref{eqn:Ginzburg_Landau:original} is formally reduced to the following focusing nonlinear Schr\"odinger (NLS) equation
\begin{align} \label{eqn:Schrodinger:original}
\partial_t u(t) & = \i \triangle u(t) + \i |u(t)|^2u (t) -\alpha u(t) +Q\partial_t W(t),\\
u(x,t) & = 0,\quad x\in \{0,1\},\,t>0, \nt \\
u(x,0) & = u_0(x),\quad x\in [0,1]. \nt 
\end{align}
Not only equation \eqref{eqn:Schrodinger:original} be derived in similar phenomena as in the case of the CGL equation, it is also employed to study nonlinear optics \cite{agrawal2011nonlinear,arecchi1993transition}, the propagation of laser beams \cite{fibich2015nonlinear}, and the propagation of solitons \cite{ablowitz1981solitons,newell1985solitons}. Under suitable assumptions on the noise structure, the large time behaviors of \eqref{eqn:Ginzburg_Landau:original} and \eqref{eqn:Schrodinger:original} are well-known. More specifically, for fixed $\gamma>0$, equation \eqref{eqn:Ginzburg_Landau:original} possesses unique ergodicity \cite{hairer2002exponential,odasso2006ergodicity} and that the equilibrium relaxation rate is exponentially fast. On the other hand, equation \eqref{eqn:Schrodinger:original} was established to be only polynomially attractive toward the unique invariant probability measure, owing to the lack of strong dissipation \cite{debussche2005ergodicity}. It is therefore a matter of interest to compare the statistically steady states of \eqref{eqn:Ginzburg_Landau:original} and those of \eqref{eqn:Schrodinger:original} in the inviscid limit, i.e., as $\gamma\to 0$. Notably, when the random force is proportional to the square root of the viscosity, there is a literature in this direction on the following equation 
\begin{align} \label{eqn:Ginzburg-Landau:original:gamma.QdW(t)}
\partial_t u^\gamma(t) & = (\gamma +\i) \triangle u^\gamma(t) \pm \i |u^\gamma(t)|^2u^\gamma (t) -\alpha u^\gamma(t) +\sqrt{\gamma} Q\partial_t W(t).
\end{align}
It has been shown that the stationary solutions of \eqref{eqn:Ginzburg-Landau:original:gamma.QdW(t)} converge toward those of the deterministic NLS equation
\begin{align} \label{eqn:Schrodinger:deterministic}
\partial_t u(t) & = \i \triangle u(t) \pm \i |u(t)|^2u (t) -\alpha u(t).
\end{align}
See for examples the work of \cite{kuksin2004randomly,kuksin2013weakly,
shirikyan2011local, zine2022inviscid}. However, much less is known about the approximation of \eqref{eqn:Ginzburg_Landau:original} by \eqref{eqn:Schrodinger:original} in the vanishing viscosity, and particularly whether the mixing rates and invariant measures of \eqref{eqn:Ginzburg_Landau:original} resemble those of \eqref{eqn:Schrodinger:original}. In light of recent works dealing with similar issues for singular parameter limits \cite{glatt2022short,
glatt2021mixing,nguyen2023small}, our main goal of the article is two-fold. Firstly, we seek to identify a general set of conditions on the random forcing so as to establish ergodic properties for \eqref{eqn:Ginzburg_Landau:original} via a spectral gap that is uniform with respect to the viscosity constant $\gamma$. Secondly, we demonstrate that the statistically steady states of \eqref{eqn:Ginzburg_Landau:original} can be related to \eqref{eqn:Schrodinger:original}, allowing for verifying the validity of the inviscid limit for the solutions of \eqref{eqn:Ginzburg_Landau:original} on the infinite time horizon. In what follows, we provide an overview of our main theorems and refer the reader to Section \ref{sec:results} for a more rigorous description.

\subsection{Summary of main results} \label{sec:intro:result} 

Our first result giving the uniform ergodicity of \eqref{eqn:Ginzburg_Landau:original} is summarized as follows:

\begin{theorem} \label{thm:ergodicity:CGL:heuristic}
Under general conditions on the noise structure $Q\d W(t)$, equation \eqref{eqn:Ginzburg_Landau:original} admits a unique invariant probability measure $\nu^\gamma$. Furthermore, for all $q\ge 2$, initial data $u_1^0,u_2^0$ with sufficient spatial regularity, and suitable Lipschitz bounded function $f$, the following holds
\begin{align} \label{ineq:ergodicity:CGL:heuristic}
\big|\E f\big(\ug(t;u_1^0)\big) -\E f \big(\ug(t;u_2^0)\big) \big|\le \frac{C}{(1+t)^q},\quad t\ge 0,
\end{align}
for some positive constant $C$ independent of $t$ and $\gamma$.
\end{theorem}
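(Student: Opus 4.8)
The plan is to obtain \eqref{ineq:ergodicity:CGL:heuristic} from an asymptotic–coupling argument (in the spirit of \cite{debussche2005ergodicity} for the Schr\"odinger equation) carried out so that every constant is uniform in $\gamma\in(0,1)$; existence of $\nu^\gamma$ will then come for free and uniqueness is immediate once \eqref{ineq:ergodicity:CGL:heuristic} is proved.

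\textbf{Step 1: $\gamma$-uniform a priori bounds.} First I would record $\gamma$-independent moment estimates for $\ug$. Applying It\^o's formula to $\|\ug(t)\|_{L^2}^2$ and using that the cubic term is $L^2$-conservative, $\Re\langle\i|u|^2u,u\rangle=0$, while $-\alpha u$ is dissipative, one gets $\sup_\gamma\E\|\ug(t)\|_{L^2}^2\le e^{-\alpha t}\|u_0\|_{L^2}^2+C$, exponential moments of $\|\ug(t)\|_{L^2}^2$, and $\sup_\gamma\gamma\,\E\int_0^t\|\nabla\ug\|_{L^2}^2\,\d s\le C(1+\|u_0\|_{L^2}^2)(1+t)$. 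Next, working with the Hamiltonian functional $\mathcal H(u)=\tfrac12\|\nabla u\|_{L^2}^2-\tfrac14\|u\|_{L^4}^4$, which is coercive in one dimension modulo a power of $\|u\|_{L^2}$ by Gagliardo--Nirenberg, It\^o's formula yields $\sup_{t\ge0}\sup_\gamma\E\|\ug(t)\|_{H^1}^{2p}\le C_p(1+\|u_0\|_{H^1}^{2p})$ for every $p\ge1$ (the extra $\gamma$-terms have a favourable sign and are simply discarded). Thus $\|\ug(t)\|_{H^1}$ has all polynomial moments uniformly in $t$ and $\gamma$, so return times to balls $\{\|u\|_{H^1}\le R\}$ have all polynomial moments uniformly in $\gamma$, and tightness in $L^2$ via the compact embedding $H^1\hookrightarrow L^2$ produces an invariant measure $\nu^\gamma$.

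\textbf{Step 2: a Foias--Prodi estimate tailored to the system.} This is the crux. Fix the projection $P_N$ onto the first $N$ eigenfunctions of $-\triangle$, with $\mathrm{range}(P_N)$ inside the span of the forced directions ($N$ allowed to depend on $\gamma$), and let $w=\ug(\cdot;u_1^0)-\ug(\cdot;u_2^0)$ denote the difference of two solutions. Using the algebraic identity $|u_1|^2u_1-|u_2|^2u_2=|u_1|^2w+u_2\,\Re(u_1\bar w+\bar u_2 w)$ and the one-dimensional bound $\|\cdot\|_{L^\infty}\lesssim\|\cdot\|_{H^1}$, the term $\Re\langle\i|u_1|^2w,w\rangle$ vanishes and It\^o's formula for $\|w\|_{L^2}^2$ gives
\[
\tfrac{\d}{\d t}\|w\|_{L^2}^2\le -2\alpha\|w\|_{L^2}^2+C\bigl(\|u_1\|_{H^1}^2+\|u_2\|_{H^1}^2\bigr)\|w\|_{L^2}^2 .
\]
The difficulty is that, unlike in the parabolic fixed-$\gamma$ setting, no $\gamma$-uniform spectral gap survives as $\gamma\to0$ to absorb the cubic term on the high modes: the only $\gamma$-uniform dissipation is $\alpha$. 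The plan is therefore to (a) force the low modes of the two copies to agree, $P_Nw\equiv0$, by a Girsanov shift of the noise supported on $\mathrm{range}(P_N)$ (this is where the hypothesis that enough directions are forced enters, and also where the additional $\gamma\lambda_{N+1}$ dissipation on $w=Q_Nw$ may be exploited when $\gamma>0$); and (b) control the exponential integrating factor $\exp\bigl(-\alpha t+C\int_0^t(\|u_1\|_{H^1}^2+\|u_2\|_{H^1}^2)\,\d s\bigr)$ with high probability using Step~1, carefully tracking the interplay between the Young constants, $N=N(\gamma)$, and the uniform estimate $\gamma\int_0^t\|\nabla\ug\|_{L^2}^2\,\d s\lesssim t$. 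The outcome should be: on an event of probability bounded below uniformly in $\gamma$, $\|w\|_{L^2}$ contracts by a fixed factor over a fixed time window. Making this quantitative while simultaneously controlling the Girsanov cost of the shift on the event where the $H^1$ norms stay moderate is the main obstacle.

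\textbf{Step 3: assembling the coupling and concluding.} With Steps~1--2 in hand, I would build a coupling $(\tilde u_1,\tilde u_2)$ of the laws of $\ug(\cdot;u_1^0)$ and $\ug(\cdot;u_2^0)$ together with a coupling time $\tau$ by alternating (i) waiting for both copies to enter a ball $\{\|u\|_{H^1}\le R\}$ --- the return time has all polynomial moments by Step~1 --- and (ii) running the one-step contraction of Step~2, which succeeds with probability bounded below uniformly in $\gamma$; past $\tau$ the low modes stay locked and $\|\tilde u_1(t)-\tilde u_2(t)\|_{L^2}\le Ce^{-c(t-\tau)}$. A renewal-type estimate then yields $\P(\tau>t)\le C_q/(1+t)^q$ for every $q\ge2$, with $C_q$ independent of $\gamma$. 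Finally, for bounded Lipschitz $f$,
\[
\bigl|\E f(\ug(t;u_1^0))-\E f(\ug(t;u_2^0))\bigr|\le \E\bigl[(\|f\|_{\mathrm{Lip}}\|\tilde u_1(t)-\tilde u_2(t)\|_{L^2})\wedge 2\|f\|_\infty\bigr]\le\frac{C}{(1+t)^q},
\]
where the last bound splits the expectation over $\{\tau>t\}$ (handled by the tail of $\tau$) and $\{\tau\le t\}$ (handled by the exponential decay after $\tau$ against the polynomial tail of $\tau$). This is \eqref{ineq:ergodicity:CGL:heuristic}, and uniqueness of $\nu^\gamma$ follows by taking $u_2^0\sim\nu^\gamma$ and letting $t\to\infty$.
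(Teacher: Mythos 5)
Your overall architecture (uniform moment bounds, a Foias--Prodi estimate, a Girsanov-based coupling, a renewal argument) matches the paper's, but Step 2 as written has a genuine gap at exactly the point you flag as "the main obstacle," and your proposed resolution does not work. Your differential inequality
\[
\tfrac{\d}{\d t}\|w\|_{L^2}^2\le -2\alpha\|w\|_{L^2}^2+C\bigl(\|u_1\|_{H^1}^2+\|u_2\|_{H^1}^2\bigr)\|w\|_{L^2}^2
\]
cannot be closed by "controlling the integrating factor with high probability": in the statistically steady regime $\int_0^t(\|u_1\|_{H^1}^2+\|u_2\|_{H^1}^2)\,\d s$ grows linearly in $t$ with a slope of order $\mathrm{Tr}(AQQ^*)/\alpha$, which has no reason to be smaller than $\alpha/C$, so the exponent $-\alpha t+C\int_0^t(\cdots)$ is typically positive and growing. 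The missing idea is that once $P_Nw=0$ the difference lives entirely in the high modes, and in dimension one the embedding $H^{3/4}\subset L^\infty$ gives $\|w\|_{H^{3/4}}=\|Q_Nw\|_{H^{3/4}}\le \alpha_N^{-1/8}\|w\|_{H^1}$; this converts the dangerous cubic term into $\tfrac{c}{\alpha_N^{1/8}}(\Phi(u_1)+\Phi(u_2))\|w\|_{H^1}^2$, whose coefficient is made arbitrarily small by choosing $N$ large \emph{independently of} $\gamma$. To exploit this you must run the estimate at the level of a functional containing $\|w\|_{H^1}^2$ (the paper uses $J(u_1,u_2)=\|w\|_{H^1}^2-\Re\langle u_1u_2,\bar w^2\rangle_H+\kappa_2(\Phi(u_1)+\Phi(u_2))\|w\|_H^2$, cf.\ Lemma \ref{lem:Foias-Prodi:Ginzburg-Landau}), not just $\|w\|_{L^2}^2$. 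Relatedly, allowing $N=N(\gamma)$ is self-defeating: if $N\to\infty$ as $\gamma\to0$ the hypothesis on the noise degenerates, which is precisely the feature of \cite{odasso2006ergodicity} the theorem is designed to avoid; the whole point is that a fixed $N\ge N_1(\alpha)$ suffices for all $\gamma\in(0,1)$.

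A second, smaller gap is in Step 3: "past $\tau$ the low modes stay locked and $\|\tilde u_1-\tilde u_2\|_{L^2}\le Ce^{-c(t-\tau)}$" is not available here. Because the Foias--Prodi bound holds only in expectation and the energy functional admits only polynomial (Burkholder/Markov) tail bounds rather than exponential-martingale bounds, the coupling can fail on each subsequent time window with a probability that is only polynomially small in the elapsed time (Proposition \ref{prop:ergodicity:P(ell(k+1)=l|ell(k)=l)} gives $\tfrac12[1+(k-l)T]^{-q}$). This decoupling probability, not the renewal structure, is what limits the final rate to polynomial; a renewal argument that assumes permanent locking does not produce the correct bookkeeping and, taken at face value, would suggest a rate the system does not have.
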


We refer the reader to Theorem \ref{thm:ergodicity:Ginzburg_Landau} for a precise statement of Theorem \ref{thm:ergodicity:CGL:heuristic}. As mentioned above, we remark that the problem of large time behaviors for \eqref{eqn:Ginzburg_Landau:original} is not new as it was previously shown to possess geometric ergodicity in $L^2(0,1)$  \cite{hairer2002exponential,odasso2006ergodicity}. The novelty of Theorem \ref{thm:ergodicity:CGL:heuristic} is the polynomial mixing rate that does not depend on the parameter $\gamma$ provided that one starts the dynamics from initial data with higher spatial regularity.

Turning to equation \eqref{eqn:Schrodinger:original}, we recall from \cite{debussche2005ergodicity} that \eqref{eqn:Schrodinger:original} also admits a unique invariant probability measure $\nu^0$. Our second main result below asserts that as $\gamma\to 0$, $\nu^\gamma$ is well approximated by $\nu^0$ with respect to suitable Wasserstein distances. 
\begin{theorem} \label{thm:gamma->0:nu^gamma-nu^0:heuristic} Under the same hypothesis of Theoremm \ref{thm:ergodicity:CGL:heuristic}, there exist suitable Wasserstein distances $\W$ such that the following holds for all $q\ge 2$
\begin{align} \label{lim:gamma->0:nu^gamma-nu^0:heuristic}
\W\big(\nu^\gamma,\nu^0\big) = O\Big(\big(\log |\log \gamma|\big)^{-q}\Big),\quad \gamma\to 0.
\end{align}

\end{theorem}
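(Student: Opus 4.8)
The plan is to interpolate between the invariant measures $\nu^\gamma$ and $\nu^0$ through the time-$t$ transition semigroups, exploiting two ingredients: the uniform-in-$\gamma$ polynomial mixing of \eqref{eqn:Ginzburg_Landau:original} supplied by Theorem \ref{thm:ergodicity:CGL:heuristic}, and a finite-time convergence estimate of solutions of the viscous equation \eqref{eqn:Ginzburg_Landau:original} toward those of the Schr\"odinger equation \eqref{eqn:Schrodinger:original}. Writing $P^\gamma_t$ and $P^0_t$ for the two Markov semigroups and using stationarity $\nu^\gamma P^\gamma_t = \nu^\gamma$, $\nu^0 P^0_t = \nu^0$, I would estimate for a test function $f$ that is bounded and Lipschitz in the relevant (negative-order Sobolev) metric underlying $\W$:
\begin{align*}
\big|\nu^\gamma(f) - \nu^0(f)\big|
  &\le \big|\nu^\gamma P^\gamma_t(f) - \nu^0 P^\gamma_t(f)\big|
     + \big|\nu^0 P^\gamma_t(f) - \nu^0 P^0_t(f)\big|.
\end{align*}
The first term is handled by Theorem \ref{thm:ergodicity:CGL:heuristic}: coupling a $\nu^\gamma$-distributed and a $\nu^0$-distributed initial datum (both measures having the uniform higher-regularity moment bounds that underlie the theorem), it is bounded by $C(1+t)^{-q}$ uniformly in $\gamma$. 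The second term is a pathwise question comparing the two flows from the same initial law $\nu^0$ over the fixed window $[0,t]$.

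For the second term I would run both equations with the same noise realization and set $r^\gamma = u^\gamma - u$. Subtracting \eqref{eqn:Ginzburg_Landau:original} and \eqref{eqn:Schrodinger:original} gives
\begin{align*}
\partial_t r^\gamma = (\gamma + \i)\triangle r^\gamma + \gamma \triangle u + \i\big(|u^\gamma|^2 u^\gamma - |u|^2 u\big) - \alpha r^\gamma,
\end{align*}
with $r^\gamma(0)=0$; the only inhomogeneous forcing is the deterministic defect $\gamma\triangle u$. A standard energy estimate in $L^2$, controlling the cubic difference by the Gagliardo--Nirenberg/Sobolev embedding in one dimension and absorbing using the dissipative real part $\gamma\|\nabla r^\gamma\|^2$, yields a Gronwall bound of the shape $\E\|r^\gamma(t)\|_{L^2}^2 \lesssim \gamma^2\, e^{Ct}\,\E\!\int_0^t \|\triangle u(s)\|^2 \d s$, where the latter moment is finite and $\gamma$-independent because $\nu^0$ charges the higher-regularity space (Schr\"odinger invariant measure moments from \cite{debussche2005ergodicity}, or the uniform a priori bounds established earlier in the paper). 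Interpolating down to the negative-order metric behind $\W$ only improves the power of $\gamma$, so the second term is $\le C(\gamma)\, e^{Ct}$ with $C(\gamma)\to 0$ polynomially in $\gamma$; concretely, something like $\gamma^{\kappa} e^{Ct}$ for an explicit $\kappa>0$.

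Combining the two bounds gives $\W(\nu^\gamma,\nu^0) \le C(1+t)^{-q} + \gamma^\kappa e^{Ct}$ for every $t\ge 0$, and then optimizing in $t$ produces the doubly-logarithmic rate: choosing $t \asymp \tfrac{1}{C}\log(\gamma^{-\kappa})$ makes the second term $O(1)$-controlled while the first becomes $O\big((\log\gamma^{-1})^{-q}\big)$, and a slightly sharper split (take $t = \tfrac{1}{2C}\log\log\gamma^{-1}$ inside a further iteration, or equivalently balance $(1+t)^{-q}$ against $\gamma^\kappa e^{Ct}$) yields exactly $\W(\nu^\gamma,\nu^0) = O\big((\log|\log\gamma|)^{-q}\big)$ as $\gamma\to 0$, for every $q\ge 2$. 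The main obstacle I anticipate is the finite-time comparison estimate: the cubic nonlinearity is only locally Lipschitz, so one cannot get a clean deterministic bound, and the energy argument must be carried out with stopping-time localization on the $H^1$ (or higher) norm of both $u^\gamma$ and $u$, then the stopping times removed using the uniform moment bounds — this is where the hypotheses of Theorem \ref{thm:ergodicity:CGL:heuristic} (enough forced directions, higher-regularity initial data) and the a priori estimates of the paper do the real work. A secondary subtlety is ensuring the Wasserstein distance $\W$ is taken in a metric weak enough that $f$'s Lipschitz constant interacts well with the $L^2$-type difference bound, but strong enough to be meaningful; the natural choice is a weighted distance on a negative Sobolev space $\mathcal{H}_{-s}$, matching the framework already set up earlier in the paper.
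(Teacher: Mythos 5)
Your decomposition $\big|\nu^\gamma(f)-\nu^0(f)\big|\le \big|\nu^\gamma P^\gamma_t f-\nu^0 P^\gamma_t f\big|+\big|\nu^0 P^\gamma_t f-\nu^0 P^0_t f\big|$ is a valid, symmetric variant of the paper's: the paper instead writes $\W(\nu^\gamma,\nu^0)\le \W(P_t^\gamma\nu^\gamma,P_t^0\nu^\gamma)+\W(P_t^0\nu^\gamma,P_t^0\nu^0)$ and uses the Schr\"odinger mixing rate (Theorem~\ref{thm:ergodicity:Schrodinger}) on the second term, whereas you use the uniform-in-$\gamma$ CGL mixing (Theorem~\ref{thm:ergodicity:CGL:heuristic}) on the first. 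Both handle their respective ``mixing'' term equally well, given the uniform moment bounds on $\nu^\gamma$ and $\nu^0$ (Lemmas~\ref{lem:moment:nu_gamma}, \ref{lem:moment:nu^0}).

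The gap is in the finite-time comparison, and it is not cosmetic. You claim a bound of the shape $\gamma^\kappa e^{Ct}$. But as you yourself flag at the end, the cubic term is only locally Lipschitz, so the $L^2$ energy estimate must be localized to a ball $\{\|u^\gamma\|_{L^\infty},\|u\|_{L^\infty}\le R\}$; inside that ball the Gronwall constant depends on $R$ (one gets roughly $\sqrt{\gamma}\,e^{CR^4 t}$), and the tail probability of ever leaving the ball on $[0,t]$ is only polynomial, $\lesssim R^{-n}$, by Markov plus the moment bounds of Lemma~\ref{lem:moment:H_1}. Balancing $\sqrt\gamma\, e^{CR^4 t}$ against $R^{-n}$ forces $R^4 \sim |\log\gamma|/t$, so the finite-time comparison comes out as $\E\sup_{[0,t]}\|u^\gamma-u\|_H \lesssim |\log\gamma|^{-n/8}\, e^{Ct}$ (Proposition~\ref{prop:gamma->0:|u^gamma-u|:[0,T]}), \emph{not} $\gamma^\kappa e^{Ct}$. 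This logarithmic-in-$\gamma$ loss is exactly what produces the $\log|\log\gamma|$ rate in the theorem; you cannot remove the cutoff at polynomial cost because the stopped equation's growth constant is itself $R$-dependent.

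Your optimization step in fact exposes the inconsistency: if $\gamma^\kappa e^{Ct}$ held, taking $t\asymp\log(1/\gamma)$ would give the strictly stronger rate $O\big((\log\gamma^{-1})^{-q}\big)$, and nothing would force you down to $O\big((\log|\log\gamma|)^{-q}\big)$; choosing $t\asymp\log\log(1/\gamma)$, as you suggest, would simply be suboptimal. The double logarithm is unavoidable only once the finite-time rate is itself logarithmic in $\gamma$, so establishing that weaker finite-time estimate (via the truncation / stopping-time balance above) is the genuinely missing ingredient in your argument, not a detail to be patched afterwards.
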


The rigorous statement of the above result is presented in Theorem \ref{thm:gamma->0:Wasserstein:nu^gamma-nu^0}. We remark that the distance $\W$ appearing in Theorem \ref{thm:gamma->0:nu^gamma-nu^0:heuristic} is neither the usual total variation nor a weighted variation. While they are typically the case for finite dimensional systems \cite{mattingly2002ergodicity}, solutions starting from distinct initial data in infinite dimensional dynamics tend to be singular with each other \cite{hairer2006ergodicity,hairer2008spectral,
hairer2011theory,hairer2011asymptotic}, requiring a modification of the measuring Wasserstein metric. See Section \ref{sec:main-result:gamma->0} for a precise description of $\W$. 

Finally, as a consequence of Theorem \ref{thm:ergodicity:CGL:heuristic} and Theorem \ref{thm:gamma->0:nu^gamma-nu^0:heuristic}, we obtain the validity of the inviscid limit on the infinite time horizon.

\begin{theorem} \label{thm:gamma->0:[0,infty):heuristic}
Let $\ug(t;u_0)$ and $u(t;u_0)$ respectively be the solutions of \eqref{eqn:Ginzburg_Landau:original} and \eqref{eqn:Schrodinger:original} with the same initial data $u_0$. Then, for all $q\ge 2$, $u_0$ with sufficient spatial regularity, and suitable observable $f$, the following holds
\begin{align} \label{lim:gamma->0:[0,infty):heuristic}
\sup_{t\ge 0}\big|\E f\big(\ug(t;u_0)\big) -\E f \big(u(t;u_0)\big) \big| \le \frac{C}{(\log |\log \gamma|)^q},\quad \gamma\to 0,
\end{align}
for some positive constant $C$ independent $t$ and $\gamma$.
\end{theorem}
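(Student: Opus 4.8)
The plan is to split the time axis at a threshold $T=T(\gamma)$, to be optimized at the end, and to treat the short-time and long-time regimes by completely different mechanisms: a deterministic-type comparison on $[0,T]$ and the two statistical estimates of Theorem \ref{thm:ergodicity:CGL:heuristic} and Theorem \ref{thm:gamma->0:nu^gamma-nu^0:heuristic} on $[T,\infty)$.

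For $t\ge T$ I would insert the two invariant measures and use the triangle inequality, bounding $\big|\E f(\ug(t;u_0))-\E f(u(t;u_0))\big|$ by $\big|\E f(\ug(t;u_0))-\int f\,\d\nu^\gamma\big|+\big|\int f\,\d\nu^\gamma-\int f\,\d\nu^0\big|+\big|\int f\,\d\nu^0-\E f(u(t;u_0))\big|$. The middle term is controlled directly by Theorem \ref{thm:gamma->0:nu^gamma-nu^0:heuristic}, provided $f$ is Lipschitz with respect to the weighted Wasserstein distance $\W$ of Section \ref{sec:main-result:gamma->0}; it contributes $O\big((\log|\log\gamma|)^{-q}\big)$. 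For the first term, stationarity of $\nu^\gamma$ gives $\int f\,\d\nu^\gamma=\int \E f(\ug(t;u_1^0))\,\d\nu^\gamma(u_1^0)$, so \eqref{ineq:ergodicity:CGL:heuristic} applied under the integral yields a bound $C(1+t)^{-q}$; here one needs that $\nu^\gamma$ charges only initial data with the higher spatial regularity demanded in Theorem \ref{thm:ergodicity:CGL:heuristic}, with the corresponding moments bounded uniformly in $\gamma$, which is a consequence of the a priori estimates already used to establish that theorem. The third term is the analogous polynomial mixing statement for the limiting equation \eqref{eqn:Schrodinger:original}, available from \cite{debussche2005ergodicity}. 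Altogether, for $t\ge T$ the error is $O\big((1+T)^{-q}\big)+O\big((\log|\log\gamma|)^{-q}\big)$.

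For $0\le t\le T$ I would run the two equations with the same noise realization and set $w^\gamma=\ug-u$, which solves $\partial_t w^\gamma=\i\triangle w^\gamma-\alpha w^\gamma+\gamma\triangle\ug+\i\big(|\ug|^2\ug-|u|^2u\big)$ with $w^\gamma(0)=0$, so that the only source is the viscous term $\gamma\triangle\ug$. Energy estimates in a suitable Sobolev space, exploiting the $1$-$d$ cubic nonlinearity together with a priori bounds on $\ug$ and $u$, then give $\E\sup_{s\le T}\|w^\gamma(s)\|\le \Lambda(T)\,\gamma^{\kappa}$ for some $\kappa>0$ and a constant $\Lambda(T)$ that may grow very fast in $T$ — plausibly like a double exponential — because bounding $\gamma\triangle\ug$ requires control of a higher Sobolev norm of $\ug$, whose growth in turn re-enters through the nonlinear terms and the (viscosity-degenerate) a priori estimates. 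Since $f$ is bounded and Lipschitz, this translates into $\sup_{s\le T}\big|\E f(\ug(s;u_0))-\E f(u(s;u_0))\big|\le C\min\{1,\Lambda(T)\gamma^{\kappa}\}$.

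Combining the two regimes, $\sup_{t\ge 0}\big|\E f(\ug(t;u_0))-\E f(u(t;u_0))\big|\le C(1+T)^{-q}+C(\log|\log\gamma|)^{-q}+C\min\{1,\Lambda(T)\gamma^{\kappa}\}$; choosing $T=T(\gamma)$ as large as possible subject to $\Lambda(T)\gamma^{\kappa}\lesssim 1$ — which, for a doubly-exponential $\Lambda$, forces $T\sim c\log|\log\gamma|$ — balances all three contributions and delivers the claimed $(\log|\log\gamma|)^{-q}$ rate (note that even a milder $\Lambda$ could not improve the exponent, since the Wasserstein term of Theorem \ref{thm:gamma->0:nu^gamma-nu^0:heuristic} is itself only $\log\log$). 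The main obstacle, and the step that genuinely constrains the final rate, is the short-time comparison: one must make the $T$-dependence of $\Lambda(T)$ as explicit as possible by carefully tracking how the perturbation $\gamma\triangle\ug$ propagates through the chain of Sobolev estimates underlying well-posedness and a priori control of the CGL dynamics. A secondary technical point is reconciling the admissible class of observables across the three ingredients — bounded Lipschitz for the mixing bounds of Theorem \ref{thm:ergodicity:CGL:heuristic} and of \cite{debussche2005ergodicity}, and $\W$-Lipschitz for Theorem \ref{thm:gamma->0:nu^gamma-nu^0:heuristic} — together with the uniform-in-$\gamma$ integrability of $\nu^\gamma$ against the weights appearing in those estimates.
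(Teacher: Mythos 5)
Your proposal follows essentially the same route as the paper: a time-splitting at $T\sim\log|\log\gamma|$, with the long-time regime handled by the triangle inequality through $\nu^\gamma$ and $\nu^0$ (combining the uniform polynomial mixing of Theorem \ref{thm:ergodicity:CGL:heuristic}, the Wasserstein convergence of Theorem \ref{thm:gamma->0:nu^gamma-nu^0:heuristic}, and the NLS mixing of \cite{debussche2005ergodicity}, together with uniform-in-$\gamma$ moments of $\nu^\gamma$), and the short-time regime by a finite-horizon comparison of the two flows driven by the same noise. The only substantive difference is the form of the short-time error: the paper's Proposition \ref{prop:gamma->0:|u^gamma-u|:[0,T]} yields $e^{cT}|\log\gamma|^{-n/8}$ rather than $\Lambda(T)\gamma^{\kappa}$, because the cubic nonlinearity is tamed by a cutoff at a $\gamma$-dependent radius whose optimization sacrifices the power of $\gamma$ for a logarithmic rate — but this still forces $T\lesssim\log|\log\gamma|$ and delivers the same final $(\log|\log\gamma|)^{-q}$ bound.
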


We refer the reader to Theorem \ref{thm:gamma->0:phi} for a rigorous statement of the estimate \eqref{lim:gamma->0:[0,infty):heuristic}.

\subsection{Previous related literature} \label{sec:intro:literature} Before diving into the methodology employed for proving the main theorems, we briefly review the literature on asymptotic behaviors of the CGL and the NLS equations. Concerning deterministic settings, that is when $Q\equiv 0$, the inviscid limit on finite time windows was investigated as early as in the work of \cite{wu1998inviscid}. It was also studied in a variety of related systems \cite{wang2004inviscid,bechouche2000inviscid,huang2008inviscid,
machihara2003inviscid,
wang2002limit} justifying the approximation of the CGL equation by the NLS equation. With regard to the impact of stochastic forcing, as mentioned above, the stationary solutions of \eqref{eqn:Ginzburg-Landau:original:gamma.QdW(t)} are known to weakly converge to those of \eqref{eqn:Schrodinger:deterministic} \cite{kuksin2004randomly}. The argument relies on a tightness property, which in turn is deduced from the uniformity of suitable energy estimates with respect to $\gamma$. In a subsequent work \cite{shirikyan2011local}, these processes are shown to satisfy certain smoothness properties in the sense that their moments in $L^p$ and $H^1_0$ are absolutely continuous with respect to Lebesgue measures. Analogous results for more generalized settings of \eqref{eqn:Ginzburg-Landau:original:gamma.QdW(t)}-\eqref{eqn:Schrodinger:deterministic} were established in \cite{kuksin2013weakly} making use of the same strategy from \cite{kuksin2004randomly}. More recently, in \cite{zine2022inviscid}, the inviscid limit of \eqref{eqn:Ginzburg-Landau:original:gamma.QdW(t)} was explored for the space-time white noise with Gibbs measure initial data.  

Concerning the large time asymptotic of \eqref{eqn:Ginzburg_Landau:original}, ergodicity was studied in \cite{barton2004invariant} with the hypothesis that noise is invertible. While the existence of an invariant measure was obtained via the construction of stationary solutions, the uniqueness was a consequence of an irreducibility together with the strong Feller property. On the other hand, in the case of degenerate additive noise, \eqref{eqn:Ginzburg_Landau:original} was demonstrated to possess geometric ergodicity in \cite{hairer2002exponential} via the method of asymptotic coupling, which had previously been employed in \cite{bricmont2002exponential,weinan2001gibbsian,
kuksin2002coupling,
kuksin2000stochastic,
kuksin2001coupling,
kuksin2002couplingb,
mattingly2002exponential,
shirikyan2004exponential}. In the same spirit of \cite{hairer2002exponential} with sufficiently rich random perturbations, exponential mixing was established for the CGL equation in \cite{odasso2006ergodicity} under a variety of assumptions on the nonlinearity structures. It is important to point out that in \cite{hairer2002exponential,odasso2006ergodicity}, the exponential convergent rate is valid thanks to the advantage of strong dissipation, i.e., when $\gamma>0$. In particular, this allows for leveraging the exponential Martingale inequality as well as deriving a path-wise Foias-Prodi estimate, showing that when the low frequencies of the solutions agree, the high frequencies must be close too. In contrast, the work of \cite{debussche2005ergodicity} studied the NLS equation \eqref{eqn:Schrodinger:original} and proved that it only admitted a polynomial rate of any order, owing to the unavailability of critical ingredients that are otherwise present in the CGL equation. It is worthwhile to mention that for \eqref{eqn:Schrodinger:original}, the damping constant $\alpha>0$ is necessary so as to ensure at least the existence of statistically steady states. It turns out that $\alpha$'s positivity is also crucial to establish a mixing rate. More specifically, it was exploited in the approach of \cite{debussche2005ergodicity} together with the coupling argument developed in \cite{odasso2006ergodicity} to produce a Foias-Prodi estimate in expectation for \eqref{eqn:Schrodinger:original}. In turn, this is sufficient to dominate the nonlinearity to the extend that one can still extract a polynomial rate. We remark that in comparison with the more popular geometric ergodicity in the literature of SPDEs, it is more difficult to handle sub-exponential situations due to the disadvantage of weak dissipation. This is found to be the case for a conservation laws with multiplicative noise \cite{dong2023ergodicity}, and for a stochastic wave equation with nonlinear damping \cite{nguyen2024polynomial}.

\subsection{Methodology of the proofs} \label{sec:intro:method-of-proof}
Turning back to Theorem \ref{thm:ergodicity:CGL:heuristic}, as $\gamma$ is close to zero, the dissipation nature is essentially inactive and does not help deducing a mixing property. To circumvent the problem, we shift our analysis to focusing on the constant $\alpha$ as employed in the framework of \cite{debussche2005ergodicity} dealing with the same issue for the NLS equation \eqref{eqn:Schrodinger:original}. More specifically, the first ingredient of the proof of \eqref{lim:gamma->0:nu^gamma-nu^0:heuristic} is the existence of a Lyapunov function, cf. Lemma \ref{lem:moment:H_1}, proving the returning time to the origin is exponentially fast. This can be constructed by appealing to the damping effect $\alpha>0$ while performing a priori estimates. We note that the Lyapunov condition is quite popular and can be found in many works for SPDEs \cite{butkovsky2014subgeometric,
butkovsky2020generalized}. In literature, the second ingredient is typically an asymptotic strong Feller property \cite{hairer2008spectral,hairer2011theory} showing a large time smoothing behavior of the dynamics, resulting in a contraction property for the associated Markov semigroup. This relies on delicate bounds on the derivatives of the solutions with respect to both the initial conditions and the noise directions. Unfortunately, in case of equation \eqref{eqn:Ginzburg_Landau:original}, this approach is not applicable owing to the complication from estimating the cubic nonlinearity in higher regularity. To overcome the issue, we resort to the coupling technique developed in \cite{debussche2005ergodicity,
mattingly2002exponential,odasso2006ergodicity} by carefully facilitating the Foias-Prodi structure of \eqref{eqn:Ginzburg_Landau:original}, cf. Lemma \ref{lem:Foias-Prodi:Ginzburg-Landau}. In particular, we demonstrate that starting from distinct initial data, the solutions can be coupled in a way that a certain number of low modes agree with each other whereas the total energy can be effectively controlled. It is important to point out that the chosen number of low frequencies is dictated by the number of directions of the phase space that are directly excited by the stochastic forcing. On the one hand, this coupling behavior can be shown to occur for a long time with very high probability, cf. Proposition \ref{prop:ergodicity:P(ell(k+1)=k+1|ell(k)=infinity)}. On the other hand, the likelihood that they decouple immediately is established to be low with a power law order, cf. Proposition \ref{prop:ergodicity:P(ell(k+1)=l|ell(k)=l)}, which ultimately produces the polynomial mixing rate. We note that the restriction on the one dimensional setting stems from the fact that the stochastic Foias-Prodi estimate is actually not available in higher dimensions. As an analytic trade-off, we are able to leverage several Sobolev inequalities pertaining to dimension one, cf. Remark \ref{rem:dimension-one}, and successfully derive the coupling argument to ultimately obtain the powerful polynomial mixing. We refer the reader to Section \ref{sec:main-result:polynomial-mixing} for the rigorous statement of the estimate \eqref{ineq:ergodicity:CGL:heuristic}, and to Section \ref{sec:ergodicity:Ginzburg-Landau} where its proof is supplied.

The second main result of the paper, as captured in Theorem \ref{thm:gamma->0:nu^gamma-nu^0:heuristic}, concerns the approximation of the invariant measure $\nu^\gamma$ by $\nu^0$ in the vanishing viscosity regime. This result can be regarded as an analogue of the convergence of stationary solutions in the settings of \eqref{eqn:Ginzburg-Landau:original:gamma.QdW(t)}-\eqref{eqn:Schrodinger:deterministic} \cite{kuksin2004randomly,zine2022inviscid}. We remark that in literature, there are situations where the stationary solutions can be explicitly computed. More surprisingly, they turn out to coincide with that of the targeting equation, resulting in limit \eqref{lim:gamma->0:nu^gamma-nu^0:heuristic} becoming a trivial identity. To name a few examples, we refer the reader to the work of \cite{cerrai2006smoluchowski} on a gradient system and to \cite{nguyen2018small} on a generalized Langevin equation. In general, such a phenomenon is rare and particularly is not expected for \eqref{eqn:Ginzburg_Landau:original}-\eqref{eqn:Schrodinger:original}. Nevertheless, motivated by recent works in \cite{cerrai2020convergence,cerrai2023small,
foldes2017asymptotic,
foldes2015ergodic,foldes2016ergodicity,
foldes2019large} studying different asymptotic behaviors of statistically steady states, the argument for \eqref{lim:gamma->0:nu^gamma-nu^0:heuristic} can be reduced to proving the convergence of $\ug(t)$ toward $u(t)$ on any finite time window. In other words, it holds that
\begin{align*}
\W\big(\nu^\gamma,\nu^0\big)\lesssim \E\|\ug(t)-u(t)\|_{L^2(0,1)} + \W\big(\textup{Law}(u(t)),\nu^0\big),
\end{align*} 
where $\ug(0)=u(0)\sim \nu^\gamma$ and Law$(X)$ denotes the distribution of the random variable $X$. We note that by invoking uniform energy estimates, the first term on the above right hand side can be controlled by $e^{ct}|\log \gamma|^{-1}$, cf. Proposition \ref{prop:gamma->0:|u^gamma-u|:[0,T]}, whereas in accordance with the mixing property of equation \eqref{eqn:Schrodinger:original}, the second term has the order of $(1+t)^{-q}$, cf. Theorem \ref{thm:ergodicity:Schrodinger} and Lemma \ref{lem:ergodicity:Schrodinger}. Combining the two estimates allows for deducing the limit \eqref{lim:gamma->0:nu^gamma-nu^0:heuristic} by optimizing the powers on $t$. The precise statement of Theorem \ref{thm:gamma->0:nu^gamma-nu^0:heuristic} is given in Section \ref{sec:main-result:gamma->0}, whose detailed argument will be carried out in Section \ref{sec:gamma->0:nu^gamma->nu^0}.

Lastly, making use of  the results from \eqref{ineq:ergodicity:CGL:heuristic} and \eqref{lim:gamma->0:nu^gamma-nu^0:heuristic}, we obtain, via Theorem \ref{thm:gamma->0:[0,infty):heuristic}, the global in time validity of  the solutions of \eqref{eqn:Schrodinger:original} as an approximation for those of \eqref{eqn:Ginzburg_Landau:original} in the inviscid regime. Indeed, we invoke the fact that $\W$ can be related to suitable observables by virtue of the dual Kantorovich formula \cite{villani2008optimal} and proceed in a similar fashion as in the proof of \eqref{lim:gamma->0:nu^gamma-nu^0:heuristic}, namely,
\begin{align*}
&\big|\E f\big(\ug(t;u_0)\big) -\E f \big(u(t;u_0)\big) \big|\\
& \lesssim \E\|\ug(t)-u(t)\|_{L^2(0,1)} +\W\big(\textup{Law}(\ug(t)),\nu^\gamma\big)+ \W\big(\textup{Law}(u(t)),\nu^0\big).
\end{align*}
While the first and the last terms are bounded as mentioned in the previous paragraph, the second term is a consequence of \eqref{ineq:ergodicity:CGL:heuristic}, cf. Lemma \ref{lem:ergodicity:Ginzburg-Landau:W_(d_1)}. Altogether, one can once again optimize the powers on time $t$ so as to produce \eqref{lim:gamma->0:[0,infty):heuristic}. We note that although limit \eqref{lim:gamma->0:[0,infty):heuristic} might give the impression of a Lipschitz condition on test functions, it can actually be applied to a large class of observables, including functions with polynomial growth. We refer the reader to Remark \ref{rem:phi} for a further discussion of this point. Finally, for the sake of simplicity, we restrict our attention to the focusing settings, but expect that analogous results should also hold for the defocusing case, i.e., when the term $\i |u|^2u$ in \eqref{eqn:Ginzburg_Landau:original}-\eqref{eqn:Schrodinger:original} is replaced by $-\i |u|^2u$ . In Section \ref{sec:main-result:gamma->0}, we provide the rigorous statement of \eqref{lim:gamma->0:[0,infty):heuristic} through Theorem \ref{thm:gamma->0:phi} whereas its detailed proof will be supplied in Section \ref{sec:gamma->0:phi}.

The rest of the paper is organized as follows: in Section~\ref{sec:results}, we introduce all the functional settings as well as the main assumptions on the noise structures. We also formulate our results in this section, including Theorem~\ref{thm:ergodicity:Ginzburg_Landau} on the uniform polynomial mixing for the CGL equation \eqref{eqn:Ginzburg_Landau:original}, and the main results concerning the inviscid limit stated in Theorem~\ref{thm:gamma->0:Wasserstein:nu^gamma-nu^0} established for invariant measures and Theorem~\ref{thm:gamma->0:phi} for the infinite time horizon. In Section~\ref{sec:moment}, we perform a priori moment bounds on the solutions of~\eqref{eqn:Ginzburg_Landau:original} that will be employed to prove the main results. We then discuss the asymptotic coupling and prove the polynomial ergodicity in Section~\ref{sec:ergodicity:Ginzburg-Landau}. In Section~\ref{sec:gamma->0}, we provide the detailed proofs of the inviscid limits making use of the previous sections. In Appendix~\ref{sec:Schrodinger}, we briefly review several auxiliary estimates on the NLS equation \eqref{eqn:Schrodinger:original} that were employed to prove the inviscid results. In Appendix \ref{sec:auxiliary-results}, we collect an irreducibility condition for a linear version of the NLS equation \eqref{eqn:Schrodinger:original}, that was invoked to deduce the mixing property presented in Section \ref{sec:ergodicity:Ginzburg-Landau}.

\section{Assumptions and main results} \label{sec:results}
\subsection{Functional settings and assumptions} Considering the bounded interval $[0,1]$, we denote by $L^2(0,1)$, $H^1_0(0,1)$ and $H^2(0,1)$ the usual Sobolev spaces of real-valued functions on $(0,1)$. Let $A$ denote the negative Dirichlet Laplacian operator $-\triangle$ in $L^2(0,1)$ endowed with the Dirichlet boundary condition and the domain $\text{Dom}(A)=H^1_0(0,1)\cap H^2(0,1)$. For now on, we will fix an orthonormal basis $\{e_k\}_{k\ge 1}$ in $L^2(0,1)$ that diagonalizes $A$, i.e.,
\begin{align} \label{cond:Ae_k=alpha_k.e_k}
Ae_k=\alpha_k e_k,\quad k\ge 1,
\end{align}
where
\begin{align} \label{form:e_k}
e_k=\sqrt{2}\sin(k\pi x),\quad x\in[0,1],\quad \text{and}\quad \alpha_k = (k\pi)^2,\quad k\ge 1.
\end{align}

Next, we denote by $H$ the complexified Sobolev space  of $L^2(0,1)$. More specifically, for $u=u_1+\i u_2$, $v=v_1+\i v_2$, $u_i,v_i\in L^2(0,1), i=1,2$, let $\la u,v\ra_H$ denote the inner product given by 
\begin{align*}
\la u,v\ra_H = \int_0^1 (u_1+\i u_2)  (v_1+\i v_2)\d x ,
\end{align*}
and the norm is defined as
\begin{align*}
\|u\|^2_H= \la u,\ubar\ra_H = \|u_1\|^2_{L^2(0,1)} + \|u_2\|^2_{L^2(0,1)}.
\end{align*}
More generally, for each $r\in\rbb$, we denote by $H^r$ the complex domain of $A^{r/2}$ together with the inner product defined as
\begin{align*}
\la u,v\ra_{H^r} = \sum_{k\ge 1}\alpha_k^r\la u,e_k\ra_H \la v,e_k\ra_H.
\end{align*}
Particularly, the corresponding norm is given by
\begin{align*}
\|u\|^2_{H^r}= \la u,\ubar\ra_{H^r}=\sum_{k\ge 1}\alpha_k^r|\la u,e_k\ra_H|^2.
\end{align*}
In addition to $H^r$, we will work with the complex $L^p$ spaces, $p\ge 1$ namely
\begin{align*}
L^p=\Big\{u:[0,1]\to\cbb\big|\|u\|_{L^p} := \int_0^1 |u|^p\d x<\infty\Big\}.
\end{align*}

Next, for each integer $N\ge 1$, we let $P_N$ be the projection of $u\in H$ on span$\{e_1,\dots,e_N\}$, i.e.,
\begin{align*}
P_N u = \sum_{k=1}^N \la u,e_k\ra_H e_k.
\end{align*}
The complement of $P_N$ is denoted as $Q_N=I-P_N$, i.e.,
\begin{align*}
Q_Nu = \sum_{k\ge N+1}^\infty\la u,e_k\ra_H e_k.
\end{align*}

We may now recast the Ginzburg-Landau equation \eqref{eqn:Ginzburg_Landau:original} as 
\begin{align} \label{eqn:Ginzburg_Landau}
\d u^\gamma(t) & = -(\gamma +\i) A u^\gamma(t)\d t + \i |u^\gamma(t)|^2u^\gamma (t)\d t -\alpha u^\gamma(t)\d t +Q\d W(t),\quad \ug(0)=u_0\in H.
\end{align}
Regarding the noise term in \eqref{eqn:Ginzburg_Landau}, we assume that $W(t)$ is a cylindrical Wiener process on $H$, whose decomposition is given by
\begin{align*}
W(t)=\sum_{k\ge 1} e_k\big(B_k^1(t)+\i B_k^2(t)\big),
\end{align*}
where $\{(B_k^1,B_k^2)\}_{k\ge 1}$ is a sequence of i.i.d two-dimensional Brownian motions, each defined on the same stochastic basis $\mathcal{S}=(\Omega, \Fcal,\{\Fcal_t\})$ \cite{karatzas2012brownian}. Concerning the operator $Q$, we impose the following assumption \cite{barton2004invariant,
debussche2005ergodicity,hairer2002exponential,
odasso2006ergodicity}

\begin{assumption} \label{cond:Q} $Q:H\to H$ satisfies
\begin{align} \label{cond:Q:Qe_k=lambda_k.e_k}
Qe_k=\lambda_k e_k,\, k=1,2,\dots,N,\quad\text{and}\quad Qe_k=0,\,k\ge N+1.
\end{align}
for some integer $N\ge 1$ and positive constants $\lambda_k$, $k=1,\dots,N$.
Furthermore, there exists a positive constant $C_Q$ independent of $N$ such that
\begin{align} \label{cond:Q:Tr(A^(3/2)QQ)<infinity}
\emph{Tr}(A^{\frac{3}{2}}QQ^*)= \sum_{k=1}^N \alpha_k^{\frac{3}{2}}\lambda_k^2 < C_Q,
\end{align}
where $\{\alpha_k\}_{k\ge 1}$ is the sequence of eigenvalues of $A$ as in \eqref{cond:Ae_k=alpha_k.e_k}-\eqref{form:e_k}.

\end{assumption}

In the above, we recall the definition $\Tr(g)=\sum_{k\ge 1}\la ge_k,e_k\ra_H$ for $g\in L(H)$.

\begin{remark} \label{rem:Q} 1. We remark that condition \eqref{cond:Q:Qe_k=lambda_k.e_k} states that noise is directly excited only in the first $N$ directions of the phase space, resulting in $Q$ being invertible on span$\{e_1,\dots,e_N\}.$ As shown below in Section \ref{sec:ergodicity:Ginzburg-Landau}, cf. Proposition \ref{prop:ergodicity:P(ell(k+1)=l|ell(k)=l)}, $N$ will be chosen to be sufficiently large so as to ensure the validity of the Foias-Prodi estimate. On the other hand, condition \eqref{cond:Q:Tr(A^(3/2)QQ)<infinity} requires that the noise's regularity be at least $H^3$ regardless of the size of $N$. It will be exploited to establish an irreducible condition for \eqref{eqn:Ginzburg_Landau} as well as energy estimates that do not depend on $N$. In turn, all of these properties will be employed to conclude the uniformity of the polynomial mixing rate  with respect to the inviscid parameter $\gamma$.

2. We also remark that in this work, we opt for the choice of $Q$ being diagonalized by the same basis $\{e_k\}_{k\ge 1}$, so as to simplify computations \cite{debussche2005ergodicity,hairer2002exponential}. Analogous results should hold for more general additive noise structures as long as $Q$ is invertible on span$\{e_1,\dots,e_N\}$ and that condition \eqref{cond:Q:Tr(A^(3/2)QQ)<infinity} is satisfied.
\end{remark}

Under Assumption \ref{cond:Q}, the existence and uniqueness of mild solutions in $H$ of \eqref{eqn:Ginzburg_Landau} can be derived using the Galerkin approximation by exploiting the Lyapunov structures in Section \ref{sec:moment}. The argument is classical and can be found in many previous works for SPDE, e.g., \cite{albeverio2008spde,
glatt2008stochastic}. See also \cite{barton2004global,barton2004invariant}. 

As a consequence of the well-posedness, for each $\gamma>0$, we can thus introduce the Markov transition probabilities of the solutions $\ug(t;u_0)$ by
\begin{align*}
P_t^\gamma(u_0,A):= \P(\ug(t;u_0)\in A),
\end{align*}
which are well-defined for $t\ge 0$, initial states $u_0\in H$ and Borel sets $A\subset H$. Letting $\mathcal{B}_B(H)$ denote the set of bounded Borel measurable functions $\f:H\to \rbb$, the associated Markov semigroup $P_t^\gamma:\mathcal{B}_b(H)\to \mathcal{B}_b(H)$ is defined and denoted by
\begin{align*}
P_t^\gamma \f(u_0)=\E\big[\f\big(\ug(t;u_0\big)\big],\quad \f\in \mathcal{B}_b(H).
\end{align*}
Let $\Pcal r(H)$ be the space of probability measures in $H$. For each $\nu\in \Pcal r(H)$, we denote by $P_t^\gamma\nu$ the measure given by
\begin{align*}
P_t^\gamma\nu(A)=\int_H P_t^\gamma(u_0,A)\nu(\d u_0).
\end{align*}
Recall that a probability measure $\nu \in \Pcal r(H)$ is said to be invariant for $P_t^\gamma$ if 
\begin{align*}
P_t^\gamma \nu=\nu.
\end{align*}
Alternatively, this is equivalent to
\begin{align*}
\int_H \f(u) P_t^\gamma\nu(\d u) = \int_H \f(u)\nu(\d u), \quad \f\in \mathcal{B}_b(H).
\end{align*}

\subsection{Uniform polynomial mixing} \label{sec:main-result:polynomial-mixing}

We now turn to the topic of uniform mixing rate for \eqref{eqn:Ginzburg_Landau}. Following the framework of~\cite{butkovsky2014subgeometric, butkovsky2020generalized, 
hairer2008spectral,
hairer2011asymptotic}, we recall that a function $d:H^k\to[0,\infty)$ is called \emph{distance-like} if it is symmetric, lower semi-continuous, and $d(u,v)=0$ if and only if $ u=v$; see \cite[Definition 4.3]{hairer2011asymptotic}. Let $\W_{d}$ denote the corresponding coupling distance or Wasserstein-type distance in $\Pcal r(H^k)$ associated with $d$, given by
\begin{equation} \label{form:W_d}
\W_{d}(\nu_1,\nu_2) := \inf \E\, d(X,Y),
\end{equation}
where the infimum is taken over all pairs of random variables $(X,Y)$ such that $X\sim \nu_1$ and $Y\sim\nu_2$. When $d$ is a distance in $H^k$, by the dual Kantorovich Theorem, $\W_d$ is equivalent defined as \cite[Theorem 5.10]{villani2008optimal}
\begin{align} \label{form:W_d:dual-Kantorovich}
\W_{d}(\nu_1,\nu_2)=\sup_{[f]_{\text{Lip},d}\leq1}\Big|\int_{\H}f(u)\nu_1(\d u)-\int_{\H}f(u)\nu_2(\d u)\Big|,
\end{align}
where
\begin{align} \label{form:Lipschitz}
[f]_{\text{Lip},{d}}=\sup_{u\neq v}\frac{|f(u)-f(v)|}{d(u,v)}.
\end{align}
On the other hand, if $d$ is only distance-like, then the following one-sided inequality holds
\begin{equation} \label{ineq:W_d(nu_1,nu_2):dual}
\W_{d}(\nu_1,\nu_2) \ge \sup_{[f]_{\text{Lip},{d}}\le 1}\Big|\int_H f(U)\nu_1(\d U)-\int_H f(U)\nu_2(\d U)\Big|.
\end{equation}
We refer the reader to \cite[Proposition A.3]{glatt2021mixing} for a further discussion of this point.

In our settings, we will particularly pay attention to the following two distances in $H$: the former is  the discrete metric, i.e.,
$d(u,v)=1$ when $u\neq v$ and $d(u,v)=0$ otherwise. The corresponding $\W_d$ is the usual total variation distance, denoted by $\W_{\TV}$. The latter is the distance $d_{k}$, $k=0,1$, given by
\begin{equation} \label{form:d_k}
d_k(u,v):=\|u-v\|_{H^k}\mi 1,\quad u,v\in H^k.
\end{equation}
Particularly, the corresponding $W_{d_1}$ (with $k=1$) will be used to measure the convergent rate of \eqref{eqn:Ginzburg_Landau} toward equilibrium. 

In order to tackle the inviscid regime $\gamma\to 0$, it is essential to establish suitable a priori bounds on the solution $\ug(t)$. To this end, we recall the Gagliardo-Nirenberg inequality 
\begin{align} \label{ineq:Gagliardo-Nirenberg}
\|u\|^4_{L^4} \le \frac{1}{4}\|u\|^2_{H^1}+\frac{1}{2}\kappa\|u\|^6_H, \quad u \in H^1,
\end{align}
holds for some positive constant $\kappa$. We introduce the function $\Psi:H^1\to\rbb$ defined as
\begin{align} \label{form:Psi}
\Psi(u)= \|u\|^2_{H^1}-\frac{1}{2}\|u\|^4_{L^4}+\kappa\|u\|_{H}^6.
\end{align}
It is well-known that $\Psi$ is the conserved Hamiltonian for the focusing NLS equation in the absence of noise and damping, i.e., $Q\equiv 0$ and $\alpha=0$ \cite{debussche2005ergodicity,temam2012infinite}. The auxiliary results concerning $\Psi$ and the NLS equation are collected in Appendix \ref{sec:Schrodinger} and will be employed to study the approximation of \eqref{eqn:Ginzburg_Landau} by the NLS equation. Next, let $\Phi$ be the functional given by
\begin{align} \label{form:Phi}
\Phi(u)=\Psi(u)+\kappa \|u\|^{18}_H= \|u\|^2_{H^1}-\frac{1}{2}\|u\|^4_{L^4}+\kappa\|u\|_{H}^6+\kappa \|u\|^{18}_H.
\end{align}
In view of \eqref{ineq:Gagliardo-Nirenberg}, we note that $\Phi$ satisfies the lower bound
\begin{align} \label{cond:Phi}
\Phi(u)\ge \frac{3}{4}\|u\|^2_{H^1}+\frac{1}{2}\|u\|^4_{L^4}+\frac{1}{2}\kappa\|u\|_{H}^6+\kappa \|u\|^{18}_H,
\end{align}
and that
\begin{align*}
\Psi^3\ge \Phi\ge \Psi.
\end{align*}
The former will be useful to establish energy estimates for \eqref{eqn:Ginzburg_Landau} in Section \ref{sec:moment} whereas the latter will be invoked to investigate the inviscid limit $\gamma\to 0$ in Section \ref{sec:gamma->0}.

Letting $N$ be the constant as in Assumption \ref{cond:Q}, we recall from \cite{odasso2006ergodicity} that for fixed $\gamma>0$ and for all $N$ sufficiently large possibly depending on $\gamma$, it can be shown that $P_t^\gamma$ admits a unique invariant probability measure $\nu^\gamma$ and that $P_t^\gamma$ is exponentially attractive toward $\nu^\gamma$ with respect to the Wasserstein distance $\W_{d_0}$ \cite[Theorem 2.1]{odasso2006ergodicity}.

We now state our first main result, concerning the unique ergodicity of $P_t^\gamma$ as well as the polynomial mixing rate independent of the parameter $\gamma$.

\begin{theorem} \label{thm:ergodicity:Ginzburg_Landau} 1. Let $N$ be the parameter as in Assumption \ref{cond:Q}. Then, there exists a positive integer $N_1$ sufficiently large independent of $\gamma$ such that for $N\ge N_1$, $P^\gamma_t$ admits a unique invariant probability measure $\nu^\gamma$ in $H$.

2. Furthermore, let $u_1,u_2\in H^1$ be given and $\f:H^1\to\rbb$ be such that $\|\f\|_{\textup{Lip},{d_1}}<\infty$ where $d_1$ is as in \eqref{form:d_k} and $\|\f\|_{\textup{Lip},{d_1}}$ is defined in \eqref{form:Lipschitz}. Then, the following holds for all $q\ge 2$
\begin{align} \label{ineq:ergodicity:Ginzburg_landau}
\big| \E \f\big(\ug(t;u_1)\big) - \E \f\big(\ug(t;u_2)\big) \big|\le C_q(1+t)^{-q}\|\f\|_{\textup{Lip},{d_1}}\big(1+\Phi(u_1)+\Phi(u_2)\big),\quad t\ge 0,
\end{align}
for some positive constant $C_q$ independent of $u_1,u_2,t,\gamma$ and $\f$. In the above, $\Phi$ is defined in \eqref{form:Phi}.

\end{theorem}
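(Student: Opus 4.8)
The plan is to prove Theorem~\ref{thm:ergodicity:Ginzburg_Landau} through a coupling construction built on a stochastic Foias--Prodi estimate, following the strategy of \cite{debussche2005ergodicity,odasso2006ergodicity} but tracking all constants so that they remain independent of $\gamma\in(0,1)$. The starting point is a Lyapunov structure: using the damping $\alpha>0$ and the bound \eqref{cond:Phi}, one performs a priori estimates (this is Lemma~\ref{lem:moment:H_1} and the moment bounds of Section~\ref{sec:moment}) showing that $\E\Phi(\ug(t))$ and in fact $\E e^{\eta t}\Phi(\ug(t))$-type quantities contract toward a $\gamma$-independent ball, and that the return time of $\|\ug\|_{H^1}$ to a bounded set has exponential tails uniformly in $\gamma$. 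Crucially, since $\gamma\ge 0$ only \emph{helps} the dissipation, the estimates obtained with the $\i\triangle$ term alone (the $\gamma=0$ case) persist uniformly; the viscous term contributes a nonnegative $-\gamma\|u\|_{H^1}^2$ that I simply discard. The regularity requirement $\Tr(A^{3/2}QQ^*)<\infty$ in Assumption~\ref{cond:Q} is exactly what makes the $H^1$ (and the $\Phi$-) estimates close without $N$-dependence.

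Next I would set up the coupling. Given $u_1,u_2\in H^1$, run two copies of \eqref{eqn:Ginzburg_Landau} driven by the same noise on the unforced high modes $Q_N$, but with a Girsanov shift on the low modes $P_N$ (feasible because $Q$ is invertible on $\mathrm{span}\{e_1,\dots,e_N\}$ by \eqref{cond:Q:Qe_k=lambda_k.e_k}) that forces $P_N\ug(t;u_1)=P_N\ug(t;u_2)$ on a to-be-determined time interval; this is the content of Lemma~\ref{lem:Foias-Prodi:Ginzburg-Landau}. The key analytic input is the one-dimensional Foias--Prodi estimate: when the low modes agree, the difference $w=\ug(t;u_1)-\ug(t;u_2)$ satisfies, after testing the difference equation with $\bar w$ and using dimension-one Sobolev/Gagliardo--Nirenberg inequalities (Remark~\ref{rem:dimension-one}) to control the cubic nonlinearity, a differential inequality of the form $\ddt\|w\|_H^2 \le (-c\,\alpha_{N+1} + \alpha\text{-type terms} + \text{polynomial in the energies})\|w\|_H^2$. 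Choosing $N\ge N_1$ large makes $c\,\alpha_{N+1}=c(N+1)^2\pi^2$ dominate, so on the event that the energies stay controlled, $\|w(t)\|_H$ decays; here the damping $\alpha>0$ plays the role that strong dissipation plays in the CGL literature. I would then define the stopping time up to which both solutions stay in a large ball and the coupling is maintained, use the Lyapunov bounds to show this event has high probability (Proposition~\ref{prop:ergodicity:P(ell(k+1)=k+1|ell(k)=infinity)}), and use the cost of the Girsanov change of measure — controlled by the $L^2$-in-time norm of the required control, which is small precisely because $\|w\|_H$ decays — to show the two laws are close in total variation on that event.

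The final step is the iteration producing the polynomial rate. I would set up a sequence of trial times $t_k$ and a renewal-type argument: at each stage, either the coupling persists (probability controlled via the above) or it fails and one restarts from the current (still-controlled) state. Proposition~\ref{prop:ergodicity:P(ell(k+1)=l|ell(k)=l)} gives that an immediate decoupling happens with only power-law probability, and combining the geometric-type persistence with the polynomial return estimates and summing yields $\W_{d_1}(\Law(\ug(t;u_1)),\Law(\ug(t;u_2)))\le C_q(1+t)^{-q}(1+\Phi(u_1)+\Phi(u_2))$ for every $q$; passing to observables via \eqref{ineq:W_d(nu_1,nu_2):dual} gives \eqref{ineq:ergodicity:Ginzburg_landau}. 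Uniqueness of $\nu^\gamma$ (part~1) is then immediate by applying the contraction with $u_2\sim\nu^\gamma$ and letting $t\to\infty$, together with existence from the Lyapunov structure (Krylov--Bogolyubov). \textbf{The main obstacle} I anticipate is the Foias--Prodi step: closing the differential inequality for $\|w\|_H$ requires absorbing the cubic nonlinearity $\i(|u_1|^2u_1-|u_2|^2u_2)$ using only $H$-norms of $w$ together with polynomially-growing functions of $\|u_i\|_{H^1}$, which is exactly where the one-dimensional embeddings $H^1\hookrightarrow L^\infty$ and the interpolation $\|w\|_{L^\infty}^2\lesssim\|w\|_H\|w\|_{H^1}$ are indispensable and genuinely fail in higher dimensions — and one must be careful that the resulting exponential prefactors in the energies are integrable against the Lyapunov bounds so that all constants stay $\gamma$-free.
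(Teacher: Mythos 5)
Your outline reproduces the paper's architecture — Lyapunov bounds (Lemma \ref{lem:moment:H_1}), a Girsanov coupling of the $P_N$-modes, a Foias--Prodi estimate, and the renewal scheme of \cite{debussche2005ergodicity,odasso2006ergodicity} assembled via Lemma \ref{lem:ergodicity:P(d_1(u_1,u_2)>t^-q)}, Proposition \ref{prop:ergodicity:P(ell(k+1)=k+1|ell(k)=infinity)} and Proposition \ref{prop:ergodicity:P(ell(k+1)=l|ell(k)=l)} — so the high-level plan is the right one. The genuine gap is in your Foias--Prodi step, in two related places. First, the decay mechanism you invoke, ``choosing $N\ge N_1$ large makes $c\,\alpha_{N+1}$ dominate,'' is the mechanism of the \emph{viscous} case: the spectral gap enters the energy balance only through the dissipative term $-\gamma\|w\|_{H^1}^2\le -\gamma\,\alpha_{N+1}\|Q_Nw\|_H^2$, whose coefficient vanishes as $\gamma\to 0$ (the dispersive part $-\i A$ is skew-adjoint and contributes nothing), so it cannot yield a $\gamma$-uniform contraction. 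In Lemma \ref{lem:Foias-Prodi:Ginzburg-Landau} the decay rate is $\alpha$ alone, and largeness of $N$ plays an entirely different role: since $P_Nv=0$ one has $\|v\|_{H^{3/4}}=\|Q_Nv\|_{H^{3/4}}\le \alpha_N^{-1/8}\|v\|_{H^1}$, which places a small prefactor $c_*\alpha_N^{-1/8}$ in front of the energy terms so that, on the coupled event where $\int_0^t\Phi^4\lesssim \beta^4+t$, the accumulated exponential is beaten by $e^{\alpha t}$.

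Second, you cannot close the differential inequality at the level of $\|w\|_H^2$ as proposed. Testing the difference equation with $\bar w$ and using the one-dimensional embeddings gives $|\Re\la u_1u_2,(\bar w)^2\ra_H|\le c\,\alpha_N^{-1/4}\big(\Phi(u_1)+\Phi(u_2)\big)\|w\|_{H^1}^2$: the right-hand side involves $\|w\|_{H^1}^2$ with an unbounded coefficient, and once $\gamma\to 0$ there is no $H^1$-dissipation available at the $H$-level to absorb it. This is why the paper works with the $H^1$-level functional $J(u_1,u_2)=\|v\|_{H^1}^2-\Re\la u_1u_2,(\bar v)^2\ra_H+\kappa_2(\Phi(u_1)+\Phi(u_2))\|v\|_H^2$; the middle correction term is there precisely to cancel the term $2\Re\{\i\la u_1u_2,(\grad\bar v)^2\ra_H\}$ produced by the $H^1$ computation, which is otherwise uncontrollable. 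The output is then only an estimate \emph{in expectation} for the exponentially weighted $J$, not a pathwise Gronwall bound; and since the target metric is $d_1$, the $H^1$-level estimate is needed for the conclusion in any case. The remaining steps of your outline (irreducibility into a small ball, Girsanov cost, summation of decoupling probabilities, Krylov--Bogoliubov for existence) do match the paper's proof.
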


It is important to point out that the limit \eqref{ineq:ergodicity:Ginzburg_landau} does not hold for any initial data in $H$, owing to the fact that the estimates in Section \ref{sec:ergodicity:Ginzburg-Landau} requires sufficient regularity on the solutions whose moment bounds are uniform with respect to $\gamma$. In contrast, while the exponential mixing result in \cite{odasso2006ergodicity} can be applied to any $u_1,u_2\in H$, it imposes a correlation between $N$ and $\gamma$. In particular, \cite[Theorem 2.1]{odasso2006ergodicity} implicitly implies that $N\to\infty$ as $\gamma\to 0$. See also Remark \ref{rem:dimension-one} below.

The proof of Theorem \ref{thm:ergodicity:Ginzburg_Landau} makes use of the coupling framework in \cite{debussche2005ergodicity, mattingly2002exponential,odasso2006ergodicity} tailored to the Ginzburg-Landau equation \eqref{eqn:Ginzburg_Landau}. The technique relies on three crucial ingredients: a stochastic version of the classical Foias-Prodi argument  allowing for coupling the solution trajectories, the high probability of coupling inside a ball and the small probability of decoupling. Together with suitable moment bounds, we are able to deduce the uniformity of a polynomial rate for every $\gamma\in(0,1)$. All of this will be carried out in Section \ref{sec:ergodicity:Ginzburg-Landau}. In turn, the result of Theorem \ref{thm:ergodicity:Ginzburg_Landau} will be exploited to study the inviscid regime, which we describe next.

\subsection{Inviscid limit $\gamma\to 0$} \label{sec:main-result:gamma->0} Having established the uniform mixing rate for \eqref{eqn:Ginzburg_Landau}, we turn to the main topic of the paper concerning the limit $\gamma\to 0$. We first recast the Schr\"odinger equation \eqref{eqn:Schrodinger:original} as
\begin{align} \label{eqn:Schrodinger}
\d u(t) & = -\i A u(t)\d t + \i |u(t)|^2u (t)\d t -\alpha u(t)\d t +Q\d W(t),\quad u(0)=u_0\in H^1.
\end{align}
Similar to \eqref{eqn:Ginzburg_Landau}, under Assumption \ref{cond:Q}, it is not difficult to derive the well-posedness of \eqref{eqn:Schrodinger}. Indeed, the local solutions of \eqref{eqn:Schrodinger} in $H^1$ follows from the fact that the semigroup $S(t)=e^{-\i A t-\alpha t}$ is contracting in $H^1$ \cite{debussche2005ergodicity}. Then, the argument can be extended to deduce the global well-posedness by appealing to suitable a priori estimates supplied in Lemma \ref{lem:moment:H_1:Schrodinger}. As a result, the corresponding Markov semigroup $P_t^0$ is well-defined as in the case of $P_t^\gamma$. In the work of \cite{debussche2005ergodicity}, it can be shown that $P_t^0$ admits a unique invariant probability measure $\nu^0$ in $\Pcal r(H^1)$, and that the convergent rate is polynomial of any order, owing to the lack of strong dissipation. For the convenience of the reader, the precise statement is presented in Theorem \ref{thm:ergodicity:Schrodinger}, whose proof can be found in \cite{debussche2005ergodicity}. 

Our next goal is to rigorously describe in what sense we are comparing $\nu^\gamma$ obtained from Theorem \ref{thm:ergodicity:Ginzburg_Landau} with $\nu^0$. To this end, following the framework of \cite{ glatt2022short,glatt2021mixing,hairer2008spectral,hairer2011theory,hairer2011asymptotic,
nguyen2023small}, we introduce the following distance-like function
\begin{align} \label{form:d_0^xi}
d_0^\xi(u_1,u_2)=\sqrt{d_0(u_1,u_2)\big(1+e^{\xi\|u_1\|^2_H}+e^{\xi\|u_2\|^2_H}\big)},\quad u_1,u_2\in H.
\end{align}
As mentioned in the introduction, there are singular limit regimes where the limiting measure is shown to agree with those of the approximating dynamics \cite{cerrai2006smoluchowski,nguyen2018small}. This remarkable observation stems from the fact that those measures can be explicitly computed. Such an approach is not available in our settings and in many other situations \cite{cerrai2020convergence,
cerrai2022smoluchowski,cerrai2023small,foldes2017asymptotic,foldes2015ergodic,
foldes2016ergodicity,foldes2019large, nguyen2023small}. Nevertheless, our second main result of the paper states that $\nu^\gamma$ resembles $\nu^0$ in the regime of vanishing viscosity.

\begin{theorem} \label{thm:gamma->0:Wasserstein:nu^gamma-nu^0}
Under the same hypothesis of Theorem \ref{thm:ergodicity:Ginzburg_Landau}, let $\nu^\gamma$ and $\nu^0$ be the unique invariant probability measures of \eqref{eqn:Ginzburg_Landau} and \eqref{eqn:Schrodinger}, respectively. Then, for all $q\ge 2$ and $\xi\in(0,1)$ sufficiently small, it holds that
\begin{align} \label{lim:gamma->0:Wasserstein:nu^gamma-nu^0}
\W_{d_0^\xi}(\nu^\gamma,\nu^0) \le \frac{C_{q,\xi}}{(\log|\log \gamma|)^q},\quad \text{as}\,\,\,\gamma\to 0.
\end{align}
In the above, $d_0^\xi$ is the distance-like function defined in \eqref{form:d_0^xi}, $\W_{d_0^\xi}$ is the corresponding Wasserstein distance defined in \eqref{form:W_d}, and $C_{q,\xi}$ is a positive constant independent of $\gamma$.

\end{theorem}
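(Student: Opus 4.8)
The plan is to trade the vanishing viscosity $\gamma$ against a growing time horizon, along the lines sketched after \eqref{lim:gamma->0:nu^gamma-nu^0:heuristic}. Fix $q\ge 2$ and $\xi\in(0,1)$ small (constrained below). Let $u_0\sim\nu^\gamma$ and drive both \eqref{eqn:Ginzburg_Landau} and \eqref{eqn:Schrodinger} by the \emph{same} cylindrical noise $W$ from $u_0$, producing the coupled pair $(\ug(t),u(t))$ with $\ug(0)=u(0)=u_0$, and let $\tilde u$ be a stationary solution of \eqref{eqn:Schrodinger}, so $\tilde u(t)\sim\nu^0$ for every $t\ge0$. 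Since $\nu^\gamma$ is invariant, $\ug(t)\sim\nu^\gamma$ for all $t$; gluing the coupling $(\ug(t),u(t))$ with an almost optimal $\W_{d_0^\xi}$-coupling of $(\Law(u(t)),\nu^0)$, inserting the intermediate point $u(t)$, and separating the genuine metric $d_0(u,v)=\|u-v\|_H\wedge 1$ from the exponential weight in \eqref{form:d_0^xi} via Cauchy--Schwarz (the only place where the distance-like, non-metric nature of $d_0^\xi$ costs anything), one reaches a reduction of the form
\begin{align*}
\W_{d_0^\xi}(\nu^\gamma,\nu^0)\ \lesssim\ \big(\E\|\ug(t)-u(t)\|_{L^2(0,1)}\big)^{1/2}\ +\ \W_{d_0^\xi}\big(\Law(u(t)),\nu^0\big),\qquad t\ge0,
\end{align*}
where the implicit constant is finite and independent of $\gamma$ once $\xi$ is below a threshold depending only on $\alpha$ and $C_Q$ — this uses the $\gamma$-uniform (exponential) moment bounds of Lemma \ref{lem:moment:H_1} and Lemma \ref{lem:moment:H_1:Schrodinger}, together with the fact that the NLS solution started from $\nu^\gamma$ inherits $\gamma$- and $t$-uniform moments.

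It remains to bound the two terms uniformly in $\gamma$ and optimize in $t$. Proposition \ref{prop:gamma->0:|u^gamma-u|:[0,T]} gives $\E\|\ug(t)-u(t)\|_{L^2(0,1)}\le Ce^{ct}\big(1+\Phi(u_0)\big)|\log\gamma|^{-1}$ with $c$ independent of $\gamma$; integrating against $\nu^\gamma$ and using $\int_H\Phi(w)\,\nu^\gamma(\d w)\le C$ uniformly in $\gamma$ (again Lemma \ref{lem:moment:H_1}), the first term is $\lesssim e^{ct/2}|\log\gamma|^{-1/2}$. The polynomial mixing of \eqref{eqn:Schrodinger} in $\W_{d_0^\xi}$ (Theorem \ref{thm:ergodicity:Schrodinger} and Lemma \ref{lem:ergodicity:Schrodinger}), applied with initial law $\nu^\gamma$, gives $\W_{d_0^\xi}(\Law(u(t)),\nu^0)\lesssim (1+t)^{-q}\big(1+\int_H\Phi(w)\,\nu^\gamma(\d w)\big)\lesssim (1+t)^{-q}$, again uniformly in $\gamma$. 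Hence
\begin{align*}
\W_{d_0^\xi}(\nu^\gamma,\nu^0)\ \le\ C_{q,\xi}\Big(e^{ct/2}|\log\gamma|^{-1/2}+(1+t)^{-q}\Big),\qquad t\ge0,
\end{align*}
and taking $t=t(\gamma):=\delta\log|\log\gamma|$ with $\delta\in(0,1/c)$ fixed makes the first summand $|\log\gamma|^{\,c\delta/2-1/2}=O(|\log\gamma|^{-\varepsilon})$ for some $\varepsilon>0$, hence negligible as $\gamma\to0$ against $(1+t)^{-q}\le\delta^{-q}(\log|\log\gamma|)^{-q}$. This is \eqref{lim:gamma->0:Wasserstein:nu^gamma-nu^0}. (Should the weight bookkeeping in the reduction in fact cost a further square root on the mixing term, one simply invokes Theorem \ref{thm:ergodicity:Schrodinger} with exponent $2q$ in place of $q$.)

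The main obstacle is the finite-time comparison behind Proposition \ref{prop:gamma->0:|u^gamma-u|:[0,T]}: since $\ug-u$ is forced by the singular term $-\gamma A\ug$, a crude energy estimate would produce growth in $t$ that is $\gamma$-dependent, or worse than exponential, which the optimization above cannot absorb. Keeping the Gronwall factor of the form $e^{ct}$ with $c$ independent of $\gamma$ requires the cancellation of the leading cubic interaction in the $H$-energy identity (the term $\Re\la\i|\ug|^2(\ug-u),\ug-u\ra$ vanishes), the one-dimensional embedding $H^1(0,1)\hookrightarrow L^\infty(0,1)$ to absorb the remaining cubic terms, and the $\gamma$-uniform control of $\gamma\int_0^t\|\ug(s)\|_{H^1}^2\,\d s$ coming from the energy balance for \eqref{eqn:Ginzburg_Landau}. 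The secondary technical point, internal to the scheme above, is that $d_0^\xi$ is only distance-like and fails a clean triangle inequality, so the reduction must be carried out genuinely through the three-process coupling and Cauchy--Schwarz — harmless precisely because all exponential moments entering it are $\gamma$- and $t$-uniform, which is what forces $\xi$ small. Everything else — the moment bounds of Lemma \ref{lem:moment:H_1}, the mixing of \eqref{eqn:Schrodinger}, and the well-posedness of both equations — is available from the earlier material and from \cite{debussche2005ergodicity}.
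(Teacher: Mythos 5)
Your proof is correct and follows essentially the same route as the paper: triangulate through $P_t^0\nu^\gamma$ (that is, through $\Law(u(t))$ with $u(0)\sim\nu^\gamma$), bound the finite-time gap by Proposition~\ref{prop:gamma->0:|u^gamma-u|:[0,T]} together with the $\gamma$-uniform moments of $\nu^\gamma$ from Lemma~\ref{lem:moment:nu_gamma}, bound the mixing leg by the NLS polynomial rate (Lemma~\ref{lem:ergodicity:Schrodinger} with Lemma~\ref{lem:moment:nu^0}), and take $t\sim\log|\log\gamma|$. The only organizational difference is that you handle the $d_0^\xi$ weight on-the-fly inside a three-process coupling, whereas the paper first proves the full estimate in the genuine metric $\W_{d_0}$ (where the triangle inequality applies cleanly) and lifts to $\W_{d_0^\xi}$ only at the very end via a single Cauchy--Schwarz as in \eqref{ineq:W(d_0^xi)<W_(d_0)(1+e^u)}; your version costs an extra square root on the mixing leg, which you correctly dismiss since $q$ is arbitrary (and the exponential moments you invoke are actually from Lemma~\ref{lem:moment:L^2} and Lemma~\ref{lem:moment:H_1:Schrodinger}, not Lemma~\ref{lem:moment:H_1}, but this is only a citation slip).
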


The proof of Theorem \ref{thm:gamma->0:Wasserstein:nu^gamma-nu^0} relies on two crucial ingredients: the approximation of \eqref{eqn:Ginzburg_Landau} by \eqref{eqn:Schrodinger} on any finite time window, cf. Proposition \ref{prop:gamma->0:|u^gamma-u|:[0,T]}, and the polynomial mixing rate of \eqref{eqn:Schrodinger} toward $\nu^0$, cf. Theorem \ref{thm:ergodicity:Schrodinger}. In particular, the former estimate draws upon the framework typically found in the settings of dispersive equations, e.g., the wave equation \cite{cerrai2006smoluchowski,cerrai2006smoluchowski2,
cerrai2022smoluchowski,cerrai2022small,
cerrai2023small}. Then, we combine with the fact that $\nu^\gamma$ satisfies uniform exponential moment bounds independent of $\gamma$ to ultimately deduce the convergence \eqref{lim:gamma->0:Wasserstein:nu^gamma-nu^0}. All of this will be clearer in Section \ref{sec:gamma->0}, where the proof of Theorem \ref{thm:gamma->0:Wasserstein:nu^gamma-nu^0} is supplied.

Finally, as a consequence of Theorem \ref{thm:ergodicity:Ginzburg_Landau} and Theorem \ref{thm:gamma->0:Wasserstein:nu^gamma-nu^0}, we obtain the following result.

\begin{theorem} \label{thm:gamma->0:phi}
Under the same hypothesis of Theorem \ref{thm:ergodicity:Ginzburg_Landau}, let $\{\ug_0\}_{\gamma\in(0,1)}$ be a sequence of deterministic initial conditions such that 
\begin{align*}
\sup_{\gamma\in(0,1)} \|\ug_0\|_{H^1} < R.
\end{align*}
Then, for all $R>0$, $q\ge 2$, $\xi\in(0,1)$ sufficiently small, and $\f:H^1\to\rbb$ such that $\|\f\|_{\textup{Lip},d_0^\xi}<\infty$ where $d_0^\xi$ is as in \eqref{form:d_0^xi} and $\|\f\|_{\textup{Lip},d_0^\xi}$ is defined in \eqref{form:Lipschitz}, the following holds 
\begin{align} \label{lim:gamma->0:phi}
\sup_{t\ge 0}\big|\E \f\big(\ug(t;\ug_0)\big)-\E \f\big(u(t;\ug_0)\big)\big| \le C_{R,q,\xi}\frac{\|\f\|_{\textup{Lip},d^\xi_0}}{(\log |\log \gamma|)^{q}}  ,\quad\textup{as} \,\gamma\to 0,
\end{align}
for some positive constant $C_{R,q,\xi}$ independent of $\f$, $u_0^\gamma$ and $\gamma$.

\end{theorem}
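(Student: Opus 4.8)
The plan is to derive \eqref{lim:gamma->0:phi} by inserting the two invariant measures $\nu^\gamma$ and $\nu^0$ as intermediaries and using the triangle inequality for the Wasserstein-type quantities. First I would fix $q\ge 2$, $\xi\in(0,1)$ small, $R>0$, and a test function $\f$ with $[\f]_{\textup{Lip},d_0^\xi}<\infty$; without loss of generality normalize $[\f]_{\textup{Lip},d_0^\xi}=1$. Since $\nu^\gamma$ and $\nu^0$ are invariant for $P_t^\gamma$ and $P_t^0$ respectively, for every $t\ge 0$ we can write
\begin{align*}
\big|\E\f(\ug(t;\ug_0))-\E\f(u(t;\ug_0))\big|
&\le \big|\E\f(\ug(t;\ug_0))-\textstyle\int_H\f\,\d\nu^\gamma\big|
+\big|\textstyle\int_H\f\,\d\nu^\gamma-\textstyle\int_H\f\,\d\nu^0\big|\\
&\quad+\big|\textstyle\int_H\f\,\d\nu^0-\E\f(u(t;\ug_0))\big|.
\end{align*}
The middle term is $\le \W_{d_0^\xi}(\nu^\gamma,\nu^0)$ by the one-sided dual inequality \eqref{ineq:W_d(nu_1,nu_2):dual}, hence bounded by $C_{q,\xi}(\log|\log\gamma|)^{-q}$ via Theorem \ref{thm:gamma->0:Wasserstein:nu^gamma-nu^0}. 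For the first term, I would use that $\nu^\gamma$ is $P_t^\gamma$-invariant to rewrite it as $\big|\E\f(\ug(t;\ug_0))-\E\f(\ug(t;Z^\gamma))\big|$ where $Z^\gamma\sim\nu^\gamma$ is drawn independently, and then invoke the uniform polynomial mixing of Theorem \ref{thm:ergodicity:Ginzburg_Landau} (in the $d_1$ metric, and noting $d_0^\xi\lesssim$ something controlled by $d_1$ together with exponential moments, i.e.\ the cited Lemma \ref{lem:ergodicity:Ginzburg-Landau:W_(d_1)} which is designed precisely to convert the $d_1$-estimate into a $\W_{d_0^\xi}$-estimate) together with the uniform-in-$\gamma$ moment bound $\E\Phi(\ug(t;\ug_0))\lesssim 1+\Phi(\ug_0)\lesssim_R 1$ from Section \ref{sec:moment} and the uniform exponential moment bound on $\nu^\gamma$; this yields a bound of order $C_{R,q}(1+t)^{-q}$. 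The third term is handled identically using the polynomial mixing of \eqref{eqn:Schrodinger} toward $\nu^0$ (Theorem \ref{thm:ergodicity:Schrodinger} and Lemma \ref{lem:ergodicity:Schrodinger}) plus the analogous uniform moment bounds for the NLS flow, again giving $C_{R,q}(1+t)^{-q}$.

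At this stage, for every $t\ge 0$,
\begin{align*}
\big|\E\f(\ug(t;\ug_0))-\E\f(u(t;\ug_0))\big|\le \frac{C_{R,q,\xi}}{(1+t)^q}+\frac{C_{q,\xi}}{(\log|\log\gamma|)^q}.
\end{align*}
The left side is trivially bounded for small $t$, so the content is for $t$ large; I would then simply take the supremum over $t\ge 0$. Note the right side no longer depends on $t$ except through the decaying term $(1+t)^{-q}$, which we may drop after bounding it by the $\gamma$-term: since $(\log|\log\gamma|)^{-q}$ does not decay to $0$ faster than any fixed power as $\gamma\to 0$ — indeed it decays arbitrarily slowly — for $\gamma$ small enough there is nothing to optimize, and in fact one can afford to keep $t$ bounded. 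More carefully, because we want a \emph{uniform-in-$t$} statement, the cleanest route is: for each $t$ the estimate above holds, and since $\sup_{t\ge 0}(1+t)^{-q}=1$, we get $\sup_{t\ge 0}|\cdots|\le C_{R,q,\xi}+C_{q,\xi}(\log|\log\gamma|)^{-q}$, which is not yet $o(1)$. The correct argument must instead balance the two: this is exactly the "optimization of powers of $t$" alluded to in Section \ref{sec:intro:method-of-proof} and used for Theorem \ref{thm:gamma->0:Wasserstein:nu^gamma-nu^0}, where one replaces the static measures by $\textup{Law}(u(t))$ and the finite-time approximation $\E\|\ug(t)-u(t)\|_{H}\lesssim e^{ct}|\log\gamma|^{-1}$ (Proposition \ref{prop:gamma->0:|u^gamma-u|:[0,T]}) and then chooses $t=t(\gamma)\sim c^{-1}\log|\log\gamma|$.

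So the honest proof I would write is the three-term splitting
\begin{align*}
&\big|\E\f(\ug(t;\ug_0))-\E\f(u(t;\ug_0))\big|\\
&\quad\le \E\,d_0^\xi\big(\ug(t;\ug_0),u(t;\ug_0)\big)
+\W_{d_0^\xi}\big(\textup{Law}(\ug(t;\ug_0)),\nu^\gamma\big)
+\W_{d_0^\xi}\big(\textup{Law}(u(t;\ug_0)),\nu^0\big),
\end{align*}
where the first term is estimated by Cauchy–Schwarz against Proposition \ref{prop:gamma->0:|u^gamma-u|:[0,T]} and the uniform exponential moments, giving $\lesssim e^{ct}|\log\gamma|^{-1/2}$ (or a similar power); the second term is $\lesssim_R (1+t)^{-q}$ by Theorem \ref{thm:ergodicity:Ginzburg_Landau} combined with Lemma \ref{lem:ergodicity:Ginzburg-Landau:W_(d_1)} and the uniform bound $\|\ug_0\|_{H^1}<R$; and the third term is $\lesssim_R (1+t)^{-q}$ by the mixing of \eqref{eqn:Schrodinger}. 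Choosing $t=\kappa\log|\log\gamma|$ with $\kappa$ small enough that $e^{ct}|\log\gamma|^{-1/2}=|\log\gamma|^{c\kappa-1/2}\to 0$, and noting $(1+t)^{-q}\sim(\log|\log\gamma|)^{-q}$, all three terms are $O\big((\log|\log\gamma|)^{-q}\big)$ uniformly in $\ug_0$ with $\|\ug_0\|_{H^1}<R$, and since this bound is independent of $t$ once we have taken the supremum up to $t(\gamma)$ and observed the estimate is monotone-friendly for $t> t(\gamma)$ as well (the mixing terms only shrink, while for the approximation term one simply re-runs the argument with the optimal $t$), we conclude \eqref{lim:gamma->0:phi}. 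The main obstacle, and the step deserving the most care, is the first term: one must control $\E\,d_0^\xi(\ug(t),u(t))$ — not just $\E\|\ug(t)-u(t)\|_H$ — which requires pairing the finite-time $L^2$-convergence rate against \emph{uniform-in-$\gamma$} exponential moment bounds on both $\ug(t)$ and $u(t)$ (so that the $e^{\xi\|\cdot\|^2_H}$ factors in $d_0^\xi$ are integrable) via Cauchy–Schwarz, and then tracking how the resulting $e^{ct}$ growth competes against the $|\log\gamma|^{-1}$ smallness when optimizing over $t$; this is where the precise form of the Lyapunov estimates from Section \ref{sec:moment} and the choice of $\xi$ small enter decisively.
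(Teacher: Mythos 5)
Your proposal follows essentially the same route as the paper: the paper first establishes Proposition~\ref{prop:gamma->0:Wasserstein:[0,infty)}, which bounds $\sup_{t\ge 0}\W_{d_0^\xi}(P_t^\gamma\delta_{\ug_0},P_t^0\delta_{\ug_0})$ by splitting time into $[0,T]$ (direct approximation via Proposition~\ref{prop:gamma->0:|u^gamma-u|:[0,T]}) and $[T,\infty)$ (triangle inequality through $\nu^\gamma$ and $\nu^0$ combined with the uniform mixing of Theorem~\ref{thm:ergodicity:Ginzburg_Landau}, Lemma~\ref{lem:ergodicity:Ginzburg-Landau:W_(d_1)}, Lemma~\ref{lem:ergodicity:Schrodinger}, and Theorem~\ref{thm:gamma->0:Wasserstein:nu^gamma-nu^0}), optimizing $T\sim\log|\log\gamma|$ and upgrading from $\W_{d_0}$ to $\W_{d_0^\xi}$ via Cauchy--Schwarz; then Theorem~\ref{thm:gamma->0:phi} follows immediately from the one-sided dual inequality~\eqref{ineq:W_d(nu_1,nu_2):dual}. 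You identified all of these ingredients, including the correct $\log\log$ optimization and the Cauchy--Schwarz trick against the uniform exponential moments. The one imprecision worth flagging is your final ``three-term splitting'': as a single inequality valid for all $t$, it cannot produce a uniform-in-$t$ bound (the first term blows up as $t\to\infty$, and for small $t$ the mixing terms are $O(1)$), and — more importantly — once you restrict to $t>T$ and drop the approximation term, the remaining two terms $\W_{d_0^\xi}(\textup{Law}(\ug(t)),\nu^\gamma)+\W_{d_0^\xi}(\textup{Law}(u(t)),\nu^0)$ do \emph{not} by themselves bound the left side; the triangle inequality on $[T,\infty)$ requires the middle term $\W_{d_0^\xi}(\nu^\gamma,\nu^0)$, which is precisely what supplies the $(\log|\log\gamma|)^{-q}$ rate and which you correctly used in your first, ``static'' decomposition but then omitted. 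Written as a genuine two-regime argument with $\W_{d_0^\xi}(\nu^\gamma,\nu^0)$ reinstated — exactly the structure of the paper's proof of Proposition~\ref{prop:gamma->0:Wasserstein:[0,infty)} — your argument closes up.
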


In order to prove Theorem \ref{thm:gamma->0:phi}, we will draw upon the framework of \cite{glatt2021mixing} tailored to our settings by employing the uniform polynomial rate in Theorem \ref{thm:ergodicity:Ginzburg_Landau} and the convergence of $\nu^\gamma$ toward $\nu^0$ in Theorem \ref{thm:gamma->0:Wasserstein:nu^gamma-nu^0}. We note that our argument is slightly different from those in \cite{glatt2022short,glatt2021mixing,nguyen2023small}, which typically require a generalized triangle inequality for $\W_{d_0^\xi}$. In our setting, instead of dealing directly with $d_0^\xi$, we prove an analogue of \eqref{lim:gamma->0:phi} for the distance $d_0$. Then, we will upgrade to $d_0^\xi$ making use of the fact that both $\ug(t)$ and $u(t)$ satisfy appropriate exponential moment bounds independent of time $t$ and $\gamma$. The detailed proof of Theorem \ref{thm:gamma->0:phi} will be carried out in Section \ref{sec:gamma->0}.

\begin{remark} \label{rem:phi}
In view of \cite[Theorem A.9]{glatt2021mixing}, a sufficient condition for $\f$ to satisfy $\|\f\|_{\text{Lip},d_0^\xi}<\infty$ is that
\begin{align*}
\sup_{u\in H}\frac{\max\{|\f(u)|,\|D \f(u)\|_H\}}{\sqrt{1+e^{\xi\|u\|^2_H}}}<\infty.
\end{align*}
In particular, it is not difficult to check that the class of polynomial functions satisfies the above condition.

\end{remark}

\section{A priori moment estimates} \label{sec:moment}

Throughout the rest of the paper, $c$ and $C$ denote generic positive constants that may change from line to line. The main parameters that they depend on will appear between parenthesis, e.g., $c(T,q)$ is a function of $T$ and $q$.

In this section, we consider the solutions $\ug$ of  \eqref{eqn:Ginzburg_Landau} and establish several useful energy estimates that will be employed to prove the main results. We start with the bounds in $H$ norm in Lemma \ref{lem:moment:L^2} below. In particular, Lemma \ref{lem:moment:L^2}, part 1, will be invoked to obtain higher regularity in Lemma \ref{lem:moment:H_1} whereas the result of Lemma \ref{lem:moment:L^2}, part 2, appears in the proof of Lemma \ref{lem:moment:nu_gamma} giving the uniform exponential moment of $\nu^\gamma$.

\begin{lemma} \label{lem:moment:L^2}
1. Let $u_0\in L^2(\Omega;H)$ be given and $u^\gamma(t)$ be the solution of \eqref{eqn:Ginzburg_Landau}. Then, for all $n\ge 1$, it holds that 
\begin{align} \label{ineq:moment:|u|^n_H+int.|u|^n_H<|u_0|^n_H+Ct}
\E\big[\|\ug(t)\|_H^{2n}\big]+\alpha n \int_0^t \E\big[\|\ug(r)\|_H^{2n}\big]\emph{d}r  \le\E\big[\|u_0\|_H^{2n}\big]+2^n\Big(\frac{n-1}{\alpha}\Big)^{n-1} |\emph{Tr}(QQ^*)|^{n}t,\quad t\ge 0 ,
\end{align}
and that
\begin{align} \label{ineq:moment:|u|^n_H<e^(-ct)|u_0|^n_H}
\E\big[\|\ug(t)\|_H^{2n}\big] \le e^{-\alpha n t}\E\big[\|u_0\|_H^{2n}\big]+2^n\frac{(n-1)^{n-1}}{n \alpha^n } |\emph{Tr}(QQ^*)|^{n},\quad t\ge 0.
\end{align}

2. For all $\xi>0$ sufficiently small independent of $\gamma$, the following holds
\begin{align} \label{ineq:moment:exp(|u|^2_H)}
\E \big[ e^{ \xi \|\ug(t)\|^2_H  }\big] \le e^{-c_\xi t} \E \big[ e^{ \xi \|u_0\|^2_H  }\big]  +C_\xi ,\quad t\ge 0,
\end{align}
for some positive constants $c_\xi$ and $C_\xi$ independent of $t,\gamma$ and $u_0$.
\end{lemma}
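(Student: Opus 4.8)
The plan is to prove both parts by Itô's formula, exploiting two structural features of \eqref{eqn:Ginzburg_Landau}. First, testing the equation against $\overline{u^\gamma}$, the dissipative term contributes the nonpositive quantity $-2\gamma\|u^\gamma\|_{H^1}^2$, which we simply drop; this is exactly what makes the resulting moment bounds uniform in $\gamma$. Second, and crucially, the cubic nonlinearity is invisible to the $H$-norm, since $\Re\langle\i|u^\gamma|^2u^\gamma,\overline{u^\gamma}\rangle_H=\Re\big(\i\|u^\gamma\|_{L^4}^4\big)=0$. Writing $X(t):=\|u^\gamma(t)\|_H^2$, Itô's formula therefore yields
\begin{align*}
\d X(t)=\big(-2\gamma\|u^\gamma(t)\|_{H^1}^2-2\alpha X(t)+\Tr(QQ^*)\big)\d t+\d M(t),
\end{align*}
where $M$ is a continuous local martingale with $\d\langle M\rangle_t=4\sum_{k=1}^N\lambda_k^2|\langle u^\gamma(t),e_k\rangle_H|^2\,\d t\le C\,\Tr(QQ^*)\,X(t)\,\d t$. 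Since a priori we only know existence of solutions in $H$, every Itô computation below is first carried out up to the localizing stopping time $\tau_R:=\inf\{t\ge 0:\|u^\gamma(t)\|_H\ge R\}$, so that the stopped stochastic integrals have zero mean, and $R\to\infty$ is passed at the end via Fatou's lemma.

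For part 1, I would apply Itô's formula to $X(t)^n$ and discard the $\gamma$-term to get
\begin{align*}
\d X^n\le\big(-2\alpha n X^n+c_n\Tr(QQ^*)X^{n-1}\big)\d t+n X^{n-1}\d M,
\end{align*}
where $c_n$ collects the Itô correction together with the $\d\langle M\rangle$ contribution. For $n\ge 2$, Young's inequality with conjugate exponents $\tfrac{n}{n-1}$ and $n$ absorbs the middle term, $c_n\Tr(QQ^*)X^{n-1}\le\alpha n X^n+C_{n,\alpha}|\Tr(QQ^*)|^n$, with $C_{n,\alpha}$ equal to $\alpha^{-(n-1)}$ times a purely combinatorial factor (and the case $n=1$ is immediate). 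This leaves $\d X^n\le\big(-\alpha n X^n+C_{n,\alpha}|\Tr(QQ^*)|^n\big)\d t+n X^{n-1}\d M$; integrating and taking expectations gives \eqref{ineq:moment:|u|^n_H+int.|u|^n_H<|u_0|^n_H+Ct}, whereas feeding $\tfrac{\d}{\d t}\E X^n\le-\alpha n\,\E X^n+C_{n,\alpha}|\Tr(QQ^*)|^n$ into Gr\"onwall's lemma gives \eqref{ineq:moment:|u|^n_H<e^(-ct)|u_0|^n_H}.

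For part 2, I would apply Itô's formula to $e^{\xi X(t)}$ instead; discarding the $\gamma$-term and using $\d\langle M\rangle_t\le C\,\Tr(QQ^*)X(t)\,\d t$,
\begin{align*}
\d e^{\xi X}\le\xi e^{\xi X}\big(-(2\alpha-\xi C\,\Tr(QQ^*))X+\Tr(QQ^*)\big)\d t+\xi e^{\xi X}\d M.
\end{align*}
Choosing $\xi>0$ small enough that $2\alpha-\xi C\,\Tr(QQ^*)\ge\alpha$, and then using the elementary estimate $\xi e^{\xi X}\big(-\alpha X+\Tr(QQ^*)\big)\le-c_\xi e^{\xi X}+C_\xi$ (split according to whether $X$ lies above or below a fixed threshold, using that $X e^{\xi X}$ dominates $e^{\xi X}$ for large $X$), one arrives at $\d e^{\xi X}\le\big(-c_\xi e^{\xi X}+C_\xi\big)\d t+\xi e^{\xi X}\d M$; taking expectations and invoking Gr\"onwall's lemma yields \eqref{ineq:moment:exp(|u|^2_H)}. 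The uniformity asserted in the statement rests on the fact that $\xi$, $c_\xi$ and $C_\xi$ may be chosen to depend only on $\alpha$ and $\Tr(QQ^*)$: indeed \eqref{cond:Q:Tr(A^(3/2)QQ)<infinity}, together with $\alpha_k=(k\pi)^2>1$, forces $\Tr(QQ^*)\le C_Q$ uniformly in $N$, hence independently of $N$ and $\gamma$.

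These are standard energy estimates and I do not expect a serious analytical obstacle. The points that need care are the algebraic cancellation of the cubic term in the $H$-norm, the localization bookkeeping needed to legitimately drop the martingale contributions (the higher moments not being a priori known to be finite), and --- for the stated independence of all constants from $N$ and $\gamma$ --- the uniform-in-$N$ control of $\Tr(QQ^*)$ furnished by \eqref{cond:Q:Tr(A^(3/2)QQ)<infinity}. I expect the localization argument to be the main thing to get right.
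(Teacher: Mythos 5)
Your proposal is correct and follows essentially the same route as the paper: Itô's formula for $\|u^\gamma\|_H^2$ with the cubic term cancelling in the $H$-inner product, dropping the nonpositive $\gamma$-term, Young's inequality to absorb the $\|u\|_H^{2n-2}$ contribution for part 1, and the elementary exponential estimate after choosing $\xi<\alpha/(2\Tr(QQ^*))$ for part 2, all closed by Gr\"onwall; the uniform-in-$N$ bound on $\Tr(QQ^*)$ is exactly the paper's Remark \ref{rem:Tr(QQ)}. Your explicit localization via $\tau_R$ and Fatou is a standard technical step the paper leaves implicit, and your Itô drift should carry $2\Tr(QQ^*)$ rather than $\Tr(QQ^*)$ (the complex noise contributes twice), but neither point affects the argument.
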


\begin{remark} \label{rem:Tr(QQ)} In light of Assumption \ref{cond:Q}, cf. \eqref{cond:Q:Tr(A^(3/2)QQ)<infinity}, it is important to note that
\begin{align*}
\sup_{N\ge 1}\Tr(QQ^*)=\sup_{N\ge 1}\sum_{k=1}^N\lambda_k^2<C_Q,
\end{align*}
which is independent of the size of $N$.
\end{remark}

\begin{proof}[Proof of Lemma \ref{lem:moment:L^2}]
For notation convenience, throughout the proofs in this section, we will drop the superscript $\gamma$ in $\ug$.

 We first compute the derivatives of $\|u\|_H^2$ as follows.
\begin{align*}
\big\la D \big(\|u\|_H^2\big),v\big\ra_{H}&= \la u,\vbar\ra_H+\la \ubar, v\ra_{H},\\
\big\la D^2\big(\|u\|_H^2\big)(z),v\big\ra_H & =   \la z,\vbar\ra_H+\la \zbar, v\ra_H.
\end{align*}
Applying It\^o's formula to $\|u\|_H^2$ gives
\begin{align} \label{eqn:d.|u|^2_H}
\d \|u(t)\|^2_H & = -2\gamma\|A^{1/2}u(t)\|^2_H\d t-2\alpha\|u(t)\|^2_H\d t +2\Tr(QQ^*)\d t+\d M_0(t),
\end{align}
where
\begin{align} \label{form:M_0(t)}
\d M_0(t)=\la u(t),\overline {Q\d W(t)}\ra_H+\la \ubar(t),Q\d W(t)\ra_H,
\end{align}
whose quadratic variation is given by
\begin{align} \label{form:M_0(t):quadratic_variation}
\d\la M_0\ra(t) = 4\|Qu(t)\|^2_H\d t.
\end{align}
Next, for $n\ge 2$, it holds that
\begin{align} \label{eqn:d.|u|^n_H}
\d \|u(t)\|^{2n}_H & = n\|u(t)\|^{2n-2}_H\Big(-2\gamma\|A^{1/2}u(t)\|^2_H\d t-2\alpha\|u(t)\|^2_H\d t +2\Tr(QQ^*)\d t+\d M_0(t)\Big) \nt \\
&\qquad +n(2n-2)\|u(t)\|^{2n-4}_H \|Qu(t)\|^2_H.
\end{align}

1. Turning back to \eqref{ineq:moment:|u|^n_H+int.|u|^n_H<|u_0|^n_H+Ct} and \eqref{ineq:moment:|u|^n_H<e^(-ct)|u_0|^n_H}, we employ Young inequality to estimate
\begin{align*}
&2\|u(t)\|^{2n-2}_H\Tr(QQ^*)+ (2n-2)\|u(t)\|^{2n-4}_H \|Qu(t)\|^2_H\\
&\le 2 \|u(t)\|^{2n-2}_H\Tr(QQ^*)+ (2n-2)\|u(t)\|^{2n-4}_H \cdot \|Q\|_{L(H)}^2\|u(t)\|^2_H\\
&\le 2n \|u(t)\|^{2n-2}_H\Tr(QQ^*)\\
&\le  \alpha\|u\|^{2n}_H+2^n\Big(\frac{n-1}{\alpha}\Big)^{n-1} |\Tr(QQ^*)|^{n}.
\end{align*}
From \eqref{eqn:d.|u|^n_H}, we deduce the bound
\begin{align}\label{ineq:d|u|^n_H}
\d \|u(t)\|^{2n}_H & \le  -\alpha n \|u(t)\|^{2n}_H\d t +2^n\Big(\frac{n-1}{\alpha}\Big)^{n-1} |\Tr(QQ^*)|^{n}\d t + n\|u(t)\|^{2n-2}_H \d M_0(t).
\end{align}
Taking expectation on both sides yields
\begin{align*}
\frac{\d}{\d t} \E\|u(t)\|^{2n}_H & \le  -\alpha n \E \|u(t)\|^{2n}_H +2^n\Big(\frac{n-1}{\alpha}\Big)^{n-1} |\Tr(QQ^*)|^{n}.
\end{align*}
On the one hand, integrating the above estimate with respect to time $t$ produces \eqref{ineq:moment:|u|^n_H+int.|u|^n_H<|u_0|^n_H+Ct}. On the other hand, from Gronwall's inequality, we establish \eqref{ineq:moment:|u|^n_H<e^(-ct)|u_0|^n_H}, thereby finishing the proof.

2. With regard to \eqref{ineq:moment:exp(|u|^2_H)}, let $\xi>0$ be given and be chosen later. From \eqref{eqn:d.|u|^2_H}, we readily have
\begin{align} \label{ineq:d.beta|u|^2_H}
\d \xi\|u(t)\|^2_H & \le -2\alpha\xi \|u(t)\|^2_H\d t +2\xi\Tr(QQ^*)\d t+\d\xi M_0(t). 
\end{align}
Note also that the quadratic process $\la M_0\ra(t)$ given by \eqref{form:M_0(t):quadratic_variation} satisfies the bound
\begin{align*}
\d \la M_0\ra (t) \le 4\|Q\|^2_{L(H)}\|u(t)\|^2_H \d t\le 4\Tr(QQ^*)\|u(t)\|^2_H\d t.
\end{align*}
It follows that
\begin{align*}
\d  e^{\xi\|u(t)\|^2_H}&= \le \xi e^{\xi\|u(t)\|^2_H}\big(\d \|u(t)\|^2_H++\frac{1}{2}\xi \d \la M_0\ra(t)\big) \\
&\le \xi e^{\xi\|u(t)\|^2_H}\big( -2\alpha \|u(t)\|^2_H\d t +2\Tr(QQ^*)\d t+\d M_0(t)+2\xi \Tr(QQ^*)\|u(t)\|^2_H\d t\big).
\end{align*}
Picking $\xi$ sufficiently small, e.g., 
\begin{align*}
\xi<\frac{\alpha}{2\Tr(QQ^*)},
\end{align*}
produces the bound in expectation
\begin{align*}
\frac{\d}{\d t}\E e^{\xi\|u(t)\|^2_H}\le  \xi \E \big[ e^{\xi\|u(t)\|^2_H}(-\alpha\|u(t)\|^2_H+2\Tr(QQ^*))\big].
\end{align*}
We employ the elementary inequality
\begin{align*}
e^{\xi x}(-\alpha x+2\Tr(QQ^*)) \le - ce^{\xi x}+C,\quad x\ge 0,
\end{align*}
to infer the existence of positive constants $c=c(\xi)$ and $C=C(\xi)$ independent of $\gamma$ such that
\begin{align*}
\frac{\d}{\d t}\E e^{\xi\|u(t)\|^2_H}\le -c \E e^{\xi\|u(t)\|^2_H}+C,\quad t\ge 0.
\end{align*}
In turn, this implies estimate \eqref{ineq:moment:exp(|u|^2_H)} by virtue of Gronwall's inequality. The proof is thus complete.

\end{proof}

Next, in Lemma \ref{lem:moment:H_1} stated and proven below, we establish moment bounds in $H^1$. The result of Lemma \ref{lem:moment:H_1}, part 1, will be particularly employed to obtain the uniform polynomial rate in Section \ref{sec:ergodicity:Ginzburg-Landau} whereas part 2 will be useful in proving the convergence of $\ug(t)$ toward $u(t)$ in Proposition \ref{prop:gamma->0:|u^gamma-u|:[0,T]}.

\begin{lemma} \label{lem:moment:H_1}
1. Let $u_0\in L^2(\Omega;H^1)$ be given and $u^\gamma(t)$ be the solution of \eqref{eqn:Ginzburg_Landau} with initial condition $u_0$. Then, for all $n\ge 1$, it holds that 
\begin{align} \label{ineq:moment:|u|_H1+int.|u|_H1<|u_0|_H1+Ct}
\E\big[ \Phi(u^\gamma(t))^n\big]+\frac{1}{2}\alpha n \int_0^t \E\big[\Phi(u^\gamma(r))^n\big] \emph{d} r \le \E \big[\Phi(u_0)^n\big]+ C_{1,n}t,\quad t\ge 0,
\end{align}
and that
\begin{align} \label{ineq:moment:|u|_H1<e^(-ct)|u_0|_H1}
\E\big[ \Phi(u^\gamma(t))^n\big] 
&\le e^{-\frac{1}{2}\alpha n t}\E\big[ \Phi(u_0)^n\big]+C_{1,n} ,\quad t\ge 0,
\end{align}
for some positive constant $C_{1,n}$ independent of $u_0,t,\gamma$ and $N$.

2. For all $T>0$, the following holds 
\begin{align} \label{ineq:moment:sup_[0,T]Phi(t)^n}
\E \big[\sup_{[0,T]}\Phi(\ug(t))^n\big] &\le \E\big[\Phi(u_0)^{n}\big]+ \E \big[\Phi(u_0)^{2n}\big] + C_{1,n} T  .
\end{align}
\end{lemma}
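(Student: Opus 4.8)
The plan is to return to the It\^o differential of $\Phi(\ug(t))^n$ that underlies part 1 and, instead of immediately taking expectations, keep the stochastic integral and estimate its supremum over $[0,T]$ via a martingale inequality. Concretely, by the computation already carried out for \eqref{ineq:moment:|u|_H1+int.|u|_H1<|u_0|_H1+Ct} one has an identity of the schematic form
\begin{align*}
\Phi(\ug(t))^n + \tfrac{1}{2}\alpha n\int_0^t \Phi(\ug(r))^n\,\d r \le \Phi(u_0)^n + C_{1,n}t + \int_0^t \Phi(\ug(r))^{n-1}\,\d \widetilde M(r),
\end{align*}
where $\widetilde M$ is the local martingale coming from the noise term $Q\d W$ acting on $\Phi$ (built from $M_0$ together with the analogous martingale pieces generated by $\|u\|_{H^1}^2$, $\|u\|_{L^4}^4$, and the powers of $\|u\|_H$). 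Dropping the nonnegative integral on the left and taking $\sup_{[0,T]}$ then gives
\begin{align*}
\E\big[\sup_{[0,T]}\Phi(\ug(t))^n\big] \le \E\big[\Phi(u_0)^n\big] + C_{1,n}T + \E\big[\sup_{[0,T]}\Big|\int_0^t \Phi(\ug(r))^{n-1}\,\d\widetilde M(r)\Big|\big].
\end{align*}

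The core of the argument is to bound the last term. First I would apply the Burkholder--Davis--Gundy inequality, so that the supremum of the stochastic integral is controlled by the expectation of the square root of its quadratic variation, which is of the form $\int_0^T \Phi(\ug(r))^{2n-2}\,\|Q\|_{L(H)}^2\,(\text{lower-order powers of }\ug)\,\d r$ — using $\Tr(A^{3/2}QQ^*)<C_Q$ from Assumption~\ref{cond:Q} to absorb the spatial regularity demanded by the $H^1$ and $L^4$ pieces of $\Phi$ (this is exactly where one must be a little careful: the martingale coming from the $\|u\|_{H^1}^2$ term pairs $A^{1/2}u$ against $A^{1/2}Q\d W$, so one needs the weight $\alpha_k\lambda_k^2$, which is covered by $\sum \alpha_k^{3/2}\lambda_k^2<C_Q$). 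After pulling a $\sup_{[0,T]}\Phi(\ug(t))^{n/2}$ out of the square root, Young's inequality lets me split off $\tfrac12 \E[\sup_{[0,T]}\Phi(\ug(t))^n]$, which is reabsorbed into the left-hand side, leaving behind a constant multiple of $\E\int_0^T \Phi(\ug(r))^n\,\d r$. That time integral is then estimated by part 1 of the lemma: integrating \eqref{ineq:moment:|u|_H1<e^(-ct)|u_0|_H1} (or directly \eqref{ineq:moment:|u|_H1+int.|u|_H1<|u_0|_H1+Ct} with $n$ replaced by $2n$ if a square of $\Phi$ appears) over $[0,T]$ bounds $\E\int_0^T\Phi(\ug(r))^n\,\d r$ by $C(\E[\Phi(u_0)^n] + T)$, and the appearance of $\E[\Phi(u_0)^{2n}]$ on the right-hand side of \eqref{ineq:moment:sup_[0,T]Phi(t)^n} is precisely the price of controlling the quadratic-variation term, whose integrand carries the power $\Phi(\ug)^{2n-2}$ and forces one moment order higher after the BDG/Young step.

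The main obstacle I anticipate is bookkeeping the several martingale contributions to $\d\Phi(\ug)$ simultaneously and verifying that each of their quadratic variations is dominated, uniformly in $N$ and $\gamma$, by admissible powers of $\Phi(\ug)$ times $C_Q$; in particular the cross term from the cubic nonlinearity inside $-\tfrac12\|u\|_{L^4}^4$ and the high powers $\kappa\|u\|_H^6+\kappa\|u\|_H^{18}$ must be shown to contribute only lower-order-in-$\Phi$ quadratic variations, so that the single application of Young's inequality with the correct exponents suffices and no moment beyond $2n$ of $\Phi(u_0)$ is needed. Once the quadratic-variation estimate is in place with the weight $C_Q$, the remaining steps (BDG, Young, reabsorption, invoking part 1) are routine and yield \eqref{ineq:moment:sup_[0,T]Phi(t)^n} with a constant $C_{1,n}$ independent of $u_0$, $\gamma$, and $N$.
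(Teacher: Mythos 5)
Your proposal is correct and follows essentially the same route as the paper: apply It\^o's formula to $\Phi(\ug(t))^n$, keep the stochastic integral against $M_1$, bound its running supremum via Burkholder's inequality using the quadratic-variation estimate $\d\la M_1\ra(t)\le \Phi(t)^2\d t+C\d t$, and close with Young's inequality together with part 1. The only cosmetic difference is that the paper does not reabsorb $\E[\sup_{[0,T]}\Phi^n]$ into the left-hand side (which would require a localization step to justify); it instead bounds the square root by $\tfrac12+\tfrac12(\cdot)$ and invokes part 1 at order $2n$ — exactly the alternative you mention — which is why $\E[\Phi(u_0)^{2n}]$ appears on the right.
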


\begin{proof}
Recalling $\Phi$ from \eqref{form:Phi}, we first consider $\|u\|^4_{L^4}$ and compute the derivatives as follows:
\begin{align*}
\big\la D \big(\|u\|_{L^4}^4\big),v\big\ra_{H}&= 2\la |u|^2u,\vbar\ra_H+2\la |u|^2\ubar, v\ra_{H},\\
\big\la D^2\big(\|u\|^4_{L^4}\big)(z),v\big\ra_H & =  4 \la |u|^2,z\vbar+\zbar v\ra_H+2 \la u^2,\zbar \vbar\ra_H+2\la \ubar^2,zv\ra_H.
\end{align*}
We apply It\^o's formula to $\|u\|_{L^4}^4$ and obtain
\begin{align} \label{eqn:d.|u|^4_L4}
\d \|u(t)\|_{L^4}^4 & =  -8\gamma \big\|\Re\big(u(t)\grad \ubar(t)\big)\big\|^2_H\d t -4\gamma\la |u(t)|^2,|\grad u(t)|^2 \big\ra_H\d t+2\Re\Big( \la |u(t)|^2u(t),\i A\ubar(t)\ra_H\Big)\d t   \nt   \\
&\qquad-4\alpha\|u(t)\|_{L^4}^4\d t+8\sum_{k=1}^N \lambda_k^2\la |u(t)|^2,e_k^2\ra_H\d t \nt \\
&\qquad +2\big\la |u(t)|^2u(t),\overline{Q\d W(t)}\big\ra_H+2\big\la |u(t)|^2\ubar(t), Q\d W(t)\big\ra_{H}.
\end{align}
Next, regarding $\|u\|^2_{H^1}$, we have
\begin{align} \label{eqn:d|u|^2_H1}
\d \|u(t)\|^2_{H^1} & = -2\gamma \|u(t)\|^2_{H^2}\d t -2\alpha\|u(t)\|^2_{H^1}\d t+\Re\Big( \la |u(t)|^2u(t),\i A\ubar(t)\ra_H\Big)\d t \nt \\
&\qquad  +\Tr(AQQ^*)\d t+ \big\la u(t),\overline {Q\d W(t)}\big\ra_{H^1}+\big\la \ubar(t),Q\d W(t)\big\ra_{H^1}.
\end{align}
Combining two above identities produces
\begin{align*}
&\d \|u(t)\|^2_{H^1} -\frac{1}{2}\d \|u(t)\|_{L^4}^4 \nt \\
&= -2\gamma \|u(t)\|^2_{H^2}\d t  +4\gamma \big\|\Re\big(u(t)\grad \ubar(t)\big)\big\|^2_H\d t +2\gamma\la |u(t)|^2,|\grad u(t)|^2 \big\ra_H\d t \nt \\
&\qquad -2\alpha\|u(t)\|^2_{H^1}\d t +2\alpha\|u(t)\|_{L^4}^4\d t +\Tr(AQQ^*)\d t  \nt  \\
&\qquad -4\sum_{k=1}^N \lambda_k^2\la |u(t)|^2,e_k^2\ra_H\d t +\d M_{1,1}(t),
\end{align*}
where
\begin{align*}
\d M_{1,1}(t) & =  \big\la u(t),\overline {Q\d W(t)}\big\ra_{H^1}+\big\la \ubar(t),Q\d W(t)\big\ra_{H^1}\\
&\qquad -\big\la |u(t)|^2u(t),\overline{Q\d W(t)}\big\ra_H-\big\la |u(t)|^2\ubar(t), Q\d W(t)\big\ra_{H}.
\end{align*}
To estimate the terms involving $\grad u(t)$ on the above right-hand side, we note that $\gamma>0$ is taken to be sufficiently small. Also, using $H^{3/4}\subset L^\infty$ (in dimension $d=1$) and interpolation inequalities, we have the following chain of estimates
\begin{align*}
 \la |u|^2,|\grad u |^2\ra_H \le \|\grad u\|^2_{L^\infty}\|u\|^2_H\le c\|\grad u\|_{H^{3/4}}^2\|u\|^2_H&\le c\|u\|_{H^2}^{7/4}\|u\|_{H}^{9/4}.
\end{align*}
It follows that
\begin{align*} 
&4\gamma \big\|\Re\big(u\grad \ubar\big)\big\|^2_H +2\gamma\la |u|^2,|\grad u|^2 \big\ra_H \nt \\
&\le 6  \gamma\la |u|^2,|\grad u|^2 \big\ra_H \le c\,\gamma^{\frac{8}{7}}\|u\|^2_{H^2}+\kappa\alpha\|u\|^{18}_{H}.
\end{align*}
In the above, the positive constant $c=c(\kappa,\alpha)$ is independent of $\gamma$. Hence, by taking $\gamma$ sufficiently small, we obtain the bound
\begin{align} \label{ineq:d|u|^2_H1-d|u|^4_L4}
&\d \|u(t)\|^2_{H^1} -\frac{1}{2}\d \|u(t)\|_{L^4}^4 \nt \\
&\le  \kappa\alpha\|u(t)\|^{18}_{H} -2\alpha\|u(t)\|^2_{H^1}\d t +2\alpha\|u(t)\|_{L^4}^4\d t +\Tr(AQQ^*)\d t   +\d M_{1,1}(t),
\end{align}

Turning back to $\Phi$ defined in \eqref{form:Phi}, from \eqref{ineq:d|u|^n_H} (with $2n=6$ and $2n=18$) and \eqref{ineq:d|u|^2_H1-d|u|^4_L4}, we set $\Phi(t):=\Phi(u(t))$ and deduce that
\begin{align*}
\d \Phi(t) &\le    -2\alpha\|u(t)\|^2_{H^1}\d t +2\alpha\|u(t)\|_{L^4}^4\d t -3 \alpha \kappa \|u(t)\|^{6}_H\d t-8\alpha \kappa \|u(t)\|^{18}_H\d t \nt \\ 
&\qquad +\Tr(AQQ^*)\d t +2^5\frac{|\Tr(QQ^*)|^{3}}{\alpha^{2}}\d t +2^{33}\frac{|\Tr(QQ^*)|^{9}}{\alpha^{8}}\d t \nt  \\
&\qquad +\d M_{1,1}(t)+ 3\kappa\|u(t)\|^{4}_H \d M_0(t)  + 9\kappa\|u(t)\|^{16}_H \d M_0(t).
\end{align*}
In view of \eqref{ineq:Gagliardo-Nirenberg}, it holds that
\begin{align*}
2\alpha\|u\|_{L^4}^4 \le \frac{1}{2}\alpha\|u\|^2_{H^1}+\alpha\kappa\|u\|^{6}_H.
\end{align*}
We combine the above two estimates to obtain
\begin{align} \label{ineq:d.Phi(t)}
\d \Phi(t) 
 &\le    -\alpha\Phi(u(t))\d t   +C_{1,1}\d t +\d M_1(t),
\end{align}
where
\begin{align*}
C_{1,1}=\Tr(AQQ^*) +2^5\frac{|\Tr(QQ^*)|^{3}}{\alpha^{2}} +2^{33}\frac{|\Tr(QQ^*)|^{9}}{\alpha^{8}},
\end{align*}
and the semi Martingale process $M_1(t)$ is defined as
\begin{align} \label{form:M_1(t)}
\d M_1(t) & =   \big\la Au(t)-|u(t)|^2u(t)+\big(3\kappa\|u(t)\|^{4}_H+9\kappa\|u(t)\|^{16}_H\big)u(t) ,\overline{Q\d W(t)}\big\ra_H \nonumber \\
&\qquad +\big\la A\ubar(t) -|u(t)|^2\ubar(t)+\big(3\kappa\|u(t)\|^{4}_H+9\kappa\|u(t)\|^{16}_H\big)\ubar(t), Q\d W(t)\big\ra_{H},
\end{align}
whose quadratic variation is given by
\begin{align} \label{form:M_1(t):quadratic_variation}
\d \la M_1\ra(t) &= 4 \sum _{k=1}^N \big|\big\la  Au(t)-|u(t)|^2u(t)+\big(3\kappa\|u(t)\|^{4}_H+9\kappa\|u(t)\|^{16}_H\big)u(t) , Qe_k\big\ra \big|^2\d t.
\end{align}
In particular, we have the bound
\begin{align*} 
& 4 \sum _{k=1}^N \big|\big\la  Au-|u|^2u+\big(3\kappa\|u\|^{4}_H+9\kappa\|u\|^{16}_H\big)u , Qe_k\big\ra \big|^2\\
&\le 12 \Tr(AQQ^*)\|u\|^2_{H^1}+12 \Tr(QQ^*)\Big[ \|u\|^6_{L^6}+ \big(3\kappa\|u\|^{4}_H+9\kappa\|u\|^{16}_H\big)^2\|u\|^2_H    \Big]\\
& \le 12 \Tr(AQQ^*)\|u\|^2_{H^1}+12 \Tr(QQ^*)\Big[ \|u\|^6_{L^6}+ 18\kappa^2\|u\|^{10}_H+162\kappa^2\|u\|^{34}_H    \Big].
\end{align*}
With regard to $\|u\|^6_{L^6}$ on the above right-hand side, for each $\varepsilon>0$, we invoke Gagliardo-Nirenberg interpolation to see that the following holds
\begin{align*}
\|u\|_{L^6}^6 \le c\|u\|^{2}_{H^1}\|u\|^{4}_H,
\end{align*}
whence
\begin{align*} 
\d \la M_1\ra(t) & \le 12 \Tr(AQQ^*)\|u(t)\|^2_{H^1}\d t+12 \Tr(QQ^*)\Big[ c\|u(t)\|^{2}_{H^1}\|u(t)\|^{4}_H \nt \\
&\qquad\qquad\qquad\qquad+ 18\kappa^2\|u(t)\|^{10}_H+162\kappa^2\|u(t)\|^{34}_H    \Big].
\end{align*}
In light of \eqref{cond:Phi}, since $\Tr(QQ^*)<C\infty$ and $\|u\|^2_{H^1}\le \frac{4}{3}\Phi(u)$, we deduce further that
\begin{align} \label{ineq:d<M_1>(t)}
\d \la M_1\ra(t) & \le 16 \Tr(AQQ^*)\Phi(t)\d t+C\Big[ \|u(t)\|^{2}_{H^1}\|u(t)\|^{4}_H+\|u(t)\|^{10}_H+\|u(t)\|^{34}_H    \Big] \nonumber \\
&\le \Phi(t)^2\d t+ C\d t.
\end{align}

Next, for each $n\ge 2$, by It\^o's formula, we compute
\begin{align*}
\d \Phi(t)^n & = n \Phi(t)^{n-1}\d H(t)+\frac{1}{2}n(n-1)\Phi(t)^{n-2}\d \la M_1\ra(t).
\end{align*}
From \eqref{ineq:d.Phi(t)} and \eqref{ineq:d<M_1>(t)}, we obtain
\begin{align*}
\d \Phi(t)^n &\le -n\alpha \Phi(t)^n\d t+ n(8n-7)\Tr(AQQ^*)\Phi(t)^{n-1}\d t+n\Phi(t)^{n-1}\d M_1(t)\\
&\qquad +c\,\Phi(t)^{n-1}\d t+c\, \Phi(t)^{n-2}\Big(\|u(t)\|^{2}_{H^1}\|u(t)\|^{4}_H + \|u(t)\|^{10}_H+\|u(t)\|^{34}_H  \Big)\d t.  
\end{align*}
Observe that by virtue of Young's inequality,
\begin{align*}
& n(8n-7)\Tr(AQQ^*)\Phi(t)^{n-1}
\\ &\le \frac{1}{4}n\alpha \Phi(t)^n + (8n-7)^{n}\Big(\frac{n}{4(n-1)}\Big)^{n-1}\cdot \frac{|\Tr(AQQ^*)|^n}{\alpha^{n-1}}.
\end{align*}
Likewise,
\begin{align*}
& c\,\Phi(t)^{n-1}+c\, \Phi(t)^{n-2}\Big(\|u(t)\|^{2}_{H^1}\|u(t)\|^{4}_H + \|u(t)\|^{10}_H+\|u(t)\|^{34}_H  \Big)\\
&\qquad\qquad\le \frac{1}{4}n\alpha \Phi(t)^n +C\d t.
\end{align*}
Altogether, we arrive at the bound
\begin{align} \label{ineq:d.Phi(t)^n}
\d \Phi(t)^n &\le -\frac{1}{2}n\alpha \Phi(t)^n \d t + (8n-7)^{n}\Big(\frac{n}{4(n-1)}\Big)^{n-1}\cdot \frac{|\Tr(AQQ^*)|^n}{\alpha^{n-1}}\d t \nt \\
&\qquad +C\d t + n\Phi(t)^{n-1}\d M_1(t).
\end{align}
In the above right-hand side, we emphasize that the positive constant $C=C(n,\alpha)>0$ is independent of $\gamma$, $u$ and $Q$.

1. Turning back to \eqref{ineq:moment:|u|_H1+int.|u|_H1<|u_0|_H1+Ct} and \eqref{ineq:moment:|u|_H1<e^(-ct)|u_0|_H1}, we note that \eqref{ineq:d.Phi(t)^n} implies the bound in expectation
\begin{align*}
\frac{\d }{\d t}\E\Phi(t)^n \le -\frac{1}{2}n\alpha \E \Phi(t)^n \d t + (8n-7)^{n}\Big(\frac{n}{4(n-1)}\Big)^{n-1}\cdot \frac{|\Tr(AQQ^*)|^n}{\alpha^{n-1}}+C,\quad t\ge 0.
\end{align*}
On the one hand, we integrate the above estimate on both sides with respect to time $t$ and immediately obtain \eqref{ineq:moment:|u|_H1+int.|u|_H1<|u_0|_H1+Ct}. On the other hand, by virtue of Gronwall's inequality, we establish \eqref{ineq:moment:|u|_H1<e^(-ct)|u_0|_H1}, as claimed.

2. With regard to the sup norm estimate \eqref{ineq:moment:sup_[0,T]Phi(t)^n}, we note that \eqref{ineq:d.Phi(t)^n} implies the almost surely bound
\begin{align} \label{ineq:sup_[0,T]Phi(t)^n}
\sup_{[0,T]}\Phi(t)^n &\le \Phi(0)^{n} + (8n-7)^{n}\Big(\frac{n}{4(n-1)}\Big)^{n-1}\cdot \frac{|\Tr(AQQ^*)|^n}{\alpha^{n-1}} T +C\, T \nt \\
&\qquad\qquad + \sup_{t\in[0,T]} \int_0^t n\Phi(r)^{n-1}\d M_1(r).
\end{align}
Concerning the semi Martingale on the above right-hand side, we invoke \eqref{ineq:d<M_1>(t)} while making use of Burkholder's inequality to infer
\begin{align*}
&\E  \sup_{t\in[0,T]} \Big|\int_0^t n\Phi(r)^{n-1}\d M_1(r)\Big|\\
& \le n \Big|\E\int_0^T \Phi(r)^{2n-1} \cdot 16 \Tr(AQQ^*)\d r\\
&\qquad+C\E\int_0^T \Phi(r)^{2n-2}\Big[ \|u(r)\|^{2}_{H^1}\|u(r)\|^{4}_H+\|u(r)\|^{10}_H+\|u(r)\|^{34}_H    \Big] \d r \Big|^{1/2}\\
&\le  \frac{1}{2} n+ n \E\int_0^T \Phi(r)^{2n-1} \cdot 8 \Tr(AQQ^*)\d r\\
&\qquad+C\E\int_0^T \Phi(r)^{2n-2}\Big[ \|u(r)\|^{2}_{H^1}\|u(r)\|^{4}_H+\|u(r)\|^{10}_H+\|u(r)\|^{34}_H    \Big] \d r .
\end{align*}
where $C=C(n,\alpha,\Tr(QQ^*))$ is a positive constant independent of $T,\gamma$ and $u_0$. We employ Young's inequality to see that
\begin{align*}
& C\E\int_0^T \Phi(r)^{2n-2}\Big[ \|u(r)\|^{2}_{H^1}\|u(r)\|^{4}_H+\|u(r)\|^{10}_H+\|u(r)\|^{34}_H    \Big] \d r\\
&\le  \frac{1}{4}\alpha n \E\int_0^T \Phi(r)^{2n}\d r+C T,
\end{align*}
and that
\begin{align*}
&n \E\int_0^T \Phi(r)^{2n-1} \cdot 8 \Tr(AQQ^*)\d r\\
&\le  \frac{1}{4}\alpha n \E\int_0^T \Phi(r)^{2n}\d r + 4\Big(\frac{2n-1}{n}\Big)^{2n-1}\frac{|\Tr(AQQ^*)|^{2n}}{\alpha^{2n-1}} T.
\end{align*}
As a consequence, we deduce
\begin{align*}
&\E  \sup_{t\in[0,T]} \Big|\int_0^t n\Phi(r)^{n-1}\d M_1(r)\Big|\\ 
&\le \frac{1}{2}\alpha n \E\int_0^T \Phi(r)^{2n}\d r + 4\Big(\frac{2n-1}{n}\Big)^{2n-1}\frac{|\Tr(AQQ^*)|^{2n}}{\alpha^{2n-1}} T +C T,
\end{align*}
for some positive constant $C=C(n,\alpha,\Tr(QQ^*))$ is a positive constant independent of $T,\gamma$ and $u_0$. In view of \eqref{ineq:moment:|u|_H1+int.|u|_H1<|u_0|_H1+Ct}, we further obtain
\begin{align*}
&\E  \sup_{t\in[0,T]} \Big|\int_0^t n\Phi(r)^{n-1}\d M_1(r)\Big|\\ 
&\le \E \big[\Phi(0)^{2n}\big]+ \Big[(16n-7)^{2n}\Big(\frac{2n}{4(2n-1)}\Big)^{2n-1}+ 4\Big(\frac{2n-1}{n}\Big)^{2n-1}\Big]\frac{|\Tr(AQQ^*)|^{2n}}{\alpha^{2n-1}} T +C T.
\end{align*}
This together with \eqref{ineq:sup_[0,T]Phi(t)^n} implies the bound
\begin{align*}
\E \big[\sup_{[0,T]}\Phi(t)^n\big] &\le \E\big[\Phi(0)^{n}\big]+ \E \big[\Phi(0)^{2n}\big] + CT \\
&\qquad + (8n-7)^{n}\Big(\frac{n}{4(n-1)}\Big)^{n-1}\cdot \frac{|\Tr(AQQ^*)|^n}{\alpha^{n-1}}T\\
&\qquad + \Big[(16n-7)^{2n}\Big(\frac{2n}{4(2n-1)}\Big)^{2n-1}+ 4\Big(\frac{2n-1}{n}\Big)^{2n-1}\Big]\frac{|\Tr(AQQ^*)|^{2n}}{\alpha^{2n-1}} T,
\end{align*}
which is the desired estimate \eqref{ineq:moment:sup_[0,T]Phi(t)^n}. The proof is thus finished.
\end{proof}

Having established moment bounds in $H$ and $H^1$, we turn to controlling the energy growth in probability, which is crucial in proving uniform ergodicity in Section \ref{sec:ergodicity:Ginzburg-Landau}. For this purpose, we introduce the following functional for $n\ge 1$
\begin{align} \label{form:E_n(t)}
E_n(t;u_0)= \Phi(\ug(t;u_0))^n +\frac{1}{2}n\alpha \int_0^t\Phi(\ug(s;u_0))^n\d s,
\end{align}
where we recall $\Phi$ defined in \eqref{form:Phi}. In Lemma \ref{lem:moment:P[E_k(t)>Phi(0)^n+rho(Phi(0)^2n+t)]} below, we provide two tail probabilities on $E_n$. The former appears in the proof of Lemma \ref{lem:ergodicity:P(ell(k+1)=k+1|ell(k)=infinity):Girsanov} whereas the latter is an ingredient for proving Proposition \ref{prop:ergodicity:P(ell(k+1)=l|ell(k)=l)}.

\begin{lemma} \label{lem:moment:P[E_k(t)>Phi(0)^n+rho(Phi(0)^2n+t)]}
For all $n\ge 1,p\ge 1$ and $\rho\ge 0$, the followings hold
\begin{align}
&\P\Big(\sup_{t\in[0,T]}\big(E_n(t;u_0) - (C_n-1) t\big)  \ge \Phi(u_0)^n+\rho\sqrt{T} \Big)\le K_{n,p}\frac{\E[\Phi(u_0)^{np}]+1 }{\rho
^p },\label{ineq:moment:P[sup_[0,T]E_k(t)>Phi(0)^n+rho(Phi(0)^2n+t)} \\
&\P\Big(\sup_{t\in[T,\infty)}\big(E_n(t;u_0) - C_n t\big)  \ge \Phi(u_0)^n+\rho \Big)\le K_{n,p}\frac{1}{ (\rho+T)^{2p-1}}\big(\E\Phi(u_0)^{2np}+1\big), \label{ineq:moment:P[sup_[T,infty)E_k(t)>Phi(0)^n+Phi(0)^2n+1+rho)}
\end{align}
for some positive constants $C_n,K_{n,p}$ independent of $u_0,T,\rho$ and $\gamma$. In the above, $E_n(t;u_0)$ is defined in \eqref{form:E_n(t)}.
\end{lemma}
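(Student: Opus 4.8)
The plan is to derive both inequalities from the pathwise It\^o bound for $\Phi(\ug(t))^n$ that was already established inside the proof of Lemma~\ref{lem:moment:H_1}. Write $\Phi(t):=\Phi(\ug(t;u_0))$. By \eqref{ineq:d.Phi(t)^n} there is an explicit constant $c_{1,n}>0$ with $\d\Phi(t)^n\le-\tfrac12 n\alpha\,\Phi(t)^n\,\d t+c_{1,n}\,\d t+n\Phi(t)^{n-1}\,\d M_1(t)$, where $M_1$ is the martingale \eqref{form:M_1(t)} and, by \eqref{ineq:d<M_1>(t)}, $\d\la M_1\ra(t)\le\Phi(t)^2\,\d t+C\,\d t$. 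I would fix $C_n$ strictly above the critical value, say $C_n:=2c_{1,n}$, and integrate this against the definition \eqref{form:E_n(t)} of $E_n$, which yields, for all $t\ge0$,
\[
E_n(t;u_0)-C_n t\;\le\;\Phi(u_0)^n-c_{1,n}t+\mathcal{M}_n(t),\qquad \mathcal{M}_n(t):=n\int_0^t\Phi(s)^{n-1}\,\d M_1(s),
\]
where $\mathcal{M}_n$ is a continuous local martingale with $\d\la\mathcal{M}_n\ra(t)\le C_n\big(\Phi(t)^{2n}+1\big)\,\d t$. Choosing $C_n$ supercritical is not cosmetic: it is exactly the genuine negative drift $-c_{1,n}t$ that will make the supremum over the unbounded interval in \eqref{ineq:moment:P[sup_[T,infty)E_k(t)>Phi(0)^n+Phi(0)^2n+1+rho)} accessible.

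For \eqref{ineq:moment:P[sup_[0,T]E_k(t)>Phi(0)^n+rho(Phi(0)^2n+t)}, dropping $-c_{1,n}t\le0$ shows that the event there is contained in $\{\sup_{[0,T]}\mathcal{M}_n(t)\ge\rho\sqrt T\}$, so it suffices to estimate $\E[\sup_{[0,T]}|\mathcal{M}_n(t)|^p]$. By Doob's $L^p$ inequality and Burkholder--Davis--Gundy this is bounded by $C_p\,\E[\la\mathcal{M}_n\ra(T)^{p/2}]\lesssim T^{p/2-1}\,\E\int_0^T\big(\Phi(s)^{np}+1\big)\,\d s$ after H\"older in time. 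To close the estimate uniformly in $T$ I would not use \eqref{ineq:moment:|u|_H1+int.|u|_H1<|u_0|_H1+Ct} but rather the exponential moment decay \eqref{ineq:moment:|u|_H1<e^(-ct)|u_0|_H1} at level $np$, giving $\int_0^T\E\Phi(s)^{np}\,\d s\lesssim\E[\Phi(u_0)^{np}]\min\{1,T\}+T$; feeding this back makes the $T$-powers cancel against $(\rho\sqrt T)^{-p}$ in both the $T\le1$ and $T>1$ regimes, and Chebyshev's inequality gives the claim.

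For \eqref{ineq:moment:P[sup_[T,infty)E_k(t)>Phi(0)^n+Phi(0)^2n+1+rho)} I would split $[T,\infty)=\bigcup_{j\ge1}[jT,(j+1)T]$ into \emph{unit-length} blocks and exploit the negative drift block by block: after replacing $C_n$ by the still-supercritical constant $C_n-\tfrac12 c_{1,n}$, on the $j$-th block an excursion of $E_n(t;u_0)-C_nt$ above $\Phi(u_0)^n+\rho$ forces the restarted Lyapunov functional to exceed a threshold of order $\rho+c_{1,n}jT$. Applying the already-proven estimate \eqref{ineq:moment:P[sup_[0,T]E_k(t)>Phi(0)^n+rho(Phi(0)^2n+t)} with exponent $2p$ (via the Markov property, with initial datum $\ug(jT)$, whose moments are uniformly bounded by \eqref{ineq:moment:|u|_H1<e^(-ct)|u_0|_H1}) controls the $j$-th block by $\lesssim(\E[\Phi(u_0)^{2np}]+1)(T^p+1)(\rho+c_{1,n}jT)^{-2p}$; here the exponent $2p$ is what produces both the moment $\Phi(u_0)^{2np}$ and, through the $(\sqrt T)^{2p}=T^p$-scaling, the factor $T^p$, while the drift accumulated before the $j$-th block is handled by a further splitting according to whether $\tfrac{n\alpha}{2}\int_0^{jT}\Phi^n\,\d s$ lies near its mean $\sim c_{1,n}jT$. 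Summing over $j$ and using $\sum_{j\ge1}(\rho+c_{1,n}jT)^{-2p}\lesssim \frac{1}{T(\rho+T)^{2p-1}}$ then reproduces the asserted bound. \textbf{The main obstacle} is precisely this second step: over an unbounded horizon a bare martingale maximal inequality is useless, since $\sup_{t\ge T}\mathcal{M}_n(t)=+\infty$ almost surely; one has to trade, interval by interval, the linearly growing drift against the $\sqrt t$-sized martingale fluctuations, and the bookkeeping needed to control the pre-$jT$ drift contribution---without reintroducing a raw quadratic-variation term, whose linear growth would destroy the summability over $j$---is the delicate point, and is also what forces the higher moment $\Phi(u_0)^{2np}$ in place of $\Phi(u_0)^{np}$.
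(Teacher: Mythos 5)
Your overall route is the same as the paper's: integrate the pathwise bound \eqref{ineq:d.Phi(t)^n} to reduce $E_n(t;u_0)-C_nt-\Phi(u_0)^n$ to the stochastic integral $\mathcal{M}_n(t)=\int_0^t n\Phi(s)^{n-1}\,\d M_1(s)$, then use Chebyshev, Burkholder, H\"older in time and the uniform moment bound \eqref{ineq:moment:|u|_H1<e^(-ct)|u_0|_H1} for the finite horizon, and a block decomposition of $[T,\infty)$ with thresholds growing linearly in the block index for the infinite horizon. Your treatment of \eqref{ineq:moment:P[sup_[0,T]E_k(t)>Phi(0)^n+rho(Phi(0)^2n+t)} is correct and matches the paper's (the paper runs Chebyshev at exponent $2p$ and lands on $\rho^{-2p}(\E\Phi(u_0)^{2np}+1)$, i.e.\ the statement after relabelling; your direct exponent-$p$ version is fine for $p\ge 2$, while for $1\le p<2$ the BDG/H\"older step needs the concave-Jensen variant --- a cosmetic point).

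The soft spot is exactly where you flag it, and your proposed fix does not close it. On the $j$-th block you cannot apply \eqref{ineq:moment:P[sup_[0,T]E_k(t)>Phi(0)^n+rho(Phi(0)^2n+t)} to the restarted process alone, because $E_n(t;u_0)$ carries the history term $\tfrac{n\alpha}{2}\int_0^{jT}\Phi(s)^n\,\d s$. After subtracting the drift, the only available control on that term is through $\mathcal{M}_n(jT)$ (pathwise, $\tfrac{n\alpha}{2}\int_0^{jT}\Phi^n-C_njT\le\Phi(u_0)^n-\Phi(jT)^n+\mathcal{M}_n(jT)$), and $\E|\mathcal{M}_n(jT)|^{2p}\lesssim(jT)^p(\E\Phi(u_0)^{2np}+1)$ because the quadratic variation grows linearly in time. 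Chebyshev at level $\sim\rho+jT$ therefore gives only $(jT)^p(\rho+jT)^{-2p}\sim(\rho+jT)^{-p}$ for this piece, not $(\rho+jT)^{-2p}$, and no splitting ``around the mean'' improves this with polynomial moments only (exponential martingale estimates are unavailable since $\la M_1\ra$ is controlled by $\Phi^2$, which has no exponential moments). Summing over blocks then yields $C\,T^{-1}(\rho+T)^{-(p-1)}(\E\Phi(u_0)^{2np}+1)$, which is weaker than the displayed right-hand side of \eqref{ineq:moment:P[sup_[T,infty)E_k(t)>Phi(0)^n+Phi(0)^2n+1+rho)} and whose block sum diverges at $p=1$. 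Since $p$ is a free parameter this is not fatal: running everything at exponent $2p$ recovers the stated form with $\E\Phi(u_0)^{4np}$ in place of $\E\Phi(u_0)^{2np}$, which is all that is needed downstream. But as written your sketch does not deliver the exponents printed in the lemma. For comparison, the paper bounds each block by $(\rho+kT)^{-2p}\,\E\big[\Phi(kT)^{2np}+\Phi(u_0)^{2np}+\sup_t|\int_{kT}^t n\Phi^{n-1}\,\d M_1|^{2p}\big]$, i.e.\ it works directly with the block-restarted stochastic integral and the moments of $\Phi(kT)$; in doing so it too drops the accumulated integral $\tfrac{n\alpha}{2}\int_0^{kT}\Phi^n\,\d s$ without comment, so you have correctly located the genuinely delicate step.
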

\begin{proof}
With regard to \eqref{ineq:moment:P[sup_[0,T]E_k(t)>Phi(0)^n+rho(Phi(0)^2n+t)}, from \eqref{ineq:d.Phi(t)^n}, there exists $C_n$ independent of $\gamma,T$ and $u_0$ such that
\begin{align*}
E_n(t;u_0)-(C_n-1) t \le \Phi(u_0)^n +\int_0^t n\Phi(s)^{n-1}\d M_1(s),
\end{align*}
where $M_1(t)$ is the semi Martingale process given by \eqref{form:M_1(t)}.
It follows that for all $\rho>0$, Markov's inequality implies
\begin{align*}
&\P\Big(\sup_{t\in[0,T]}\big(E_n(t;u_0) - (C_n-1) t\big) \ge \Phi(u_0)^n+\rho\sqrt{T}\Big)\\
&\le \P\Big(\sup_{t\in[0,T]}\Big|\int_0^t n\Phi(s)^{n-1}\d M_1(s)\Big| \ge \rho\sqrt{T}\Big)\le \frac{1}{\rho^{2p} T^{p} }\E\bigg[\sup_{t\in[0,T]}\Big|\int_0^t n\Phi(s)^{n-1}\d M_1(s)\Big|^{2p}\bigg].
\end{align*}
Furthermore, Burkholder's inequality together with estimate \eqref{ineq:d<M_1>(t)} produces the bound
\begin{align*}
\E \bigg[\sup_{t\in[0,T]}\Big|\int_0^t n\Phi(s)^{n-1}\d M_1(s)\Big|^{2p}\bigg] &\le c\, \E \Big[\Big|\int_0^T  \Phi(s)^{2n-2}\d \la M_1\ra(s)\Big|^{p}\Big]\\
&\le c \E\Big[\Big|\int_0^T \big(\Phi(s)^{2n}+1\big)\d s\Big|^p\Big].
\end{align*}
We invoke Holder's inequality while making use of \eqref{ineq:moment:|u|_H1<e^(-ct)|u_0|_H1} to infer
\begin{align*}
 \E\Big[\Big|\int_0^T \big(\Phi(s)^{2n}+1\big)\d s\Big|^p\Big]
&\le c\, T^{p-1}\E\int_0^T \big(\Phi(s)^{2np} +1\big) \d s\le c\, T^{p}\big(\E\Phi(u_0)^{2np}+1\big),
\end{align*}
whence
\begin{align*} 
\E \bigg[\sup_{t\in[0,T]}\Big|\int_0^t n\Phi(s)^{n-1}\d M_1(s)\Big|^{2p}\bigg] \le c\, T^{p}\big(\E\Phi(u_0)^{2np}+1\big).
\end{align*}
Altogether, we obtain
\begin{align*}
\P\Big(\sup_{t\in[0,T]}\big(E_n(t;u_0) - (C_n-1) t\big) \ge  \Phi(u_0)^n+\rho\sqrt{T}\Big) \le \frac{c}{\rho^{2p} T^{p} }\cdot T^{p}\big(\E\Phi(u_0)^{2np}+1\big),
\end{align*}
which establishes \eqref{ineq:moment:P[sup_[0,T]E_k(t)>Phi(0)^n+rho(Phi(0)^2n+t)}, as claimed.

Turning to \eqref{ineq:moment:P[sup_[T,infty)E_k(t)>Phi(0)^n+Phi(0)^2n+1+rho)}, for each $k=0,1,2,\dots$, observe that
\begin{align*}
&\P\Big(\sup_{t\in[T+k,T+k+1]}\big(E_n(t;u_0) - C_n t\big) \ge \Phi(u_0)^n+\rho\Big)\\
& \le  \P\Big(\sup_{t\in[T+k,T+k+1]}\big(E_n(t;u_0) - (C_n -1)t\big) \ge \Phi(u_0)^n+\rho+T+k\Big)\\
&\le  \P\Big(\sup_{t\in[T+k,T+k+1]}\Phi(T+k)^n-\Phi(u_0)^n+\Big|\int_{T+k}^t n\Phi(s)^{n-1}\d M_1(s)\Big| \ge \rho+T+k\Big)\\
&\le \frac{c}{(\rho+T+k)^{2p}}\E\bigg[\Phi(T+k)^{2np}+\Phi(u_0)^{2np}+\sup_{t\in[T+k,T+k+1]}\Big|\int_{T+k}^{t} n\Phi(s)^{n-1}\d M_1(s)\Big|^{2p}\bigg].
\end{align*}
Similar to the proof of \eqref{ineq:moment:P[sup_[0,T]E_k(t)>Phi(0)^n+rho(Phi(0)^2n+t)}, we employ Holder's inequality and \eqref{ineq:moment:|u|_H1<e^(-ct)|u_0|_H1} to infer
\begin{align*}
&\E\bigg[\sup_{t\in[T+k,T+k+1]}\Big|\int_{T+k}^{t} n\Phi(s)^{n-1}\d M_1(s)\Big|^{2p}\bigg]\\
&\le c\,\E\Big[\Big|\int_{T+k}^{T+k+1} (\Phi(s)^{2n}+1)\d s\Big|^{p}\Big]\le c\,\big(\E\Phi(u_0)^{2np}+1\big),
\end{align*}
implying
\begin{align*}
\P\Big(\sup_{t\in[T+k,T+k+1]}\big(E_n(t;u_0) - C_n t\big) \ge \Phi(u_0)^n+\rho\Big) \le \frac{c}{(\rho+T+k)^{2p}}\big(\E\Phi(u_0)^{2np}+1\big).
\end{align*}
As a consequence, 
\begin{align*}
&\P\Big(\sup_{t\in[T,\infty]}\big(E_n(t;u_0) - C_n t\big) \ge \Phi(u_0)^n+\rho\Big)\\
& \le c\,\big(\E\Phi(u_0)^{2np}+1\big)\sum_{k\ge 0}\frac{1}{(\rho+T+k)^{2p}}\le   \frac{c}{ (\rho+T)^{2p-1}}\big(\E\Phi(u_0)^{2np}+1\big).
\end{align*}
The proof is thus complete.
\end{proof}

Finally, we turn to the stochastic Foias-Prodi estimate, stating that if the low modes are close, then so are the high modes. In order to precisely state the result, we introduce the function
\begin{align} \label{form:J}
J(u_1,u_2)=\|u_1-u_2\|^2_{H^1}-\Re\big\{\la u_1 u_2,(\ubar_1-\ubar_2)^2\ra_H\big\}+ \kappa_2\big(\Phi(u_1)+\Phi(u_2)\big)\|u_1-u_2\|^2_H.
\end{align}
By Sobolev embedding $H^1\subset L^\infty$, we may pick $\kappa_2$ sufficiently large such that
\begin{align*}
\frac{1}{2}\kappa_2\big(\Phi(u_1)+\Phi(u_2)\big)\|u_1-u_2\|^2_H \ge \big|\Re\big\{\la u_1 u_2,(\ubar_1-\ubar_2)^2\ra_H\big\}\big|.
\end{align*}
In other words,
\begin{align} \label{cond:J}
J(u_1,u_2) \ge \|u_1-u_2\|^2_{H^1}+ \frac{1}{2}\kappa_2\big(\Phi(u_1)+\Phi(u_2)\big)\|u_1-u_2\|^2_H.
\end{align}

\begin{lemma} \label{lem:Foias-Prodi:Ginzburg-Landau}
For all $N\ge 1$, let $u_1^0,u_2^0\in L^2(\Omega;H^1)$ be such that $P_Nu_1^0=P_Nu_2^0$ where $P_N$ is the projection on \textup{span}$\{e_1,\dots,e_N\}$ and let $\tau_N$ be the stopping time defined as 
\begin{align} \label{form:tau:Foias-Prodi}
\tau_N=\inf\{t\ge 0: P_N\ug(t;u_1^0)\neq P_N\ug(t;u_2^0)\}.
\end{align}
Then, for all $t\ge 0$, the following holds
\begin{align} \label{ineq:Foias-Prodi}
\E\Big[ \exp \Big\{ \alpha (t\mi\tau_N)& -\frac{c_*}{\alpha_N^{1/8}}\int_0^{t\mi \tau_N}\hspace{-0.5cm}\big[2\alpha\Phi(\ug(s;u_1^0))^4+2\alpha\Phi(\ug(s;u_2^0))^4+1\big] \d s \Big\}  J(t\mi \tau_N)\Big]\notag \\
&\le \E [J(u_1^0,u_2^0)],
\end{align}
for some positive constant $c_*$ independent of $u_1^0,\,u_2^0,\,t,\,N$ and $\gamma$. In the above, $J$ is given by \eqref{form:J} and $\alpha_N$ is the eigenvalue of $A$ as in \eqref{cond:Ae_k=alpha_k.e_k}.
\end{lemma}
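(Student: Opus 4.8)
The plan is to establish the differential inequality for $J(\ug_1(t),\ug_2(t))$ under the Foias--Prodi constraint $P_N\ug_1 = P_N\ug_2$, and then absorb the unfavorable terms using the weights $e^{\alpha t}$ and the exponential integral factor. First I would set $w = \ug_1 - \ug_2$ and note that on $[0,\tau_N]$ we have $P_N w = 0$, so $w = Q_N w$ and hence the spectral gap estimate $\|w\|_H^2 \le \alpha_N^{-1}\|w\|_{H^1}^2$ (and more generally $\|w\|_{H^s}^2 \le \alpha_N^{s-1}\|w\|_{H^1}^2$ for $s\le 1$) is available. The equation for $w$ is
\begin{align*}
\d w(t) = -(\gamma+\i)Aw(t)\,\d t + \i\big(|\ug_1|^2\ug_1 - |\ug_2|^2\ug_2\big)(t)\,\d t - \alpha w(t)\,\d t,
\end{align*}
which is \emph{noise-free} because the additive forcing $Q\d W$ cancels in the difference. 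This is the structural reason the argument works: $J(w)$ evolves by an ODE-type inequality with no martingale part, so no exponential martingale estimate is needed.

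Next I would compute $\d J$ via the chain rule. The term $\|w\|_{H^1}^2$ contributes $-2\gamma\|w\|_{H^2}^2 - 2\alpha\|w\|_{H^1}^2 + 2\Re\la \i(|\ug_1|^2\ug_1-|\ug_2|^2\ug_2), A\bar w\ra_H$; the correction term $-\Re\la \ug_1\ug_2,(\bar w)^2\ra_H$ is included precisely to cancel the worst part of that cubic contribution (this is the standard trick from Debussche--Odasso: the combination $\|w\|_{H^1}^2 - \Re\la u_1u_2,\bar w^2\ra$ has a cubic derivative that is controlled, unlike $\|w\|_{H^1}^2$ alone, whose $H^1$-derivative of the nonlinearity is only $H^{-1/2}$-type and not absorbable). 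The remaining cubic error terms are estimated in dimension one using $H^1 \subset L^\infty$ and interpolation, producing bounds of the form $c\,(\|\ug_1\|_{H^1}^2 + \|\ug_2\|_{H^1}^2 + \dots)\|w\|_{H^{1/2}}\|w\|_{H^1} \le \text{(dissipation)} + c\,\alpha_N^{-1/8}(\Phi(\ug_1)^2+\Phi(\ug_2)^2+1)\|w\|_{H^1}^2$, where the negative power of $\alpha_N$ comes from trading an $\|w\|_{H^{1/2}}$ (or a fractional $H^2$ norm) against the $\gamma\|w\|_{H^2}^2$ dissipation \emph{and} against the spectral gap factor $\alpha_N^{-1/2}$ from $P_Nw=0$. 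The lower-order weight term $\kappa_2(\Phi(\ug_1)+\Phi(\ug_2))\|w\|_H^2$ is differentiated using the already-established drift inequality \eqref{ineq:d.Phi(t)} for $\Phi(\ug_i)$ and It\^o's formula for $\|w\|_H^2$; its martingale part (coming from $\d M_1$ in \eqref{form:M_1(t)}) is genuinely present here, so one must check it either integrates to zero in expectation after the stopping-time argument or is controlled — in fact since $\|w\|_H^2$ is itself noise-free, the only martingale in $\d(\Phi(\ug_i)\|w\|_H^2)$ is $\|w\|_H^2\,\d M_1^{(i)}$, which has zero expectation.

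Assembling these estimates, on $[0,\tau_N]$ one obtains
\begin{align*}
\d J(t) \le -\alpha J(t)\,\d t + \frac{c_*}{\alpha_N^{1/8}}\big(\Phi(\ug_1(t))^2 + \Phi(\ug_2(t))^2 + 1\big) J(t)\,\d t + \d(\text{mart}),
\end{align*}
using \eqref{cond:J} to dominate $\|w\|_{H^1}^2 + \tfrac12\kappa_2(\Phi(\ug_1)+\Phi(\ug_2))\|w\|_H^2$ from below by a constant multiple of $J$ wherever a coercive lower bound is needed to close the loop (so that $-2\alpha\|w\|_{H^1}^2 - \alpha\kappa_2(\dots)\|w\|_H^2 \le -\alpha J$). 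Then I would apply It\^o's formula to the product $\exp\{\alpha t - \frac{c_*}{\alpha_N^{1/8}}\int_0^t(\Phi(\ug_1)^2+\Phi(\ug_2)^2+1)\,\d s\}\, J(t)$: the drift of this quantity is nonpositive by the displayed inequality, and the martingale part has zero expectation after optional stopping at $t\wedge\tau_N$ (justified by a localization/truncation argument using the moment bounds of Lemma~\ref{lem:moment:H_1}, since $\Phi(\ug_i)$ has all moments). Taking expectations and evaluating at $t\wedge\tau_N$ yields \eqref{ineq:Foias-Prodi}.

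\textbf{The main obstacle} I anticipate is the cubic nonlinearity estimate in $H^1$: controlling $2\Re\la\i(|\ug_1|^2\ug_1 - |\ug_2|^2\ug_2),A\bar w\ra_H - \frac{\d}{\d t}\Re\la\ug_1\ug_2,\bar w^2\ra_H$ so that, after using the $J$-specific cancellation, what remains is bounded by $\varepsilon\gamma\|w\|_{H^2}^2 + c\,\alpha_N^{-1/8}(\Phi(\ug_1)^2+\Phi(\ug_2)^2+1)\|w\|_{H^1}^2$ with a constant independent of $\gamma$ and $N$. This requires careful bookkeeping: one expands the difference of cubic terms into monomials in $\ug_1,\ug_2,w$, uses $\|fg\|_{H^{-1/2}} \lesssim \|f\|_{L^\infty}\|g\|_{H^{-1/2}} + \dots$ and the one-dimensional embeddings $H^{3/4}\subset L^\infty$, $H^{1/2}\subset L^4$, and must extract exactly the power $\alpha_N^{-1/8}$ from the interplay of the spectral gap ($\|w\|_{H^s}\le\alpha_N^{(s-1)/2}\|w\|_{H^1}$ on the range of $Q_N$) with the small-$\gamma$ absorption of higher-derivative norms — the precise exponent $1/8$ being the one that makes all the Young's inequality splittings consistent. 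A secondary technical point is the rigorous justification that the local martingales appearing (from $\d M_1^{(i)}$ weighted by $\|w\|_H^2$ and by the exponential factor) are true martingales after stopping; this follows from the uniform-in-$\gamma$ moment bounds on $\Phi(\ug_i)$ in Lemma~\ref{lem:moment:H_1} together with a standard stopping-time truncation, but should be stated.
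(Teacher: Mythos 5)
Your proposal is correct and follows essentially the same route as the paper: the noise-free difference equation, the cancellation of $2\Re\{\i\la u_1u_2,(\grad\bar w)^2\ra_H\}$ by the It\^o derivative of the correction term $-\Re\la u_1u_2,\bar w^2\ra_H$, the spectral gap $\|w\|_{H^{3/4}}\le\alpha_N^{-1/8}\|w\|_{H^1}$ on the range of $Q_N$ combined with $H^{3/4}\subset L^\infty$, the use of \eqref{cond:J} to close the differential inequality for the weighted $J$, and optional stopping for the martingales $\|w\|_H^2\,\d M_1^{(i)}$. One small correction: the factor $\alpha_N^{-1/8}$ comes solely from the spectral gap, not from any absorption into the $\gamma\|w\|_{H^2}^2$ dissipation — the paper simply discards that nonpositive term, which is what keeps the estimate uniform as $\gamma\to 0$.
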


\begin{remark}
In the case of fixed $\gamma>0$, in view of \cite[Proposition]{odasso2006ergodicity}, we note that one can also deduce the following path-wise estimate
\begin{align*}
&\|\ug(t\mi \tau_N;u_1^0) -\ug(t\mi \tau_N;u_2^0)   \|_H\\
&\le \|u_1^0-u_2^0\|_H \exp\Big\{ -\frac{1}{2}\gamma\mu_{N+1}(t\mi \tau_N)+c\big[ F(t\mi \tau_N;u_1^0)+ F(t\mi \tau_N;u_2^0)\big]   \Big\},
\end{align*}
where 
\begin{align*}
F(t;u_i^0) = \|\ug(t;u_i^0)\|_H^2+\gamma\int_0^t \|\ug(s;u_i^0)\|_H^2\d s,\quad i=1,2.
\end{align*}
In particular, it is clear that as $\gamma \to 0$, $N$ must be taken arbitrarily large accordingly, allowing for a dissipative effect strong enough to dominate the $F$ terms.

Here, in the vanishing viscosity regime, we are only able to derive an estimate in expectation, owing to the lack of dissipation as $\gamma\to 0$. Nevertheless, the result of Lemma \ref{lem:Foias-Prodi:Ginzburg-Landau} is sufficient to carry out the coupling argument presented in Section \ref{sec:ergodicity:Ginzburg-Landau} so as to establish ergodicity. Lemma \ref{lem:Foias-Prodi:Ginzburg-Landau} can also be considered as an analogue of \cite[Proposition 2.1]{debussche2005ergodicity} adapted to the setting of the Ginzburg-Landau equation.

\end{remark}

\begin{proof}[Proof of Lemma \ref{lem:Foias-Prodi:Ginzburg-Landau}]
For notation convenience, we denote
\begin{align*}
u_1(t)=\ug(t;u_1^0),\quad \text{and} \quad u_2=\ug(t;u_2^0).
\end{align*}
Letting $v=u_1-u_2$, observe that $v$ satisfies the equation
\begin{align*}
\frac{\d}{\d t} v= -\gamma Av-\i Av -\alpha v+\i\big[ |u_1|^2 u_1-|u_2|^2u_2\big].
\end{align*}
We note that the last term on the above right hand side can be recast as
\begin{align*}
|u_1|^2 u_1-|u_2|^2 u_2 = (|u_1|^2+|u_2|^2)v+u_1u_2\vbar.
 \end{align*}
A routine calculation produces
\begin{align*}
\frac{\d}{\d t}\|v\|^2_{H}& = -2\gamma\| v\|^2_{H^1}-2\alpha\|v\|^2_{H}+2\Re\big\{ \i \la u_1u_2,(\vbar)^2\ra_H \big)\big\},
\end{align*}
Using Holder's inequality and $H^1\subset H^{3/4}\subset L^\infty$, we have
\begin{align} \label{cond:dimension-one}
\big|\Re\big\{ \i \la u_1u_2,(\vbar)^2\ra_H \big)\big\}\big|& \le \|u_1\|_H\|u_2\|_H \|v\|^2_{H^{3/4}}\le c\big(\Phi(u_1)+\Phi(u_2)\big)\|v\|^2_{H^{3/4}}.
\end{align}
Under the condition that $P_Nu_1=P_Nu_2$, it holds that
\begin{align*}
\|v\|_{H^{3/4}}= \|Q_Nv\|_{H^{3/4}}  \le \frac{1}{\alpha_N^{1/8}} \|v\|_{H^1},
\end{align*}
whence
\begin{align*}
\big|\Re\big\{ \i \la u_1u_2,(\vbar)^2\ra_H \big)\big\}\big|& \le  \frac{c}{\alpha_N^{1/4}}\big(\Phi(u_1)+\Phi(u_2)\big)\|v\|^2_{H^{1}}.
\end{align*}
It follows that
\begin{align} \label{ineq:d|v|^2_H}
\frac{\d}{\d t}\|v\|^2_{H}& \le -2\gamma\| v\|^2_{H^1}-2\alpha\|v\|^2_{H}+\frac{c}{\alpha_N^{1/4}}\big(\Phi(u_1)+\Phi(u_2)\big)\|v\|^2_{H^{1}}.
\end{align}

Next, we turn to $\|v\|^2_{H^1}$ and compute
\begin{align*}
\frac{\d}{\d t}\|v\|^2_{H^1}& = -2\gamma\|A v\|^2_H-2\alpha\|v\|^2_{H^1}+2\Re\big\{ \i\big( G_1 +  \la u_1u_2,(\grad \vbar)^2\ra_H \big)\big\},
\end{align*}
where
\begin{align*}
G_1&=\la \ubar_1\grad u_1,v\grad \vbar\ra_H +\la u_1\grad \ubar_1,v\grad \vbar\ra_H+  \la \ubar_2\grad u_2,v\grad \vbar\ra_H\\
&\hspace{2cm}+\la u_2\grad \ubar_2,v\grad \vbar\ra_H +  \la u_2\grad u_1,\vbar\grad \vbar\ra_H+  \la u_1\grad u_2,\vbar\grad \vbar\ra_H  .
\end{align*}
Similarly to the argument for \eqref{ineq:d|v|^2_H}, we have
\begin{align*}
\la \ubar_1\grad u_1,v\grad \vbar\ra_H \le \|u_1\|_{L^\infty}\|v\|_{L^\infty}\|U_1\|_{H^1}\|v\|_{H^1}& \le c\|v\|_{H^1}\|v\|_{H^{3/4}}\|u_1\|^2_{H^1} \\
&\le c \|v\|_{H^1}\|v\|_{H^{3/4}}\big(\Phi(u_1)+\Phi(u_2)\big).
\end{align*}
We note that the same argument can also be carried out for other terms on the right-hand side of $G_1$ equation. Hence, $G_1$ can be estimated as
\begin{align} \label{ineq:G_1}
|G_1| & \le c \|v\|_{H^1}\|v\|_{H^{3/4}}\big(\Phi(u_1)+\Phi(u_2)\big)\le \frac{c}{\alpha^{1/8}_N} \|v\|^2_{H^1}\big(\Phi(u_1)+\Phi(u_2)\big).
\end{align}
where in the last estimate, we once again employed the fact that $P_Nu_1=P_Nu_2$. It follows that
\begin{align} \label{ineq:d.|v|^2_H1}
\frac{\d}{\d t}\|v\|^2_{H^1}& \le -2\gamma\|A v\|^2_H-2\alpha\|v\|^2_{H^1}+\frac{c}{\alpha^{1/8}_N} \|v\|^2_{H^1}\big(\Phi(u_1)+\Phi(u_2)\big) \nonumber \\
&\qquad+2\Re\big\{ \i \la u_1u_2,(\grad \vbar)^2\ra_H \big\}.
\end{align}
In the above, we emphasize that $c$ is a positive constant independent of $\gamma,N$ and $t$.

Next, from \eqref{form:J}, we consider the term $\la u_1u_2,(\vbar)^2\ra_H$ and compute
\begin{align*}
&\d \la u_1u_2,(\vbar)^2\ra_H\\
 & = \la u_2(\vbar)^2,-\gamma A u_1-\i A u_1-\alpha u_1+\i |u_1|^2u_1\ra_H\d t+ \la u_2(\vbar)^2,Q\d W\ra_H\\
&\qquad +\la u_1(\vbar)^2,-\gamma A u_2-\i A u_2-\alpha u_2+\i |u_2|^2u_2\ra_H\d t+ \la u_1(\vbar)^2,Q\d W\ra_H\\
&\qquad + 2\la u_1u_2 \vbar, -\gamma A\vbar +\i A\vbar-\alpha\vbar-\i \big[(|u_1|^2+|u_2|^2)\vbar+\ubar_1\ubar_2v\big]\ra_H\d t\\
& =: G_2\d t+ \la (u_1+u_2)(\vbar)^2,Q\d W\ra_H- 2\gamma\la u_1 u_2, (\grad\vbar)^2\ra_H\d t +2\i \la u_1u_2,(\grad\vbar)^2\ra_H\d t.
\end{align*}
Similar to \eqref{ineq:G_1}, we employ $H^1\subset H^{3/4}\subset L^\infty$ to infer
\begin{align*}
|G_2|\le \frac{c}{\alpha^{1/8}_N} \|v\|^2_{H^1}\big(\Phi(u_1)+\Phi(u_2)\big).
\end{align*}
Also, since $\gamma\in (0,1)$ is assumed to be sufficiently small, we have
\begin{align*}
\big| 2\gamma\la u_1 u_2, (\grad\vbar)^2\ra_H\big| \le \gamma \big(\|u_1\|^2_{H^1}+\|u_2\|^2_{H^1}\big)\|v\|^2_{H^1}\le \frac{c}{\alpha^{1/8}_N} \|v\|^2_{H^1}\big(\Phi(u_1)+\Phi(u_2)\big).
\end{align*}
It follows that
\begin{align} \label{ineq:d<u_1.u_2.(vbar)^2>_H}
&-\d\, \Re\big\{ \la u_1u_2,(\vbar)^2\ra_H\big\} \nonumber \\
&\le \frac{c}{\alpha^{1/8}_N} \|v\|^2_{H^1}\big(\Phi(u_1)+\Phi(u_2)\big)\d t-2\Re\big\{ \i \la u_1u_2,(\grad \vbar)^2\ra_H \big\}\d t.
\end{align}

With regard to the last term involving $\Phi$ on the right-hand side of \eqref{form:J}, on the one hand, we employ \eqref{ineq:d.Phi(t)} to infer
\begin{align*}
 \|v\|^2_H \d  \big(\Phi(u_1)+\Phi(u_2)\big)
&\le  -\alpha \big(\Phi(u_1)+\Phi(u_2)\big)\|v\|^2_H \d t+ C\|v\|^2_H \d t\\
&\qquad+ \|v\|^2_H \big[\d M_1(t;u_1^0)+\d M_1(t;u_2^0)\big],
\end{align*}
where $M_1(t;u_i^0)$ is the semi Martingale process $M_1$ as in \eqref{form:M_1(t)} with the initial condition $u_i^0$, $i=1,2$. We invoke the fact that $\|v\|_H^2\le \|v\|_{H^1}^2/\alpha_N $ (provided $P_Nu_1=P_Nu_2$) to deduce 
\begin{align*}
 \|v\|^2_H \d  \big(\Phi(u_1)+\Phi(u_2)\big)
&\le  -\alpha \big(\Phi(u_1)+\Phi(u_2)\big)\|v\|^2_H \d t+ \frac{c}{\alpha_N}\|v\|^2_{H^1} \d t\\
&\qquad+ \|v\|^2_H \big[\d M_1(t;u_1^0)+\d M_1(t;u_2^0)\big].
\end{align*}
On the other hand, from \eqref{ineq:d|v|^2_H}, we readily have
\begin{align*}
 \big(\Phi(u_1)+\Phi(u_2)\big)\frac{\d}{\d t} \|v\|^2_H \le -2\alpha \big(\Phi(u_1)+\Phi(u_2)\big)\|v\|^2_H + \frac{c}{\alpha_N^{1/4}}\big(\Phi(u_1)^2+\Phi(u_2)^2\big)\|v\|^2_{H^1}.
\end{align*}
Altogether, we obtain
\begin{align} \label{ineq:d(Phi(u_1)+Phi(u_2))|v|^2_H}
&\d \big[ \big(\Phi(u_1)+\Phi(u_2)\big)\|v\|^2_H\big] \nonumber \\
& \le -3\alpha \big(\Phi(u_1)+\Phi(u_2)\big)\|v\|^2_H\d t + \frac{c}{\alpha_N^{1/4}}\big[\Phi(u_1)^2+\Phi(u_2)^2+1\big]\|v\|^2_{H^1}\d t \nonumber \\
&\qquad + \|v\|^2_H \big[\d M_1(t;u_1^0)+\d M_1(t;u_2^0)\big].
\end{align}

Turning back to $J$ as in \eqref{form:J}, we collect \eqref{ineq:d.|v|^2_H1}, \eqref{ineq:d<u_1.u_2.(vbar)^2>_H} and \eqref{ineq:d(Phi(u_1)+Phi(u_2))|v|^2_H} to deduce the bound
\begin{align*}
\d\, J & \le -2\alpha\|v\|^2_{H^1}\d t-3\alpha \kappa_2 \big(\Phi(u_1)+\Phi(u_2)\big)\|v\|^2_H\d t + \frac{c}{\alpha_N^{1/8}}\big[2\alpha\Phi(u_1)^4+2\alpha\Phi(u_2)^4+1\big]\|v\|^2_{H^1}\d t \nonumber \\
&\qquad + \|v\|^2_H \big[\d M_1(t;u_1^0)+\d M_1(t;u_2^0)\big].
\end{align*}
It follows that for all $0\le t\le \tau$
\begin{align*}
&\d\Big[ \exp \Big\{ \alpha t -\frac{c}{\alpha_N^{1/8}}\int_0^t\big[2\alpha\Phi(u_1(s))^4+2\alpha\Phi(u_2(s))^4+1\big] \d s \Big\}J(t)\Big]\\&\le \exp \Big\{ \alpha t -\frac{c}{\alpha_N^{1/8}}\int_0^t\big[2\alpha\Phi(u_1(s))^4+2\alpha\Phi(u_2(s))^4+1\big] \d s \Big\} \|v(t)\|^2_H \big[\d M_1(t;u_1^0)+\d M_1(t;u_2^0)\big] ,
\end{align*}
whence
\begin{align*}
\E\Big[ \exp \Big\{ \alpha (t\mi \tau) -\frac{c}{\alpha_N^{1/8}}\int_0^{t\mi\tau}\big[2\alpha\Phi(u_1(s))^4+2\alpha\Phi(u_2(s))^4+1\big] \d s \Big\}J(t\mi\tau)\Big] \le \E [J(0)].
\end{align*}
This establishes \eqref{ineq:Foias-Prodi}, thereby finishing the proof.
\end{proof}

\begin{remark} \label{rem:dimension-one}
The dimensional restriction of the Foias-Prodi estimate \eqref{ineq:Foias-Prodi} follows from the bound \eqref{cond:dimension-one} where we invoke Sobolev embedding $H^{1/2+}\subset L^\infty$, which is only available in dimension one, but not in higher dimensions. See also the proof of \cite[Proposition 2.1]{debussche2005ergodicity}.
\end{remark}

\section{Uniform polynomial mixing of \eqref{eqn:Ginzburg_Landau}} \label{sec:ergodicity:Ginzburg-Landau}

\subsection{Proof of Theorem \ref{thm:ergodicity:Ginzburg_Landau}} \label{sec:ergodicity:Ginzburg-Landau:Proof}

In this section, we establish the uniform polynomial convergent rate of $P^\gamma_t$ toward $\nu^\gamma$. Before discussing the proof of Theorem \ref{thm:ergodicity:Ginzburg_Landau}, it is illuminating to recapitulate the framework of \cite{debussche2005ergodicity,odasso2006ergodicity} on the coupling argument, which is the key tool for proving ergodicity. For this purpose, we set
\begin{align} \label{form:X_N.Y_N}
X^\gamma_N(t)= X^\gamma_N(t;u_0) = P_N\ug(t;u_0),\quad Y^\gamma_N(t)= Y^\gamma_N(t;u_0) = Q_N \ug(t;u_0)=(I-P_N)\ug(t;u_0),
\end{align}
where we recall $P_Nu$ is the projection of $u$ on span$\{e_1,\dots,e_N\}$. Observe that 
\begin{align} \label{eqn:Ginzburg-Landau:X_N}
\d \Xg_N(t)&=  \big(-\gamma A -\i A- \alpha\big)\Xg_N(t)  \d t+Q\d W(t) \nonumber \\
&\qquad+\i P_N\big[ \big|\Xg_N(t)+\Yg_N(t)\big|^2\big(\Xg_N(t)+\Yg_N(t)\big)\big]\d t,
\end{align}
and that
\begin{align} \label{eqn:Ginzburg-Landau:Y_N}
\d \Yg_N(t)&=  \big(-\gamma A -\i A- \alpha\big)\Yg_N(t)  \d t \nonumber \\
&\qquad+\i Q_N\big[ \big|\Xg_N(t)+\Yg_N(t)\big|^2\big(\Xg_N(t)+\Yg_N(t)\big)\big]\d t,
\end{align}
with the initial conditions $\Xg_N(0)=P_Nu_0$ and $\Yg_N(0)=Q_Nu_0$. Next, we introduce the following discrete random variables:
\begin{definition}{\cite{debussche2005ergodicity}} \label{def:ell_beta} 
1. For every $u_1^0,u_2^0\in L^2(\Omega;H^1)$, $N\ge 1$, $\theta>0$, $\beta>0$, $T>0$ and $k\in\nbb$, define 
\begin{align*}
\ell_{\theta,\beta}^N(k)=\min\{l\in\{0,\dots,k\}: P_{l,k} \textup{ holds} \},
\end{align*}
where $\min \emptyset =\infty$ and 
\begin{align*}
P_{l,k} =\begin{cases}
X^\gamma_N(t;u_1^0)=X^\gamma_N(t;u_2^0),& \forall t\in [lT,kT],\\
\Phi(\ug(lT;u_1^0))+ \Phi(\ug(lT;u_2^0))\le \beta,\\
E_4(t,u_i^0) \le \theta+ \beta^4+C_4(t-lT),& \forall t\in [lT,kT],\, i=1,2.
\end{cases}
\end{align*}
In the above, $\Phi$ is as in \eqref{form:Phi}, $E_4$ is given by \eqref{form:E_n(t)} and $C_4$ is the constant from Lemma \ref{lem:moment:P[E_k(t)>Phi(0)^n+rho(Phi(0)^2n+t)]} with $n=4$.

2. The pair $\big(\ug(t;u_1^0),\ug(t;u_2^0)\big)$ is said to be coupled on $[lT,kT]$ if $\ell_{\theta,\beta}^N(k)=l$.
\end{definition}

As mentioned in Section \ref{sec:main-result:polynomial-mixing}, the motivation for $\ell_{\theta,\beta}^N$ stems from the Foias-Prodi typed estimates producing an effective control on the solutions' energy provided that a sufficiently large number of low modes of the solutions are close \cite{debussche2005ergodicity,odasso2006ergodicity,
temam2012infinite}. For the convenience of the reader, we summarize the idea of the coupling technique in \cite[Theorem 2.9]{debussche2005ergodicity} and \cite[Theorem 1.8]{odasso2006ergodicity}. The argument essentially consists three steps as follows.

\textit{Step 1}: Fix $R$ sufficiently larger than $C_{1,1}$ where $C_{1,1}$ is the constant from Lemma \ref{lem:moment:H_1}, e.g., $R>8C_{1,1}$. Starting from two initial conditions $u_1^0,u_2^0$, we let $\tau$ be the first time the solutions enter a ball of radius $R$, i.e., 
\begin{align*}
\tau=\inf\{k\ge 0: \Phi\big(\ug(kT;u_1^0)\big)+ \Phi\big(\ug(kT;u_2^0)\big)\le R\}.
\end{align*}
In view of the Lyapunov estimate presented in Lemma \ref{lem:moment:H_1}, it can be shown that 
\begin{align} \label{ineq:E.e^(xi.tau)<Phi(u_0)}
\E e^{\xi \tau}< c\big(1+\E\Phi(u_1^0)+\E\Phi(u_2^0)\big) ,
\end{align}
for some positive constants $\xi$ and $c$ independent of $\gamma,\, T$ and the initial conditions. See for example \cite[Section 1.4]{odasso2006ergodicity}. In particular, as a consequence, $\tau$ is a.s. finite.

\textit{Step 2}: Once inside the ball, we show that the probability the two solutions will couple is always bounded from below independent of $\gamma$, cf. Proposition \ref{prop:ergodicity:P(ell(k+1)=k+1|ell(k)=infinity)}. The proof of which relies on two auxiliary results collected in Lemma \ref{lem:irreducibility:Ginzburg-Landau} and Lemma \ref{lem:ergodicity:P(ell(k+1)=k+1|ell(k)=infinity):Girsanov}.

\textit{Step 3}: Once the solutions are coupled, on the one hand, the probability that they are close with respect to $d_1$ distance is high and is independent of $\gamma$. This is established in Lemma \ref{lem:ergodicity:P(d_1(u_1,u_2)>t^-q)} exploiting the Foias-Prodi estimate collected in Lemma \ref{lem:Foias-Prodi:Ginzburg-Landau}. On the other hand, for all $\beta>0$ sufficiently small and $T$ sufficiently large, the probability of decoupling is small with a power law order, which is also independent of $\gamma$. The precise statement is discussed in detailed in Proposition \ref{prop:ergodicity:P(ell(k+1)=l|ell(k)=l)}. Altogether, we are able to deduce the mixing rate \eqref{ineq:ergodicity:Ginzburg_landau} uniformly with respect to $\gamma$. 

See \cite[Theorem 2.9]{debussche2005ergodicity} and \cite[Theorem 1.8]{odasso2006ergodicity} for a more detailed explanation of the above steps. We now provide the auxiliary results whose proofs are deferred to the end of this section. We start by stating Lemma \ref{lem:ergodicity:P(d_1(u_1,u_2)>t^-q)} giving the high probability that two coupled solutions are close.

\begin{lemma} \label{lem:ergodicity:P(d_1(u_1,u_2)>t^-q)}
Let $\ell_{\theta,\beta}^N$ be the random variable as in Definition \ref{def:ell_beta}. Then, there exists $N_1=N_1(\alpha)$ sufficiently large independent of $\gamma$ such that for all $N\ge N_1$, $\theta$, $\beta$, $q$, $T> 0$, and $0\le l\le k$,
\begin{align}
\P\big(\big\{ d_1\big( \ug(t;u_1^0),\ug(t;u_2^0) \big)\ge c_0 (t-lT)^{-q}\big\} \cap \big\{\ell_{\theta,\beta}^N(k)\le l \big\} \big) \le c_0(t-lT)^{-q},\quad t\in [lT,kT],
\end{align}
holds for some positive constant $c_0=c_0(\theta,\beta,N)$ independent of $\gamma$ and $t$. In the above, $d_1$ is the distance as in \eqref{form:d_k}.
\end{lemma}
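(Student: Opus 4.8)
The plan is to exploit the stochastic Foias--Prodi estimate of Lemma~\ref{lem:Foias-Prodi:Ginzburg-Landau}: along a coupled pair of trajectories (write $u_i(\cdot):=\ug(\cdot;u_i^0)$), where the low modes coincide and the energies $E_4$ grow at most linearly, the functional $J$ — which by \eqref{cond:J} dominates $\|u_1-u_2\|^2_{H^1}\ge d_1(u_1,u_2)^2$ — decays exponentially in conditional expectation, and a Markov inequality converts this into the asserted power law. The main preliminary reduction concerns measurability: $\{\ell_\beta(k)\le l\}$ is $\Fcal_{kT}$-measurable but not $\Fcal_{lT}$-measurable, so I would decompose it as the disjoint union over $l'\le l$ of $\{\ell_\beta(k)=l'\}$, restart the dynamics at time $l'T$, and work on the $\Fcal_{l'T}$-measurable event $G_{l'}:=\{P_Nu_1(l'T)=P_Nu_2(l'T)\}\cap\{\Phi(u_1(l'T))+\Phi(u_2(l'T))\le 2\beta\}$, which contains $\{\ell_\beta(k)=l'\}$ since on the latter the predicate $P_{l',k}$ forces $E_4(l'T,u_i)\le\beta^4$, hence $\Phi(u_i(l'T))\le\beta$, together with equality of the low modes.

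On $\{\ell_\beta(k)=l'\}$ I would extract the three ingredients that feed Lemma~\ref{lem:Foias-Prodi:Ginzburg-Landau} with initial data $(u_1(l'T),u_2(l'T))$: (i) $P_Nu_1(s)=P_Nu_2(s)$ on all of $[l'T,kT]$, so the separation time after $l'T$ exceeds $\tau:=t-l'T$ for every $t\in[lT,kT]$; (ii) $2\alpha\int_{l'T}^{t}\Phi(u_i(s))^4\,\d s\le E_4(t,u_i)\le\beta^4+C_4(t-l'T)$, whence, by Cauchy--Schwarz, $\int_{l'T}^{t}[\Phi(u_1)^2+\Phi(u_2)^2+1]\,\d s\le A_\beta\sqrt{t-l'T}+B(t-l'T)$ with $B$ depending only on $\alpha$ (recall $C_4=C_4(\alpha)$); (iii) $J(u_1(l'T),u_2(l'T))\le c(\beta)$ for a polynomial $c(\beta)$, by \eqref{form:J}, \eqref{cond:Phi} and $H^1\subset L^\infty$. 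The parameter $N$ enters here: since $\alpha_N=(N\pi)^2\to\infty$ and $c_*$ is independent of $\gamma$ and $N$, I would choose $N_1=N_1(\alpha)$ so that $c_*\alpha_N^{-1/8}B\le\alpha/4$ for all $N\ge N_1$; then (ii) and Young's inequality give, on $\{\ell_\beta(k)=l'\}$,
\[
\alpha\tau-\frac{c_*}{\alpha_N^{1/8}}\int_{l'T}^{l'T+\tau}\!\big[\Phi(u_1)^2+\Phi(u_2)^2+1\big]\,\d s\ \ge\ \tfrac{\alpha}{2}\tau-c_3,\qquad c_3=c_3(\beta,N),
\]
so that, using (i) and \eqref{cond:J}, the random variable whose expectation appears on the left of \eqref{ineq:Foias-Prodi} (run from $l'T$ for time $\tau$) is bounded below by $e^{\frac{\alpha}{2}\tau-c_3}\,d_1(u_1(t),u_2(t))^2$.

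Then I would condition on $\Fcal_{l'T}$: by the strong Markov property and Lemma~\ref{lem:Foias-Prodi:Ginzburg-Landau}, on $G_{l'}$ the conditional expectation of that random variable is at most $J(u_1(l'T),u_2(l'T))\le c(\beta)$, so Markov's inequality yields
\[
\P\Big(\big\{d_1(u_1(t),u_2(t))\ge c_0(t-lT)^{-q}\big\}\cap\{\ell_\beta(k)=l'\}\Big)\ \le\ \frac{c(\beta)\,e^{c_3}}{c_0^{2}}\,(t-lT)^{2q}\,e^{-\frac{\alpha}{2}(t-l'T)}.
\]
Writing $t-l'T=(t-lT)+(l-l')T$ and summing over $l'=0,\dots,l$, the geometric factor $e^{-\frac{\alpha}{2}(l-l')T}$ makes the sum converge with an $l$-independent constant, while $\sup_{s\ge0}s^{3q}e^{-\alpha s/2}<\infty$ turns $(t-lT)^{2q}e^{-\frac{\alpha}{2}(t-lT)}$ into $O((t-lT)^{-q})$. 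This gives $\P(\{d_1\ge c_0(t-lT)^{-q}\}\cap\{\ell_\beta(k)\le l\})\le C(\beta,N,q)\,c_0^{-2}(t-lT)^{-q}$, and choosing $c_0=c_0(\beta,N)$ (also depending on the fixed $q,\alpha,T$) so large that $C(\beta,N,q)\le c_0^{3}$ closes the estimate with the same $c_0$ on both sides.

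The step I expect to be the main obstacle is not any single inequality but the bookkeeping needed to keep every constant independent of $\gamma$, of $u_1^0,u_2^0$, and of $l$ and $k$: one must organize the decomposition over $\ell_\beta(k)=l'$ so that each piece is dominated by the Foias--Prodi supermartingale restarted on the adapted event $G_{l'}$, and the cancellation of the $l$-dependence rests entirely on the geometric gain $e^{-\frac{\alpha}{2}(l-l')T}$ coming from the extra length $(l-l')T$ over which the trajectories have been coupled. A related subtlety is that Lemma~\ref{lem:Foias-Prodi:Ginzburg-Landau} is available only in expectation as $\gamma\to0$ — no pathwise Foias--Prodi bound survives the vanishing viscosity — so the exponential decay of $\|u_1-u_2\|_{H^1}$ has to be used probabilistically from the outset rather than pathwise.
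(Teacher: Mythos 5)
Your argument is correct and is essentially the paper's proof: the paper likewise feeds the event $\{\ell_\beta(k)\le l\}$ into the Foias--Prodi estimate of Lemma~\ref{lem:Foias-Prodi:Ginzburg-Landau} (using that on this event $t<\tau_N$ and $\int(\Phi(u_1)^4+\Phi(u_2)^4+1)\,\d s\le 2\beta^4+(2C_4+1)t$), chooses $N_1(\alpha)$ so that $c_*\alpha_N^{-1/8}$ times the linear-in-time part of that integral is absorbed into $\alpha t/2$, bounds $\E[J(0)\mid\ell_\beta(k)=l]\le\tilde c(\beta+\beta^2)$, and closes with Markov's inequality, $t^{2q}e^{-\alpha t/2}\lesssim t^{-q}$, and the choice $c_0^3=\tilde c^2(\beta+\beta^2)e^{2c_*\beta^4/\alpha_N^{1/8}}$. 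The only divergence is that the paper reduces to $l=0$ ``without loss of generality'' and works directly on $\{\ell_\beta(k)=0\}$, whereas you make that reduction explicit by decomposing $\{\ell_\beta(k)\le l\}$ over $\{\ell_\beta(k)=l'\}$, restarting the Foias--Prodi supermartingale at $l'T$ (where $\Phi(u_i(l'T))\le\beta$), and summing the geometric series in $e^{-\alpha(l-l')T/2}$ --- a more careful rendering of the same step.
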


Next, we state Proposition \ref{prop:ergodicity:P(ell(k+1)=k+1|ell(k)=infinity)}, establishing a lower bound for the probability of coupling once the solutions enter a ball.

\begin{proposition} \label{prop:ergodicity:P(ell(k+1)=k+1|ell(k)=infinity)}
For all $N\ge 1$, $R>0$, $\theta>0$, and $\beta=\beta(N)$ sufficiently small independent of $\gamma$, the following holds
\begin{align*}
\P\big(\ell_{\theta,\beta}^N(k+1)=k+1|\ell_{\theta,\beta}^N(k)=\infty,\Phi(\ug(kT_1;u_1^0))+ \Phi(\ug(kT_1;u_2^0))\le R\big) \ge \varepsilon_1,
\end{align*}
for some positive constants $T_1=T_1(N,R,\beta)$ and $\varepsilon_1=\varepsilon_1(N,R, \beta,T_1)$ independent of $\gamma$. In the above, $\Phi$ is the function defined in \eqref{form:Phi}.
\end{proposition}

Finally, we state the auxiliary result in Proposition \ref{prop:ergodicity:P(ell(k+1)=l|ell(k)=l)} ensuring a low probability of decoupling.

\begin{proposition} \label{prop:ergodicity:P(ell(k+1)=l|ell(k)=l)}
Let $N_1=N_1(\alpha)$ be the constant as in Lemma \ref{lem:ergodicity:P(d_1(u_1,u_2)>t^-q)}. Then, for all $q\ge 2$ and $N\ge N_1$, there exist positive constants $\theta>0$ sufficiently large and $\beta=\beta(q,N)$ sufficiently small both independent of $\gamma$ such that the following holds
\begin{align} \label{ineq:ergodicity:P(ell(k+1)=l|ell(k)=l)}
\P\big(\ell_{\theta,\beta}^N(k+1)\neq l|\ell_{\theta,\beta}^N(k)=l\big)\le  \frac{1}{2} [1+(k-l)T_1]^{-q},\quad 0\le l\le k,
\end{align}
where $T_1$ is the time constant as in Proposition \ref{prop:ergodicity:P(ell(k+1)=k+1|ell(k)=infinity)}.
\end{proposition}

Assuming the above results, let us conclude Theorem \ref{thm:ergodicity:Ginzburg_Landau}, whose argument is the same as that of \cite[Theorem 2.9]{debussche2005ergodicity}. See also the proof of \cite[Theorem 1.8]{odasso2006ergodicity}.

\begin{proof}[Proof of Theorem \ref{thm:ergodicity:Ginzburg_Landau}] Let $N_1$ be the integer constant as in Lemma \ref{lem:ergodicity:P(d_1(u_1,u_2)>t^-q)}, $q\ge 2$ be arbitrary and $R>8C_{1,1}$ where $C_{1,1}$ is the constant from Lemma \ref{lem:moment:H_1}. Then, there exist $\beta=\beta(q,N_1)$ sufficiently small and $T_1=T_1(N_1,R,\beta,q)$ sufficiently large such that Proposition \ref{prop:ergodicity:P(ell(k+1)=k+1|ell(k)=infinity)} and Proposition \ref{prop:ergodicity:P(ell(k+1)=l|ell(k)=l)} hold. Moreover, the choice $R>8C_{1,1}$ is to ensure that \eqref{ineq:E.e^(xi.tau)<Phi(u_0)} holds. See e.g., the proof of \cite[(1.56)]{odasso2006ergodicity}.

 Now, we proceed to prove Theorem \ref{thm:ergodicity:Ginzburg_Landau} by verifying the assumptions of \cite[Theorem 2.9]{debussche2005ergodicity}. First of all, Lemma \ref{lem:ergodicity:P(d_1(u_1,u_2)>t^-q)} establishes \cite[Condition (2.12)]{debussche2005ergodicity}. Next, the results in Proposition \ref{prop:ergodicity:P(ell(k+1)=k+1|ell(k)=infinity)} and Proposition \ref{prop:ergodicity:P(ell(k+1)=l|ell(k)=l)} respectively provides the essential bounds in \cite[Condition (2.14)]{debussche2005ergodicity} and \cite[Condition (2.13)]{debussche2005ergodicity}. Finally, Lemma \ref{lem:moment:H_1}, part 1. supplies the Lyapunov functions required in \cite[Condition (2.15)]{debussche2005ergodicity}. Altogether, by virtue of \cite[Theorem 2.9]{debussche2005ergodicity}, we deduce that there exists a unique invariant probability measure $\nu^\gamma$ such that $\nu^\gamma(H^1)=1$. In particular, the desired convergent rate \eqref{ineq:ergodicity:Ginzburg_landau} holds regardless of $\gamma$.

 Furthermore, if $\nu\in \Pcal r(H)$ is another invariant probability measure, then by the Krylov-Bogoliubov procedure and the estimates in Lemma \ref{lem:moment:L^2}, we note that $\nu$ must be supported in $H^1$. It follows that $\nu$ must be the same as $\nu^\gamma$, hence the uniqueness of $\nu^\gamma$ in $H$.

\end{proof}

\subsection{Proof of the auxiliary results}

We turn to the proof of the auxiliary results. First, we provide the proof of Lemma \ref{lem:ergodicity:P(d_1(u_1,u_2)>t^-q)} while making use of the Foias-Prodi estimate in Lemma \ref{lem:Foias-Prodi:Ginzburg-Landau}.

\begin{proof}[Proof of Lemma \ref{lem:ergodicity:P(d_1(u_1,u_2)>t^-q)}]
For notation convenience, we denote
\begin{align*}
u_1(t)=\ug(t;u_1^0),\quad \text{and} \quad u_2(t)=\ug(t;u_2^0).
\end{align*}
Without loss of generalization, we may assume $l=0$. Let $c_0$ be given and be chosen later. Recalling function $J$ defined in \eqref{form:J} and the stopping time $\tau_N$ from Lemma \ref{lem:Foias-Prodi:Ginzburg-Landau}, observe that for $t\in[0,kT]$,
\begin{align*}
&\big\{ d_1\big( u_1(t), u_2(t) \big)\ge c_0\, t^{-q}\big\} \cap \big\{\ell_{\theta,\beta}^N(k)=0 \big\} \\
&=\big\{ d_1\big( u_1(t),u_2(t) \big)\ge c_0 \, t^{-q}\big\} \cap \big\{\ell_{\theta,\beta}^N(k)=0 \big\}\cap \{t<\tau_N\}\\
&\subset \big\{ \| u_1(t)-u_2(t) \|^2_{H^1}\ge c_0^2 \, t^{-2q}\big\} \cap \big\{\ell_{\theta,\beta}^N(k)=0 \big\}\cap \{t<\tau_N\}\\
&= \big\{  \| u_1(t\mi \tau_N)-u_2(t\mi\tau_N) \|^2_{H^1}\ge c_0^2\, t^{-2q}\big\} \cap \big\{\ell_{\theta,\beta}^N(k)=0 \big\}\cap \{t<\tau_N\}\\
&\subset \Big\{ J(t\mi\tau_N) \ge c_0^2 \, t^{-2q} \Big\} \cap \big\{\ell_{\theta,\beta}^N(k)=0 \big\}.
\end{align*}
In the last inclusion above, we employed \eqref{cond:J}, i.e., $J(u,v)\ge \|u-v\|_{H^1}^2$. Given $\{\ell_{\theta,\beta}^N(k)=0 \}$, it holds that
\begin{align*}
\int_0^{t}2\alpha\Phi(u_1(s))^4+2\alpha\Phi(u_2(s))^4+1\d s \le 2\theta+ 2\beta^4+(2C_4+1)t,  
\end{align*}
whence
\begin{align*}
-\frac{c_*}{\alpha_N^{1/8}}\int_0^{t}2\alpha\Phi(u_1(s))^4+2\alpha\Phi(u_2(s))^4+1\d s\ge -\frac{c_*}{\alpha_N^{1/8}}\big[2\theta+\beta^4+(2C_4+1)t\big].
\end{align*}
In the above, $c_*$ is the constant from Lemma \ref{lem:Foias-Prodi:Ginzburg-Landau}. Since $c_*$ does not depend on $\alpha_N,\theta,\beta$ and $C_4$, we may take $N_1=N_1(\alpha)$ sufficiently large independent of $\gamma,\beta,\theta$ such that for all $N\ge N_1$
\begin{align} \label{ineq:J(t)>e^(alpha.t)}
 \alpha t-\frac{c_*}{\alpha_N^{1/8}}\int_0^{t}2\alpha\Phi(u_1(s))^4+2\alpha\Phi(u_2(s))^4+1\d s    \ge \frac{1}{2}\alpha t -2c_*\frac{\theta+\beta^4}{\alpha_N^{1/8}}.
\end{align}
It follows that
\begin{align*} 
&\big\{ d_1\big( u_1(t), u_2(t) \big)\ge c_0\, t^{-q}\big\} \cap \big\{\ell_{\theta,\beta}^N(k)=0 \big\} \\
&\subset \bigg\{\exp\Big\{ \alpha t-\frac{c_*}{\alpha_N^{1/8}}\int_0^{t}2\alpha\Phi(u_1(s))^4+2\alpha\Phi(u_2(s))^4+1\d s   \Big\}  J(t\mi\tau_N)\\
&\hspace{2cm} \ge c_0^2 \, t^{-2q} \exp\Big\{\frac{1}{2}\alpha t -2c_*\frac{\theta+\beta^4}{\alpha_N^{1/8}}\Big\} \bigg\}.
\end{align*}
From Lemma \ref{lem:Foias-Prodi:Ginzburg-Landau}, cf. \eqref{ineq:Foias-Prodi}, we invoke Markov inequality to obtain
\begin{align*}
&\P\big(\big\{ d_1\big( u_1(t), u_2(t) \big)\ge c_0\, t^{-q}\big\} \cap \big\{\ell_{\theta,\beta}^N(k)=0 \big\}\big) \\
&\le\E \big[J(0)|\ell_{\theta,\beta}^N(k)=0\big] \frac{t^{2q}}{c_0^2}\exp\Big\{-\frac{1}{2}\alpha t +2c_*\frac{\theta+\beta^4}{\alpha_N^{1/8}}\Big\} .
\end{align*}
Note that 
\begin{align*}
\E \big[J(0)|\ell_{\theta,\beta}^N(k)=0\big] \le \tilde{c}(\beta+\beta^2),\quad\text{and}\quad  t^{2q}e^{-\frac{1}{2}\alpha t}\le \tilde{c}t^{-q},\quad t\ge 0.
\end{align*}
Picking $c_0$ such that 
\begin{align*}
c_0^3= \tilde{c}^2(\beta+\beta^2)\exp\Big\{2c_*\frac{\theta+\beta^4}{\alpha_N^{1/8}}\Big\}, 
\end{align*}
produces
\begin{align*}
\P\big(\big\{ d_1\big( u_1(t), u_2(t) \big)\ge c_0\, t^{-q}\big\} \cap \big\{\ell_{\theta,\beta}^N(k)=0 \big\}\big) \le c_0 t^{-q}.
\end{align*}
The proof is thus finished.
\end{proof}

Next, we turn to Proposition \ref{prop:ergodicity:P(ell(k+1)=k+1|ell(k)=infinity)}. As mentioned in Section \ref{sec:ergodicity:Ginzburg-Landau:Proof}, the argument relies on two ingredients: the former is an irreducibility condition, cf. Lemma \ref{lem:irreducibility:Ginzburg-Landau}, allowing for driving the solutions to any arbitrarily small ball whereas the latter, cf. Lemma \ref{lem:ergodicity:P(ell(k+1)=k+1|ell(k)=infinity):Girsanov}, is a lower bound on the probability of coupling inside the small ball. In particular, the proof of Lemma \ref{lem:irreducibility:Ginzburg-Landau} will employ Lemma \ref{lem:irreducibility:eta} while Lemma \ref{lem:ergodicity:P(ell(k+1)=k+1|ell(k)=infinity):Girsanov} will invoke the result of Lemma \ref{lem:moment:P[E_k(t)>Phi(0)^n+rho(Phi(0)^2n+t)]}, part 1.

\begin{lemma} \label{lem:irreducibility:Ginzburg-Landau}
For all $R,\,r>0$, there exists $T_*=T_*(R,r)>0$ independent of $\gamma$ such that for all $t\ge T_*$ and $u_1^0,u_2^0$ satisfying $\Phi(u_1^0)+\Phi(u_2^0)\le R$, the following holds
\begin{align} \label{ineq:irreducibility:ginzburg-Landau}
\P\big( \Phi(\ug(t;u_1^0)) + \Phi(\ug(t;u_2^0))\le r \big)\ge \varepsilon_*,
\end{align}
for some positive constant $\varepsilon_*=\varepsilon_*(t,R,r)$ independent of $u_1^0,\,u_2^0$ and $\gamma$. 
\end{lemma}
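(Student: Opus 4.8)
The plan is to establish \eqref{ineq:irreducibility:ginzburg-Landau} by a small-ball/support argument, exhibiting a single noise event on which \emph{both} trajectories $\ug(t;u_1)$, $\ug(t;u_2)$ are forced into an arbitrarily small $\Phi$-ball and whose probability is bounded below uniformly in $\gamma\in(0,1)$. First I would decompose $\ug(t;u_i)=\eta(t)+v_i(t)$, where $\eta$ is the Ornstein--Uhlenbeck process solving $\d\eta=-(\gamma+\i)A\eta\,\d t-\alpha\eta\,\d t+Q\d W(t)$ with $\eta(0)=0$ (common to $i=1,2$), and $v_i$ solves the pathwise equation $\partial_t v_i=-(\gamma+\i)Av_i-\alpha v_i+\i|v_i+\eta|^2(v_i+\eta)$ with $v_i(0)=u_i$. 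Because $A$ and $Q$ are simultaneously diagonalized and $Q$ excites only $e_1,\dots,e_N$, the process $\eta$ lives in $\textup{span}\{e_1,\dots,e_N\}$, is spatially smooth, and in particular is $H^1$-valued; Lemma \ref{lem:irreducibility:eta} (or a direct estimate on the finitely many forced coordinates, whose law depends continuously on $\gamma\in[0,1]$) then provides, for each $\delta>0$ and $t>0$, a strictly positive lower bound $p(t,\delta)>0$ for $\P\big(\sup_{s\in[0,t]}\|\eta(s)\|_{H^1}\le\delta\big)$, uniform in $\gamma$ (the dissipative term $-\gamma A$ only helps here).

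Next I would compare $v_i$ to the solution $w_i$ of the \emph{noiseless} Ginzburg--Landau equation $\partial_t w_i=-(\gamma+\i)Aw_i-\alpha w_i+\i|w_i|^2w_i$, $w_i(0)=u_i$. Specializing the computation behind \eqref{ineq:d.Phi(t)} to $Q\equiv0$ kills every trace term (so the drift constant and the martingale both vanish), leaving $\frac{\d}{\d t}\Phi(w_i(t))\le-\alpha\Phi(w_i(t))$ and hence $\Phi(w_i(t))\le e^{-\alpha t}\Phi(u_i)\le e^{-\alpha t}R$ for all $t\ge0$; choosing $T_*=T_*(R,r)$ with $e^{-\alpha T_*}R\le r/4$ is then $\gamma$-independent and the bound persists for every $t\ge T_*$. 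Fixing such a $t$, the difference $v_i-w_i$ solves a linear equation forced by $\i\big[|v_i+\eta|^2(v_i+\eta)-|w_i|^2w_i\big]$, which splits into terms carrying a factor $v_i-w_i$ with coefficients controlled by $\|v_i\|_{H^1}^2+\|w_i\|_{H^1}^2$ (via $H^{3/4}\subset L^\infty$ in dimension one, exactly as in \eqref{cond:dimension-one}) plus terms carrying at least one factor of $\eta$. A Gr\"onwall estimate in $H^1$ on $[0,t]$ --- using $\int_0^t\|w_i\|_{H^1}^2\le 4R/(3\alpha)$ from the decay bound, together with the analogous energy estimate for $v_i$ (bounded by a $\gamma$-free constant $C(R,t)$ on the event $\{\sup_{[0,t]}\|\eta\|_{H^1}\le\delta\}$, since there the $\eta$-perturbations are $O(\delta)$ and the $\gamma$-terms are dissipative) --- then yields $\sup_{s\in[0,t]}\|v_i(s)-w_i(s)\|_{H^1}\le C(R,t)\sqrt{\delta}$ on that event.

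Since $\Phi$ is continuous on bounded subsets of $H^1$ (in $d=1$, $H^1\hookrightarrow L^4$ continuously) and $\ug(t;u_i)=w_i(t)+\big(v_i(t)-w_i(t)\big)+\eta(t)$ with the last two terms of size $O(\sqrt\delta)$ on the event, I would finally pick $\delta=\delta(t,R,r)>0$ small enough that $\Phi(\ug(t;u_i))\le\Phi(w_i(t))+r/4\le r/2$ for $i=1,2$ on $\{\sup_{[0,t]}\|\eta\|_{H^1}\le\delta\}$. Summing over $i$, this event is contained in $\{\Phi(\ug(t;u_1))+\Phi(\ug(t;u_2))\le r\}$, so \eqref{ineq:irreducibility:ginzburg-Landau} holds with $\varepsilon_*:=p\big(t,\delta(t,R,r)\big)>0$, depending only on $t,R,r$.

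The main obstacle is keeping the $\gamma$-uniformity explicit in the two quantitative steps: the continuity bound $\|v_i-w_i\|_{H^1}\le C(R,t)\sqrt\delta$, where every constant must be $\gamma$-free --- this works because each $\gamma$-dependent contribution is either dissipative ($-\gamma\|Av_i\|_H^2$, $-2\gamma\|v_i\|_{H^1}^2$) or multiplied by the small parameter $\gamma\in(0,1)$ and absorbed just as in the proofs of Lemma \ref{lem:moment:H_1} and Lemma \ref{lem:Foias-Prodi:Ginzburg-Landau} --- and the lower bound $p(t,\delta)>0$ for the small-ball probability of $\eta$ uniformly in $\gamma$, which follows because $\eta$ is an $N$-dimensional Gaussian diffusion with coefficients continuous in $\gamma\in[0,1]$ whose $\gamma=0$ instance already has positive small-ball probability. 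Neither is deep, but both must be carried out with the uniformity visible; the rest is a routine one-dimensional energy estimate.
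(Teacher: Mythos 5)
Your proposal follows essentially the same route as the paper: split off the stochastic convolution $\eta$, condition on the small-ball event $\{\sup_{[0,t]}\|\eta\|\le\delta\}$ supplied by Lemma \ref{lem:irreducibility:eta}, and use the damping $\alpha>0$ to drive the remainder into a small $\Phi$-ball after a $\gamma$-independent time $T_*(R,r)$. Two execution points differ, and one of them hides the only real work. First, the paper defines $\eta$ by the $\gamma$-\emph{free} linear equation \eqref{eqn:eta}, so its law does not depend on $\gamma$ and Lemma \ref{lem:irreducibility:eta} applies verbatim; the price is an extra forcing term $-\gamma A\eta$ in the equation for $v^\gamma=u^\gamma-\eta$, absorbed using the $H^2$-control of $\eta$. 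Your choice of putting $-\gamma A$ into $\eta$'s equation removes that forcing but makes the uniformity in $\gamma$ of the small-ball probability an additional (fixable, e.g.\ by continuity--compactness in $\gamma\in[0,1]$) obligation that the paper simply never incurs. Second, and more substantively: your Gr\"onwall comparison of $v_i$ with the noiseless flow $w_i$ requires an a priori bound on $\Phi(v_i)$ over $[0,t]$ on the small-noise event, and your justification (``the $\eta$-perturbations are $O(\delta)$'') is circular as stated, since the perturbation terms in the energy inequality are of the form $\big(1+\Phi(v_i)^4+\cdots\big)\|\eta\|$ and are only $O(\delta)$ once $\Phi(v_i)$ is already known to be bounded. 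The paper closes exactly this loop with a stopping time $\tau=\inf\{t:\Phi(\ug(t;u_1))+\Phi(\ug(t;u_2))>3cR\}$ and a contradiction argument showing $\tau\ge t$ on the event $B$; with that continuation step inserted (after which your intermediate object $w_i$ becomes unnecessary --- the paper works directly with the differential inequality $\frac{\d}{\d t}\Phi(v^\gamma)\le-\alpha\Phi(v^\gamma)+c\big(1+\Phi(v^\gamma)^4+\|A\eta\|_H^8\big)\|A\eta\|_H$ and its Duhamel form), your argument is complete and matches the paper's.
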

\begin{proof} Letting $\eta(t)$ be the process satisfying \eqref{eqn:eta}, we denote $\vg=\ug-\eta$. From \eqref{eqn:Ginzburg_Landau} and \eqref{eqn:eta}, we observe that $\vg$ obeys the equation 
\begin{align*}
\frac{\d}{\d t} \vg & = -\gamma A(\vg+\eta) -\i A\vg-\alpha \vg+\i |\vg+\eta|^2(\vg+\eta),\\
&= -\gamma A\vg -\i A\vg-\alpha \vg+\i |\vg|^2\vg -\gamma A\eta+\i \big[ |\vg+\eta|^2(\vg+\eta) -|\vg|^2\vg\big],
\end{align*}
with the initial condition $ \vg(0)=\ug(0)$.

Considering $\Phi(\vg)$ where $\Phi$ is given by \eqref{form:Phi}, we employ an argument similar to the proof of \eqref{ineq:d.Phi(t)} and obtain
\begin{align*}
\frac{\d}{\d t}\Phi(\vg) & \le -2\gamma\|A\vg\|^2_H-\alpha\Phi(\vg) +\big\la D\Phi(\vg),  -\gamma A\eta+\i \big[ |\vg+\eta|^2(\vg+\eta) -|\vg|^2\vg\big]  \big\ra_H,
\end{align*}
where
\begin{align*}
&\big\la D\Phi(\vg),  -\gamma A\eta+\i \big[ |\vg+\eta|^2(\vg+\eta) -|\vg|^2\vg\big]  \big\ra_H\\
&= -2\gamma\Re(\la A \vg,A\bar{\eta}\ra_H) +2\Re \big( \big\la \i \big[ |\vg+\eta|^2(\vg+\eta) -|\vg|^2\vg\big],A\vbar ^\gamma \big\ra_H\big)\\
&\qquad -2\gamma \Re\big(\la  |\vg|^2\vg,A\etabar\ra_H  \big)+ 2\Re\big(  \big\la \i \big[ |\vg+\eta|^2(\vg+\eta) -|\vg|^2\vg\big],|\vg|^2\vbar^\gamma\big\ra_H \big)\\
&\qquad -6\kappa \gamma \|\vg\|^4_H\Re\big(\la  \vg,A\etabar\ra_H  \big)+ 6\kappa\|\vg\|^4_H\Re\big(  \big\la \i \big[ |\vg+\eta|^2(\vg+\eta) -|\vg|^2\vg\big],\vbar^\gamma\big\ra_H \big)\\
&\qquad -18\kappa \gamma \|\vg\|^{16}_H\Re\big(\la  \vg,A\etabar\ra_H  \big)+ 18\kappa\|\vg\|^{16}_H\Re\big(  \big\la \i \big[ |\vg+\eta|^2(\vg+\eta) -|\vg|^2\vg\big],\vbar^\gamma\big\ra_H \big)\\
& = I_1+\cdots+I_8.
\end{align*}
We now proceed to estimate $I_k$, $k=1,\dots,8$ on the above right-hand side. Concerning $I_1$, we employ Holder inequality to see that
\begin{align*}
I_1=-2\gamma\Re(\la A \vg,A\bar{\eta}\ra_H) \le \gamma\|A\vg\|^2_H+ \gamma\|A\eta\|^2_H.
\end{align*}
Similarly, recalling $\gamma<1$ and using $H^1\subset L^\infty$ imply
\begin{align*}
&I_3+I_5+I_7 \\
&=-2\gamma \Re\big(\la  |\vg|^2\vg,A\etabar\ra_H  \big)-6\kappa \gamma \|\vg\|^4_H\Re\big(\la  \vg,A\etabar\ra_H  \big)-18\kappa \gamma \|\vg\|^{16}_H\Re\big(\la  \vg,A\etabar\ra_H  \big)\\
&\le c\,\big( \|\vg\|^3_{H^1}+ \|\vg\|^5_{H}  +\|\vg\|^{17}_{H}    \big)\|A\eta\|_H \\
&\le c\,\big(1+\Phi(\vg)^{2}\big)\|A\eta\|_H.
\end{align*}
In the above, $c>0$ is a constant independent of $\gamma$. Next, we employ the inequality
\begin{align} \label{ineq:|u+z|^2(u+z)-|u|^2u<|z|(|u|^2+|z|^2)}
||u+z|^2(u+z)-|u|^2u|\le |z|(|u|^2+|z|^2),\quad u,z\in\cbb,
\end{align}
to infer
\begin{align*}
I_4 & = 2\Re\big(  \big\la \i \big[ |\vg+\eta|^2(\vg+\eta) -|\vg|^2\vg\big],|\vg|^2\vbar^\gamma\big\ra_H \big)\\
&\le 2\la |\eta|(|\vg|^2+|\eta|^2),|\vg|^3\ra_H\\
&\le c\|\eta\|_{L^\infty}\big(\|\vg\|^5_{L^\infty}+\|\eta\|^4_{L^\infty}+\|\vg\|^6_{L^\infty}  \big)\\
&\le c\,\big(1+\Phi(\vg)^{3}+\|A\eta\|^4_H\big)\|A\eta\|_H.
\end{align*}
Likewise,
\begin{align*}
I_6+I_8\le  c\,\big(1+\Phi(\vg)^{4}+\|A\eta\|^8_H\big)\|A\eta\|_H.
\end{align*}
Regarding $I_2$, we compute
\begin{align*}
&\big \la\big[ |\vg+\eta|^2(\vg+\eta) -|\vg|^2\vg\big],A\vbar ^\gamma \big\ra_H\\
&= \big\la \grad  \big(  |\vg|^2\etabar+2|\vg|^2\eta+2\vg|\eta|^2+\vbar^\gamma\eta^2+|\eta|^2\eta\big),\grad \vbar^\gamma \big\ra_H\\
&\le c\big(  \|\vg\|^3_{H^1}\|\eta\|_{H^1}  + \|\vg\|^2_{H^1}\|\eta\|^2_{H^1}+\|\vg\|_{H^1}\|\eta\|^3_{H^1}    \big)\\
&\le c \big(\|\vg\|^4_{H^1}+\|\eta\|^4_{H^1} +1  \big)\|\eta\|_{H^1}\\
&\le c\,\big(1+\Phi(\vg)^{2}+\|A\eta\|^4_H\big)\|A\eta\|_H.
\end{align*}
Altogether, we deduce
\begin{align*}
\frac{\d}{\d t}\Phi(\vg) & \le -\alpha\Phi(\vg) +  c\,\big(1+\Phi(\vg)^{4}+\|A\eta\|^8_H\big)\|A\eta\|_H,
\end{align*}
whence
\begin{align} \label{ineq:Phi(v^gamma)}
\Phi\big(\vg(t)\big) &\le e^{-\alpha t }\Phi\big(\vg(0)\big)+c\int_0^t e^{-\alpha (t-s)}\big(1+\Phi\big(\vg(s)\big)^{4}+\|A\eta(s)\|^8_H\big)\|A\eta(s)\|_H\d s \nonumber \\
&\le e^{-\alpha t }\Phi\big(\ug(0)\big)+c\sup_{s\in[0,t]} \Big[\big(1+\Phi\big(\vg(s)\big)^{4}+\|A\eta(s)\|^8_H\big)\|A\eta(s)\|_H\Big].
\end{align}
Recalling $\ug=\vg+\eta$, we note that
\begin{align*}
\Phi(\ug)\le c\big(\Phi(\vg)+\Phi(\eta)\big)\le c \big(\Phi(\vg)+\|A\eta\|^{18}_H\big).
\end{align*}
This together with \eqref{ineq:Phi(v^gamma)} produces
\begin{align} \label{ineq:Phi(u^gamma)}
\Phi\big(\ug(t)\big) 
&\le c\,e^{-\alpha t }\Phi\big(\ug(0)\big)+c\sup_{s\in[0,t]} \Big[\big(1+\Phi\big(\ug(s)\big)^{4}+\|A\eta(s)\|^{72}_H\big)\|A\eta(s)\|_H\Big].
\end{align}

Turning back to \eqref{ineq:irreducibility:ginzburg-Landau}, let $u_1^0$ and $u_2^0$ be given such that $\Phi(u_1^0)+\Phi(u_2^0)\le R$. From \eqref{ineq:Phi(u^gamma)}, we readily have
\begin{align} \label{ineq:Phi(u^gamma(t;u_1))+Phi(u^gamma(t;u_2))}
&\Phi\big(\ug(t;u_1^0)\big) + \Phi\big(\ug(t;u_2^0)\big) \le c\,e^{-\alpha t }\big[ \Phi(u_1^0)+ \Phi(u_2^0)\big]\nonumber\\
&  +c\sup_{s\in[0,t]} \Big[\big(1+\big[\Phi\big(\ug(s;u_1^0)\big)+\Phi\big(\ug(s;u_2^0)\big]^{4}+\|A\eta(s)\|^{72}_H\big)\|A\eta(s)\|_H\Big].
\end{align}
Denote by $\tau$ the stopping time defined as
\begin{align*}
\tau=\inf\{t\ge 0: \Phi\big(\ug(t;u_1^0)\big)+ \Phi\big(\ug(t;u_2^0)\big) > 3cR\},
\end{align*} 
where $c\ge 1$ is the same constant as in \eqref{ineq:Phi(u^gamma(t;u_1))+Phi(u^gamma(t;u_2))}. For each $t\ge 1$, conditioning on the event
\begin{align*}
B=\Big\{\sup_{s\in[0,t]}\|A\eta(s)\|_H\le \frac{Rr}{2(2+(3cR)^4)(1+R+r+c)^2 }\Big\},
\end{align*}
we claim that $\tau\ge t$. Indeed, suppose by means of contradiction, $\tau<t$. From \eqref{ineq:Phi(u^gamma(t;u_1))+Phi(u^gamma(t;u_2))}, we have
\begin{align*}
&\Phi\big(\ug(\tau;u_1^0)\big) + \Phi\big(\ug(\tau;u_2^0)\big) \\
&\le c e^{-\alpha \tau}R  +c\sup_{s\in[0,\tau]} \Big[\big(1+\big[\Phi\big(\ug(s;u_1^0)\big)+\Phi\big(\ug(s;u_2^0)\big]^{4}+\|A\eta(s)\|^{72}_H\big)\|A\eta(s)\|_H\Big]\\
& \le cR+c(2+(3cR)^4)\cdot\frac{Rr}{2(2+(3cR)^4)(1+R+r+c)^2 }< 2cR.
\end{align*}
This contradicts the fact that $\Phi\big(\ug(\tau;u_1^0)\big) + \Phi\big(\ug(\tau;u_2^0)\big)\ge 3cR$. Thus, $\tau\ge t$, as claimed.

Now, we pick 
\begin{align} \label{cond:T_*}
T_* = \frac{1}{\alpha}\log\Big(\frac{2(cR+r)}{r}\Big)>0.
\end{align} 
For all $t\ge T_*$, conditioning on event $B$ again, it holds that
\begin{align*}
&\Phi\big(\ug(t;u_1^0)\big) + \Phi\big(\ug(t;u_2^0)\big) \\
&\le c e^{-\alpha T_*}R  +c\sup_{s\in[0,\tau]} \Big[\big(1+\big[\Phi\big(\ug(s;u_1^0)\big)+\Phi\big(\ug(s;u_2^0)\big]^{4}+\|A\eta(s)\|^{72}_H\big)\|A\eta(s)\|_H\Big]\\
& \le \frac{cR}{2(cR+r)}\cdot r +c(2+(3cR)^4)\cdot\frac{Rr}{2(2+(3cR)^4)(1+R+r+c)^2 }< r.
\end{align*}
In view of Lemma \ref{lem:irreducibility:eta}, we infer a positive constant $\varepsilon=\varepsilon(t,T,r)$ independent of $\gamma$ such that
\begin{align*}
&\P\big( \Phi(\ug(t;u_1^0)) + \Phi(\ug(t;u_2^0))\le r \big) \\
&\ge \P\big( \Phi(\ug(t;u_1^0)) + \Phi(\ug(t;u_2^0))\le r \big| B\big)\P(B)\ge \P(B)\ge \varepsilon.
\end{align*}
This establishes \eqref{ineq:irreducibility:ginzburg-Landau}, thereby finishing the proof.

\end{proof}

\begin{lemma} \label{lem:ergodicity:P(ell(k+1)=k+1|ell(k)=infinity):Girsanov}
For all $N>0$, there exists a positive constant $\beta_1=\beta_1(N)$ sufficiently small such that for all $\beta\in(0,\beta_1)$,
 the following holds 
\begin{align} \label{ineq:ergodicity:P(ell(k+1)=k+1|ell(k)=infinity):Girsanov}
\P\big( \Xg_N(t_1;u_1)= \Xg_N(t_1;u_2), \Phi\big(\ug(t_1;u_1)\big)+ \Phi\big(\ug(t_1;u_2)\big)\le \beta \big)\ge \frac{1}{2},
\end{align}
for some positive constants
\begin{align*}
t_1=t_1(\beta,N)  \quad \text{and}\quad  r_1=r_1(\beta,N),
\end{align*}
that are both sufficiently small independent of $\gamma$ and
for all $u_1,u_2$ satisfying $\Phi(u_1)+\Phi(u_2)\le r_1$. In \eqref{ineq:ergodicity:P(ell(k+1)=k+1|ell(k)=infinity):Girsanov}, $X^\gamma_N$ is the process defined in \eqref{form:X_N.Y_N}-\eqref{eqn:Ginzburg-Landau:X_N}.
\end{lemma}
\begin{proof}
Let $t_1$, $r_1$ and $\rho$ be given and be chosen later, we aim to prove that there exists a coupling of $\big(\ug(\cdot;u_1),\ug(\cdot;u_2), W )$ such that \eqref{ineq:ergodicity:P(ell(k+1)=k+1|ell(k)=infinity):Girsanov} holds. We introduce the process $\uhat$ defined as
\begin{align*}
\uhat(t)= \ug(t;u_1)+\frac{t_1-t}{t_1}P_N(u_2-u_1),\quad 0\le t\le t_1.
\end{align*}
Setting $\Xhat_N=P_N\uhat$ and $\Yhat_N=Q_N\uhat$, observe that 
\begin{align*}
\Xhat(0)=P_N\uhat(0)= P_Nu_2= X_N^\gamma(0;u_2),\quad\text{and}\quad \Xhat(t_1) = P_N\uhat(t_1)= \Xg_N(t_1;u_1).
\end{align*}
Also, recalling $E_4$ defined in \eqref{form:E_n(t)} and the constant $C_4$ from Lemma \ref{lem:moment:P[E_k(t)>Phi(0)^n+rho(Phi(0)^2n+t)]} (with $n=4$), let $\tau_1,\tau_2$ be stopping times given by
\begin{align*}
\tau_i&=\min\{t\ge 0:E_4(t;u_i)-C_4 t\ge \Phi(u_i)^4+\rho_1\sqrt{t_1} \},\quad i=1,2.
\end{align*}
Then, we have the following chain of implications
\begin{align*}
&\P\Big( \big\{\Xg_N(t_1;u_1)= \Xg_N(t_1;u_2)\big\}\cap \big\{ E_4(t_1;u_1)-C_4 t_1\le \Phi(u_1)^4+\rho_1\sqrt{t_1}\big\}\\
&\hspace{4cm}\cap \big\{E_4(t_1;u_2)-C_4 t_1\le \Phi(u_2)^4+\rho_1\sqrt{t_1}\big\} \Big)\\
&\ge \P\Big( \big\{\forall t\in[0,t_1],\Xhat_N(t)= \Xg_N(t;u_2)\big\} \cap \big\{\sup_{t\in[0,t_1]}\big(E_4(t;u_1)-C_4 t\big)\le \Phi(u_1)^4+\rho_1\sqrt{t_1}\big\}\\
&\hspace{4cm}\cap \big\{\sup_{t\in[0,t_1]}\big(E_4(t;u_2)-C_4 t\big)\le \Phi(u_2)^4+\rho_1\sqrt{t_1}\big\} \Big)\\
& = \P\Big( \big\{\forall t\in[0,t_1],\Xhat_N(t)= \Xg_N(t;u_2)\big\}\cap \big\{ \tau_1\mi\tau_2\ge t_1\big\}\Big)\\
& = \P\Big( \big\{\forall t\in[0,t_1\mi \tau_1\mi\tau_2],\Xhat_N(t)= \Xg_N(t;u_2)\big\}\cap \big\{ \tau_1\mi\tau_2\ge t_1\big\}\Big),
\end{align*}
implying
\begin{align} \label{ineq:P(X^gamma(u_1)=X^gamma(u_2):E_4)}
&\P\Big( \big\{\Xg_N(t_1;u_1)= \Xg_N(t_1;u_2)\big\}\cap \big\{ E_4(t_1;u_1)-C_4 t_1\le \Phi(u_1)^4+\rho_1\sqrt{t_1}\big\} \nonumber\\
&\hspace{4cm}\cap \big\{E_4(t_1;u_2)-C_4 t_1\le \Phi(u_2)^4+\rho_1\sqrt{t_1}\big\} \Big) \nonumber \\
&\ge 1- \P\Big( \exists t\in[0,t_1\mi \tau_1\mi \tau_2],\Xhat_N(t)\neq \Xg_N(t;u_2) \Big)-\P(\tau_1\le t_1)-\P(\tau_2\le t_1).
\end{align}
In view of Lemma \ref{lem:moment:P[E_k(t)>Phi(0)^n+rho(Phi(0)^2n+t)]}, cf. \eqref{ineq:moment:P[sup_[0,T]E_k(t)>Phi(0)^n+rho(Phi(0)^2n+t)} with $n=4$ and $p=1$, it holds that
\begin{align*}
& \P(\tau_1\le t_1)+\P(\tau_2\le t_1)\\
&=\P\Big(\sup_{t\in[0,t_1]}\big(E_4(t_1;u_1)-C_4 t_1\big)\ge \Phi(u_1)^4+\rho_1\sqrt{t_1}\Big)\\
&\qquad+\P\Big(\sup_{t\in[0,t_1]}\big(E_4(t_1;u_2)-C_4 t_1\big)\ge \Phi(u_2)^4+\rho_1\sqrt{t_1} \Big)\\
&\le \P\Big(\sup_{t\in[0,t_1]}\big(E_4(t_1;u_1)-(C_4-1) t_1\big)\ge \Phi(u_1)^4+\rho_1\sqrt{t_1}\Big)\\
&\qquad+\P\Big(\sup_{t\in[0,t_1]}\big(E_4(t_1;u_2)-(C_4-1) t_1\big)\ge \Phi(u_2)^4+\rho_1\sqrt{t_1} \Big)\\
&\le  K_{4,1} \frac{\E[\Phi(u_1)^4+\Phi(u_2)^4]+2}{\rho_1},
\end{align*}
where $K_{4,1}$ is the constant on the right-hand side of \eqref{ineq:moment:P[sup_[0,T]E_k(t)>Phi(0)^n+rho(Phi(0)^2n+t)}. Conditioning on the event $\{\Phi(u_1)+\Phi(u_2)\le r_1\}$, we pick 
\begin{align} \label{cond:t_1_r_1:a}
\rho_1:=8K_{4,1}(r_1^4+1),
\end{align}
so as to deduce
\begin{align*}
 \P(\tau_1\le t_1)+\P(\tau_2\le t_1)\le \frac{1}{4}.
\end{align*}
This together with \eqref{ineq:P(X^gamma(u_1)=X^gamma(u_2):E_4)} produces
\begin{align} \label{ineq:P(X^gamma(u_1)=X^gamma(u_2))>1/2-P(X^gamma(u_1).neq.X^gamma(u_2))}
&\P\Big( \big\{\Xg_N(t_1;u_1)= \Xg_N(t_1;u_2)\big\}\cap \big\{ E_4(t_1;u_1)-C_4 t_1\le \Phi(u_1)^4+\rho_1\sqrt{t_1}\big\} \nonumber\\
&\hspace{4cm}\cap \big\{E_4(t_1;u_2)-C_4 t_1\le \Phi(u_2)^4+\rho_1\sqrt{t_1}\big\} \Big) \nonumber \\
&\ge \frac{1}{2}- \P\Big( \exists t\in[0,t_1\mi \tau_1\mi \tau_2],\Xhat_N(t)\neq \Xg_N(t;u_2) \Big).
\end{align}

Next, we claim that
\begin{align*}
\P\Big( \exists t\in[0,t_1\mi\tau_1\mi\tau_2],\Xhat_N(t)\neq \Xg_N(t;u_2) \Big) \le \frac{1}{4}.
\end{align*}
To this end, since
\begin{align*}
\d \uhat(t)&= -\gamma \ug(t;u_1)\d t -\i A \ug(t;u_1)\d t -\alpha \ug(t;u_1) \d t\\
&\qquad+\i |\ug(t;u_1)|^2\ug(t;u_1)\d t+Q\d W(t)-\frac{1}{t_1}P_N(u_2-u_1)\d t,
\end{align*}
$\Xhat_N$ satisfies the equation
\begin{align*}
\d \Xhat_N(t) & = \big(-\gamma A -\i A- \alpha\big)\Xhat_N(t)  \d t+\i P_N\big[ \big|\Xhat_N(t)+\Yg_N(t;u_2)\big|^2\big(\Xhat_N(t)+\Yg_N(t;u_2)\big)\big]\d t\\
&\qquad+Q\d W(t)+F_1(u_1,u_2,t)\d t,
\end{align*}
where
\begin{align*}
F_1(t;u_1,u_2)&:=\big(-\gamma A -\i A- \alpha\big)\frac{t_1-t}{t_1}P_N(u_2-u_1)-\frac{1}{t_1}P_N(u_2-u_1)\\
&\qquad +\i P_N\big[ |\ug(t;u_1)|^2\ug(t;u_1) - \big|\Xhat_N(t)+\Yg_N(t;u_2)\big|^2\big(\Xhat_N(t)+\Yg_N(t;u_2)\big)\big]. 
\end{align*}
Letting $\What$ be the process given by
\begin{align*}
\d \What= \d W+ Q^{-1}F_1\d t,
\end{align*}
the equation for $\Xhat_N$ can be recast as
\begin{align*}
\d \Xhat_N(t) & = \big(-\gamma A -\i A- \alpha\big)\Xhat_N(t)  \d t+\i P_N\big[ \big|\Xhat_N(t)+\Yg_N(t;u_2)\big|^2\big(\Xhat_N(t)+\Yg_N(t;u_2)\big)\big]\d t\\
&\qquad+Q\d \What(t).
\end{align*}
We note that the law induced by $\big(\Xg_N(\cdot;u_2),W\big)$ on $[0,t_1\mi\tau_1\mi\tau_2]$ is equivalent to that induced by $\big(\Xhat_N(\cdot),\What\big)$. Indeed, thanks to the condition that $Q$ is invertible on span$\{e_1,\dots,e_N\}$ by virtue of Assumption \ref{cond:Q}, it holds that
\begin{align*}
\|Q^{-1}F_1(u_1,u_2,s)\|^2_{H} &\le C(N)\Big[\|u_2-u_1\|^2_H+\frac{1}{t_1^2}\|u_2-u_1\|^2_H\Big]\\
&+C(N)\big\| |\ug(t;u_1)|^2\ug(t;u_1) - \big|\Xhat_N(t)+\Yg_N(t;u_2)\big|^2\big(\Xhat_N(t)+\Yg_N(t;u_2)\big)\big\|^2_H,
\end{align*}
for some positive constant $C(N)$ that may be arbitrarily large as $N$ tends to infinity. Concerning the last term on the above right-hand side, we invoke \eqref{ineq:|u+z|^2(u+z)-|u|^2u<|z|(|u|^2+|z|^2)} while taking into account of the fact that
\begin{align*}
\ug(t;u_1)=\Xhat_N(t)-\frac{t_1-t}{t_1}P_N(u_2-u_1)+\Yg_N(t;u_1),
\end{align*}
to infer for $0\le t\le t_1$
\begin{align*}
&\big\| |\ug(t;u_1)|^2\ug(t;u_1) - \big|\Xhat_N(t)+\Yg_N(t;u_2)\big|^2\big(\Xhat_N(t)+\Yg_N(t;u_2)\big)\big\|^2_H\\
&\le c \big\| \big(|P_N(u_2-u_1)|+|\Yg_N(t;u_1)|+| \Yg_N(t;u_2)|\big)\\
&\hspace{2cm}\times\big(|\ug(t;u_1)|^2+|P_N(u_2-u_1)|^2+ |\Yg_N(t;u_1)|^2+| \Yg_N(t;u_2)|^2 \big)\big\|^2_H\\
&\le c\big[\Phi(u_1)^4+\Phi(u_2)^4+\Phi\big(\ug(t;u_1)\big)^4+\Phi\big(\ug(t;u_2)\big)^4\big].
\end{align*}
As a consequence, conditioning on the event $\{\Phi(u_1)+\Phi(u_2)\le r_1\}$
\begin{align} \label{ineq:int_0^t.|F|^2_H}
&\int_0^{t_1\mi\tau_1\mi\tau_2}\hspace{-0.5cm}\|Q^{-1}F_1(u_1,u_2,s)\|^2_{H}\d s \nonumber \\
&\le C(N)\Big[\Big(t_1+\frac{1}{t_1}+1\Big)\big(\Phi(u_1)^4+\Phi(u_2)^4\big)+C_4t_1+\rho_1\sqrt{t_1}    \Big] \nonumber\\
&\le  C(N)\Big[\Big(t_1+\frac{1}{t_1}+1\Big)r_1^4+C_4t_1+\rho_1\sqrt{t_1}    \Big],
\end{align}
where we emphasize again that the positive constant $C(N)$ may be arbitrarily large depending only on $N$. In particular, this implies
\begin{align*}
\E\exp\Big\{a\int_0^{t_1\mi\tau_1\mi\tau_2}\hspace{-0.5cm}\|Q^{-1}F_1(u_1,u_2,s)\|^2_{H}\d s\Big\}<\infty,\quad a>0,
\end{align*} 
which verifies Novikov's condition (with $a=1/2$), thereby establishing the equivalence in law. As a consequence, employing Pinsker's inequality together with \cite[Theorem A.2]{butkovsky2020generalized} we have the following bound
\begin{align*}
&\W_{\TV}\Big( \Law\big(W_{[0,t_1\mi\tau_1\mi\tau_2]}\big) ,\Law\big( \What_{[0,t_1\mi\tau_1\mi\tau_2]}\big) \Big) \\
&\quad\le \sqrt{\frac{1}{2}D_{\KL}\Big( \Law\big(W_{[0,t_1\mi\tau_1\mi\tau_2]}\big)\big\|\Law\big(\What_{[0,t_1\mi\tau_1\mi\tau_2]}\big) \Big)}\\
 &\quad \le \frac{1}{2}\sqrt{\E\int_0^{t\mi\tau_1\mi\tau_2}\hspace{-0.5cm}\|Q^{-1}F_1(u_1,u_2,s)\|^2_{H}\d s}.
\end{align*}
From \eqref{ineq:int_0^t.|F|^2_H}, we obtain
\begin{align*}
&\W_{\TV}\Big( \Law\big(W_{[0,t_1\mi\tau_1\mi\tau_2]}\big) ,\Law\big(\What_{[0,t_1\mi\tau_1\mi\tau_2]}\big) \Big)\\
&\quad \le  C(N)\sqrt{\Big(t_1+\frac{1}{t_1}+1\Big)r_1^4+C_4t_1+\rho_1\sqrt{t_1}  }.
\end{align*}
Also, by the uniqueness of the weak solutions, 
\begin{align*}
&\big\{ \exists t\in[0,t_1\mi \tau_1\mi \tau_2],\Xhat_N(t)\neq \Xg_N(t;u_2) \big\}\subset \big\{W_{[0,t_1\mi\tau_1\mi\tau_2]}\neq \What_{[0,t_1\mi\tau_1\mi\tau_2]} \big\}.
\end{align*}
In turn, this implies
\begin{align*} 
\W_{\TV}\Big( \Law\big(\Xhat_N\big) ,\Law\big(\Xg_N(\,\cdot\,;u_2)\big) \Big) 
&\le \W_{\TV}\Big( \Law\big(W_{[0,t_1\mi\tau_1\mi\tau_2]}\big) ,\Law\big(\What_{[0,t_1\mi\tau_1\mi\tau_2]}\big) \Big)\\
&\le C(N)\sqrt{\Big(t_1+\frac{1}{t_1}+1\Big)r_1^4+C_4t_1+\rho_1\sqrt{t_1}  }.
\end{align*}

We note that up to this point, we have not explicitly chosen a coupling of $\big(\ug(\cdot;u_1),\ug(\cdot;u_2), W )$ so as to satisfy \eqref{ineq:ergodicity:P(ell(k+1)=k+1|ell(k)=infinity):Girsanov}. To this end, we recall that since the discrete metric $\mathbf{1}\{u\neq v\}$ is convex, the infimum in definition \eqref{form:W_d} for $\W_{\TV}$ is achieved \cite{villani2008optimal}. In light of  \cite[Proposition 2.8]{debussche2005ergodicity} (see also \cite[lemma 2.7]{mattingly2002exponential}), there exists a coupling $(W',\What')$ of $(W,\What)$ such that $(\Xg_N(\,\cdot\,;u_2,W'),\Xhat_N')$ is an optimal coupling for $(\Xg_N(\,\cdot\,;u_2,W),\Xhat_N)$. In other words,
\begin{align*}
&\P\Big( \exists t\in[0,t_1\mi\tau_1\mi\tau_2],\Xhat_N'(t)\neq \Xg_N(t;u_2,W') \Big)\\
&\quad = \W_{\TV}\Big( \Law\big(\Xhat_N\big) ,\Law\big(\Xg_N(\,\cdot\,;u_2)\big) \Big) \\
&\quad \le C(N)\sqrt{\Big(t_1+\frac{1}{t_1}+1\Big)r_1^4+C_4t_1+\rho_1\sqrt{t_1}  }.
\end{align*}
Since $\big(\ug(\cdot;u_1),\ug(\cdot;u_2), W' )$ is a coupling of $\big(\ug(\cdot;u_1),\ug(\cdot;u_2), W )$, from \eqref{ineq:P(X^gamma(u_1)=X^gamma(u_2))>1/2-P(X^gamma(u_1).neq.X^gamma(u_2))}, we deduce
\begin{align*}
&\P\Big( \big\{\Xg_N(t_1;u_1)= \Xg_N(t_1;u_2)\big\}\cap \big\{ E_4(t_1;u_1)-C_4 t_1\le \Phi(u_1)^4+\rho_1\sqrt{t_1}\big\} \nonumber\\
&\hspace{4cm}\cap \big\{E_4(t_1;u_2)-C_4 t_1\le \Phi(u_2)^4+\rho_1\sqrt{t_1}\big\} \Big) \nonumber \\
&=\P\Big( \big\{\Xg_N(t_1;u_1,W')= \Xg_N(t_1;u_2,W')\big\}\cap \big\{ E_4(t_1;u_1,W' )-C_4 t_1\le \Phi(u_1)^4+\rho_1\sqrt{t_1}\big\} \nonumber\\
&\hspace{4cm}\cap \big\{E_4(t_1;u_2,W' )-C_4 t_1\le \Phi(u_2)^4+\rho_1\sqrt{t_1}\big\} \Big) \nonumber \\
&\ge \frac{1}{2}- \P\Big( \exists t\in[0,t_1\mi \tau_1\mi \tau_2],\Xhat_N'(t)\neq \Xg_N(t;u_2,W' ) \Big),
\end{align*}
whence
\begin{align*}
&\P\Big( \big\{\Xg_N(t_1;u_1)= \Xg_N(t_1;u_2)\big\}\cap \big\{ E_4(t_1;u_1)-C_4 t_1\le \Phi(u_1)^4+\rho_1\sqrt{t_1}\big\} \nonumber\\
&\hspace{4cm}\cap \big\{E_4(t_1;u_2)-C_4 t_1\le \Phi(u_2)^4+\rho_1\sqrt{t_1}\big\} \Big) \nonumber \\
&\ge 1-C(N)\sqrt{\Big(t_1+\frac{1}{t_1}+1\Big)r_1^4+C_4t_1+\rho_1\sqrt{t_1}  }.
\end{align*}
Now, let $t_1=r_1=r_1(N,\beta)\in (0,1)$ be sufficiently small such that (recalling the choice of $\rho_1$ from \eqref{cond:t_1_r_1:a})
\begin{align}  \label{cond:t_1_r_1:b}
&C(N)\sqrt{\Big(t_1+\frac{1}{t_1}+1\Big)r_1^4+C_4t_1+\rho_1\sqrt{t_1}  }  \notag \\
& = C(N)\sqrt{\Big(t_1+\frac{1}{t_1}+1\Big)r_1^4+C_4t_1+8K_{4,1}(r_1^4+1)\sqrt{t_1}  } \le \frac{1}{4}, 
\end{align}
and that
\begin{align} \label{cond:t_1_r_1:c}
  C_4t_1+r_1^4+\rho_1\sqrt{t_1}=C_4t_1+r_1^4+8K_{4,1}(r_1^4+1)\sqrt{t_1} <\Big(\frac{\beta }{2}\Big)^4.
\end{align}
Since $C_4$ and $K_{4,1}$ do not depend on $C(N)$, for the sake of simplicity, we pick
\begin{align} \label{cond:t_1_r_1:d}
t_1=r_1=\beta^{10},
\end{align}
and take $\beta=\beta(N)$ sufficiently small to ensure \eqref{cond:t_1_r_1:b}-\eqref{cond:t_1_r_1:c} hold. 

With these choices of $t_1$ and $r_1$, recalling from \eqref{form:E_n(t)} that $E_4\ge \Phi^4$, we arrive at the bound
\begin{align*}
&\P\big( \Xg_N(t_1;u_1)= \Xg_N(t_1;u_2), \Phi\big(\ug(t_1;u_1)\big)+ \Phi\big(\ug(t_1;u_2)\big)\le \beta \big)\\
&\ge  \P\Big( \big\{\Xg_N(t_1;u_1)= \Xg_N(t_1;u_2)\big\}\cap \big\{ E_4(t_1;u_1)-C_4 t_1\le \Phi(u_1)^4+\rho_1\sqrt{t_1}\big\} \nonumber\\
&\hspace{4cm}\cap \big\{E_4(t_1;u_2)-C_4 t_1\le \Phi(u_2)^4+\rho_1\sqrt{t_1}\big\} \Big)\\
&\ge \frac{1}{2}.
\end{align*}
This produces \eqref{ineq:ergodicity:P(ell(k+1)=k+1|ell(k)=infinity):Girsanov}, thereby finishing the proof.

\end{proof}

Having established the auxiliary results from Lemma \ref{lem:irreducibility:Ginzburg-Landau} and Lemma \ref{lem:ergodicity:P(ell(k+1)=k+1|ell(k)=infinity):Girsanov}, we now provide the proof of Proposition \ref{prop:ergodicity:P(ell(k+1)=k+1|ell(k)=infinity)}.

\begin{proof}[Proof of Proposition \ref{prop:ergodicity:P(ell(k+1)=k+1|ell(k)=infinity)}] Let $N\ge 1$, $R>0$ be given and let $\beta\in(0,\beta_1)$ where $\beta_1=\beta_1(N)>0$ is the constant from Lemma \ref{lem:ergodicity:P(ell(k+1)=k+1|ell(k)=infinity):Girsanov}. By the Markov property, it suffices to establish the existence of $T_1=T_1(N,R,\beta)$ independent of $\gamma$ such that
\begin{align} \label{ineq:P(ell(k+1)=k+1|ell(k)=infinity)}
\P\big(  \Xg_N(T_1;u_1)= \Xg_N(T_1;u_2), \Phi(\ug(T_1;u_1))+\Phi(\ug(T;u_2))\le \beta  \big|\Phi(u_1)+ \Phi(u_2)\le R\big) \ge \varepsilon_1,
\end{align}
holds for some positive $\varepsilon_1=\varepsilon_1(N,R,\beta,T_1)$ independent of $\gamma$.

To see this,  let $t_1=t_1(N,\beta)$ and $r_1=r_1(N,\beta)$ be the constants as in Lemma \ref{lem:ergodicity:P(ell(k+1)=k+1|ell(k)=infinity):Girsanov}, cf. the choice \eqref{cond:t_1_r_1:d}. In view of Lemma \ref{lem:irreducibility:Ginzburg-Landau}, there exists a positive time $T_*=T_*(R,r_1)$ such that for all $t\ge T_*$,
\begin{align*}
\P\big( \Phi(\ug(t;u_1))+ \Phi(\ug(t;u_2))\le r_1 \big| \Phi(u_1)+ \Phi(u_2)\le R \big) \ge \varepsilon_*,
\end{align*} 
where the positive constant $\varepsilon_*=\varepsilon_*(t,R,r_1)$ is independent of $u_1,u_2$ and $\gamma$. In particular, $T_*$ is given by \eqref{cond:T_*}. Now, for $T_1=T_*+t_1$, we employ Lemma \ref{lem:irreducibility:Ginzburg-Landau} and Lemma \ref{lem:ergodicity:P(ell(k+1)=k+1|ell(k)=infinity):Girsanov} to infer that
\begin{align*}
& \P\big(  \Xg_N(T_1;u_1)= \Xg_N(T_1;u_2), \Phi(\ug(T_1;u_1))+\Phi(\ug(T_1;u_2))\le \beta  \big|\Phi(u_1)+ \Phi(u_2)\le R\big) \\
&\ge   \P\Big( \big\{ \Xg_N\big(t_1;\ug(T_*;u_1)\big)= \Xg_N\big(t_1;\ug(T_*;u_2)\big)\big\}\\
&\qquad\qquad\cap\big\{ \Phi\big(t_1;\ug(T_*;u_1)\big)+ \Phi\big(t_1;\ug(T_*;u_2)\big)\le \beta \big\} \\
&\qquad\qquad\qquad\big|\Phi\big(\ug(T_*;u_1)\big)+ \Phi\big(\ug(T_*;u_2)\big)\le r_1,\Phi(u_1)+ \Phi(u_2)\le R  \Big)\\
&\times \P\big( \Phi(\ug(T_*;u_1))+ \Phi(\ug(T_*;u_2))\le r_1 \big| \Phi(u_1)+ \Phi(u_2)\le R \big)\\
& \ge \frac{1}{2}\varepsilon_*=: \varepsilon_1.
\end{align*}
This produces \eqref{ineq:P(ell(k+1)=k+1|ell(k)=infinity)}, as claimed.

\end{proof}

\begin{remark} \label{rem:T_1} From the proof of Proposition \ref{prop:ergodicity:P(ell(k+1)=k+1|ell(k)=infinity)} together with choice of $T_*$ as in \eqref{cond:T_*} and $t_1\in(0,1)$, we see that $T_1=T_*+t_1$ satisfies
\begin{align} \label{cond:T_1}
\frac{1}{\alpha}\log\Big(\frac{2(cR+r_1)}{r_1}\Big) <T_1< \frac{1}{\alpha}\log\Big(\frac{2(cR+r_1)}{r_1}\Big)+1.
\end{align}
In the above, $c$ is a positive constant independent of $\beta,\alpha,R,r_1$. In view of the choice \eqref{cond:t_1_r_1:d}, i.e., $r_1=\beta^{10}$, we deduce that for every fixed $R$, $T_1$ has the order of $|\log\beta|$ as $\beta\to 0$. This will be employed in the argument of Proposition \ref{prop:ergodicity:P(ell(k+1)=l|ell(k)=l)} presented below.

\end{remark}

Finally, we establish Proposition \ref{prop:ergodicity:P(ell(k+1)=l|ell(k)=l)}, which together with the above auxiliary results ultimately concludes Theorem \ref{thm:ergodicity:Ginzburg_Landau}. In particular, the argument will rely on the estimates on tail probabilities from Lemma \ref{lem:moment:P[E_k(t)>Phi(0)^n+rho(Phi(0)^2n+t)]}, part 2.

\begin{proof}[Proof of Proposition \ref{prop:ergodicity:P(ell(k+1)=l|ell(k)=l)}]
By the Markov property, we may assume that $l=0$. Similar to the proof of Proposition \ref{prop:ergodicity:P(ell(k+1)=k+1|ell(k)=infinity)}, we aim to construct a coupling of $\big(\ug(\,\cdot\,;u_1),\ug(\,\cdot\,;u_2),W\big)$ such that \eqref{ineq:ergodicity:P(ell(k+1)=l|ell(k)=l)} holds. To achieve this effect, letting $\rho_2>0$ be given and be chosen later, we introduce the following stopping times
\begin{align*}
\taut_i =\inf\big\{t\ge 0: E_4(kT+t;u_i)\ge \theta+\beta^4+C_4 (kT+t) \big \},\quad i=1,2.
\end{align*}
and
\begin{align*}
\tau_3= \inf\Big\{t \ge 0: \int_{kT}^{kT+t\mi\taut_1\mi \taut_2}\hspace{-1cm} \big[1+\Phi(u_1(s))^4+\Phi(u_2(s))^4\big]&\|u_1(s)-u_1(s)\|^2_{H^1}\d s  > \rho_2 e^{-\frac{1}{4}\alpha kT} \Big\}.
\end{align*}
With the above stopping times, for $N\ge N_1$, from Definition \ref{def:ell_beta} of $\ell_{\theta,\beta}^N$, observe that
\begin{align*}
&\big\{\ell_{\theta,\beta}^N(k+1)\neq 0\big\}\\
&\subset\big\{\exists t\in [0,T]: \Xg_N(kT+t;u_1)\neq \Xg_N(kT+t;u_2) \big\}\cup \{\taut_1<T\}\cup \{\taut_2<T\}\\
& =\big\{\exists t\in [0,T\mi\taut_1\mi\taut_2]: \Xg_N(kT+t;u_1)\neq \Xg_N(kT+t;u_2) \big\}\cup \{\taut_1<T\}\cup \{\taut_2<T\}.
\end{align*}
In particular, 
\begin{align*}
&\big\{\exists t\in [0,T\mi\taut_1\mi\taut_2]: \Xg_N(kT+t;u_1)\neq \Xg_N(kT+t;u_2) \big\}\\
&=\Big(\big\{\exists t\in [0,T\mi\taut_1\mi\taut_2]: \Xg_N(kT+t;u_1)\neq \Xg_N(kT+t;u_2) \big\}\cap \{\tau_3>T\mi\taut_1\mi\taut_2\}\Big)\\
&\qquad \cup\Big( \big\{\exists t\in [0,T\mi\taut_1\mi\taut_2]: \Xg_N(kT+t;u_1)\neq \Xg_N(kT+t;u_2) \big\}\cap \{\tau_3\le T\mi\taut_1\mi\taut_2\}\Big)\\
&\subset  \big\{\exists t\in [0,T\mi\taut_1\mi\taut_2\mi\tau_3]: \Xg_N(kT+t;u_1)\neq \Xg_N(kT+t;u_2) \big\}\\
&\qquad \cup \Big( \big\{\forall t\in [0,T\mi\taut_1\mi\taut_2\mi\tau_3]: \Xg_N(kT+t;u_1)= \Xg_N(kT+t;u_2) \big\}\cap  \{\tau_3\le T\mi\taut_1\mi\taut_2\}\Big).
\end{align*}
Recalling the stopping time $\tau_N$ as in Lemma \ref{lem:Foias-Prodi:Ginzburg-Landau}, we note that
\begin{align*}
&\big\{\forall t\in [0,T\mi\taut_1\mi\taut_2\mi\tau_3]: \Xg_N(kT+t;u_1)= \Xg_N(kT+t;u_2) \big\}\\
&\hspace{1cm}\cap  \{\tau_3\le T\mi\taut_1\mi\taut_2\}\cap \{\ell_{\theta,\beta}^N(k)=0\}\\
&\subset \big\{\forall t\in [0,(kT+T\mi\taut_1\mi\taut_2\mi\tau_3)\mi\tau_N], E_4(t;u_i) \le \theta+\beta^4+C_4 t,\, i=1,2 \big\}\\
&\qquad \cap \Big\{\int_{kT}^{(kT+T\mi\taut_1\mi \taut_2\mi \tau_3)\mi\tau_N}\hspace{-1cm} \big[1+\Phi(u_1(s))^4+\Phi(u_2(s))^4\big]\|u_1(s)-u_2(s)\|^2_{H^1}\d s \ge \rho_2 e^{-\frac{1}{4}\alpha kT} \Big\} \\
& \subset \Big\{ \big[1+2\theta+2\beta^4+2C_4(k+1)T\big]\int_{kT}^{(kT+T\mi\taut_1\mi \taut_2\mi \tau_3)\mi\tau_N}\hspace{-1cm} \|u_1(s)-u_2(s)\|^2_{H^1}\d s \ge \rho_2 e^{-\frac{1}{4}\alpha kT} \Big\}.
\end{align*}
Furthermore, for all $t\in [kT,(kT+T\mi\taut_1\mi \taut_2\mi \tau_3)\mi\tau_N]$, in view of \eqref{ineq:J(t)>e^(alpha.t)}, we have
\begin{align*}
&\int_{kT}^{(k+1)T}\hspace{-0.5cm}\exp\Big\{ \alpha (t\mi\tau_N)-\frac{c_*}{\alpha_N^{1/8}}\int_0^{t\mi\tau_N}\hspace{-0.5cm}2\alpha\Phi(u_1(s))^4+2\alpha\Phi(u_2(s))^4+1 \d s\Big\} J(t\mi\tau_N) \d t\\
&\ge \int_{kT}^{(kT+T\mi\taut_1\mi \taut_2\mi \tau_3)\mi\tau_N}\hspace{-0.5cm}\exp\Big\{\frac{1}{2} \alpha t-2c_*\frac{\theta+\beta^4}{\alpha_N^{1/8}}\Big\}\|u_1(t)-u_2(t)\|^2_{H^1} \d t   \\
& \ge \rho_2\exp\Big\{\frac{1}{4}\alpha kT -2c_*\frac{\theta+\beta^4}{\alpha_N^{1/8}}\Big\}\big[1+2\theta+2\beta^4+2C_4(k+1)T\big]^{-1}.
\end{align*}
We deduce from the above implications that
\begin{align} \label{ineq:ell(k+1).ell(k)|ell(l)=l}
& \big\{\ell_{\theta,\beta}^N(k+1)\neq 0,\ell_{\theta,\beta}^N(k)=0|\ell_{\theta,\beta}^N(0)=0 \big\} \nonumber\\
&\subset \{\taut_1<T|\ell_{\theta,\beta}^N(0)=0\}\cup \{\taut_2<T|\ell_{\theta,\beta}^N(0)=0\} \nonumber \\
&\qquad \cup \big\{\exists t\in [0,T\mi\taut_1\mi\taut_2\mi\tau_3]: \Xg_N(kT+t;u_1)\neq \Xg_N(kT+t;u_2), \nonumber \\
&\hspace{2cm}\Xg_N(kT;u_1)= \Xg_N(kT;u_2) | \ell_{\theta,\beta}^N(0)=0  \big\}\nonumber \\
&\qquad \cup  \Big\{\int_{kT}^{(k+1)T}\hspace{-0.5cm}\exp\Big\{ \alpha (t\mi\tau_N)-\frac{c_*}{\alpha_N^{1/8}}\int_0^{t\mi\tau_N}\hspace{-0.5cm}2\alpha\Phi(u_1(s))^4+2\alpha\Phi(u_2(s))^4+1 \d s\Big\} J(t\mi\tau_N) \d t  \nonumber  \\
& \hspace{2cm}\ge \rho_2\exp\Big\{\frac{1}{4}\alpha kT -2c_*\frac{\theta+\beta^4}{\alpha_N^{1/8}}\Big\}\big[1+2\theta+2\beta^4+2C_4(k+1)T\big]^{-1}\Big|\ell_{\theta,\beta}^N(0)=0 \Big\}.
\end{align}

With regard to $\taut_i$, $i=1,2$, we have
\begin{align*}
\P(\taut_i<T|\ell_{\theta,\beta}^N(0)=0)& = \P\Big( \sup_{t\in[0,T]}E_4(kT+t;u_i)-C_4 (kT+t)\ge \theta+\beta^4 |\ell_{\theta,\beta}^N(0)=0\Big)\\
&\le \P\Big( \sup_{t\in[0,T] }E_4(kT+t;u_i)-C_4 (kT+t)\ge \theta+\Phi(u_i)^4|\ell_{\theta,\beta}^N(0)=0\Big)\\
&\le \P\Big( \sup_{t\in[0,\infty) }E_4(kT+t;u_i)-C_4 (kT+t)\ge \theta+ \Phi(u_i)^4|\ell_{\theta,\beta}^N(0)=0\Big).
\end{align*}
It follows from Lemma \ref{lem:moment:P[E_k(t)>Phi(0)^n+rho(Phi(0)^2n+t)]}, cf.~\eqref{ineq:moment:P[sup_[T,infty)E_k(t)>Phi(0)^n+Phi(0)^2n+1+rho)} with $n=4$ and $\rho=\theta$, that
\begin{align} \label{ineq:ell(k+1).ell(k)|ell(l)=l:tau_i<T}
\P(\taut_i<T|\ell_{\theta,\beta}^N(0)=0)&\le K_{4,q}\frac{1}{(\theta+kT)^{2q-1}} \E\big[ \Phi(u_i)^{8q}+1|\ell_{\theta,\beta}^N(0)=0\big] \nonumber \\
&\le  K_{4,q}\frac{\beta^{8q}+1}{(\theta+ kT)^{2q-1}}\le K_{4,q}\frac{2}{(\theta+ kT)^{2q-1}},\quad i=1,2,
\end{align}
where in the last implication, we simply invoked the choice $\beta<1$.

Concerning the third conditional event on the right-hand side of \eqref{ineq:ell(k+1).ell(k)|ell(l)=l}, we will employ an argument similar to the proof of Lemma \ref{lem:ergodicity:P(ell(k+1)=k+1|ell(k)=infinity):Girsanov} to deduce an upper bound in probability. To see this, note that the equation \eqref{eqn:Ginzburg-Landau:X_N} for $\Xg_N$ can be recast as
\begin{align*}
\d \Xg_N(t;u_1)&=  \big(-\gamma A -\i A- \alpha\big)\Xg_N(t;u_1)  \d t+Q\d \tilde{W}(t)\\
&\qquad +\i P_N\big[\big|\Xg_N(t;u_1)+\Yg_N(t;u_2)\big|^2\big(\Xg_N(t;u_1)+\Yg_N(t;u_2) \big)\big]\d t ,
\end{align*}
where
\begin{align*}
\d \tilde{W}(t) =\d W(t)+ Q^{-1}F_2(t)\d t,
\end{align*}
and
\begin{align*}
F_2(t)&=\i P_N\Big[ \big|\Xg_N(t;u_1)+\Yg_N(t;u_1)\big|^2\big(\Xg_N(t;u_1)+\Yg_N(t;u_1) \big)\\
&\hspace{2cm}-\big|\Xg_N(t;u_1)+\Yg_N(t;u_2)\big|^2\big(\Xg_N(t;u_1)+\Yg_N(t;u_2) \big)\Big].
\end{align*}
Observe that 
\begin{align*}
\|F(t)\|^2_H\le \big(1+\Phi(u_1(t))^4+\Phi(u_2(t))^4\big)\|u_1(t)-u_2(t)\|^2_{H^1},
\end{align*}
whence
\begin{align*}
\E\exp\Big\{\frac{1}{2}\int_{kT}^{kT+T\mi\taut_1\mi\taut_2\mi\tau_3} \hspace{-1cm}\|Q^{-1}F(s)\|^2_H\d s\Big\} \le \exp\Big\{\frac{1}{2} \|Q^{-1}\|^2_{L(H)}\rho_2 e^{-\frac{1}{4}\alpha kT}\Big\}.
\end{align*}
This verifies Novikov's condition, and thus establishes the equivalence in law between $(\Xg_N(\,\cdot\,;u_1),\tilde{W})$ and $(\Xg_N(\,\cdot\,;u_2,W)$ on $[kT,kT+T\mi\taut_1\mi\taut_2\mi\tau_3]$. Now, by the uniqueness of weak solutions, it holds that
\begin{align*}
&\P\Big(\exists t\in [0,T\mi\taut_1\mi\taut_2\mi\tau_3]: \Xg_N(kT+t;u_1)\neq \Xg_N(kT+t;u_2), \nonumber \\
&\hspace{2cm}\Xg_N(kT;u_1)= \Xg_N(kT;u_2) | \ell_{\theta,\beta}^N(0)=0  \Big)\\
&\le \P\Big( W|_{[kT,kT+T\mi\taut_1\mi\taut_2\mi\tau_3]}\neq \tilde{W}|_{[kT,kT+T\mi\taut_1\mi\taut_2\mi\tau_3]}\big| \ell_{\theta,\beta}^N(0)=0  \Big).
\end{align*}
We note that up to this point, we have not chosen a coupling of $\big(\ug(\cdot;u_1),\ug(\cdot;u_2), W )$ so as to establish ~\eqref{ineq:ergodicity:P(ell(k+1)=l|ell(k)=l)}. Instead of doing so directly, we will pick an optimal coupling on $[kT,kT+T\mi\taut_1\mi\taut_2\mi\tau_3]$ for $\big((W|\ell_{\theta,\beta}^N(0)=0),(\tilde{W}|\ell_{\theta,\beta}^N(0)=0)\big)$. In turn, similar to the proof of Lemma \ref{lem:ergodicity:P(ell(k+1)=k+1|ell(k)=infinity):Girsanov}, this allows for employing Pinsker inequality and \cite[Theorem A.2]{butkovsky2020generalized} to deduce
\begin{align*}
&\P\Big( W_{[kT,kT+T\mi\taut_1\mi\taut_2\mi\tau_3]}\neq \tilde{W}_{[kT,kT+T\mi\taut_1\mi\taut_2\mi\tau_3]}\big| \ell_{\theta,\beta}^N(0)=0  \Big)\\
&\quad =\W_{\TV}\Big( \Law\big(W_{[kT,kT+T\mi\taut_1\mi\taut_2\mi\tau_3]}|\ell_{\theta,\beta}^N(0)=0\big) ,\Law\big( \tilde{W}_{[kT,kT+T\mi\taut_1\mi\taut_2\mi\tau_3]}|\ell_{\theta,\beta}^N(0)=0\big) \Big) \\
&\quad\le \sqrt{\frac{1}{2}D_{\KL}\Big( \Law\big(W_{[kT,kT+T\mi\taut_1\mi\taut_2\mi\tau_3]}|\ell_{\theta,\beta}^N(0)=0\big) ,\Law\big( \tilde{W}_{[kT,kT+T\mi\taut_1\mi\taut_2\mi\tau_3]}|\ell_{\theta,\beta}^N(0)=0\big) \Big) }\\
 &\quad \le \frac{1}{2}\sqrt{\E\Big[\int_{kT}^{kT+T\mi\taut_1\mi\taut_2\mi\tau_3} \hspace{-1cm}\|Q^{-1}F(s)\|^2_H\d s\Big|\ell_{\theta,\beta}^N(0)=0\Big]}\\
 &\quad \le \frac{1}{2}\sqrt{ \|Q^{-1}\|^2_{L(H)}\rho_2 e^{-\frac{1}{4}\alpha kT}}\\
 &\quad \le C(N)\sqrt{\rho_2}e^{-\frac{1}{8}\alpha kT}.
\end{align*}
As a consequence, we obtain
\begin{align} \label{ineq:ell(k+1).ell(k)|ell(l)=l:b}
&\P\Big(\exists t\in [0,T\mi\taut_1\mi\taut_2\mi\tau_3]: \Xg_N(kT+t;u_1)\neq \Xg_N(kT+t;u_2), \nonumber \\
&\hspace{2cm}\Xg_N(kT;u_1)= \Xg_N(kT;u_2) | \ell_{\theta,\beta}^N(0)=0  \Big)\le C(N)\sqrt{\rho_2}e^{-\frac{1}{8}\alpha kT}.
\end{align}

Turning to the last event on the right-hand side of \eqref{ineq:ell(k+1).ell(k)|ell(l)=l}, we invoke Lemma \ref{lem:Foias-Prodi:Ginzburg-Landau} to infer
\begin{align*}
&\P\Big(\int_{kT}^{(k+1)T}\hspace{-0.5cm}\exp\Big\{ \alpha (t\mi\tau_N)-\frac{c_*}{\alpha_N^{1/8}}\int_0^{t\mi\tau_N}\hspace{-0.5cm}2\alpha\Phi(u_1(s))^4+2\alpha\Phi(u_2(s))^4+1 \d s\Big\} J(t\mi\tau_N) \d t  \nonumber  \\
& \hspace{1cm}\ge \rho_2\exp\Big\{\frac{1}{4}\alpha kT -2c_*\frac{\beta^4}{\alpha_N^{1/8}}\Big\}\big[1+2\theta+2\beta^4+2C_4(k+1)T\big]^{-1}\Big|\ell_{\theta,\beta}^N(0)=0 \Big)\\
&\le \int_{kT}^{(k+1)T}\hspace{-0.5cm}\E[J(0)|\ell_{\theta,\beta}^N(0)=0]\d t\cdot\frac{1}{\rho_2}\exp\Big\{-\frac{1}{4}\alpha kT +2c_*\frac{\beta^4}{\alpha_N^{1/8}}\Big\}\big[1+2\theta+2\beta^4+2C_4(k+1)T\big]\\
&\le \tilde{c}(\beta+\beta^2)\frac{T}{\rho_2}\exp\Big\{-\frac{1}{4}\alpha kT +2c_*\frac{\beta^4}{\alpha_N^{1/8}}\Big\}\big[1+2\theta+2\beta^4+2C_4(k+1)T\big].
\end{align*}
Taking into account of \eqref{ineq:ell(k+1).ell(k)|ell(l)=l}, \eqref{ineq:ell(k+1).ell(k)|ell(l)=l:tau_i<T} and \eqref{ineq:ell(k+1).ell(k)|ell(l)=l:b}, the above estimate implies the bound for all $q\ge 2$
\begin{align*}
&\P\big(\ell_{\theta,\beta}^N(k+1)\neq 0,\ell_{\theta,\beta}^N(k)=0|\ell_{\theta,\beta}^N(0)=0 \big)\\
&\le \tilde{c}(\beta+\beta^2)\frac{T}{\rho_2}\exp\Big\{-\frac{1}{4}\alpha kT + 2c_*\frac{\beta^4}{\alpha_N^{1/8}}\Big\}\big[1+2\theta+2\beta^4+2C_4(k+1)T\big]\\
&\qquad+\frac{c}{(\theta+kT)^{2q}} + C(N)\sqrt{\rho_2}e^{-\frac{1}{8}\alpha kT}.
\end{align*}
For the sake of simplicity, we pick $\rho_2=\sqrt{\beta}$ and recall $\beta\in(0,1)$ to deduce
\begin{align} \label{ineq:ell(k+1).ell(k)|ell(l)}
&\P\big(\ell_{\theta,\beta}^N(k+1)\neq 0,\ell_{\theta,\beta}^N(k)=0|\ell_{\theta,\beta}^N(0)=0 \big) \notag \\
&\le c\sqrt{\beta}e^{-\frac{1}{4}\alpha kT}\big[\theta+(k+1)T\big]T  +\frac{c}{(\theta+kT)^{2q}} + C(N)\beta^{\frac{1}{4}} e^{-\frac{1}{8}\alpha kT}.
\end{align}
In the above, we emphasize that $c$ is an absolute constant independent of $\beta,\theta, T,k$ and $C(N)$. Now, we proceed to tune $\theta$, $\beta$ and $T$ appropriately so as to produce
\begin{align} \label{ineq:ell(k+1).ell(k)|ell(l)=l<1/4(1+kT)^q}
\P\big(\ell_{\theta,\beta}^N(k+1)\neq 0,\ell_{\theta,\beta}^N(k)=0|\ell_{\theta,\beta}^N(0)=0 \big)\le \frac{1}{4}(1+kT)^{-q}.
\end{align}
To see this, on the right hand side of \eqref{ineq:ell(k+1).ell(k)|ell(l)}, we first pick $\theta$ sufficiently large such that
\begin{align*}
\frac{c}{(\theta+kT)^{2q}}<\frac{c}{\theta^{q-1}}\cdot \frac{1}{(\theta+kT)^{q+1}} <\frac{1}{10(1+kT)^{q+1}}.
\end{align*}
Next, letting $T$ be the same constant $T_1$ from Proposition \ref{prop:ergodicity:P(ell(k+1)=k+1|ell(k)=infinity)}, in view of Remark \ref{rem:T_1}, $T$ has the order of $|\log\beta|$. Since \begin{align*}
e^{-\frac{1}{8}\alpha kT}\le \frac{c}{(1+kT)^{q+2}}, 
\end{align*}
we pick $\beta$ sufficiently small such that
\begin{align*}
& \sqrt{\beta}e^{-\frac{1}{4}\alpha kT}\big[\theta+(k+1)T\big]T  + C(N)\beta^{\frac{1}{4}} e^{-\frac{1}{8}\alpha kT}\\
&\le  \Big(\sqrt{\beta}\big[\theta+|\log \beta|\big]|\log \beta|  + C(N)\beta^{\frac{1}{4}}\Big)\frac{c}{(1+kT)^{q+1}} < \frac{1}{10(1+kT)^{q+1}}.
\end{align*}
Altogether with \eqref{ineq:ell(k+1).ell(k)|ell(l)}, we establish \eqref{ineq:ell(k+1).ell(k)|ell(l)=l<1/4(1+kT)^q}, as claimed.

Turning back to \eqref{ineq:ergodicity:P(ell(k+1)=l|ell(k)=l)}, we invoke \eqref{ineq:ell(k+1).ell(k)|ell(l)=l<1/4(1+kT)^q} to infer
\begin{align*}
\P\big(\ell_{\theta,\beta}^N(k)\neq 0|\ell_{\theta,\beta}^N(0)=0 \big)& \le \sum_{j=0}^{k-1}\P\big(\ell_{\theta,\beta}^N(j+1)\neq 0,\ell_{\theta,\beta}^N(j)=0|\ell_{\theta,\beta}^N(0)=0 \big)\\
&\le \frac{1}{4}+\frac{1}{4}\sum_{k=1}^\infty\frac{1}{(1+kT)^{q+1} }\le \frac{1}{4}+ \frac{C(q)}{T^q} . 
\end{align*}
We once again invoke the fact that $T$ has the order of $|\log \beta|$, and shrink $\beta$ further to zero if necessary to infer that $C(q)/T^q<1/4$, whence
\begin{align*}
\P\big(\ell_{\theta,\beta}^N(k)\neq 0|\ell_{\theta,\beta}^N(0)=0 \big)& \le \frac{1}{2}. 
\end{align*}
As a consequence, we combine the above estimate with \eqref{ineq:ell(k+1).ell(k)|ell(l)=l<1/4(1+kT)^q} to arrive at the bound
\begin{align*}
\P\big(\ell_{\theta,\beta}^N(k+1)\neq 0|\ell_{\theta,\beta}^N(k)=0 \big)& = \P\big(\ell_{\theta,\beta}^N(k+1)\neq 0|\ell_{\theta,\beta}^N(k)=0,\ell_{\theta,\beta}^N(0)=0 \big)\\
&=\frac{\P\big(\ell_{\theta,\beta}^N(k+1)\neq 0,\ell_{\theta,\beta}^N(k)=0|\ell_{\theta,\beta}^N(0)=0 \big)}{\P\big(\ell_{\theta,\beta}^N(k)=0|\ell_{\theta,\beta}^N(0)=0 \big)}\\
&\le \frac{1}{2}(1+kT)^{-q}.
\end{align*}
This produces \eqref{ineq:ergodicity:P(ell(k+1)=l|ell(k)=l)}, thereby finishing the proof.

\end{proof}



\section{Inviscid limit as $\gamma \to 0$} \label{sec:gamma->0}

In this section, we establish the validity of the inviscid limit $\gamma\to 0$ for the invariant probability measures as well as for the infinite time horizon. 

\subsection{Proof of Theorem \ref{thm:gamma->0:Wasserstein:nu^gamma-nu^0}} \label{sec:gamma->0:nu^gamma->nu^0}

Concerning the convergence of $\nu^\gamma$ toward $\nu^0$, we will draw upon the framework of \cite{
cerrai2006smoluchowski,cerrai2006smoluchowski2, cerrai2020convergence,nguyen2023small} dealing with similar issue for stochastic wave equations. For the convenience of the reader, we briefly review the argument, which essentially consists of two main steps. We first establish a convergence of $\ug(t)$ toward $u(t)$ on any finite time window. This is presented in Proposition \ref{prop:gamma->0:|u^gamma-u|:[0,T]}. Then, we combine with the fact that $\nu^0$ satisfies a polynomial mixing rate, cf. Theorem \ref{thm:ergodicity:Schrodinger}, to conclude Theorem \ref{thm:gamma->0:Wasserstein:nu^gamma-nu^0}.

We start by considering the solutions $\ug(t)$ and $u(t)$ and establish the inviscid limit on finite time windows. We state the result now, but defer its proof to the end of this subsection.

\begin{proposition} \label{prop:gamma->0:|u^gamma-u|:[0,T]}
For all $u_0\in L^2(\Omega; H^1)$, let $\ug(t)$ and $u(t)$ respectively be the solutions of \eqref{eqn:Ginzburg_Landau} and \eqref{eqn:Schrodinger} with initial condition $u_0$. Then, the following holds for all $n\ge 1$
\begin{align} \label{lim:gamma->0:|u^gamma-u|}
\E\Big[\sup_{t\in[0,T]}\|\ug(t)-u(t)\|_H\Big] \le \frac{ C}{|\log \gamma|^{\frac{n}{8}}}\big( \E[\Phi(u_0)^{2n}] +T+1\big)e^{2\textup{Tr}(AQQ^*)T},\quad T>0,
\end{align}
 for some positive constant $C=C(n)$ independent of $\gamma, T,Q$ and $u_0$.
\end{proposition}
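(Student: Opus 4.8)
The plan is to set $v:=\ug-u$ and to exploit that, since both equations are driven by the \emph{same} noise $Q\,\d W$, the process $v$ solves the noise-free (random) PDE
\[
\partial_t v = -\gamma A\ug - \i A v - \alpha v + \i\big(|\ug|^2\ug - |u|^2u\big),\qquad v(0)=0 .
\]
First I would write down the pathwise energy identity for $\|v(t)\|_H^2$ and control the three relevant contributions: the dispersive term $-\i\langle Av,v\rangle$ is purely imaginary and drops out after taking real parts; the viscous forcing $-\gamma\langle A\ug,v\rangle$ is, after writing $A\ug=Av+Au$ and integrating by parts, bounded by $c\gamma\big(\|\ug\|_{H^1}^2+\|u\|_{H^1}^2\big)\le c\gamma\big(\Phi(\ug)+\Phi(u)\big)$ — crucially this is proportional to $\gamma$; and the cubic difference, via the identity $|\ug|^2\ug-|u|^2u=(|\ug|^2+|u|^2)v+\ug u\,\bar v$ already used in Lemma~\ref{lem:Foias-Prodi:Ginzburg-Landau}, contributes after taking real parts only the term $\Re\big[\i\langle \ug u\,\bar v,v\rangle\big]$, which by the one-dimensional embedding $H^1\subset L^\infty$ is bounded by $c\|\ug\|_{H^1}\|u\|_{H^1}\|v\|_H^2\le c\big(\Phi(\ug)+\Phi(u)\big)\|v\|_H^2$. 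This produces a differential inequality $\tfrac{\d}{\d t}\|v\|_H^2\le c\big(\Phi(\ug(t))+\Phi(u(t))\big)\|v\|_H^2+c\gamma\big(\Phi(\ug(t))+\Phi(u(t))\big)$, so Gronwall's lemma gives, pathwise,
\[
\sup_{t\in[0,T]}\|v(t)\|_H^2\ \le\ c\,\gamma\,G\,e^{cG},\qquad G:=\int_0^T\big(\Phi(\ug(r))+\Phi(u(r))\big)\,\d r .
\]

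The main obstacle is that $G$, the time integral of the $H^1$-energy of the two solutions, does not possess uniform exponential moments — this is exactly the weak-dissipation difficulty emphasized throughout the paper — so one cannot simply take expectations above, and the constant $c$ in the exponent (the Sobolev constant of $H^1\subset L^\infty$) cannot be made small. I would get around this by a level-splitting/large-deviation argument: fix a threshold $K>0$ and on the event $\{G\le K\}$ estimate $\E\sup_{[0,T]}\|v\|_H$ directly by $\sqrt{c\gamma K}\,e^{cK/2}$, while on $\{G>K\}$ use the trivial bound $\|v\|_H\le\|\ug\|_H+\|u\|_H$ together with Hölder's inequality and the Chebyshev estimate $\P(G>K)\le K^{-n}\E G^n\lesssim (T/K)^{n}(\E\Phi(u_0)^{n}+1)$; here $\E G^n$ is handled by Jensen's inequality and the time-averaged moment bounds of Lemma~\ref{lem:moment:H_1}, part~1 (and its NLS counterpart Lemma~\ref{lem:moment:H_1:Schrodinger}), while $\E\sup_{[0,T]}(\|\ug\|_H+\|u\|_H)^{m}$ comes from Lemma~\ref{lem:moment:L^2}; assembling these a priori estimates (the sup-moments via Burkholder's inequality) is where the explicit prefactor $e^{2\Tr(AQQ^*)T}$ and the polynomial-in-$T$ factor are produced. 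Choosing $K\asymp|\log\gamma|$ makes the first piece a positive power of $\gamma$ (hence negligible) and the second $O\big((T/|\log\gamma|)^{n}\big)$; taking square roots and optimizing the exponents yields the claimed rate $|\log\gamma|^{-n/8}$.

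In summary, everything beyond the two structural inputs — the $v$-equation with its vanishing noise, and the algebraic identity for the cubic difference together with the three term-by-term bounds — is essentially bookkeeping: deriving the Gronwall estimate, applying Chebyshev and Hölder, and invoking the moment lemmas. The delicate point is the second paragraph, where the unavoidable exponential $e^{cG}$ with $G$ only polynomially (not exponentially) integrable forces the large-deviation splitting and is ultimately responsible for the logarithmic, rather than algebraic, convergence rate in $\gamma$.
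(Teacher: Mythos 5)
Your argument is correct, and it reaches the estimate by a genuinely different route than the paper. The paper truncates the cubic nonlinearity in both equations with a cut-off $\f_R(|u|^2)$, proves a Gronwall bound $\E\sup_{[0,T]}\|\ug_R-u_R\|^2_H\le\gamma\,e^{CR^4T}(\cdots)$ for the Lipschitz systems (which requires the auxiliary dissipation estimate $\gamma\E\int_0^T\|A\ug_R\|^2_H\,\d t\lesssim e^{CR^4T}$ of Lemma \ref{lem:moment:u^gamma_R}, since the viscous term is handled by Cauchy--Schwarz as $\gamma^2\|A\ug_R\|^2_H$), and then removes the truncation via stopping times on $\|\cdot\|^2_{L^\infty}$ and Markov's inequality applied to the sup-in-time moments of Lemma \ref{lem:moment:H_1}, part 2, finally optimizing $R^4\asymp|\log\gamma|/T$. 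You instead run a pathwise Gronwall estimate on the untruncated difference, which produces the random factor $e^{cG}$ with $G$ the time-integrated energy, and split on $\{G\le K\}$ versus $\{G>K\}$ with $K\asymp|\log\gamma|$, controlling the tail by Chebyshev and the time-averaged bound \eqref{ineq:moment:|u|_H1+int.|u|_H1<|u_0|_H1+Ct}. Both implementations share the same core mechanism --- an exponential Gronwall constant tamed by restricting to a good event whose complement has polynomial probability, with the threshold tuned logarithmically in $\gamma$ --- but yours buys two simplifications: integrating the viscous term by parts ($\gamma|\la\grad u,\grad\bar v\ra|\le\gamma\|u\|_{H^1}\|v\|_{H^1}$) avoids any $H^2$-level estimate and hence the whole of Lemma \ref{lem:moment:u^gamma_R}, and the Chebyshev bound on $G$ uses only the integrated moment bound rather than the sup-in-time one. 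Two small bookkeeping points: the sup-in-time moment you still need on the bad event (to bound $\E\sup_{[0,T]}\|v\|^2_H$) comes from Lemma \ref{lem:moment:H_1}, part 2, and Lemma \ref{lem:moment:H_1:Schrodinger}, not from Lemma \ref{lem:moment:L^2}, which only gives fixed-time moments; and your final constant carries a polynomial factor $T^{n}$ rather than the paper's $(T+1)e^{2\textup{Tr}(AQQ^*)T}$ --- this is harmless (indeed at least as good) for the downstream optimization in $t\asymp\log|\log\gamma|$, though it does not literally reproduce the stated prefactor with a $Q$-independent $C$.
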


Next, we provide the uniform moment bound on $\nu^\gamma$ in the following auxiliary result, whose proof is also defered to the end of this subsection.

\begin{lemma}  \label{lem:moment:nu_gamma}
Under the same hyposthesis of Theorem \ref{thm:ergodicity:Ginzburg_Landau}, let $\nu^\gamma$ be the unique invariant probability measure of \eqref{eqn:Ginzburg_Landau}. Then, the followings hold
\begin{align} \label{ineq:moment:mu_gamma}
\sup_{\gamma\in[0,1]}\int_{H} e^{\xi\|u\|^2_H}\nu^\gamma(\textup{d} u) <\infty \quad\textup{and}\quad \sup_{\gamma\in[0,1]}\int_{H} \Phi(u)^n \nu^\gamma(\textup{d} u) <\infty.
\end{align}
for all $\xi>0$ sufficiently small and $n\ge 1$.
\end{lemma}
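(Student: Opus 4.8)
The plan is to produce $\nu^\gamma$ itself from the uniform (in both $t$ and $\gamma$) dissipative moment estimates of the previous section, so that the bounds we want are built in from the start. Fix the deterministic datum $v_0=0\in H^1$. Since $\Phi(0)=0$ and the constants $C_\xi$ in \eqref{ineq:moment:exp(|u|^2_H)} and $C_{1,n}$ in \eqref{ineq:moment:|u|_H1<e^(-ct)|u_0|_H1} are independent of $\gamma$, those estimates give
\[
\sup_{\gamma\in(0,1)}\sup_{t\ge0}\E\big[e^{\xi\|\ug(t;0)\|^2_H}\big]\le 1+C_\xi,\qquad
\sup_{\gamma\in(0,1)}\sup_{t\ge0}\E\big[\Phi(\ug(t;0))^n\big]\le \tfrac{2}{\alpha n}C_{1,n},
\]
for every $\xi>0$ small and $n\ge1$.

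Next I would run the Krylov--Bogoliubov construction from $v_0=0$: set $\mu^\gamma_T:=\tfrac1T\int_0^T P^\gamma_t(0,\cdot)\,\d t\in\Pcal r(H)$, so that $\int_H\Phi^n\,\d\mu^\gamma_T\le\tfrac2{\alpha n}C_{1,n}$ and $\int_H e^{\xi\|u\|^2_H}\,\d\mu^\gamma_T\le1+C_\xi$, uniformly in $T$ and $\gamma$. By \eqref{cond:Phi} the sublevel sets $\{\Phi\le M\}$ are bounded in $H^1$, hence — once their closedness in $H$ is known, see below — compact in $H$ by the one-dimensional Rellich embedding; the bound $\mu^\gamma_T(\{\Phi>M\})\le\tfrac1M\cdot\tfrac2\alpha C_{1,1}$ then gives tightness of $\{\mu^\gamma_T\}_T$. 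A weakly convergent subsequence $\mu^\gamma_{T_j}\to\mu^\gamma$ produces, by the standard argument using the Feller property of $P^\gamma_t$, an invariant probability measure, which must coincide with $\nu^\gamma$ by the uniqueness in Theorem \ref{thm:ergodicity:Ginzburg_Landau}. Applying the portmanteau theorem to a nonnegative lower semicontinuous $g$,
\[
\int_H g\,\d\nu^\gamma\le \liminf_{j\to\infty}\int_H g\,\d\mu^\gamma_{T_j}\le \sup_{t\ge0}\E\big[g(\ug(t;0))\big],
\]
and taking $g=e^{\xi\|\cdot\|^2_H}$ (continuous, hence l.s.c.) and $g=\Phi(\cdot)^n$ yields \eqref{ineq:moment:mu_gamma} after $\sup_{\gamma\in(0,1)}$; the endpoint $\gamma=0$ follows by the identical scheme, using Lemma \ref{lem:moment:H_1:Schrodinger} (the $\gamma$-uniform a priori bound for \eqref{eqn:Schrodinger}) in place of Lemma \ref{lem:moment:H_1} and the ergodicity of $P^0_t$ from Theorem \ref{thm:ergodicity:Schrodinger}.

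The one point needing genuine care — and what I expect to be the main obstacle — is that $u\mapsto\Phi(u)^n$, extended by $+\infty$ to $H\setminus H^1$, is lower semicontinuous on $H$ (this also makes $\{\Phi\le M\}$ closed, completing the tightness step). To see it, suppose $u_k\to u$ in $H$ with $\ell:=\liminf_k\Phi(u_k)<\infty$; along a subsequence realizing $\ell$, \eqref{cond:Phi} bounds $u_k$ in $H^1$, so along a further subsequence $u_k\rightharpoonup u$ in $H^1$, and by the \emph{compact} embedding $H^1_0(0,1)\hookrightarrow L^4(0,1)$ — again a one-dimensional feature, cf.\ \eqref{ineq:Gagliardo-Nirenberg} and Remark \ref{rem:dimension-one} — one has $u_k\to u$ in $L^4$ and in $H$; hence $\|u\|_{H^1}^2\le\liminf_k\|u_k\|_{H^1}^2$ while $\|u_k\|_{L^4}^4\to\|u\|_{L^4}^4$ and $\|u_k\|_H^{6},\|u_k\|_H^{18}\to\|u\|_H^{6},\|u\|_H^{18}$, so $\Phi(u)\le\ell$; if $\liminf_k\Phi(u_k)=\infty$ there is nothing to prove, and note $u\notin H^1$ forces $\|u_k\|_{H^1}\to\infty$, whence by \eqref{cond:Phi} $\Phi(u_k)\to\infty$. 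Raising to the $n$-th power preserves nonnegativity (by \eqref{cond:Phi}) and lower semicontinuity, which closes the argument.
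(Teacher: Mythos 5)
Your proposal is correct, but it takes a genuinely different route than the paper. You rebuild $\nu^\gamma$ by Krylov--Bogoliubov from the zero initial state, use the $\gamma$-uniform decay estimates \eqref{ineq:moment:exp(|u|^2_H)} and \eqref{ineq:moment:|u|_H1<e^(-ct)|u_0|_H1} to bound the time-averaged measures, establish tightness via compactness of the sublevel sets $\{\Phi\le M\}$, and then pass the moment bounds to the weak limit with the portmanteau theorem. The technical content you had to supply is the lower semicontinuity of $\Phi^n$ on $H$ (extended by $+\infty$ off $H^1$), which you handle correctly by combining weak $H^1$-lower semicontinuity of $\|\cdot\|_{H^1}^2$ with the compact embeddings $H^1\hookrightarrow L^4$ and $H^1\hookrightarrow H$ to control the remaining terms of $\Phi$ along a subsequence.

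The paper avoids all of this. It works directly with the invariant measure $\nu^\gamma$ (whose existence Theorem \ref{thm:ergodicity:Ginzburg_Landau} already supplies): it truncates the test function, $\f_m(u)=e^{\xi\|u\|^2_H}\mi m$, writes the invariance identity $\int P^\gamma_t\f_m\,\d\nu^\gamma=\int\f_m\,\d\nu^\gamma$, splits the integral over a ball $B_R=\{\|u\|_H^2\le R\}$ chosen with $\nu^\gamma(B_R^c)<\varepsilon$, uses \eqref{ineq:moment:exp(|u|^2_H)} to push down the contribution from $B_R$ by taking $t$ large, takes $\varepsilon<m^{-1}$, and finally sends $m\to\infty$ by monotone convergence. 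The $\gamma$-uniformity drops out immediately because $c_\xi,C_\xi$ in \eqref{ineq:moment:exp(|u|^2_H)} do not depend on $\gamma$, even though $R$ and $t$ may. This is shorter and bypasses compactness and lower semicontinuity entirely; its only structural input is the Lyapunov-type decay estimate plus invariance. Your argument is more constructive and makes the regularity of the sublevel sets of $\Phi$ explicit, which can be useful elsewhere, but it needs the Feller property of $P^\gamma_t$ to make the Krylov--Bogoliubov limit invariant — a standard but not explicitly verified fact here — whereas the paper's argument needs only that $\nu^\gamma$ is invariant, which is already established.
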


Assuming that Proposition \ref{prop:gamma->0:|u^gamma-u|:[0,T]} and Lemma \ref{lem:moment:nu_gamma} hold, let us conclude Theorem \ref{thm:gamma->0:Wasserstein:nu^gamma-nu^0}. In turn, the result of Theorem \ref{thm:gamma->0:Wasserstein:nu^gamma-nu^0} will be combined with Theorem \ref{thm:ergodicity:Ginzburg_Landau} to establish Theorem \ref{thm:gamma->0:phi} in Section \ref{sec:gamma->0:phi}.

\begin{proof}[Proof of Theorem \ref{thm:gamma->0:Wasserstein:nu^gamma-nu^0}] Firstly, we prove the following analogue of \eqref{lim:gamma->0:Wasserstein:nu^gamma-nu^0} with respect to $\W_{d_0}$
\begin{align} \label{lim:gamma->0:Wasserstein:d_0}
\W_{d_0}(\nu^\gamma,\nu^0) \le \frac{C}{(\log |\log \gamma|)^q}, \quad \textup{as}\, \gamma\to 0.
\end{align}
To see this, we invoke the triangle inequality and obtain
\begin{align}
\W_{d_0}\big(\nu^\gamma,\nu^0  \big) &\le  \W_{d_0}\big(\nu^\gamma,P_t^0\nu^\gamma   \big)+ \W_{d_0}\big(P_t^0\nu^\gamma,\nu^0  \big) \nonumber \\
& = \W_{d_0}\big(P_t^\gamma\nu^\gamma,P_t^0\nu^\gamma   \big)+ \W_{d_0}\big(P_t^0\nu^\gamma,P_t^0\nu^0  \big), \label{ineq:W_H(nu^gamma,nu^0)}
\end{align}
where the last implication follows from the invariance property. 

With regard to the second term on the right-hand side of \eqref{ineq:W_H(nu^gamma,nu^0)}, we note that
\begin{align*}
d_0(u_1,u_2) =\|u_1-u_2\|_H\mi 1\le c\big( \|u_1-u_2\|_{H^1}\mi 1\big)=c\,d_1(u_1,u_2),
\end{align*}
implying $\W_{d_0}\le c\W_{d_1}$ by virtue of definition \eqref{form:W_d}. In turn, Lemma \ref{lem:ergodicity:Schrodinger} can be employed to deduce
\begin{align*}
 \W_{d_0}\big(P_t^0\nu^\gamma,P_t^0\nu^0  \big)&\le c\,\W_{d_1}\big(P_t^0\nu^\gamma,P_t^0\nu^0  \big)\\
& \le \frac{C}{(1+t)^q }\Big(1+\int_{H^1}\Psi(u)\nu^\gamma(\textup{d} u)+\int_{H^1}\Psi(u)\nu^0(\textup{d} u)\Big)\\
&\le \frac{C}{(1+t)^q }\Big(1+\int_{H^1}\Phi(u)\nu^\gamma(\textup{d} u)+\int_{H^1}\Psi(u)\nu^0(\textup{d} u)\Big).
\end{align*}
where we recall $\Psi$ and $\Phi$ are defined in \eqref{form:Psi} and \eqref{form:Phi}, respectively. Taking into account of Lemma \ref{lem:moment:nu_gamma} and Lemma \ref{lem:moment:nu^0}, we obtain the bound
\begin{align} \label{ineq:W_H(P_t.nu^gamma,P_t.nu^0)<(1+t)^-q}
\W_{d_0}\big(P_t^0\nu^\gamma,P_t^0\nu^0  \big)  & \le \frac{C}{(1+t)^q },
\end{align}
for some positive constant $C$ independent of $\gamma$ and $t$. 

Concerning the first term on the right-hand side of \eqref{ineq:W_H(nu^gamma,nu^0)}, we employ definition \eqref{form:W_d} again to see that
\begin{align*}
\W_{d_0}\big(P_t^\gamma\nu^\gamma,P_t^0\nu^\gamma   \big) & \le \E\big[\|u^\gamma(t)-u(t)\|_H\mi 1\big],
\end{align*}
where $u^\gamma(0)=u(0)\sim \nu^\gamma$. In view of Proposition \ref{prop:gamma->0:|u^gamma-u|:[0,T]}, we infer
\begin{align}
\W_{d_0 }\big(P_t^\gamma\nu^\gamma,P_t^0\nu^\gamma   \big) & \le \E\big[\|u^\gamma(t)-u(t)\|_H\big] \nonumber\\
&\le \frac{C}{|\log \gamma|^{\frac{n}{8}}}\Big( \int_{H^1}\Phi(u)^{2n}\nu^\gamma(\d u)+t+1\Big)e^{Ct} \nonumber \\
&\le \frac{e^{Ct}}{|\log \gamma|^{\frac{n}{8}}}, \label{ineq:W_H(P_t^gamma.nu^gamma,P_t^0.nu^gamma)}
\end{align}
where the last implication follows from \eqref{ineq:moment:mu_gamma}. In the above, we emphasize that $C=C(n)$ is independent of $\gamma$ and $t$.

Now, from \eqref{ineq:W_H(nu^gamma,nu^0)} together with \eqref{ineq:W_H(P_t.nu^gamma,P_t.nu^0)<(1+t)^-q} and \eqref{ineq:W_H(P_t^gamma.nu^gamma,P_t^0.nu^gamma)}, we arrive at the bound
\begin{align*}
\W_{d_0}\big(\nu^\gamma,\nu^0  \big) \le \frac{C}{(1+t)^q }+ \frac{e^{Ct}}{|\log \gamma|^{\frac{n}{8}}}.
\end{align*}
Picking $t$ satisfying
\begin{align*}
t= \frac{1}{C}\log |\log \gamma|,\quad \gamma\in (0,1/4),
\end{align*}
implies
\begin{align*}
\W_{d_0}\big(\nu^\gamma,\nu^0  \big) \le \frac{C}{(1+\log |\log \gamma|)^q }+ \frac{1}{|\log \gamma|^{\frac{n}{8}-1}}.
\end{align*} 
Since $q$ and $n$ are arbitrarily chosen, sending $\gamma$ to 0 produces \eqref{lim:gamma->0:Wasserstein:d_0}, as claimed.

Turning back to \eqref{lim:gamma->0:Wasserstein:nu^gamma-nu^0}, for any bivariate random variable $(X,Y)$ such that $X\sim\nu^\gamma$ and $Y\sim \nu^0$, we invoke Holder inequality to see that
\begin{align*}
&\E\Big[\sqrt{d_0(X,Y)\big(1+e^{\xi\|X\|^2_H}+e^{\xi\|Y\|^2_H}\big)}\Big] \\
&\le \sqrt{\E d_0(X,Y)\big(1+\E e^{\xi\|X\|^2_H}+\E e^{\xi\|Y\|^2_H}\big)}\\
&= \sqrt{\E d_0(X,Y)\Big(1+\int_H e^{\xi\|u\|^2_H}\nu^\gamma(\d u)+\int_H e^{\xi\|u\|^2_H}\nu^0(\d u)\Big)}.
\end{align*}
Since $(X,Y)$ is arbitrary, from \eqref{form:W_d}, we obtain
\begin{align} \label{ineq:W(d_0^xi)<W_(d_0)(1+e^u)}
\W_{d_0^\xi}(\nu^\gamma,\nu^0)& \le \sqrt{\W_{d_0}(\nu^\gamma,\nu^0)\Big(1+\int_H e^{\xi\|X\|^2_H}\nu^\gamma(\d u)+\int_H e^{\xi\|u\|^2_H}\nu^0(\d u)\Big)}\nonumber \\
&\le \frac{C}{(\log |\log \gamma|)^q}\Big(1+\int_H e^{\xi\|u\|^2_H}\nu^\gamma(\d u)+\int_H e^{\xi\|X\|^2_H}\nu^0(\d u)\Big).
\end{align}
Furthermore,  taking into account of Lemma \ref{lem:moment:nu_gamma} and Lemma \ref{lem:moment:nu^0}, we arrive at \eqref{lim:gamma->0:Wasserstein:nu^gamma-nu^0}, i.e.,
\begin{align*}
\W_{d_0^\xi}(\nu^\gamma,\nu^0)&\le \frac{C}{(\log |\log \gamma|)^q}.
\end{align*}
This completes the proof.

\end{proof}

We now turn to the proof of the auxiliary results. In order to prove Proposition \ref{prop:gamma->0:|u^gamma-u|:[0,T]}, we will not compare $\ug(t)$ and $u(t)$ directly, due to the difficulty induced by the cubic nonlinearity. Instead, we tackle the issue by modifying \eqref{eqn:Ginzburg_Landau} and \eqref{eqn:Schrodinger} as follows: for $R>0$, we introduce a smooth cut-off function $\f_R:[0,\infty)\to [0,1]$ defined as
\begin{align} \label{form:phi_R}
\f_R(x)= \begin{cases}
1, & 0\le x \le R,\\
\text{decreasing},& R\le x\le R+1,\\
0, & R+1\le x. 
\end{cases}
\end{align}
Given $\f_R$ above, consider the following truncating version of \eqref{eqn:Ginzburg_Landau}
\begin{align} \label{eqn:Ginzburg_Landau:truncated}
\d \ug_R (t) & = -(\gamma +\i) A \ug_R(t)\d t + \i |\ug_R(t)|^2\ug_R (t)\f_R(|\ug_R(t)|^2)\d t -\alpha \ug_R(t)\d t +Q\d W(t),
\end{align}
as well as of \eqref{eqn:Schrodinger}
\begin{align} \label{eqn:Schrodinger:truncated}
\d u_R (t) & = -\i A u_R (t)\d t + \i |u_R (t)|^2u_R  (t)\f_R(|u_R (t)|^2)\d t -\alpha u_R (t)\d t +Q\d W(t).
\end{align}
Observe that \eqref{eqn:Ginzburg_Landau:truncated} and \eqref{eqn:Schrodinger:truncated} are both Lipschitz systems, which allows for proving that $\ug_R$ can be approximated by $u_R$ on $[0,T]$. Then, we will remove the Lipschitz restriction by exploiting the uniform moment bounds in Lemma \ref{lem:moment:H_1} and Lemma \ref{lem:moment:H_1:Schrodinger}.

In Lemma \ref{lem:moment:u^gamma_R}, stated and proven next, we provide a uniform moment bound on $\ug_R$. In turn, this will be invoked to establish the convergence of $\ug_R$ toward $u_R$.

\begin{lemma} \label{lem:moment:u^gamma_R} For all $R>0$ and $u_0\in L^2(\Omega; H^1)$, let $\ug_R(t)$ be the solution of \eqref{eqn:Ginzburg_Landau:truncated} with initial condition $u_0$. Then, the following holds
\begin{align} \label{ineq:moment:u^gamma_R} 
\sup_{\gamma\in(0,1)}\gamma\E\int_0^T\|\ug_R(t)\|^2_{H^1}\textup{d} t \le \big(\E\|u_0\|^2_{H^1}+1\big)e^{
[CR^4+2\textup{Tr}(AQQ^*)]T}, \quad T>0,
\end{align}
for some positive constant $C$ independent of $\gamma, R,T,Q$ and $u_0$.
\end{lemma}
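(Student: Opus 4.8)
The plan is to obtain \eqref{ineq:moment:u^gamma_R} directly from the $L^2$ energy balance for the truncated equation \eqref{eqn:Ginzburg_Landau:truncated}: because the cut-off $\f_R$ depends only on $|\ug_R|^2$, the truncated cubic term remains Hamiltonian and contributes nothing to $\d\|\ug_R\|_H^2$, so the $\gamma$-weighted $H^1$ integral sits with a negative sign in an \emph{exact} identity, and the bound follows by discarding nonnegative terms. The factor $R^4$ and the exponential in the statement are then only a (very loose) over-estimate of what one actually gets.

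In detail, first recall that $\f_R$ makes the nonlinearity globally Lipschitz and bounded on $H$, so \eqref{eqn:Ginzburg_Landau:truncated} is globally well-posed; since $e^{-(\gamma+\i)At}$ is analytic and smoothing for $\gamma>0$ and $u_0\in L^2(\Omega;H^1)$, one has $\ug_R\in L^2(\Omega;C([0,T];H^1))$ for each fixed $\gamma$, so the left side of \eqref{ineq:moment:u^gamma_R} is finite — the content of the lemma is its uniformity in $\gamma\in(0,1)$. Now the It\^o computation leading to \eqref{eqn:d.|u|^2_H} applies verbatim to $\|\ug_R(t)\|_H^2$, the cut-off cubic term contributing $2\Re\big(\i\int_0^1|\ug_R(t)|^4\f_R(|\ug_R(t)|^2)\,\d x\big)=0$ exactly as the un-truncated one does; hence
\begin{align*}
\d\|\ug_R(t)\|_H^2 = -2\gamma\|\ug_R(t)\|_{H^1}^2\,\d t-2\alpha\|\ug_R(t)\|_H^2\,\d t+2\Tr(QQ^*)\,\d t+\d M_0(t),
\end{align*}
with $M_0$ as in \eqref{form:M_0(t)} and $\d\la M_0\ra(t)\le 4\Tr(QQ^*)\|\ug_R(t)\|_H^2\,\d t$; since $\E\int_0^T\|\ug_R(t)\|_H^2\,\d t<\infty$ (a stopping-time localization suffices), $M_0$ is a genuine martingale. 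Integrating on $[0,T]$ and taking expectations,
\begin{align*}
\E\|\ug_R(T)\|_H^2 + 2\gamma\,\E\int_0^T\|\ug_R(t)\|_{H^1}^2\,\d t + 2\alpha\,\E\int_0^T\|\ug_R(t)\|_H^2\,\d t = \E\|u_0\|_H^2 + 2\Tr(QQ^*)T .
\end{align*}

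Discarding the first and third (nonnegative) terms on the left gives $\gamma\,\E\int_0^T\|\ug_R(t)\|_{H^1}^2\,\d t\le \tfrac12\E\|u_0\|_H^2+\Tr(QQ^*)T$. Since $\|u_0\|_H\le\|u_0\|_{H^1}$ and, because $\alpha_k=(k\pi)^2\ge1$, $\Tr(QQ^*)\le\Tr(AQQ^*)$, one has $e^{[CR^4+2\Tr(AQQ^*)]T}\ge 1+2\Tr(AQQ^*)T\ge 1+2\Tr(QQ^*)T$ for any $C\ge0$, so the last bound is $\le(\E\|u_0\|_{H^1}^2+1)e^{[CR^4+2\Tr(AQQ^*)]T}$, which is \eqref{ineq:moment:u^gamma_R}.

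I do not expect a real obstacle here; the only care needed is the two technical points just flagged — that $\ug_R(t)\in H^1$ (smoothing of the parabolic-type semigroup for $\gamma>0$, which is what makes the left side finite for fixed $\gamma$), and that $\E M_0(T)=0$ (localization). If instead one wanted a proof in which the $R$-dependence genuinely enters, one can expand $\|\ug_R(t)\|_{H^1}^2$ by It\^o and control the nonlinear contribution $2\Im\!\int_0^1 (A\ug_R)\,|\ug_R|^2\overline{\ug_R}\,\f_R(|\ug_R|^2)\,\d x$ via integration by parts together with the pointwise bounds $|\ug_R|^2\f_R(|\ug_R|^2)\le R+1$ and $|\f_R'|\le C$, which yields a term of size $\lesssim R^{2}\|\ug_R\|_{H^1}^2$, and then close by Gr\"onwall and $\gamma<1$; but this is longer and strictly weaker than the $L^2$-balance route above.
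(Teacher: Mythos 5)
Your proof is correct for the lemma as literally stated, and it is a genuinely different (and much cleaner) route: you apply It\^o at the $L^2$ level and exploit the fact that the truncated cubic term is still Hamiltonian (its contribution to $\tfrac{\d}{\d t}\|\ug_R\|_H^2$ vanishes), so the $\gamma\|\ug_R\|_{H^1}^2$ dissipation drops out of an exact identity with no $R$-dependence and no Gr\"onwall needed. Your observations that $\|u_0\|_H\le\|u_0\|_{H^1}$ and $\Tr(QQ^*)\le\Tr(AQQ^*)$ close the gap to the stated right-hand side, and your incidental remark that the $H^1$-level It\^o computation gives a nonlinear contribution of size $R^2$ rather than the paper's $R^4$ is also accurate (the paper's constant is not optimized).

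The point you should be alert to, however, is that the paper's own proof works at the $H^1$ level precisely because it needs (and derives) the $H^2$ dissipation: applying It\^o to $\|\ug_R\|_{H^1}^2$ produces the term $-2\gamma\|A\ug_R\|_H^2\,\d t$, and after the Gr\"onwall step the paper concludes with the bound $\gamma\E\int_0^T\|A\ug_R(t)\|_H^2\,\d t\le(\E\|u_0\|_{H^1}^2+1)e^{[CR^4+2\Tr(AQQ^*)]T}$. This $H^2$-level estimate is exactly what is invoked in the proof of Lemma~\ref{lem:gamma->0:truncated}, where one needs to bound $\gamma^2\int_0^T\|A\ug_R(t)\|_H^2\,\d t$ by $\gamma$ times a $\gamma$-free quantity. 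Your $L^2$ energy balance only puts $\gamma\|\ug_R\|_{H^1}^2$ in the dissipation, which is one derivative short of what is used downstream. So either \eqref{ineq:moment:u^gamma_R} contains a typographical $H^1$ that should read $H^2$ (in which case your $L^2$ route does not prove it), or the statement is intentional but then the actual workhorse estimate is the $H^2$ one, derived by the $H^1$-level It\^o computation you describe in your last paragraph and dismiss as ``strictly weaker.'' It is not weaker in the sense that matters: it supplies the extra derivative. You have all the ingredients; the adjustment is to run the $H^1$ It\^o balance as the primary argument rather than as a footnote.
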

\begin{proof} From \eqref{eqn:Ginzburg_Landau:truncated}, we apply It\^o's formula to $\ug_R$ and obtain
\begin{align} \label{eqn:d.|u^gamma_R|^2_H1}
\d \|\ug_R(t)\|^2_{H^1} 
&= -2\gamma \|A\ug_R(t)\|^2_H\d t-2\alpha \|\ug_R(t)\|^2_{H^1}\d t+2\Tr(AQQ^*)\d t  \notag \\
&\qquad + \la \ug_R(t),\overline{Q\d W(t)}\ra_{H^1} + \la \ubar^\gamma_R(t),Q\d W(t)\ra_{H^1} \notag \\
&\qquad +\la \grad \ubar^\gamma_R(t),\i \grad \big(|\ug_R(t)|^2\ug_R(t)\f_R(|\ug_R(t)|^2)  \big)\ra_H\d t \notag\\
&\qquad+ \la \grad \ug_R(t),-\i \grad \big(|\ug_R(t)|^2\ubar^\gamma_R(t)\f_R(|\ug_R(t)|^2)  \big)\ra_H\d t .
\end{align} 
With regard to the last two terms on the above right-hand side, we note that
\begin{align*}
&\i\la \grad \ubar, \grad \big(|u|^2u\f_R(|u|^2)  \big)\ra_H \\
& = 2 \la |\grad u|^2, |u|^2\f_R(|u|^2) \ra_H +\la (\grad \ubar)^2,u^2\f_R(|u|^2)\ra_H\\
&\qquad + \la |\grad u|^2, |u|^4\f_R'(|u|^2) \ra_H + \la (\grad \ubar)^2,u^2|u|^2\f_R'(|u|^2)\ra_H \\
&\le C R^4\|u\|^2_{H^1}.
\end{align*}
Likewise, 
\begin{align*}
-\i\la \grad u, \grad \big(|u|^2\ubar\f_R(|u|^2)  \big)\ra_H \le C R^4\|u\|^2_{H^1}.
\end{align*}
It follows from \eqref{eqn:d.|u^gamma_R|^2_H1} that
\begin{align*} 
\E \|\ug_R(t)\|^2_{H^1}\le \E \|u_0\|^2_{H^1}+2\Tr(AQQ^*)t + CR^4\int_0^t \E \|\ug_R(s)\|^2_{H^1}\d s,
\end{align*} 
whence
\begin{align*}
\E \|\ug_R(t)\|^2_{H^1} \le \big(\E \|u_0\|^2_{H^1}+2\Tr(AQQ^*)t\big) e^{CR^4 t} \le   \big(\E \|u_0\|^2_{H^1}+1\big) e^{[CR^4+2\Tr(AQQ^*)] t} .
\end{align*}
Taking \eqref{eqn:d.|u^gamma_R|^2_H1} into account once again produces
\begin{align*}
\gamma\int_0^t\E\|A\ug_R(s)\|^2_H\d s\le  \big(\E \|u_0\|^2_{H^1}+1\big) e^{[CR^4+2\Tr(AQQ^*)] t} ,
\end{align*}
for some positive constant independent of $\gamma, R,t,Q$ and $u_0$, thereby establishing \eqref{ineq:moment:u^gamma_R} as claimed.

\end{proof}

Having obtained the moment bound \eqref{ineq:moment:u^gamma_R}, we now show that $\ug_R$ converges toward $u_R$ on $[0,T]$ in $H$. This is summarized in the following result.

\begin{lemma} \label{lem:gamma->0:truncated}
For all $R>0$ and $u_0\in L^2(\Omega; H^1)$, let $\ug_R(t)$ and $u_R(t)$ respectively be the solutions of \eqref{eqn:Ginzburg_Landau:truncated} and \eqref{eqn:Schrodinger:truncated} with initial condition $u_0$. Then, the following holds 
\begin{align} \label{lim:gamma->0:truncated}
\E\Big[\sup_{t\in[0,T]}\|\ug_R(t)-u_R(t)\|^2_H\Big] \le \gamma \big(\E\|u_0\|^2_{H^1}+1\big)e^{
[CR^4+\textup{Tr}(AQQ^*)]T},
\end{align}
 for some positive constant $C$ independent of $\gamma, R,T,Q$ and $u_0$.
 
\end{lemma}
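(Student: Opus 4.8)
The plan is to exploit that the same cylindrical Wiener process $W$ drives both \eqref{eqn:Ginzburg_Landau:truncated} and \eqref{eqn:Schrodinger:truncated}, so that for $w:=\ug_R-u_R$ the noise cancels and $w$ solves a \emph{noise-free} (but still random) equation with $w(0)=0$. Subtracting the two equations and organizing the dispersive/dissipative terms as $-(\gamma+\i)A\ug_R+\i Au_R=-(\gamma+\i)Aw-\gamma Au_R$, one finds
\begin{align*}
\frac{\d}{\d t}w=-(\gamma+\i)Aw-\gamma Au_R-\alpha w+\i\big[g_R(\ug_R)-g_R(u_R)\big],
\end{align*}
where $g_R(u):=|u|^2u\,\f_R(|u|^2)$. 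Since $\f_R$ is smooth and compactly supported in $|u|^2\le R+1$, the map $g_R:\cbb\to\cbb$ is globally Lipschitz with a constant $L_R$ depending only on $R$ (one checks $L_R\lesssim R^4$). The essential point is the choice of splitting above: the stray viscous term $\gamma A$ is attached to $u_R$, whose $H^1$ moments are controlled \emph{uniformly in} $\gamma$ (by the Lyapunov-type bound of Lemma~\ref{lem:moment:H_1:Schrodinger}, or a direct estimate for the Lipschitz equation \eqref{eqn:Schrodinger:truncated} analogous to Lemma~\ref{lem:moment:u^gamma_R}), rather than to $\ug_R$, for which one only controls $\gamma\int_0^T\|\ug_R\|_{H^1}^2$ (Lemma~\ref{lem:moment:u^gamma_R}) and would thereby lose the rate $\gamma$.

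Next I would carry out a pathwise energy estimate for $\|w(t)\|_H^2$. Using $2\Re\la-(\gamma+\i)Aw,\bar w\ra_H=-2\gamma\|w\|_{H^1}^2$, the bound $2\Re\la-\gamma Au_R,\bar w\ra_H\le\gamma\|w\|_{H^1}^2+\gamma\|u_R\|_{H^1}^2$ (Cauchy--Schwarz and Young), $2\Re\la-\alpha w,\bar w\ra_H=-2\alpha\|w\|_H^2$, and the pointwise Lipschitz estimate $2\Re\la\i[g_R(\ug_R)-g_R(u_R)],\bar w\ra_H\le 2L_R\|w\|_H^2$, one obtains, after discarding the nonpositive term $-\gamma\|w\|_{H^1}^2$,
\begin{align*}
\frac{\d}{\d t}\|w(t)\|_H^2\le(2L_R-2\alpha)\|w(t)\|_H^2+\gamma\|u_R(t)\|_{H^1}^2 .
\end{align*}
Since $w(0)=0$ and no martingale term is present, a deterministic Gronwall inequality, applied $\omega$ by $\omega$, yields $\sup_{t\in[0,T]}\|w(t)\|_H^2\le\gamma\,e^{2L_RT}\int_0^T\|u_R(s)\|_{H^1}^2\,\d s$. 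Taking expectations and inserting the $H^1$ moment bound for $u_R$ then produces \eqref{lim:gamma->0:truncated}, the factor $e^{[CR^4+\Tr(AQQ^*)]T}$ arising from combining $e^{2L_RT}$ (with $L_R\lesssim R^4$) with $\E\int_0^T\|u_R(s)\|_{H^1}^2\,\d s$.

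The genuinely delicate point is the algebraic choice of splitting described in the first paragraph; after it, the argument is routine. A minor technical matter is justifying the energy identity for $\|w\|_H^2$ together with the appearance of $\|w\|_{H^1}^2$, which is handled in the standard way via Galerkin truncation (or the Lions--Magenes lemma), since $w\in C([0,T];H)\cap L^2([0,T];H^1)$ with $\partial_t w\in L^2([0,T];H^{-1})$; a quick a priori $H^1$ moment estimate for the Lipschitz equation \eqref{eqn:Schrodinger:truncated}, analogous to Lemma~\ref{lem:moment:u^gamma_R}, supplies the bound on $\E\int_0^T\|u_R(s)\|_{H^1}^2\,\d s$ if one prefers not to invoke the appendix.
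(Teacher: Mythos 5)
Your proof is correct, but it uses a different algebraic splitting than the paper, and your stated motivation for that choice rests on a misreading. You write $-(\gamma+\i)A\ug_R+\i Au_R=-(\gamma+\i)Aw-\gamma Au_R$ and pair the stray $-\gamma Au_R$ against $\bar w$ in $H^1\times H^1$; the $\gamma\|w\|_{H^1}^2$ cost is absorbed by the dissipation coming from $-(\gamma+\i)Aw$, leaving $\gamma\|u_R\|_{H^1}^2$, which needs only a viscosity-independent $H^1$ bound on $u_R$. The paper instead writes the same quantity as $-\gamma A\ug_R-\i Aw$, pairs $\gamma A\ug_R$ against $v$ in $H$ via Young's inequality to get $\gamma^2\|A\ug_R\|_H^2$, and then uses the weighted $H^2$ estimate $\gamma\,\E\int_0^T\|A\ug_R\|_H^2\,\d t\lesssim e^{CR^4T}$ produced in the proof of Lemma~\ref{lem:moment:u^gamma_R} (whose statement writes $H^1$ --- a typo --- but whose proof in fact supplies the $H^2$ quantity); thus $\gamma^2\cdot\gamma^{-1}=\gamma$, and the rate is not lost as you suggested. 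Your splitting is nevertheless sound and arguably the cleaner one: it never invokes $H^2$ regularity of $\ug_R$ and relies only on the elementary $H^1$ Gronwall estimate for the Lipschitz truncated system. One small correction: Lemma~\ref{lem:moment:H_1:Schrodinger} is stated for the \emph{untruncated} Schr\"odinger equation, so it cannot be cited for $u_R$; use instead the direct estimate for \eqref{eqn:Schrodinger:truncated} that you mention as the alternative (the $\gamma=0$ analogue of Lemma~\ref{lem:moment:u^gamma_R}), which yields $\E\int_0^T\|u_R(s)\|_{H^1}^2\,\d s\le T\big(\E\|u_0\|_{H^1}^2+1\big)e^{[CR^4+2\Tr(AQQ^*)]T}$, the extra $T$ being absorbed into the exponential.
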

\begin{proof}

Setting $v=\ug_R-u_R$, we subtract \eqref{eqn:Schrodinger:truncated} from \eqref{eqn:Ginzburg_Landau:truncated} and observe that $v$ satisfies the following equation
\begin{align*}
&\frac{\d}{\d t} v(t) = -\gamma A\ug_R(t)-\i Av(t)-\alpha v(t)\\
&+ \i \Big(  |\ug_R(t)|^2\ug_R (t)\f_R(|\ug_R(t)|^2)- |u_R(t)|^2u_R (t)\f_R(|u_R(t)|^2) \Big) ,
\end{align*}
with initial condition $v(0)=0$. Recalling $\f_R$ as in \eqref{form:phi_R}, we have
\begin{align*}
&\Big|\i \Big\la  |\ug_R(t)|^2\ug_R (t)\f_R(|\ug_R(t)|^2)- |u_R(t)|^2u_R (t)\f_R(|u_R(t)|^2), v(t) \Big\ra_H \Big|\\ 
&\quad \le  CR^4\|v(t)\|^2_H,
\end{align*}
whence
\begin{align*}
\frac{\d}{\d t} \|v(t)\|^2_H \le  -\gamma\la  A\ug_R(t),v(t)\ra_H-\alpha \|v(t)\|^2_H+CR^4\|v(t)\|^2_H.
\end{align*}
We invoke Cauchy-Schwarz inequality to further estimate
\begin{align*}
\frac{\d}{\d t} \|v(t)\|^2_H \le \gamma^2\|  A\ug_R(t)\|^2_H +CR^4\|v(t)\|^2_H.
\end{align*}
In turn, this implies the bound
\begin{align*}
\sup_{s\in[0,T]}\|v(s)\|^2_H  \le \gamma^2 \int_0^T \|  A\ug_R(t)\|^2_H\d t+ CR^4\int_0^T \sup_{s\in[0,t]} \|v(s)\|^2_H\d t.
\end{align*}
In view of Lemma \ref{lem:moment:u^gamma_R}, we deduce while making use of Gronwall's inequality
\begin{align*}
\E \sup_{s\in[0,T]}\|v(s)\|^2_H  &\le \gamma^2 \int_0^T \|  A\ug_R(t)\|^2_H\d t e^{CR^4 T}\\
&\le \gamma \big(\E\|u_0\|^2_{H^1}+1\big)e^{
[CR^4+2\textup{Tr}(AQQ^*)]T}.
\end{align*}
We emphasize that the positive constant $C$ is independent of $\gamma,R,T,Q$ and $u_0$. This produces \eqref{lim:gamma->0:truncated}, thereby finishing the proof.
\end{proof}

We now provide the proof of Proposition \ref{prop:gamma->0:|u^gamma-u|:[0,T]} by combining Lemma \ref{lem:gamma->0:truncated} together with Lemma \ref{lem:moment:H_1} and Lemma \ref{lem:moment:H_1:Schrodinger}.

\begin{proof}[Proof of Proposition \ref{prop:gamma->0:|u^gamma-u|:[0,T]}]

To remove the Lipschitz truncation as in Lemma~\ref{lem:gamma->0:truncated}, we shall employ an argument similarly to those in \cite[Section 6.2]{cerrai2020convergence} and \cite[Proposition 5.1]{nguyen2023small} tailored to our settings. 

Since the initial condition $u_0$ takes values in $L^2(\Omega;H^1)$, from Lemma \ref{lem:moment:H_1}, we note that $\ug(t)$ and $u(t)$ both belong to $H^1$. As a consequence, they are elements in $L^\infty$, thanks to Sobolev embedding in dimension $d=1$. So, for $R>0$, we introduce the stopping times $\tau^\gamma_R$ and $\tau_R$ given by
\begin{equation*}
\tau^\gamma_R=\inf\{t\ge 0:\|\ug(t)\|_{L^\infty}^2> R   \} ,
\end{equation*} 
and
\begin{align*}
\tau_R=\inf\{t\ge 0:\|u(t)\|_{L^\infty}^2>R\}.
\end{align*}
With the above stopping times, we have
\begin{align} \label{eqn:I^gamma_R1+I^gamma_R2}
&\E\Big[\sup_{t\in[0,T]}\|\ug(t)-u(t)\|_{H}\Big] \nonumber \\
&=\E\Big[ \mathbf{1}\{\tau^\gamma_R\mi \tau_R>T\}\sup_{t\in[0,T]}\|\ug(t)-u(t)\|_H\Big]+\E\Big[ \mathbf{1}\{\tau^\gamma_R\mi \tau_R\le T\}\sup_{t\in[0,T]}\|\ug(t)-u(t)\|_H\Big] \nonumber \\
&=I_{1,R}^{\gamma}(T)+I^\gamma_{2,R}(T).
\end{align}

Observe for $0\le t\le \tau^\gamma_R$, $ \ug(t) =\ug_R(t)$ where $\ug_R(t)$ is the solution of the truncating system \eqref{eqn:Ginzburg_Landau:truncated}. Likewise, for $0\le t\le \tau_R$, $u(t)=u_R(t)$ which solves \eqref{eqn:Schrodinger:truncated}. Taking into account of these facts, we recast $I^\gamma_{1,R}$ as
\begin{align} \label{ineq:I^gamma_R1}
I_{1,R}^{\gamma}(T) 
& = \E\Big[ \mathbf{1}\{\tau^\gamma_R\mi \tau_R>T\}\sup_{t\in[0,T]}\|\ug_R(t)-u_R(t)\|_H\Big]  \nonumber \\
&\le \E\Big[ \sup_{t\in[0,T]}\|\ug_R(t)-u_R(t)\|_H\Big]  \nonumber \\
&\le \sqrt{\gamma} \cdot \sqrt{\big(\E\|u_0\|^2_{H^1}+1\big)}\cdot e^{
[CR^4+\textup{Tr}(AQQ^*)]T},
\end{align}
where the last implication follows from Lemma \ref{lem:gamma->0:truncated}.

Concerning $I^\gamma_{2,R}$, we invoke Holder's inequality to infer
\begin{align*}
I^\gamma_{2,R}(t) &\le \Big(2 \E\Big[ \sup_{t\in[0,T]}\|\ug(t)\|^2_H\Big] + 2\E\Big[ \sup_{t\in[0,T]}\|u(t)\|^2_{H}\Big]\Big)^{1/2}\Big(\P\big(\tau^\gamma_R\le T\big)+\P\big(\tau_R\le T\big)\Big)^{1/2}.
\end{align*}
For $n\ge 1$, using Markov's inequality and $H^1\subset L^\infty$, we further estimate
\begin{align*}
\P\big(\tau^\gamma_R\le T\big)= \P\Big(\sup_{t\in[0,T]}\|\ug(t)\|^2_{L^\infty} \ge R\Big)
&\le \frac{1}{R^n}\E \Big[\sup_{t\in[0,T]}\|\ug(t)\|^{2n}_{L^\infty} \Big]\\
&\le  \frac{C}{R^n}\E \Big[\sup_{t\in[0,T]}\|\ug(t)\|^{2n}_{H^1} \Big].
\end{align*}
Recalling $\Phi$ defined in \eqref{form:Phi}, in view of \eqref{cond:Phi} and \eqref{ineq:moment:sup_[0,T]Phi(t)^n}, we obtain
\begin{align*}
\P\big(\tau^\gamma_R\le T\big) \le \frac{C}{R^n}\big( \E[\Phi(u_0)^n]+\E[\Phi(u_0)^{2n}] +T\big)\le \frac{C}{R^n}\big( \E[\Phi(u_0)^{2n}] +T+1\big).
\end{align*}
Likewise, from \eqref{ineq:moment:sup_[0,T]Phi(u(t))^n:Schrodinger}, we have
\begin{align*}
\P\big(\tau_R\le T\big) \le \frac{C}{R^n}\big( \E[\Phi(u_0)^{2n}] +T+1\big).
\end{align*}
Making use of \eqref{ineq:moment:sup_[0,T]Phi(t)^n} and \eqref{ineq:moment:sup_[0,T]Phi(u(t))^n:Schrodinger} again implies
\begin{align} \label{ineq:I^gamma_R2}
I^\gamma_{2,R}(t) \le \frac{C}{R^{\frac{n}{2}}} \big( \E[\Phi(u_0)^{2n}] +T+1\big).
\end{align}

Turning back to \eqref{eqn:I^gamma_R1+I^gamma_R2}, we combine \eqref{ineq:I^gamma_R1} and \eqref{ineq:I^gamma_R2} to deduce
\begin{align} \label{ineq:E|sup_[0,T].|u^gamma-u|_H<C(gamma,R)}
&\E\Big[\sup_{t\in[0,T]}\|\ug(t)-u(t)\|_{H}\Big] \nonumber \\
&\le \sqrt{\gamma} \cdot \sqrt{\big(\E\|u_0\|^2_{H^1}+1\big)}\cdot e^{
[CR^4+2\textup{Tr}(AQQ^*)]T} + \frac{C}{R^{\frac{n}{2}}} \big( \E[\Phi(u_0)^{2n}] +T+1\big).
\end{align}
We emphasize that the positive constant $C$ on the above right-hand side is independent of $\gamma,R,T,Q$ and $u_0$. For $\varepsilon\in(0,1/2)$, we pick $R>0$ satisfying
\begin{align*}
R^4 = \frac{-\log\big( \gamma^{\frac{1}{2}-\varepsilon}\big)}{CT}.
\end{align*}
Plugging into \eqref{ineq:E|sup_[0,T].|u^gamma-u|_H<C(gamma,R)} yields
\begin{align*}
&\E\Big[\sup_{t\in[0,T]}\|\ug(t)-u(t)\|_{H}\Big]\\
&\le \gamma^{\varepsilon}\sqrt{\big(\E\|u_0\|^2_{H^1}+1\big)}\cdot e^{2\textup{Tr}(AQQ^*)T} + C\Big(\frac{ T}{|\log \gamma|}\Big)^{\frac{n}{8}}\big( \E[\Phi(u_0)^{2n}] +T+1\big)\\
&\le \frac{ C}{|\log \gamma|^{\frac{n}{8}}}\big( \E[\Phi(u_0)^{2n}] +T+1\big)e^{2\textup{Tr}(AQQ^*)T}, \quad \gamma\to 0.
\end{align*}
Since $n$ is arbitrarily large, this produces \eqref{lim:gamma->0:|u^gamma-u|}, thereby completing the proof.

\end{proof}

Finally, we supply the proof of Lemma \ref{lem:moment:nu_gamma}, which together with Proposition \ref{prop:gamma->0:|u^gamma-u|:[0,T]} was employed to conclude Theorem \ref{thm:gamma->0:Wasserstein:nu^gamma-nu^0}.

\begin{proof}[Proof of Lemma \ref{lem:moment:nu_gamma}]
To avoid repetition, we provide the proof for the former exponential moment bound in \eqref{ineq:moment:mu_gamma}. The latter polynomial bound in $H^1$ can be established using an analogous argument.

Consider the set $B_R\subset H$ defined as
\begin{align*}
B_R = \{u\in H^1: \|u\|^2_H\le R\}.
\end{align*}
Since $\nu^\gamma(H)=1$, for $\varepsilon\in (0,1)$, there exists $R=R(\varepsilon,\gamma)>0$ such that 
\begin{align*}
\nu^\gamma(B_R^c)<\varepsilon.
\end{align*}
Next, given $m>0$, we set $\f_m(u)=e^{\xi\|u\|^2_H}\mi m$. On the one hand, by the invariance of $\nu^\gamma$, since $\f_m$ is bounded, 
\begin{align*}
\int_{H^1}P^\gamma_t \f_m(u)\nu^\gamma(\d u) =\int_{H^1} \f_m(u)\nu^\gamma(\d u).
\end{align*}
On the other hand, given the choice of $B_R$, we have
\begin{align*}
\int_{H^1}P^\gamma_t \f_m(u)\nu^\gamma(\d u)& =\Big\{\int_{B_R}+ \int_{B_R^c}\Big\}P^\gamma_t \f_m(u)\nu^\gamma(\d u)\\
&\le \int_{B_R}P^\gamma_t \f_m(u)\nu^\gamma(\d u)+m\varepsilon.
\end{align*}
With regard to the first term on the above right-hand side, for all $\xi$ sufficiently small, we invoke \eqref{ineq:moment:exp(|u|^2_H)} to see that for $u\in B_R$
\begin{align*}
P^\gamma_t \f_m(u) = \E \big[ e^{\xi\|\ug(t)\|^2_H}\mi m\big] \le e^{-c_\xi t}e^{\xi\|u\|^2_H} +C_\xi \le  e^{-c_\xi  t} e^{\xi R^2}+C_\xi,
\end{align*}
where $c_\xi$ and $C_\xi$ are the constants as in \eqref{ineq:moment:exp(|u|^2_H)}. It follows that
\begin{align*}
\int_{H^1}P^\gamma_t \f_m(u)\nu^\gamma(\d u)& \le \int_{B_R}P^\gamma_t \f_m(u)\nu^\gamma(\d u)+m\varepsilon\\
&\le e^{-c_\xi  t} e^{\xi R^2}+C_\xi +m\varepsilon.
\end{align*}
We may take $\varepsilon$ small and then take $t$ large enough, e.g., $\varepsilon<m^{-1}$ and $t>\xi R^2/c_\xi$, to deduce further that
\begin{align*}
\int_{H^1} \f_m(u)\nu^\gamma(\d u) \le C_\xi+2. 
\end{align*}
Since the above right-hand side is independent of $\gamma$ and $m$, by sending $m$ to infinity, we establish \eqref{ineq:moment:mu_gamma} by virtue of the Monotone Convergence Theorem. The proof is thus finished.

\end{proof}

\subsection{Proof of Theorem \ref{thm:gamma->0:phi}}

\label{sec:gamma->0:phi}

In this subsection, we will draw upon the framework of  \cite{
glatt2022short,glatt2021mixing,nguyen2023small} tailored to our settings, so as to establish the validity of the inviscid regime $\gamma\to 0$ on the infinite time horizon. More specifically, the proof of Theorem \ref{thm:gamma->0:phi} relies on the following intermediate result giving the convergence with respect to Wasserstein distances.

\begin{proposition} \label{prop:gamma->0:Wasserstein:[0,infty)}
Under the same hypothesis of Theorem \ref{thm:ergodicity:Ginzburg_Landau}, let $\{\ug_0\}_{\gamma\in(0,1)}$ be a sequence of deterministic initial conditions such that 
\begin{align*}
\sup_{\gamma\in(0,1)} \|\ug_0\|_{H^1} < R.
\end{align*}
Then, for all $R>0$, $q\ge 1$ and $\xi$ sufficiently small, the following holds 
\begin{align} \label{lim:gamma->0:Wasserstein:[0,infty)}
\sup_{t\ge 0}\W_{d_0^\xi}\big( P^\gamma_t \delta_{\ug_0}, P^0_t \delta_{\ug_0}\big) \le \frac{C}{(\log |\log \gamma|)^{q}}  ,\quad\textup{as} \,\gamma\to 0,
\end{align}
for some positive constant $C=C(R,\xi,q)$ independent of $\gamma$.
\end{proposition}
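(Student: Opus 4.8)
The plan is to follow the triangle-inequality decomposition sketched in the introduction, adapted to the distance $d_0$ first and then upgraded to $d_0^\xi$, exactly as announced in Section~\ref{sec:main-result:gamma->0}. First I would fix $R>0$, $q\ge 2$ and a deterministic $\ug_0$ with $\|\ug_0\|_{H^1}<R$, and estimate, for any $t\ge 0$,
\begin{align*}
\W_{d_0}\big(P^\gamma_t\delta_{\ug_0},P^0_t\delta_{\ug_0}\big)
&\le \W_{d_0}\big(P^\gamma_t\delta_{\ug_0},P^0_t\nu^\gamma\big)
+\W_{d_0}\big(P^0_t\nu^\gamma,P^0_t\nu^0\big)\\
&\qquad+\W_{d_0}\big(P^0_t\nu^0,P^0_t\delta_{\ug_0}\big),
\end{align*}
although it is cleaner to split it as $\W_{d_0}(P^\gamma_t\delta_{\ug_0},P^0_t\delta_{\ug_0})\le \W_{d_0}(P^\gamma_t\delta_{\ug_0},P^\gamma_t\nu^\gamma)+\W_{d_0}(P^\gamma_t\nu^\gamma,\nu^\gamma)+\W_{d_0}(\nu^\gamma,\nu^0)+\W_{d_0}(\nu^0,P^0_t\nu^0)+\W_{d_0}(P^0_t\nu^0,P^0_t\delta_{\ug_0})$ and then use $\W_{d_0}(P^\gamma_t\nu^\gamma,\nu^\gamma)=0$, $\W_{d_0}(\nu^0,P^0_t\nu^0)=0$ by invariance. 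The middle term $\W_{d_0}(\nu^\gamma,\nu^0)$ is controlled by $(\log|\log\gamma|)^{-q}$ from Theorem~\ref{thm:gamma->0:Wasserstein:nu^gamma-nu^0} (via its $d_0$-version \eqref{lim:gamma->0:Wasserstein:d_0}). For the first term I would use $\W_{d_0}\le \W_{d_1}$ together with the uniform polynomial mixing of $P^\gamma_t$ in $\W_{d_1}$ from Theorem~\ref{thm:ergodicity:Ginzburg_Landau}, part 2, which gives a bound $C_q(1+t)^{-q}(1+\Phi(\ug_0)+\Phi(\nu^\gamma))$; here $\Phi(\ug_0)\le C(R)$ since $\|\ug_0\|_{H^1}<R$ and $\int\Phi\,d\nu^\gamma<\infty$ uniformly in $\gamma$ by Lemma~\ref{lem:moment:nu_gamma}, so this term is $\le C_{R,q}(1+t)^{-q}$. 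The last term $\W_{d_0}(P^0_t\nu^0,P^0_t\delta_{\ug_0})$ is handled identically using the polynomial mixing of the Schr\"odinger semigroup, cf.\ Theorem~\ref{thm:ergodicity:Schrodinger} and Lemma~\ref{lem:ergodicity:Schrodinger}, giving $\le C_{R,q}(1+t)^{-q}$.

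The subtlety is that the above bound is uniform in $t$ but only of order $(1+t)^{-q}+(\log|\log\gamma|)^{-q}$, which does not by itself beat $(\log|\log\gamma|)^{-q}$ for small $t$. To close the gap I would, as in the proof of Theorem~\ref{thm:gamma->0:Wasserstein:nu^gamma-nu^0}, insert the finite-time approximation: for $t$ in a bounded range $[0,t_\gamma]$ with $t_\gamma\asymp \log|\log\gamma|$ I bound $\W_{d_0}(P^\gamma_t\delta_{\ug_0},P^0_t\delta_{\ug_0})$ directly by $\E\|\ug(t)-u(t)\|_H\le C\,e^{Ct}|\log\gamma|^{-n/8}(\E[\Phi(\ug_0)^{2n}]+t+1)$ from Proposition~\ref{prop:gamma->0:|u^gamma-u|:[0,T]}, which at $t=t_\gamma$ is $\le e^{C\log|\log\gamma|}|\log\gamma|^{-n/8}=|\log\gamma|^{-n/8+C}$, negligible for $n$ large; for $t\ge t_\gamma$ I use the triangle-inequality bound, whose $(1+t)^{-q}$ part is then $\le(1+t_\gamma)^{-q}\asymp(\log|\log\gamma|)^{-q}$. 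Taking the supremum over all $t\ge0$ and optimizing $n$ yields \eqref{lim:gamma->0:Wasserstein:[0,infty)} for $d_0$.

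Finally I would upgrade from $d_0$ to $d_0^\xi$ exactly as in \eqref{ineq:W(d_0^xi)<W_(d_0)(1+e^u)}: for any coupling $(X,Y)$ of $P^\gamma_t\delta_{\ug_0}$ and $P^0_t\delta_{\ug_0}$, Cauchy--Schwarz gives
\begin{align*}
\E\sqrt{d_0(X,Y)\big(1+e^{\xi\|X\|^2_H}+e^{\xi\|Y\|^2_H}\big)}
\le \sqrt{\E d_0(X,Y)\cdot\big(1+\E e^{\xi\|X\|^2_H}+\E e^{\xi\|Y\|^2_H}\big)},
\end{align*}
so $\W_{d_0^\xi}(P^\gamma_t\delta_{\ug_0},P^0_t\delta_{\ug_0})\le \sqrt{\W_{d_0}(P^\gamma_t\delta_{\ug_0},P^0_t\delta_{\ug_0})}\cdot\sqrt{1+\sup_t\E e^{\xi\|\ug(t)\|^2_H}+\sup_t\E e^{\xi\|u(t)\|^2_H}}$. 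For $\xi$ small the two exponential-moment suprema are finite and bounded by a constant depending only on $R$ and $\xi$, using \eqref{ineq:moment:exp(|u|^2_H)} for $\ug(t)$ (uniform in $\gamma$) and the analogous bound for the Schr\"odinger equation (Lemma~\ref{lem:moment:nu^0} and the moment estimates in Appendix~\ref{sec:Schrodinger}), together with $\|\ug_0\|_{H^1}<R$. Replacing $q$ by $2q$ in the $d_0$-estimate to absorb the square root, this gives \eqref{lim:gamma->0:Wasserstein:[0,infty)}. I expect the main obstacle to be the bookkeeping in the split-time argument—making sure the constant $C$ in $e^{Ct}$ from Proposition~\ref{prop:gamma->0:|u^gamma-u|:[0,T]} is reconciled with the choice $t_\gamma\asymp\log|\log\gamma|$ so that $|\log\gamma|^{-n/8}e^{Ct_\gamma}$ stays small, which forces $n$ to be chosen large relative to $C$ and $q$; since $n$ is arbitrary this is harmless but must be stated carefully.
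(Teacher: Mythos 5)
Your proposal is correct and follows essentially the same route as the paper: the same three-term triangle-inequality decomposition through $\nu^\gamma$ and $\nu^0$ (using invariance, Theorem \ref{thm:gamma->0:Wasserstein:nu^gamma-nu^0}, Lemma \ref{lem:ergodicity:Ginzburg-Landau:W_(d_1)} and Lemma \ref{lem:ergodicity:Schrodinger}), the same split of the time axis at $T\asymp\log|\log\gamma|$ with Proposition \ref{prop:gamma->0:|u^gamma-u|:[0,T]} handling the finite window, and the same Cauchy--Schwarz upgrade from $d_0$ to $d_0^\xi$ via the uniform exponential moment bounds. The bookkeeping concern you flag is resolved in the paper exactly as you anticipate, by taking $T=n\log|\log\gamma|/(32\,\textup{Tr}(AQQ^*))$ with $n$ arbitrarily large.
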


For the sake of clarity, the proof of Proposition \ref{prop:gamma->0:Wasserstein:[0,infty)} will be deferred to the end of this subsection. Assuming Proposition \ref{prop:gamma->0:Wasserstein:[0,infty)}, we are now in a position to conclude Theorem \ref{thm:gamma->0:phi}, whose argument is relatively short making use of observation \eqref{ineq:W_d(nu_1,nu_2):dual}.

\begin{proof}[Proof of Theorem \ref{thm:gamma->0:phi}] Let $\f:H^1\to \rbb$ satisfy $\|\f\|_{\textup{Lip},d_0^\xi}<\infty$. Since $d_0^\xi$ is only a distance-like function, in view of \eqref{ineq:W_d(nu_1,nu_2):dual}, it holds that
\begin{align*}
\W_{d_0^\xi}\big( P^\gamma_t\delta_{\ug_0},P^0_t\delta_{\ug_0}  \big) &\ge \frac{1}{\|\f\|_{\textup{Lip},d_0^\xi} } \Big|\int_H \f(u)P^\gamma_t\delta_{\ug_0}(\d u)-\int_H \f(u)P^0_t\delta_{\ug_0}(\d u) \Big|\\
&= \frac{1}{\|\f\|_{\textup{Lip},d_0^\xi} } \Big| \E\f\big(\ug(t;\ug_0)\big)-\E\f\big(u(t;\ug_0)\big)\Big|.
\end{align*}
In light of Proposition \ref{prop:gamma->0:Wasserstein:[0,infty)}, we immediately obtain
\begin{align*}
\sup_{t\ge 0} \Big| \E\f\big(\ug(t;\ug_0)\big)-\E\f\big(u(t;\ug_0)\big)\Big|& \le \|\f\|_{\textup{Lip},d_0^\xi} \sup_{t\ge 0} \W_{d_0^\xi}\big( P^\gamma_t\delta_{\ug_0},P^0_t\delta_{\ug_0}  \big) \\
&\le \|\f\|_{\textup{Lip},d_0^\xi} \frac{C(q,R,\xi) }{(\log |\log \gamma|)^{q}}. 
\end{align*}
This establishes \eqref{lim:gamma->0:phi}, thereby finishing the proof.

\end{proof}

Turning back to Proposition \ref{prop:gamma->0:Wasserstein:[0,infty)}, we will employ the following auxiliary result, which is considered as an analogue of Theorem \ref{thm:ergodicity:Ginzburg_Landau} in terms of Wasserstein distances.

\begin{lemma} \label{lem:ergodicity:Ginzburg-Landau:W_(d_1)}
For all $q\ge 1$, and probability measures $\mu_1,\mu_2\in \Pcal r(H^1)$, it holds that
\begin{align} \label{ineq:ergodicity:Ginzburg-Landau:W_(d_1)}
\W_{d_1}\big(P_t^\gamma\mu_1,P_t^\gamma\mu_2\big)  \le C(1+t)^{-q}\Big(1+\int_{H}\Phi(u)\mu_1(\textup{d} u)+\int_{H}\Phi(u)\mu_2(\textup{d} u)\Big),\quad t\ge 0,
\end{align}
where $C=C(q)$ is a positive constant independent of $\mu_1,\mu_2$ and $t$.
\end{lemma}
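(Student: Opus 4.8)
The goal of Lemma~\ref{lem:ergodicity:Ginzburg-Landau:W_(d_1)} is to upgrade the two-point estimate \eqref{ineq:ergodicity:Ginzburg_landau} of Theorem~\ref{thm:ergodicity:Ginzburg_Landau} to a statement about arbitrary probability measures $\mu_1,\mu_2$ on $H^1$, measured in the Wasserstein distance $\W_{d_1}$. The plan is to combine the dual Kantorovich representation \eqref{form:W_d:dual-Kantorovich} of $\W_{d_1}$ with a disintegration (coupling) argument. Since $d_1$ is a genuine metric on $H^1$ (it is $\|u-v\|_{H^1}\wedge 1$), \eqref{form:W_d:dual-Kantorovich} applies, so it suffices to bound $|\int_{H^1}P_t^\gamma f\,\d\mu_1 - \int_{H^1}P_t^\gamma f\,\d\mu_2|$ uniformly over all $f$ with $[f]_{\mathrm{Lip},d_1}\le 1$.

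The key steps, in order, would be as follows. First, fix $f$ with $[f]_{\mathrm{Lip},d_1}\le 1$; then $P_t^\gamma f$ is itself a function on $H^1$, and by the semigroup property $\int P_t^\gamma f\,\d\mu_i = \int f\,\d(P_t^\gamma\mu_i)$. Second, write
\begin{align*}
\Big|\int_{H^1}\hspace{-0.2cm} P_t^\gamma f\,\d\mu_1 - \int_{H^1}\hspace{-0.2cm} P_t^\gamma f\,\d\mu_2\Big| \le \int_{H^1}\int_{H^1}\hspace{-0.2cm}\big|P_t^\gamma f(u_1) - P_t^\gamma f(u_2)\big|\,\mu_1(\d u_1)\mu_2(\d u_2),
\end{align*}
using that $\mu_1\otimes\mu_2$ is a coupling of $\mu_1$ and $\mu_2$ (this is exactly where the freedom in choosing the coupling is exploited — we simply take the product). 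Third, apply Theorem~\ref{thm:ergodicity:Ginzburg_Landau}, part 2, with $\f = f$: for each fixed pair $(u_1,u_2)$, since $[f]_{\mathrm{Lip},d_1}\le 1$,
\begin{align*}
\big|P_t^\gamma f(u_1) - P_t^\gamma f(u_2)\big| = \big|\E f(\ug(t;u_1)) - \E f(\ug(t;u_2))\big| \le C_q(1+t)^{-q}\big(1+\Phi(u_1)+\Phi(u_2)\big).
\end{align*}
Fourth, integrate this bound against $\mu_1(\d u_1)\mu_2(\d u_2)$ and use $\int(1+\Phi(u_1)+\Phi(u_2))\,\mu_1\otimes\mu_2 = 1 + \int\Phi\,\d\mu_1 + \int\Phi\,\d\mu_2$ to obtain \eqref{ineq:ergodicity:Ginzburg-Landau:W_(d_1)}. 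Finally, take the supremum over all such $f$ and invoke \eqref{form:W_d:dual-Kantorovich}.

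There is essentially no serious obstacle here; the lemma is a soft corollary of Theorem~\ref{thm:ergodicity:Ginzburg_Landau} together with the duality formula. The only points requiring a word of care are: (i) checking that $\W_{d_1}$ indeed admits the dual representation \eqref{form:W_d:dual-Kantorovich} rather than only the one-sided inequality \eqref{ineq:W_d(nu_1,nu_2):dual} — this holds because $d_1$ is an honest metric on $H^1$ (bounded by $1$, symmetric, vanishing only on the diagonal), so \cite[Theorem 5.10]{villani2008optimal} applies; (ii) ensuring the right-hand integrals $\int\Phi\,\d\mu_i$ are what controls things, which is automatic since if either integral is infinite the bound is vacuous, and if both are finite the product-coupling estimate above is finite and the argument goes through; and (iii) measurability of $(u_1,u_2)\mapsto P_t^\gamma f(u_1)-P_t^\gamma f(u_2)$, which follows from Feller-type continuity of $P_t^\gamma$ already implicit in the well-posedness theory. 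Thus the proof is short: it is the product coupling plus Theorem~\ref{thm:ergodicity:Ginzburg_Landau}.
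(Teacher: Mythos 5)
Your proof is correct and follows essentially the same route as the paper: both arguments combine the two-point estimate \eqref{ineq:ergodicity:Ginzburg_landau} with the dual Kantorovich formula \eqref{form:W_d:dual-Kantorovich} and an integration against a coupling of $\mu_1,\mu_2$ (you use the product coupling and apply duality once at the level of $P_t^\gamma\mu_i$, while the paper invokes convexity of $\W_{d_1}$ over a generic coupling and applies duality to the kernels $P_t^\gamma(u_i,\cdot)$ — a purely cosmetic reordering). No gaps.
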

\begin{proof} Since $\W_{d_0}$ is convex, cf. \cite[Theorem 4.8]{villani2008optimal}, it holds that
\begin{align*}
\W_{d_1}\big(P_t^\gamma\mu_1,P_t^\gamma\mu_2\big) & \le \int_{H\times H}\close \W_{d_1}\big(P_t^\gamma(u_1,\cdot),P_t^\gamma(u_2,\cdot)\big) \pi(\d u_1,\d u_2), 
\end{align*}
where for a slight abuse of notation, we denote $\pi$ to be a generic coupling of $(\mu_1,\mu_2)$. From the estimate \eqref{ineq:ergodicity:Ginzburg_landau}, we have
\begin{align*}
\sup_{\|\f\|_{\textup{Lip},d_1}\le 1}\big| \E \f\big(u(t;u_1)\big) - \E \f\big(u(t;u_2)\big) \big|\le C(1+t)^{-q}\big(1+\Phi(u_1)+\Phi(u_2)\big),\quad t\ge 0.
\end{align*} 
In light of the dual Kantorovich identity \eqref{form:W_d:dual-Kantorovich}, we deduce that
\begin{align*}
\W_{d_1}\big(P_t^\gamma(u_1,\cdot),P_t^\gamma(u_2,\cdot)\big) \le C(1+t)^{-q}\big(1+\Phi(u_1)+\Phi(u_2)\big),
\end{align*}
whence
\begin{align*}
\W_{d_1}\big(P_t^0\mu_1,P_t^0\mu_2\big) & \le C(1+t)^{-q} \int_{H\times H}\close\close\close\big(1+\Phi(u_1)+\Phi(u_2)\big) \pi(\d u_1,\d u_2)\\
& = C(1+t)^{-q} \Big(1+\int_{H}\Phi(u)\mu_1(\textup{d} u)+\int_{H}\Phi(u)\mu_2(\textup{d} u)\Big).
\end{align*}
In the above, the last implication follows from the fact that $\pi$ is coupling for $(\mu_1,\mu_2)$. This produces \eqref{ineq:ergodicity:Ginzburg-Landau:W_(d_1)}, as claimed.

\end{proof}

Finally, we provide the proof of Proposition \ref{prop:gamma->0:Wasserstein:[0,infty)}.

\begin{proof}[Proof of Proposition \ref{prop:gamma->0:Wasserstein:[0,infty)}] 
Firstly, we proceed to establish an analogue of \eqref{lim:gamma->0:Wasserstein:[0,infty)} for $\W_{d_0}$. Then, we will upgrade to $\W_{d_0^\xi}$ by exploiting the exponential moment bounds in Lemma \ref{lem:moment:L^2} and Lemma \ref{lem:moment:H_1:Schrodinger}. 

Let $T$ be given and be chosen later. Since $\ug_0\in H^1$, from definition \eqref{form:W_d} and Proposition \ref{prop:gamma->0:|u^gamma-u|:[0,T]}, for all $t\in[0,T]$ and $n\ge 1$, we have
\begin{align} \label{ineq:W(u^gamma-u):[0,T]}
\sup_{t\in[0,T]}\W_{d_0}\big( P^\gamma_t \delta_{\ug_0}, P^0_t \delta_{\ug_0}\big)&\le \E\Big[\sup_{t\in[0,T]}\|\ug(t;\ug_0)-u(t;\ug_0)\|_H\Big] \nonumber \\
&\le \frac{ C}{|\log \gamma|^{\frac{n}{8}}}\big( \Phi(\ug_0)^{2n} +T+1\big)e^{2\textup{Tr}(AQQ^*)T}.
\end{align}
Now, for $t\ge T$, by triangle inequality, it holds that
\begin{align}
\W_{d_0}\big( P^\gamma_t \delta_{\ug_0}, P^0_t \delta_{\ug_0}\big)&\le \W_{d_0}\big( P^\gamma_t \delta_{\ug_0}, \nu^\gamma\big)+ \W_{d_0}\big( \nu^\gamma, \nu^0\big)+ \W_{d_0}\big( \nu^0, P^0_t \delta_{\ug_0}\big).
\end{align}
On the one hand, from Theorem \ref{thm:gamma->0:Wasserstein:nu^gamma-nu^0}, we readily have for all $q\ge 1$
\begin{align*}
 \W_{d_0}\big( \nu^\gamma, \nu^0\big) \le \frac{C}{(\log |\log \gamma|)^q},
\end{align*}
holds for some positive constant $C=C(q)$ independent of $\gamma$. On the other hand, we employ invariance property together with Lemma \ref{lem:ergodicity:Ginzburg-Landau:W_(d_1)} and Lemma \ref{lem:ergodicity:Schrodinger} to obtain for all $q\ge 1$
\begin{align*}
&\W_{d_0}\big( P^\gamma_t \delta_{\ug_0}, \nu^\gamma\big)+ \W_{d_0}\big( \nu^0, P^0_t \delta_{\ug_0}\big)\\
&\le \W_{d_1}\big( P^\gamma_t \delta_{\ug_0}, P^\gamma_t\nu^\gamma\big)+ \W_{d_1}\big( P^0_t\nu^0, P^0_t \delta_{\ug_0}\big)\\
&\le C(1+t)^{-q} \Big(1+\Phi(\ug_0) + \int_H \Phi(u)\nu^\gamma(\d u)+ \int_H \Psi(u)\nu^0(\d u)  \Big)\\
&\le C(1+t)^{-q}\big(1+\Phi(\ug_0)\big).
\end{align*}
In the last implication above, we invoked the uniform moment bound on $\nu^\gamma$ and $\nu^0$ from Lemma \ref{lem:moment:nu_gamma} and Lemma \ref{lem:moment:nu^0}, respectively. It follows that for all $q_1,q_2\ge 1$ and $t\ge T$
\begin{align*}
\W_{d_0}\big( P^\gamma_t \delta_{\ug_0}, P^0_t \delta_{\ug_0}\big)
&\le \frac{C}{(\log |\log \gamma|)^{q_1}}+C(1+T)^{-q_2}\big(1+\Phi(\ug_0)\big).
\end{align*}
This together with \eqref{ineq:W(u^gamma-u):[0,T]} implies
\begin{align*}
\sup_{t\ge 0}\W_{d_0}\big( P^\gamma_t \delta_{\ug_0}, P^0_t \delta_{\ug_0}\big)
&\le \frac{ C}{|\log \gamma|^{\frac{n}{8}}}\big( \Phi(\ug_0)^{2n} +T+1\big)e^{2\textup{Tr}(AQQ^*)T}\\
&\qquad+\frac{C}{(\log |\log \gamma|)^{q_1}}+C(1+T)^{-q_2}\big(1+\Phi(\ug_0)\big).
\end{align*}
We emphasize once again that the above constant $C$ does not depend on $\gamma,T$ and $\ug_0$. Recall that $\sup_{\gamma\in(0,1)}\|\ug_0\|_{H^1}<R$, for all $\gamma$ sufficiently small, we pick 
\begin{align*}
T=\frac{n\log |\log\gamma|}{32\Tr(AQQ^*)},
\end{align*}
and obtain 
\begin{align*}
&\sup_{t\ge 0}\W_{d_0}\big( P^\gamma_t \delta_{\ug_0}, P^0_t \delta_{\ug_0}\big)\\
&\le \frac{ C}{|\log \gamma|^{\frac{n}{16}}}\big(\log |\log\gamma| +1\big)+\frac{C}{(\log |\log \gamma|)^{q_1}}+\frac{C}{(\log |\log \gamma|)^{q_2}}.
\end{align*}
Sending $\gamma$ to 0 implies the bound for all $q\ge 1$
\begin{align} \label{lim:gamma->0:W_d0:[0,infty)}
\sup_{t\ge 0}\W_{d_0}\big( P^\gamma_t \delta_{\ug_0}, P^0_t \delta_{\ug_0}\big)&\le \frac{C}{(\log |\log \gamma|)^{q}},
\end{align}
for some positive constant $C=C(R,q)$ independent of $\gamma$. 

Turning back to \eqref{lim:gamma->0:Wasserstein:[0,infty)}, similar to the proof of \eqref{ineq:W(d_0^xi)<W_(d_0)(1+e^u)}, we have that
\begin{align*}
&\W_{d_0^\xi}\big( P^\gamma_t\delta_{\ug_0},P^0_t\delta_{\ug_0}  \big) \\
& \le \sqrt{\W_{d_0}\big( P^\gamma_t\delta_{\ug_0},P^0_t\delta_{\ug_0} \big) \Big( 1+ \int_H e^{\xi \|u\|^2_H} P^\gamma_t\delta_{\ug_0}(\d u)+ \int_H e^{\xi \|u\|^2_H} P^0_t\delta_{\ug_0}(\d u) \Big)}\\
&= \sqrt{\W_{d_0}\big( P^\gamma_t\delta_{\ug_0},P^0_t\delta_{\ug_0} \big) \Big( 1+ \E e^{\xi \|\ug(t;\ug_0)\|^2_H}+ \E e^{\xi \|u(t;\ug_0)\|^2_H} \Big)}.
\end{align*}
From Lemma \ref{lem:moment:L^2} and Lemma \ref{lem:moment:H_1:Schrodinger}, we see that
\begin{align*}
\E e^{\xi \|\ug(t;\ug_0)\|^2_H}+ \E e^{\xi \|u(t;\ug_0)\|^2_H}
 &\le C\big(1+ e^{\xi \|\ug_0\|^2_H} \big)\le C(\xi,R).
\end{align*}
Combining with \eqref{lim:gamma->0:W_d0:[0,infty)}, we obtain
\begin{align*}
\W_{d_0^\xi}\big( P^\gamma_t\delta_{\ug_0},P^0_t\delta_{\ug_0}  \big)
&\le  \sqrt{\W_{d_0}\big( P^\gamma_t\delta_{\ug_0},P^0_t\delta_{\ug_0} \big) \Big( 1+ \E e^{\xi \|\ug(t;\ug_0)\|^2_H}+ \E e^{\xi \|u(t;\ug_0)\|^2_H} \Big)}\\
&\le \frac{C(q,R)}{(\log |\log \gamma|)^{q}} \cdot C(\xi,R).
\end{align*}
In turn, this establishes \eqref{lim:gamma->0:Wasserstein:[0,infty)}. The proof is thus complete.
\end{proof}

\section*{Acknowledgment}

The author would like to thank Tuoc Phan and Diogo Pinheiro for fruitful discussion on the topics of the paper.

\appendix

\section{Estimates on the Schr\"odinger equation \eqref{eqn:Schrodinger}} \label{sec:Schrodinger}

In this section, we collect useful estimates on the system \eqref{eqn:Schrodinger} that were employed to prove the main results. We start with the polynomial bounds in $H^1$ and exponential moment bounds in $H$ subject to random initial conditions. The former was employed in the proof of Proposition \ref{prop:gamma->0:|u^gamma-u|:[0,T]} whereas the latter appeared in the proof of Proposition \ref{prop:gamma->0:Wasserstein:[0,infty)}. The proof of Lemma \ref{lem:moment:H_1:Schrodinger} is similar to those of Lemma \ref{lem:moment:L^2} and Lemma \ref{lem:moment:H_1}, and thus is omitted.

\begin{lemma} \label{lem:moment:H_1:Schrodinger}
Let $u_0\in L^2(\Omega;H^1)$ be given and $u(t)$ be the solution of \eqref{eqn:Schrodinger} with initial condition $u_0$.  

1. For all $n\ge 1$ and $T>0$, the following holds
\begin{align} \label{ineq:moment:sup_[0,T]Phi(u(t))^n:Schrodinger}
\E \big[\sup_{[0,T]}\Psi(u(t))^n\big] &\le \E\big[\Psi(u_0)^{n}\big]+ \E \big[\Psi(u_0)^{2n}\big] + C_{2,n}T ,
\end{align}
for some positive constant $C_{2,n}$ independent of $T$ and $u_0$. In the above, $\Psi$ is the function defined in \eqref{form:Psi}.

2. For all $\xi>0$ sufficiently small, the following holds
\begin{align}\label{ineq:moment:exp(xi|u|^2_H):Schrodinger}
\E\big[ e^{\xi\|u(t)\|^2_H}\big] \le e^{-c_\xi t}\E \big[ e^{\xi\|u_0\|^2_H}\big]+C_\xi,\quad t\ge 0,
\end{align}
for some positive constants $c_\xi$ and $C_\xi$ independent of $t$ and $u_0$.
\end{lemma}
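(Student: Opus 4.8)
The statement to prove is Lemma \ref{lem:moment:H_1:Schrodinger}, which asserts two a priori estimates for the Schr\"odinger equation \eqref{eqn:Schrodinger}: a pathwise-supremum polynomial moment bound on the Hamiltonian $\Psi$ in $H^1$, and an exponential moment bound in $H$. The excerpt already signals that the proof mirrors those of Lemma \ref{lem:moment:L^2} and Lemma \ref{lem:moment:H_1}, so the plan is to transcribe those arguments, dropping the viscous dissipation terms (which only help) and keeping careful track of the fact that there is no regularizing $-\gamma A$ term.

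The plan for part 2 (the exponential bound in $H$) is the most straightforward, and I would do it first. Applying It\^o's formula to $\|u(t)\|_H^2$ using \eqref{eqn:Schrodinger} gives $\d\|u(t)\|_H^2 = -2\alpha\|u(t)\|_H^2\,\d t + 2\,\Tr(QQ^*)\,\d t + \d M_0(t)$, where the crucial point is that the term $2\Re\langle \i|u|^2u, \bar u\rangle_H = 2\,\mathrm{Im}\,\|u\|_{L^4}^4 = 0$, i.e. the cubic nonlinearity is conservative in $H$, exactly as in \eqref{eqn:d.|u|^2_H} but with no $-2\gamma\|A^{1/2}u\|_H^2$ term. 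The martingale $M_0$ has quadratic variation $4\|Qu\|_H^2\,\d t \le 4\,\Tr(QQ^*)\|u\|_H^2\,\d t$. From here the argument is verbatim the proof of Lemma \ref{lem:moment:L^2}, part 2: apply It\^o to $e^{\xi\|u(t)\|_H^2}$, pick $\xi < \alpha/(2\Tr(QQ^*))$ so the drift of the exponential becomes $\le \xi e^{\xi\|u\|_H^2}(-\alpha\|u\|_H^2 + 2\Tr(QQ^*))$, use the elementary bound $e^{\xi x}(-\alpha x + 2\Tr(QQ^*)) \le -ce^{\xi x} + C$ for $x\ge 0$, and conclude \eqref{ineq:moment:exp(xi|u|^2_H):Schrodinger} by Gronwall. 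The constants $c_\xi, C_\xi$ depend only on $\alpha, \xi, \Tr(QQ^*)$, hence are independent of $t$ and $u_0$.

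For part 1 (the supremum bound on $\Psi^n$), the plan is to follow the proof of Lemma \ref{lem:moment:H_1}, part 2, with $\Phi$ replaced by $\Psi$ (we no longer need the extra $\kappa\|u\|_H^{18}$ cushion since there are no $\gamma$-dependent gradient terms to absorb). One computes, via It\^o applied to $\|u\|_{H^1}^2$, $\|u\|_{L^4}^4$ and the lower-order powers $\|u\|_H^6$ inside $\Psi$ (recall $\Psi(u) = \|u\|_{H^1}^2 - \tfrac12\|u\|_{L^4}^4 + \kappa\|u\|_H^6$), that the deterministic and nonlinearity-driven terms largely cancel: the key identity is that $\Re(\langle|u|^2u,\i A\bar u\rangle_H)$ appears with opposite signs in $\d\|u\|_{H^1}^2$ and $-\tfrac12\d\|u\|_{L^4}^4$ and cancels, exactly as in the combination producing \eqref{ineq:d|u|^2_H1-d|u|^4_L4}, except now with no $\gamma$ terms at all. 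Using the Gagliardo–Nirenberg inequality \eqref{ineq:Gagliardo-Nirenberg} to absorb $2\alpha\|u\|_{L^4}^4$ into $\tfrac12\alpha\|u\|_{H^1}^2 + \alpha\kappa\|u\|_H^6$, one arrives at $\d\Psi(t) \le -c\,\alpha\,\Psi(u(t))\,\d t + C_2\,\d t + \d\widetilde M(t)$ for a semimartingale $\widetilde M$ with $\d\langle\widetilde M\rangle(t) \le \Psi(t)^2\,\d t + C\,\d t$ (the analogue of \eqref{ineq:d<M_1>(t)}, where the $\|u\|_{L^6}^6$ term is controlled by Gagliardo–Nirenberg $\|u\|_{L^6}^6 \le c\|u\|_{H^1}^2\|u\|_H^4$). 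Then It\^o on $\Psi^n$ and Burkholder's inequality applied to the supremum, combined with Young's inequality and the integrated version of the drift bound (the analogue of \eqref{ineq:moment:|u|_H1+int.|u|_H1<|u_0|_H1+Ct}), give \eqref{ineq:moment:sup_[0,T]Phi(u(t))^n:Schrodinger}, with $\E[\Psi(u_0)^{2n}]$ entering from the Burkholder term exactly as in the proof of \eqref{ineq:moment:sup_[0,T]Phi(t)^n}.

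The main obstacle — more bookkeeping than conceptual — is verifying the cancellation of the nonlinearity in $H^1$ and checking that all the remaining nonlinear terms produced by It\^o's formula on $\Psi^n$ and by the quadratic variation of the noise (which now involves $\Tr(AQQ^*)$, finite by \eqref{cond:Q:Tr(A^(3/2)QQ)<infinity}) can be dominated by $\alpha\Psi^{n}$ plus constants via Young's inequality; since there is no viscous smoothing, one must be sure that no term genuinely requiring $H^2$ control survives, and indeed the structure of \eqref{eqn:Schrodinger} ensures the worst surviving term is exactly the conservative one that cancels. Because the argument is a direct adaptation of the already-proven Lemma \ref{lem:moment:L^2} and Lemma \ref{lem:moment:H_1}, and the excerpt explicitly notes the proof is omitted for this reason, in the write-up I would simply state the parallel and perhaps indicate the one place ($\gamma=0$ removes the $-\gamma A$ term, and $\Psi$ replaces $\Phi$) where the two differ, rather than reproduce the computation.
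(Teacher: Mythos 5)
Your proposal is correct and follows exactly the route the paper intends: the paper omits this proof, stating it is similar to those of Lemma \ref{lem:moment:L^2} and Lemma \ref{lem:moment:H_1}, and your adaptation (setting $\gamma=0$, exploiting the cancellation of $\Re\la |u|^2u,\i A\ubar\ra_H$ in the combination $\d\|u\|_{H^1}^2-\tfrac12\d\|u\|_{L^4}^4$, and noting the $\kappa\|u\|_H^{18}$ cushion in $\Phi$ is unneeded since there are no $\gamma$-dependent gradient terms to absorb) is precisely that adaptation.
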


Next, we recall the following ergodicity result from \cite{debussche2005ergodicity} giving the polynomial mixing rate of \eqref{eqn:Schrodinger}. Theorem \ref{thm:ergodicity:Schrodinger} was directly invoked in the proof of Theorem \ref{thm:gamma->0:Wasserstein:nu^gamma-nu^0}.

\begin{theorem}{\cite[Theorem 2.9]{debussche2005ergodicity}} \label{thm:ergodicity:Schrodinger} Let $u_1,u_2\in H^1$ be given and $\f:H^1\to\rbb$ be such that $\|\f\|_{\textup{Lip},{d_1}}<\infty$ where $d_1$ and $\|\f\|_{\textup{Lip},{d_1}}$ are defined in \eqref{form:d_k} and \eqref{form:Lipschitz}, respectively. Then, the following holds for all $q\ge 1$
\begin{align} \label{ineq:ergodicity:Schrodinger}
\big| \E \f\big(u(t;u_1)\big) - \E \f\big(u(t;u_2)\big) \big|\le C(1+t)^{-q}\|\f\|_{\textup{Lip},d_1}\big(1+\Psi(u_1)+\Psi(u_2)\big),\quad t\ge 0,
\end{align}
for some positive constant $C=C(q)$ independent of $u_1,u_2,t$ and $\f$. In the above, $\Psi$ is the function defined in \eqref{form:Psi}.
\end{theorem}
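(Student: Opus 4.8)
The statement is quoted verbatim from \cite[Theorem 2.9]{debussche2005ergodicity}, so the plan is to invoke that reference; here we outline why its proof goes through, since it is structurally identical to the argument for the Ginzburg--Landau equation carried out in Section~\ref{sec:ergodicity:Ginzburg-Landau} with the viscosity set to $\gamma=0$. The backbone is the coupling scheme of \cite{mattingly2002exponential,odasso2006ergodicity}: run the two trajectories $u(t;u_1)$, $u(t;u_2)$ in tandem, attempt to synchronize their first $N$ Fourier modes on successive windows $[kT,(k+1)T]$, and track the random variable $\ell_\beta$ of Definition~\ref{def:ell_beta} recording the last successful coupling time. The estimate \eqref{ineq:ergodicity:Schrodinger} follows once one shows, uniformly in the initial data after one window: (i) the pair enters a fixed ball in $H^1$ with exponentially small return-time tails; (ii) once inside, the probability of coupling over one window is bounded below; and (iii) once coupled, the probability of decoupling over the next window decays like $(1+kT)^{-q}$. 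These three facts, together with the Lyapunov bound, are precisely the hypotheses of the abstract renewal lemma \cite[Theorem 2.9]{debussche2005ergodicity}, which then delivers \eqref{ineq:ergodicity:Schrodinger} by the dual Kantorovich formula \eqref{form:W_d:dual-Kantorovich}, the polynomial rate of arbitrary order $q$ arising because the coupling persists for a geometric number of windows while each decoupling event costs a $(1+kT)^{-q}$ factor.

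First I would set up the Lyapunov structure. Applying It\^o's formula to $\Psi(u(t))$, the conserved Hamiltonian \eqref{form:Psi} of the focusing NLS perturbed only by $-\alpha u$ and $Q\,dW$, the damping term produces a strict drift $-\alpha\Psi(u)\,dt$ modulo lower-order and martingale contributions, exactly as in the proof of Lemma~\ref{lem:moment:H_1}; this yields $\E\Psi(u(t))^n\le e^{-ct}\E\Psi(u_0)^n+C_n$ and tail bounds of the form in Lemma~\ref{lem:moment:P[E_k(t)>Phi(0)^n+rho(Phi(0)^2n+t)]}, hence exponential integrability of the hitting time of the ball, giving (i). The NLS-specific ingredient is the Foias--Prodi estimate \emph{in expectation}: with $v=u(t;u_1)-u(t;u_2)$ and $J$ as in \eqref{form:J}, on the event $P_Nu(t;u_1)=P_Nu(t;u_2)$ one proves the $\gamma=0$ analogue of Lemma~\ref{lem:Foias-Prodi:Ginzburg-Landau},
\begin{align*}
\E\Big[\exp\Big\{\alpha(t\wedge\tau_N)-\tfrac{c_*}{\alpha_N^{1/8}}\int_0^{t\wedge\tau_N}\big(\Psi(u_1(s))^2+\Psi(u_2(s))^2+1\big)\,ds\Big\}J(t\wedge\tau_N)\Big]\le\E[J(u_1,u_2)],
\end{align*}
where, because $\gamma=0$, there is no parabolic smoothing to absorb the cubic term, so the damping $\alpha$ alone must dominate it: this forces $N$ to be taken large (but with no $\gamma$-dependence to worry about) and relies on the one-dimensional embedding $H^{3/4}\subset L^\infty$ to handle $\Re\{\i\langle u_1u_2,(\bar v)^2\rangle\}$ and the gradient terms. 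This estimate yields the analogue of Lemma~\ref{lem:ergodicity:P(d_1(u_1,u_2)>t^-q)}: coupled trajectories are $d_1$-close with high probability, which is (iii)'s ``close'' half.

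Next I would establish the lower bound on the coupling probability inside a small ball (the analogue of Proposition~\ref{prop:ergodicity:P(ell(k+1)=k+1|ell(k)=infinity)}): use the irreducibility of the linearized NLS equation collected in Appendix~\ref{sec:auxiliary-results} to drive both solutions into an arbitrarily small ball, then a Girsanov change of measure that shifts the control needed to synchronize the low modes, its cost being bounded via Novikov's condition and Pinsker's inequality exactly as in Lemma~\ref{lem:ergodicity:P(ell(k+1)=k+1|ell(k)=infinity):Girsanov}. The decoupling estimate (analogue of Proposition~\ref{prop:ergodicity:P(ell(k+1)=l|ell(k)=l)}) then combines the Foias--Prodi control of $\int\|v\|_{H^1}^2$, the $E_n$-tail bounds of Lemma~\ref{lem:moment:P[E_k(t)>Phi(0)^n+rho(Phi(0)^2n+t)]}, part 2, and a second Girsanov argument to show $\P(\ell_\beta(k+1)\neq l\mid\ell_\beta(k)=l)\le\tfrac12(1+(k-l)T)^{-q}$ for $\beta$ small and $T$ large. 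Feeding (i)--(iii) into \cite[Theorem 2.9]{debussche2005ergodicity} concludes.

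The main obstacle is precisely the Foias--Prodi estimate in expectation: without the $\gamma\triangle$ smoothing one cannot hope for a pathwise high-mode contraction, so the gain comes only from $\alpha$ and the $d=1$ Sobolev inequalities, and one must choose $N$ large enough that $c_*/\alpha_N^{1/8}$ beats the accumulated $\int(\Psi^2+1)$ integral, which is itself controlled only in expectation through the Lyapunov bound; reconciling these two ``in expectation'' controls inside the stopped exponential supermartingale is the delicate point, and it is what confines the whole argument to one space dimension.
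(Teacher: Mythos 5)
Your proposal is correct and matches the paper's treatment: the paper does not prove this statement but simply quotes it from \cite[Theorem 2.9]{debussche2005ergodicity}, which is exactly the reference you invoke. Your additional sketch of why that reference's coupling argument (Foias--Prodi estimate in expectation, Girsanov synchronization of low modes, and the $E_n$-tail bounds) goes through for the damped NLS is accurate and mirrors the machinery the paper develops in Section~\ref{sec:ergodicity:Ginzburg-Landau} for the Ginzburg--Landau case.
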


As a corollary, we obtain the following mixing result in terms of probability measures in $\Pcal r(H^1)$. We opt for neglecting the proof of Lemma \ref{lem:ergodicity:Schrodinger} and refer the reader to Lemma \ref{lem:ergodicity:Ginzburg-Landau:W_(d_1)} for a detailed argument. 
\begin{lemma} \label{lem:ergodicity:Schrodinger}
For all $q\ge 1$, and probability measures $\mu_1,\mu_2\in \Pcal r(H^1)$, it holds that
\begin{align} \label{ineq:ergodicity:Schrodinger:Wasserstein}
\W_{d_1}\big(P_t^0\mu_1,P_t^0\mu_2\big)  \le C(1+t)^{-q}\Big(1+\int_{H^1}\Psi(u)\mu_1(\textup{d} u)+\int_{H^1}\Psi(u)\mu_2(\textup{d} u)\Big),\quad t\ge 0,
\end{align}
where $C=C(q)$ is a positive constant independent of $\mu_1,\mu_2$ and $t$.
\end{lemma}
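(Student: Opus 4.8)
The plan is to deduce Lemma~\ref{lem:ergodicity:Schrodinger} from Theorem~\ref{thm:ergodicity:Schrodinger} by exactly the same convexity-and-duality argument used for its Ginzburg--Landau counterpart Lemma~\ref{lem:ergodicity:Ginzburg-Landau:W_(d_1)}. First I would recall that $\W_{d_1}$ is convex in each of its arguments (as is any Wasserstein distance associated with a lower semicontinuous cost, cf.~\cite[Theorem 4.8]{villani2008optimal}), so that for any coupling $\pi$ of $(\mu_1,\mu_2)$,
\begin{align*}
\W_{d_1}\big(P_t^0\mu_1,P_t^0\mu_2\big) \le \int_{H^1\times H^1} \W_{d_1}\big(P_t^0(u_1,\cdot),P_t^0(u_2,\cdot)\big)\,\pi(\d u_1,\d u_2).
\end{align*}
Here $P^0_t(u,\cdot)$ denotes the transition kernel of \eqref{eqn:Schrodinger}, which is well-defined for $u\in H^1$ since \eqref{eqn:Schrodinger} is globally well-posed in $H^1$ by the discussion in Section~\ref{sec:main-result:gamma->0}.

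Next I would fix $u_1,u_2\in H^1$ and apply the dual Kantorovich formula \eqref{form:W_d:dual-Kantorovich} (valid since $d_1$ is a genuine metric on $H^1$, being the truncation of a norm):
\begin{align*}
\W_{d_1}\big(P_t^0(u_1,\cdot),P_t^0(u_2,\cdot)\big) = \sup_{\|\f\|_{\textup{Lip},d_1}\le 1}\big|\E\f\big(u(t;u_1)\big)-\E\f\big(u(t;u_2)\big)\big|.
\end{align*}
By Theorem~\ref{thm:ergodicity:Schrodinger}, the right-hand side is bounded by $C(1+t)^{-q}\big(1+\Psi(u_1)+\Psi(u_2)\big)$ for every $q\ge 1$. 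Substituting this pointwise bound back into the convexity inequality and using that $\pi$ has marginals $\mu_1,\mu_2$,
\begin{align*}
\W_{d_1}\big(P_t^0\mu_1,P_t^0\mu_2\big) &\le C(1+t)^{-q}\int_{H^1\times H^1}\big(1+\Psi(u_1)+\Psi(u_2)\big)\,\pi(\d u_1,\d u_2)\\
&= C(1+t)^{-q}\Big(1+\int_{H^1}\Psi(u)\mu_1(\d u)+\int_{H^1}\Psi(u)\mu_2(\d u)\Big),
\end{align*}
which is precisely \eqref{ineq:ergodicity:Schrodinger:Wasserstein}, with $C=C(q)$ inherited from Theorem~\ref{thm:ergodicity:Schrodinger} and hence independent of $\mu_1,\mu_2,t$.

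I do not anticipate any real obstacle here, since the argument is a verbatim transcription of the proof of Lemma~\ref{lem:ergodicity:Ginzburg-Landau:W_(d_1)} with $\Phi$ replaced by $\Psi$ and the estimate \eqref{ineq:ergodicity:Ginzburg_landau} replaced by \eqref{ineq:ergodicity:Schrodinger}. The only minor points worth a sentence are: (i) measurability of $u\mapsto \W_{d_1}\big(P_t^0(u,\cdot),P_t^0(u',\cdot)\big)$, which follows from the Feller property of $P^0_t$ and lower semicontinuity of $\W_{d_1}$, so the integral in the convexity bound makes sense; and (ii) the case where one of the integrals $\int\Psi\,d\mu_i$ is infinite, in which the inequality is trivially true. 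Given that the paper explicitly says it ``neglect[s] the proof'' and points to Lemma~\ref{lem:ergodicity:Ginzburg-Landau:W_(d_1)}, the cleanest writeup is simply to state these three steps — convexity, duality plus Theorem~\ref{thm:ergodicity:Schrodinger}, and recombination via the marginal property — and to refer to the earlier lemma for the routine details.
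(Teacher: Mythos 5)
Your proposal is correct and is exactly the argument the paper intends: the omitted proof is, as you note, a verbatim transcription of the proof of Lemma \ref{lem:ergodicity:Ginzburg-Landau:W_(d_1)} (convexity of $\W_{d_1}$ over couplings, the dual Kantorovich identity \eqref{form:W_d:dual-Kantorovich} combined with the pointwise mixing bound of Theorem \ref{thm:ergodicity:Schrodinger}, and integration against the marginals), with $\Phi$ replaced by $\Psi$. Your two side remarks on measurability and the trivial infinite-moment case are fine and do not change the argument.
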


Lastly, we consider the invariant measure $\nu^0$ and establish two moment bounds on $\nu^0$. By exploiting the estimates in Lemma \ref{lem:moment:H_1:Schrodinger}, the proof of Lemma \ref{lem:moment:nu^0} below is similar to that of Lemma \ref{lem:moment:nu_gamma} and is thus omitted. 

\begin{lemma}  \label{lem:moment:nu^0}
Let $\nu^0$ be the unique invariant probability measure of \eqref{eqn:Schrodinger}. Then, for all $\xi>0$ sufficiently small and $n\ge 1$, the followings hold
\begin{align} \label{ineq:moment:nu^0}
\int_{H} e^{\xi\|u\|^2_H}\nu^0(\textup{d} u) <\infty \quad\textup{and}\quad \sup_{\gamma\in[0,1]}\int_{H} \Psi(u)^n \nu^0(\textup{d} u) <\infty.
\end{align}
\end{lemma}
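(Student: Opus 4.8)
The final statement to prove is Lemma \ref{lem:moment:nu^0}, which asserts uniform exponential and polynomial moment bounds for the invariant measure $\nu^0$ of the Schr\"odinger equation \eqref{eqn:Schrodinger}. Since the paper explicitly says the proof is analogous to that of Lemma \ref{lem:moment:nu_gamma}, the plan is to mirror that argument, replacing the role of Lemma \ref{lem:moment:L^2} (resp.\ Lemma \ref{lem:moment:H_1}) with the corresponding a priori estimates for \eqref{eqn:Schrodinger}, namely the exponential bound \eqref{ineq:moment:exp(xi|u|^2_H):Schrodinger} and the polynomial bound \eqref{ineq:moment:sup_[0,T]Phi(u(t))^n:Schrodinger} from Lemma \ref{lem:moment:H_1:Schrodinger}.

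\begin{proof}[Proof of Lemma \ref{lem:moment:nu^0}]
We only detail the exponential bound; the polynomial bound in $H^1$ follows the same scheme with $\Psi^n$ in place of $e^{\xi\|\cdot\|^2_H}$, using \eqref{ineq:moment:sup_[0,T]Phi(u(t))^n:Schrodinger} instead of \eqref{ineq:moment:exp(xi|u|^2_H):Schrodinger}.

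Since $\nu^0$ is a probability measure on $H^1$, given $\varepsilon\in(0,1)$ there exists $R>0$ with $\nu^0(B_R^c)<\varepsilon$, where $B_R=\{u\in H^1:\|u\|^2_H\le R\}$. For $m>0$ set $\f_m(u)=e^{\xi\|u\|^2_H}\mi m$, which is bounded. By invariance of $\nu^0$ for $P_t^0$,
\begin{align*}
\int_{H^1}P_t^0\f_m(u)\,\nu^0(\d u)=\int_{H^1}\f_m(u)\,\nu^0(\d u).
\end{align*}
Splitting the left-hand side over $B_R$ and $B_R^c$ and using $\f_m\le m$ on $B_R^c$,
\begin{align*}
\int_{H^1}P_t^0\f_m(u)\,\nu^0(\d u)\le \int_{B_R}P_t^0\f_m(u)\,\nu^0(\d u)+m\varepsilon.
\end{align*}
For $\xi>0$ sufficiently small and $u\in B_R$, estimate \eqref{ineq:moment:exp(xi|u|^2_H):Schrodinger} gives
\begin{align*}
P_t^0\f_m(u)=\E\big[e^{\xi\|u(t;u)\|^2_H}\mi m\big]\le e^{-c_\xi t}e^{\xi\|u\|^2_H}+C_\xi\le e^{-c_\xi t}e^{\xi R}+C_\xi,
\end{align*}
where $c_\xi,C_\xi$ are the constants from \eqref{ineq:moment:exp(xi|u|^2_H):Schrodinger}. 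Hence
\begin{align*}
\int_{H^1}\f_m(u)\,\nu^0(\d u)\le e^{-c_\xi t}e^{\xi R}+C_\xi+m\varepsilon.
\end{align*}
Choosing $\varepsilon<m^{-1}$ and then $t>\xi R/c_\xi$ yields $\int_{H^1}\f_m(u)\,\nu^0(\d u)\le C_\xi+2$, a bound independent of $m$. Sending $m\to\infty$ and invoking the Monotone Convergence Theorem produces the first estimate in \eqref{ineq:moment:nu^0}. The polynomial bound is obtained identically, replacing $\f_m$ by $u\mapsto \Psi(u)^n\mi m$ and using \eqref{ineq:moment:sup_[0,T]Phi(u(t))^n:Schrodinger} together with the lower bound $\Psi\ge 0$ (so that $\Psi^n$ is nonnegative and the truncation argument applies). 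The proof is complete.
\end{proof}

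The only genuine obstacle here is bookkeeping: one must verify that all the smallness thresholds on $\xi$ coming from Lemma \ref{lem:moment:H_1:Schrodinger} are compatible, and that the returning-time-type estimate \eqref{ineq:moment:exp(xi|u|^2_H):Schrodinger} indeed holds with constants $c_\xi,C_\xi$ independent of the initial datum (which is stated there). There is no $\gamma$-dependence to track in this lemma, so unlike Lemma \ref{lem:moment:nu_gamma} one does not need the uniformity in $\gamma$ — the argument is strictly a special case.
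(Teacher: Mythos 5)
Your exponential‐moment argument mirrors the paper's proof of Lemma \ref{lem:moment:nu_gamma} exactly and is correct.

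For the polynomial bound there is a genuine misstep: you cite \eqref{ineq:moment:sup_[0,T]Phi(u(t))^n:Schrodinger}, which is a \emph{supremum} estimate of the form $\E[\sup_{[0,T]}\Psi(u(t))^n]\le \E[\Psi(u_0)^n]+\E[\Psi(u_0)^{2n}]+C_{2,n}T$, i.e.\ a bound that \emph{grows} linearly in $T$. That cannot serve the role played by the exponential estimate \eqref{ineq:moment:exp(xi|u|^2_H):Schrodinger} in the truncation argument. The crucial step in the Ginzburg--Landau proof of Lemma~\ref{lem:moment:nu_gamma} is the Lyapunov/Gronwall \emph{decay} bound $P_t^\gamma\f_m(u)\le e^{-ct}\f_m(u)+C$; for the Ginzburg--Landau equation this is \eqref{ineq:moment:|u|_H1<e^(-ct)|u_0|_H1} in Lemma~\ref{lem:moment:H_1}, not the sup-norm bound \eqref{ineq:moment:sup_[0,T]Phi(t)^n}. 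The Schr\"odinger analogue you need is the decay bound $\E[\Psi(u(t))^n]\le e^{-c_n t}\E[\Psi(u_0)^n]+C_n$ (the analogue of \eqref{ineq:moment:|u|_H1<e^(-ct)|u_0|_H1} for equation \eqref{eqn:Schrodinger}), which is not what \eqref{ineq:moment:sup_[0,T]Phi(u(t))^n:Schrodinger} says; if you plug the sup bound into your argument you obtain on the right-hand side a quantity that blows up as $t\to\infty$, so the step of ``then take $t$ large'' fails outright. The paper implicitly relies on this decay estimate being available for the Schr\"odinger equation (it remarks that the proof of Lemma~\ref{lem:moment:H_1:Schrodinger} is similar to that of Lemma~\ref{lem:moment:H_1}, whose part 1 contains exactly such a bound). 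To make your proof correct, replace the reference by that Gronwall-type decay bound. As a secondary matter, for the polynomial moment the truncation ball should be taken as a sublevel set of $\Psi$ (or the $H^1$ norm) rather than the $H$-ball $B_R$, since $\Psi$ is not bounded on $\{\|u\|_H^2\le R\}$; this is a detail the paper also elides but that deserves a word in a complete argument.
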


\section{Auxiliary irreducibility conditions} \label{sec:auxiliary-results}

%
%
%
%
%

In this section, we consider the process $\eta(t)$ solving the following equation
\begin{align} \label{eqn:eta}
\d \eta(t) & = -\i A \eta(t)\d t-\alpha\eta(t)\d t +Q\d W(t),\quad\eta(0)= 0.
\end{align}
Observe that \eqref{eqn:eta} is essentially the linear part of \eqref{eqn:Schrodinger} without the cubic potential. In Lemma \ref{lem:irreducibility:eta}, we discuss an irreducibility condition that was invoked to establish Lemma \ref{lem:irreducibility:Ginzburg-Landau}.

\begin{lemma} \label{lem:irreducibility:eta}
Let $N$ be the constant as in Assumption \ref{cond:Q} and $\eta$ be the solution of the linear equation \eqref{eqn:eta}. Then, for all $r>0$ and $T>1$,
\begin{align} \label{ineq:irreducibility:eta}
\P\Big(\sup_{t\in[0,T]}\|\eta(t)\|_{H^2}\le r\Big)>\varepsilon,
\end{align}
for some positive constant $\varepsilon=\varepsilon(r,T)$ independent of the choice of $N$.
\end{lemma}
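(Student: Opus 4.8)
The plan is to exploit the fact that $\eta(t)$ is a Gaussian process in $H^2$ whose law has full topological support, so that every ball around any fixed smooth path has positive probability. Concretely, $\eta$ is the stochastic convolution $\eta(t)=\int_0^t S_0(t-s)Q\,\d W(s)$, where $S_0(t)=e^{-\i A t-\alpha t}$ is the $C_0$-semigroup generated by $-\i A-\alpha$. The key structural input from Assumption \ref{cond:Q}, namely $\Tr(A^{3/2}QQ^*)=\sum_{k=1}^N\alpha_k^{3/2}\lambda_k^2<C_Q$, guarantees that $Q$ maps $H$ into $H^{3/2}\subset H^2$ with Hilbert--Schmidt-type control; hence for each fixed $t$ the Gaussian random variable $\eta(t)$ takes values in $H^2$ and in fact the whole trajectory $t\mapsto\eta(t)$ lies in $C([0,T];H^2)$ almost surely. (This is the same regularity input used elsewhere in the paper, e.g.\ Lemma \ref{lem:irreducibility:Ginzburg-Landau} where $\|A\eta(s)\|_H$ appears.)

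First I would establish that $\Law(\eta_{[0,T]})$ is a centered Gaussian measure on the separable Banach space $\mathcal{X}=C([0,T];H^2)$. Then the statement \eqref{ineq:irreducibility:eta} is precisely the assertion that the open ball $B=\{\omega\in\mathcal{X}:\sup_{t\in[0,T]}\|\omega(t)\|_{H^2}<r\}$ (which contains $0$, the center of the Gaussian) has positive measure. For a centered Gaussian measure on a separable Banach space, every open set containing a point of the topological support has positive measure, and $0$ always lies in the support of a centered Gaussian; alternatively one invokes the classical small-ball property that any ball centered at $0$ has strictly positive Gaussian measure (see, e.g., standard references on Gaussian measures, or the Cameron--Martin/support characterization). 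This immediately gives $\P(\sup_{t\in[0,T]}\|\eta(t)\|_{H^2}<r)=\varepsilon>0$ for some $\varepsilon=\varepsilon(r,T)$; crucially, since the constant $C_Q$ in \eqref{cond:Q:Tr(A^(3/2)QQ)<infinity} is uniform in $N$, the covariance operator of $\eta_{[0,T]}$ is controlled uniformly in $N$, so a quantitative small-ball lower bound can be taken independent of $N$. The condition $T>1$ plays no essential role beyond ensuring a nondegenerate time window; it is there only for compatibility with how the lemma is invoked.

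The main technical point — and the only place requiring care — is verifying that $\eta$ genuinely has continuous $H^2$-valued paths with a covariance that is uniformly bounded in $N$, so that the small-ball constant $\varepsilon$ does not degenerate as $N\to\infty$. For this I would compute, for $0\le s\le t\le T$,
\begin{align*}
\E\|\eta(t)-\eta(s)\|_{H^2}^2 \le C\,(t-s)^{\delta}\,\Tr(A^{3/2}QQ^*)
\end{align*}
for a suitable $\delta\in(0,1)$, using the analytic/unitary smoothing of $S_0$ together with $\|A\,S_0(\sigma)\|_{L(H^{3/2},H^{1/2})}\lesssim \sigma^{-3/4}$ and an interpolation; a Kolmogorov-type continuity argument then yields $\eta\in C([0,T];H^2)$ a.s.\ with moments bounded by $C_Q$. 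Once this uniform-in-$N$ bound on the covariance of $\eta_{[0,T]}$ is in hand, Anderson's inequality (or directly the monotonicity of Gaussian ball probabilities under increase of the covariance) shows the small-ball probability for the full process $\eta$ with noise level $N$ is bounded below by that of a fixed limiting Gaussian, giving the $N$-independent $\varepsilon$. I expect the path-continuity/uniformity estimate to be the bulk of the work, while the positivity of the small ball is then a soft consequence of Gaussian measure theory.
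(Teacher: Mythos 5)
Your soft argument (the small-ball property of centered Gaussian measures on a separable Banach space) is sound for a \emph{fixed} $N$, and your Kolmogorov-continuity route to $\eta\in C([0,T];H^2)$ is essentially the same increment estimate the paper obtains by hand from the explicit spectral formula for $\eta$. Where the proposal has a genuine gap is in the mechanism you offer for the $N$-independence of $\varepsilon$, namely an Anderson-type comparison to a ``fixed limiting Gaussian''. Assumption~\ref{cond:Q} only constrains $\sum_{k\le N}\alpha_k^{3/2}\lambda_k^2<C_Q$ and allows the $\lambda_k$ to change with $N$; for the comparison you would need a single Gaussian $\tilde\eta$ on $C([0,T];H^2)$ whose covariance dominates that of $\eta^{(N)}$ for \emph{every} admissible configuration. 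The natural envelope is $\tilde\lambda_k^2=C_Q\,\alpha_k^{-3/2}$ (this is the worst case mode by mode), but then $\Tr(A^2\tilde Q\tilde Q^*)=C_Q\sum_k\alpha_k^{1/2}=C_Q\pi\sum_k k=\infty$, so $\tilde\eta(t)$ is almost surely \emph{not} in $H^2$ and no dominating Gaussian law exists on the state space you are working in. Anderson's inequality therefore has nothing to compare against, and the uniformity in $N$ does not follow.

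The paper avoids this obstruction entirely by being quantitative rather than soft: after deriving the increment bound $\E\|\eta(t)-\eta(s)\|^2_{H^2}\le C\,\Tr(A^{3}QQ^*)\,|t-s|$, it invokes a small-ball lemma for Gaussian processes (\cite[Lemma~6.2]{csorgo1994almost}) that yields a lower bound of the form $e^{-cT/r}$ depending \emph{only on the increment constant}, not on the fine structure of the spectrum. Since the increment constant is bounded uniformly by the trace hypothesis, the resulting $\varepsilon$ is automatically $N$-independent, and no dominating Gaussian is required. If you replace the Anderson step with such a quantitative small-ball estimate keyed to the modulus of continuity, your argument closes; as written, it does not.
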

\begin{proof} We note that $\eta$ is explicitly given by
\begin{align*}
\eta(t)& = \int_0^t e^{-\alpha(t-r)}\sum_{k\ge 1}\sin(\alpha_k (t-r))\lambda_k \d B_k^1(r)e_k + \int_0^t e^{-\alpha(t-r)}\sum_{k\ge 1}\cos(\alpha_k (t-r))\lambda_k \d B_k^2(r)e_k. 
\end{align*}
Concerning the terms involving sine functions, for $0\le s\le t$, we employ It\^o's Isometry to compute
\begin{align*}
& \E\Big\|\int_0^t e^{-\alpha(t-r)}\sin(\alpha_k (t-r))\lambda_k \d B_k^1(r)e_k-\int_0^s e^{-\alpha(s-r)}\sin(\alpha_k (s-r))\lambda_k \d B_k^1(r)e_k\Big\|^2_{H^2}\\
&=\E\Big\|\int_s^t e^{-\alpha(t-r)}\sin(\alpha_k (t-r))\lambda_k \d B_k^1(r)e_k\Big\|^2_{H^2}\\
&\quad+\E\Big\| \int_0^s \Big[e^{-\alpha(t-r)}\sin(\alpha_k (t-r))-e^{-\alpha(s-r)}\sin(\alpha_k (s-r))\Big]\lambda_k \d B_k^1(r)e_k\Big\|^2_{H^2}\\
&= \alpha_k^2\lambda_k^2\int_s^t e^{-2\alpha(t-r)}|\sin(\alpha_k (t-r))|^2\d r \\
&\qquad + \alpha_k^2\lambda_k^2\int_0^s \Big[e^{-\alpha(t-r)}\sin(\alpha_k (t-r))-e^{-\alpha(s-r)}\sin(\alpha_k (s-r))\Big]^2\d r .
\end{align*}
Together with the analogous computation for cosine functions, it follows that
\begin{align*}
&\E\|\la \eta(t)-\eta(s),e_k\ra_H e_k\|^2_{H^2}\\
&=  \alpha_k^2\lambda_k^2 \Big[\int_s^t \big| e^{-(\alpha+\i \alpha_k)(t-r)}\big|^2\d r+ \int_0^s \big|e^{-(\alpha+\i \alpha_k)(t-r)}-e^{-(\alpha+\i \alpha_k)(s-r)}\big|^2\d r\Big]\\
& =\alpha_k^2\lambda_k^2 \Big[\int_s^t e^{-2\alpha(t-r)}\d r+ \big|e^{-(\alpha+\i \alpha_k)t}-e^{-(\alpha+\i \alpha_k)s}\big|^2\int_0^s e^{2\alpha r}\d r\Big]\\
& = \frac{\alpha_k^2\lambda_k^2}{2\alpha} \Big[1-e^{-2\alpha(t-s)}+ \big|1-e^{-(\alpha+\i \alpha_k)(t-s)}\big|^2\big(1-e^{-2\alpha s}\big)\Big].
\end{align*}
Using the elementary inequality $|1-e^{-(a+\i b)x}|\le (a+b)x$ for $a,b,x\ge 0$, we deduce further that
\begin{align*}
&\E\|\la \eta(t)-\eta(s),e_k\ra_H e_k\|^2_{H^2}\\
&\le \frac{\alpha_k^2\lambda_k^2}{2\alpha}\big[ 2\alpha(t-s)+4(\alpha+\alpha_k)(t-s)\big]\le \Big(3+\frac{2}{\alpha}\Big)\alpha_k^3\lambda_k^2(t-s),
\end{align*}
whence
\begin{align*}
\E\|\eta(t)-\eta(s)\|^2_{H^2}\le \Big(3+\frac{2}{\alpha}\Big)\Tr(A^3QQ^*)(t-s).
\end{align*}
In light of \cite[Lemma 6.2]{csorgo1994almost} applying to the Gaussian process $\eta(t)$, for $r\in(0,1)$ and $T>1$,
\begin{align*}
\P\Big(\|\eta(t)\|^2_{H^2} \le \Big(3+\frac{2}{\alpha}\Big)\Tr(A^3QQ^*)r \Big) \ge  e^{-cT/r},
\end{align*}
holds for some positive constant $c$ independent of $T,r$ and $\Tr(A^3QQ^*)$. Recalling Assumption \ref{cond:Q}, cf. condition \eqref{cond:Q:Tr(A^(3/2)QQ)<infinity}, we deduce
\begin{align*}
\P\Big(\|\eta(t)\|^2_{H^2} \le \Big(3+\frac{2}{\alpha}\Big)C_Q r \Big) \ge  e^{-cT/r},
\end{align*}
which does not depend on the size of $N$. In turn, this produces \eqref{ineq:irreducibility:eta}, as claimed.

\end{proof}

\bibliographystyle{abbrv}
{\footnotesize\bibliography{wave-bib}}

\begin{thebibliography}{10}

\bibitem{ablowitz1981solitons}
M.~J. Ablowitz and H.~Segur.
\newblock {\em {Solitons and the Inverse Scattering Transform}}.
\newblock SIAM, 1981.

\bibitem{agrawal2011nonlinear}
G.~P. Agrawal.
\newblock Nonlinear fiber optics: its history and recent progress.
\newblock {\em J. Opt. Soc. Am. B}, 28(12):A1--A10, 2011.

\bibitem{albeverio2008spde}
S.~Albeverio, F.~Flandoli, and Y.~G. Sinai.
\newblock {\em {SPDE in Hydrodynamics: Recent Progress and Prospects: Lectures
  given at the CIME Summer School held in Cetraro, Italy, August 29-September
  3, 2005}}.
\newblock Springer, 2008.

\bibitem{arecchi1993transition}
F.~Arecchi, S.~Boccaletti, P.~Ramazza, and S.~Residori.
\newblock Transition from boundary-to bulk-controlled regimes in optical
  pattern formation.
\newblock {\em Phys. Rev. Lett.}, 70(15):2277, 1993.

\bibitem{wang2004inviscid}
W.~Baoxiang and W.~Youde.
\newblock {The inviscid limit of the derivative complex Ginzburg--Landau
  equation}.
\newblock {\em J. Math. Pures Appl.}, 83(4):477--502, 2004.

\bibitem{barton2004global}
M.~Barton-Smith.
\newblock {Global solution for a stochastic Ginzburg-Landau equation with
  multiplicative noise}.
\newblock {\em Stoch. Anal. Appl.}, 22(1):1--18, 2004.

\bibitem{barton2004invariant}
M.~Barton-Smith.
\newblock {Invariant measure for the stochastic Ginzburg Landau equation}.
\newblock {\em Nonlinear Differ. Equ. Appl.}, 11:29--52, 2004.

\bibitem{bechouche2000inviscid}
P.~Bechouche and A.~J{\"u}ngel.
\newblock {Inviscid limits of the complex Ginzburg--Landau equation}.
\newblock {\em Commun. Math. Phys.}, 214:201--226, 2000.

\bibitem{blennerhassett1980generation}
P.~Blennerhassett.
\newblock On the generation of waves by wind.
\newblock {\em Philos. Trans. R. Soc. A}, 298(1441):451--494, 1980.

\bibitem{bricmont2002exponential}
J.~Bricmont, A.~Kupiainen, and R.~Lefevere.
\newblock {Exponential mixing of the 2D stochastic Navier-Stokes dynamics}.
\newblock {\em Commun. Math. Phys.}, 230:87--132, 2002.

\bibitem{butkovsky2014subgeometric}
O.~Butkovsky.
\newblock {Subgeometric rates of convergence of Markov processes in the
  Wasserstein metric}.
\newblock {\em Ann. Appl. Probab.}, 24(2):526--552, 2014.

\bibitem{butkovsky2020generalized}
O.~Butkovsky, A.~Kulik, and M.~Scheutzow.
\newblock {Generalized couplings and ergodic rates for SPDEs and other Markov
  models}.
\newblock {\em Ann. Appl. Probab.}, 30(1):1--39, 2020.

\bibitem{cerrai2006smoluchowski}
S.~Cerrai and M.~Freidlin.
\newblock {On the Smoluchowski-Kramers approximation for a system with an
  infinite number of degrees of freedom}.
\newblock {\em Probab. Theory Relat. Fields}, 135(3):363--394, 2006.

\bibitem{cerrai2006smoluchowski2}
S.~Cerrai and M.~Freidlin.
\newblock {Smoluchowski-Kramers approximation for a general class of SPDEs}.
\newblock {\em J. Evol. Equ.}, 6(4):657--689, 2006.

\bibitem{cerrai2020convergence}
S.~Cerrai and N.~Glatt-Holtz.
\newblock On the convergence of stationary solutions in the
  smoluchowski-kramers approximation of infinite dimensional systems.
\newblock {\em J. Funct. Anal.}, 278(8):108421, 2020.

\bibitem{cerrai2022smoluchowski}
S.~Cerrai and G.~Xi.
\newblock {A Smoluchowski--Kramers approximation for an infinite dimensional
  system with state--dependent damping}.
\newblock {\em Ann. Probab.}, 50(3):874--904, 2022.

\bibitem{cerrai2022small}
S.~Cerrai and M.~Xie.
\newblock {On the small noise limit in the Smoluchowski-Kramers approximation
  of nonlinear wave equations with variable friction}.
\newblock {\em arXiv preprint arXiv:2203.05923}, 2022.

\bibitem{cerrai2023small}
S.~Cerrai and M.~Xie.
\newblock On the small-mass limit for stationary solutions of stochastic wave
  equations with state dependent friction.
\newblock {\em arXiv preprint arXiv:2309.01549}, 2023.

\bibitem{csorgo1994almost}
M.~Cs{\"o}rgo and Q.-M. Shao.
\newblock {On almost sure limit inferior for B-valued stochastic processes and
  applications}.
\newblock {\em Probab. Theory Relat. Fields}, 99(1):29--54, 1994.

\bibitem{debussche2005ergodicity}
A.~Debussche and C.~Odasso.
\newblock {Ergodicity for a weakly damped stochastic non-linear Schr{\"o}dinger
  equation}.
\newblock {\em J. Evol. Equ.}, 5(3):317--356, 2005.

\bibitem{dias1999nonlinear}
F.~Dias and C.~Kharif.
\newblock Nonlinear gravity and capillary-gravity waves.
\newblock {\em Annu. Rev. Fluid Mech.}, 31(1):301--346, 1999.

\bibitem{dong2023ergodicity}
Z.~Dong, R.~Zhang, and T.~Zhang.
\newblock Ergodicity for stochastic conservation laws with multiplicative
  noise.
\newblock {\em Commun. Math. Phys.}, 400(3):1739--1789, 2023.

\bibitem{weinan2001gibbsian}
W.~E, J.~C. Mattingly, and Y.~Sinai.
\newblock {Gibbsian Dynamics and Ergodicity for the Stochastically Forced
  Navier--Stokes Equation}.
\newblock {\em Commun. Math. Phys.}, 224(1):83--106, 2001.

\bibitem{fibich2015nonlinear}
G.~Fibich.
\newblock {\em {The Nonlinear Schr{\"o}dinger Equation}}, volume 192.
\newblock Springer, 2015.

\bibitem{foldes2017asymptotic}
J.~Foldes, S.~Friedlander, N.~Glatt-Holtz, and G.~Richards.
\newblock {Asymptotic analysis for randomly forced MHD}.
\newblock {\em SIAM J. Math. Anal.}, 49(6):4440--4469, 2017.

\bibitem{foldes2015ergodic}
J.~F{\"o}ldes, N.~Glatt-Holtz, G.~Richards, and E.~Thomann.
\newblock {Ergodic and mixing properties of the Boussinesq equations with a
  degenerate random forcing}.
\newblock {\em J. Funct. Anal.}, 269(8):2427--2504, 2015.

\bibitem{foldes2016ergodicity}
J.~F{\"o}ldes, N.~Glatt-Holtz, G.~Richards, and J.~Whitehead.
\newblock {Ergodicity in randomly forced Rayleigh--B{\'e}nard convection}.
\newblock {\em Nonlinearity}, 29(11):3309, 2016.

\bibitem{foldes2019large}
J.~F{\"o}ldes, N.~E. Glatt-Holtz, and G.~Richards.
\newblock {Large Prandtl number asymptotics in randomly forced turbulent
  convection}.
\newblock {\em Nonlinear Differ. Equ. Appl. NoDEA}, 26(6):43, 2019.

\bibitem{ginzburg1950theory}
V.~Ginzburg and L.~Landau.
\newblock On the theory of superconductivity.
\newblock {\em Zh. Eksp. Teor. Fiz.}, 20:1064--1082, 1950.

\bibitem{glatt2008stochastic}
N.~Glatt-Holtz and M.~Ziane.
\newblock The stochastic primitive equations in two space dimensions with
  multiplicative noise.
\newblock {\em Discrete Contin. Dyn. Syst. - B}, 10(4):801, 2008.

\bibitem{glatt2022short}
N.~E. Glatt-Holtz, V.~R. Martinez, and H.~D. Nguyen.
\newblock {The short memory limit for long time statistics in a stochastic
  Coleman--Gurtin model of heat conduction}.
\newblock {\em preprint}, 2022.

\bibitem{glatt2021mixing}
N.~E. Glatt-Holtz and C.~F. Mondaini.
\newblock {Mixing rates for Hamiltonian Monte Carlo algorithms in finite and
  infinite dimensions}.
\newblock {\em Stoch. Partial Differ. Equ.: Anal. Comput.}, pages 1--74, 2021.

\bibitem{hairer2002exponential}
M.~Hairer.
\newblock {Exponential mixing properties of stochastic PDEs through asymptotic
  coupling}.
\newblock {\em Probab. Theory Relat. Fields}, 124(3):345--380, 2002.

\bibitem{hairer2006ergodicity}
M.~Hairer and J.~C. Mattingly.
\newblock {Ergodicity of the 2D Navier-Stokes equations with degenerate
  stochastic forcing}.
\newblock {\em Ann. Math.}, pages 993--1032, 2006.

\bibitem{hairer2008spectral}
M.~Hairer and J.~C. Mattingly.
\newblock {Spectral gaps in Wasserstein distances and the 2D stochastic
  Navier--Stokes equations}.
\newblock {\em Ann. Probab.}, 36(6):2050--2091, 2008.

\bibitem{hairer2011theory}
M.~Hairer and J.~C. Mattingly.
\newblock {A theory of hypoellipticity and unique ergodicity for semilinear
  stochastic PDEs}.
\newblock {\em Electron. J. Probab.}, 16:658--738, 2011.

\bibitem{hairer2011asymptotic}
M.~Hairer, J.~C. Mattingly, and M.~Scheutzow.
\newblock {Asymptotic coupling and a general form of Harris' theorem with
  applications to stochastic delay equations}.
\newblock {\em Probab. Theory Relat. Fields}, 149(1):223--259, 2011.

\bibitem{huang2008inviscid}
C.~Huang and B.~Wang.
\newblock {Inviscid limit for the energy-critical complex Ginzburg--Landau
  equation}.
\newblock {\em J. Funct. Anal.}, 255(3):681--725, 2008.

\bibitem{huber1993universal}
G.~Huber, P.~Alstr, et~al.
\newblock Universal decay of vortex density in two dimensions.
\newblock {\em Physica A}, 195(3-4):448--456, 1993.

\bibitem{karatzas2012brownian}
I.~Karatzas and S.~Shreve.
\newblock {\em {Brownian Motion and Stochastic Calculus}}, volume 113.
\newblock Springer Science \& Business Media, 2012.

\bibitem{kuksin2002coupling}
S.~Kuksin, A.~Piatniski, and A.~Shirikyan.
\newblock {A coupling approach to randomly forced nonlinear PDE's. II}.
\newblock {\em Commun. Math. Phys.}, 230:81--85, 2002.

\bibitem{kuksin2000stochastic}
S.~Kuksin and A.~Shirikyan.
\newblock {Stochastic dissipative PDE's and Gibbs measures}.
\newblock {\em Commun. Math. Phys.}, 213:291--330, 2000.

\bibitem{kuksin2001coupling}
S.~Kuksin and A.~Shirikyan.
\newblock {A Coupling Approach to Randomly Forced Nonlinear PDE's. I}.
\newblock {\em Commun. Math. Phys.}, 221:351--366, 2001.

\bibitem{kuksin2002couplingb}
S.~Kuksin and A.~Shirikyan.
\newblock {Coupling approach to white-forced nonlinear PDEs}.
\newblock {\em J. Math. Pures Appl.}, 81(6):567--602, 2002.

\bibitem{kuksin2004randomly}
S.~Kuksin and A.~Shirikyan.
\newblock {Randomly forced CGL equation: stationary measures and the inviscid
  limit}.
\newblock {\em J. Phys. A}, 37(12):3805, 2004.

\bibitem{kuksin2013weakly}
S.~B. Kuksin.
\newblock {Weakly nonlinear stochastic CGL equations}.
\newblock 49(4):1033--1056, 2013.

\bibitem{kuramoto1975formation}
Y.~Kuramoto and T.~Tsuzuki.
\newblock {On the formation of dissipative structures in reaction-diffusion
  systems: Reductive perturbation approach}.
\newblock {\em Prog. Theor. Phys.}, 54(3):687--699, 1975.

\bibitem{machihara2003inviscid}
S.~Machihara and Y.~Nakamura.
\newblock {The inviscid limit for the complex Ginzburg--Landau equation}.
\newblock {\em J. Math. Anal. Appl.}, 281(2):552--564, 2003.

\bibitem{mattingly2002exponential}
J.~C. Mattingly.
\newblock {Exponential convergence for the stochastically forced Navier-Stokes
  equations and other partially dissipative dynamics}.
\newblock {\em Commun. Math. Phys.}, 230(3):421--462, 2002.

\bibitem{mattingly2002ergodicity}
J.~C. Mattingly, A.~M. Stuart, and D.~J. Higham.
\newblock {Ergodicity for SDEs and approximations: locally Lipschitz vector
  fields and degenerate noise}.
\newblock {\em Stoch. Process. Their Appl.}, 101(2):185--232, 2002.

\bibitem{moon1982three}
H.-T. Moon, P.~Huerre, and L.~Redekopp.
\newblock {Three-frequency motion and chaos in the Ginzburg-Landau equation}.
\newblock {\em Phys. Rev. Lett.}, 49(7):458, 1982.

\bibitem{moon1983transitions}
H.~T. Moon, P.~Huerre, and L.~Redekopp.
\newblock {Transitions to chaos in the Ginzburg-Landau equation}.
\newblock {\em Physica D}, 7(1-3):135--150, 1983.

\bibitem{newell1985solitons}
A.~C. Newell.
\newblock {\em {Solitons in Mathematics and Physics}}.
\newblock SIAM, 1985.

\bibitem{newell1969finite}
A.~C. Newell and J.~A. Whitehead.
\newblock Finite bandwidth, finite amplitude convection.
\newblock {\em J. Fluid Mech.}, 38(2):279--303, 1969.

\bibitem{nguyen2018small}
H.~D. Nguyen.
\newblock {The small-mass limit and white-noise limit of an infinite
  dimensional generalized Langevin equation}.
\newblock {\em J. Stat. Phys.}, 173(2):411--437, 2018.

\bibitem{nguyen2023small}
H.~D. Nguyen.
\newblock The small mass limit for long time statistics of a stochastic
  nonlinear damped wave equation.
\newblock {\em J. Diff. Equ.}, 371:481--548, 2023.

\bibitem{nguyen2024polynomial}
H.~D. Nguyen.
\newblock Polynomial mixing of a stochastic wave equation with dissipative
  damping.
\newblock {\em Appl. Math. Opt.}, 89:1--31, 2024.

\bibitem{odasso2006ergodicity}
C.~Odasso.
\newblock {Ergodicity for the stochastic complex Ginzburg-Landau equations}.
\newblock {\em Ann. inst. Henri Poincare (B) Probab. Stat.}, 42(4):417--454,
  2006.

\bibitem{shirikyan2004exponential}
A.~Shirikyan.
\newblock {Exponential mixing for 2D Navier-Stokes equations perturbed by an
  unbounded noise}.
\newblock {\em J. Math. Fluid Mech.}, 6:169--193, 2004.

\bibitem{shirikyan2011local}
A.~Shirikyan.
\newblock {Local times for solutions of the complex Ginzburg--Landau equation
  and the inviscid limit}.
\newblock {\em J. Math. Anal. Appl.}, 384(1):130--137, 2011.

\bibitem{stuart1978frs}
J.~T. Stuart and D.~C. Di~Prima.
\newblock {The Eckhaus and Benjamin-Feir resonance mechanisms}.
\newblock {\em Proc. R. Soc. London Ser. A}, 362:27--41, 1978.

\bibitem{temam2012infinite}
R.~Temam.
\newblock {\em {Infinite-dimensional Dynamical Systems in Mechanics and
  Physics}}, volume~68.
\newblock Springer Science \& Business Media, 2012.

\bibitem{villani2008optimal}
C.~Villani.
\newblock {\em {Optimal Transport: Old and New}}, volume 338.
\newblock Springer Science \& Business Media, 2008.

\bibitem{wang2002limit}
B.~Wang.
\newblock {The limit behavior of solutions for the Cauchy problem of the
  complex Ginzburg-Landau equation}.
\newblock {\em Commun. Pure Appl. Math.}, 55(4):481--508, 2002.

\bibitem{wu1998inviscid}
J.~Wu.
\newblock {The inviscid limit of the complex Ginzburg--Landau equation}.
\newblock {\em J. Differ. Equ.}, 142(2):413--433, 1998.

\bibitem{zine2022inviscid}
Y.~Zine.
\newblock {On the inviscid limit of the singular stochastic complex
  Ginzburg-Landau equation at statistical equilibrium}.
\newblock {\em arXiv preprint arXiv:2212.00604}, 2022.

\end{thebibliography}

\end{document}